\definecolor{zzttqq}{rgb}{0.6,0.2,0}
\newcommand{\set}[1]{\ensuremath{\mleft\{ #1 \mright\}}}
\newcommand{\apply}[1]{\ensuremath{\mleft( #1 \mright)}}
\newcommand{\crd}[1]{\ensuremath{\mleft\lvert #1 \mright\rvert}}
\newcommand{\tup}[2]{\mleft(#1_1,\dotsc,#1_{#2}\mright)}
\newcommand{\Lp}[1][n]{\ensuremath{\mathcal{L}_{#1}}}
\newcommand{\Ji}[1][n]{\ensuremath{\mathcal{J}\apply{\Lp[#1]}}}
\newcommand{\Mi}[1][n]{\ensuremath{\mathcal{M}\apply{\Lp[#1]}}}
\newcommand{\Jirr}[1][\mathbb{L}]{\ensuremath{\mathcal{J}\apply{#1}}}
\newcommand{\Mirr}[1][\mathbb{L}]{\ensuremath{\mathcal{M}\apply{#1}}}
\newcommand{\Bh}[2][n]{\ensuremath{\mathcal{H}_{#1}(#2)}}
\newcommand{\Bw}[2][n]{\ensuremath{\mathcal{W}_{#1}(#2)}}
\newcommand{\tupone}[2]{(#1,#1,\dotsc,\overset{#2}{#1})}
\newcommand{\tupdos}[4]{(#1,#1,\dotsc,\overset{#2}{#1},#3,#3,\dotsc,\overset{#4}{#3})}
\newcommand{\gupm}[2][\Mi]{\ensuremath{#2{\uparrow_{#1}}}}
\newcommand{\mdownj}[2][\Ji]{\ensuremath{#2{\downarrow_{#1}}}}
\newcommand{\upset}[1]{\mathop{\uparrow}#1}
\let\gup=\upset
\newcommand{\downset}[1]{\mathop{\downarrow}#1}
\let\mdown=\downset
\DeclareMathOperator{\Part}{Part}
\renewcommand{\land}{\mathbin{\&}}%
\newcommand{\disjointunion}{\mathbin{\dot{\cup}}}%
\newcommand{\diff}{\ensuremath{\, \mathrel{\mathop:}\Longleftrightarrow \, }}
\DeclareMathOperator{\hgt}{ht}
\DeclareMathOperator{\len}{len}
\providecommand{\Runterpfeil}{\mathrel{\swarrow}}
\providecommand{\DownArrow}{\Runterpfeil}
\DeclareRobustCommand{\nRunterpfeil}{\mathrel{\ooalign{$\Runterpfeil$\cr\hidewidth\raise.3ex\hbox{${\scriptstyle-\mkern5mu}$}}}}
\providecommand{\Hochpfeil}{\mathrel{\nearrow}}
\providecommand{\UpArrow}{\Hochpfeil}
\DeclareRobustCommand{\nHochpfeil}{\mathrel{\ooalign{$\Hochpfeil$\cr\hidewidth${\scriptstyle-\mkern8.5mu}$}}}
\DeclareRobustCommand{\Doppelpfeil}{\mathrel{\ooalign{$\nearrow$\cr\hidewidth$\swarrow$}}}
\providecommand{\DoubleArrow}{\Doppelpfeil}
\def\crossingoutchar%
\providecommand{\notI}{\mathrel{\mbox{\rlap{\crossingoutchar}%
$I$\hspace*{-0.25em}\raisebox{.27ex}{\crossingoutchar}}}}%
\newcommand{\defeq}{\mathrel{\mathop:}=}
\newcommand{\eqdef}{\mathrel{\mathopen={\mathclose:}}}
\DeclareRobustCommand{\lset}[2]{\ensuremath{\mleft\{\mleft.#1\ \vphantom{#2}\mright|\ #2\mright\}}}
\newcommand{\N}{\mathbb{N}}
\newcommand{\Nplus}{\mathbb{N}_+}
\newcommand{\La}{\mathbb{L}}
\newcommand{\Pa}{\mathbb{P}}
\newcommand{\Qa}{\mathbb{Q}}
\newcommand{\K}{\mathbb{K}}
\newtheorem{lemma}{Lemma}[section]
\newtheorem{theorem}[lemma]{Theorem}
\newtheorem{corollary}[lemma]{Corollary}
\newtheorem{proposition}[lemma]{Proposition}
\theoremstyle{definition}
\newtheorem{definition}[lemma]{Definition}
\newtheorem{remark}[lemma]{Remark}
\newtheorem{example}[lemma]{Example}
\renewcommand{\figurename}{Fig.}
\begin{document}
\begin{frontmatter}
\title{Arrow Relations in Lattices of Integer Partitions}

\author[TTU]{Asma'a Almazaydeh}
\ead{aalmazaydeh@ttu.edu.jo}
\affiliation[TTU]{organization={Department of Mathematics, Tafila Technical University},
            addressline={PO Box~179},
            postcode={66110},
            city={Tafila},
            country={Jordan}}
\author[TUWien]{Mike Behrisch}
\ead{behrisch@logic.at}
\affiliation[TUWien]{organization={Institute of Discrete Mathematics and Geometry, TU Wien},
            addressline={Wiedner Hauptstr.~8--10},
            postcode={1040},
            city={Vienna},
            country={Austria}}
\author[ITAM]{Edith Vargas-Garc\'\i{}a\fnref{AMC}}
\ead{edith.vargas@itam.mx}
\affiliation[ITAM]{organization={Department of Mathematics, ITAM},
            addressline={R\'{\i}o Hondo~1},
            city={Ciudad de M\'exico},
            postcode={CP~01080},
            country={Mexico}}
\author[ITAM]{Andreas Wachtel\corref{corauth}\fnref{AMC}}
\ead{andreas.wachtel@itam.mx}
\cortext[corauth]{Corresponding author}
\fntext[AMC]{This author gratefully acknowledges support by the A\-so\-cia\-ci\'on Me\-xi\-ca\-na de Cul\-tu\-ra A.C.}

\begin{abstract}
We give a complete characterisation of the single and double
arrow relations of the standard context~$\K(\Lp)$ of the lattice~$\Lp$
of partitions of any positive integer~$n$ under the dominance order,
thereby addressing an open question of Ganter, 2022.
\end{abstract}

\begin{graphicalabstract}
~
\vspace{3cm}

\noindent
\includegraphics[width=\linewidth]{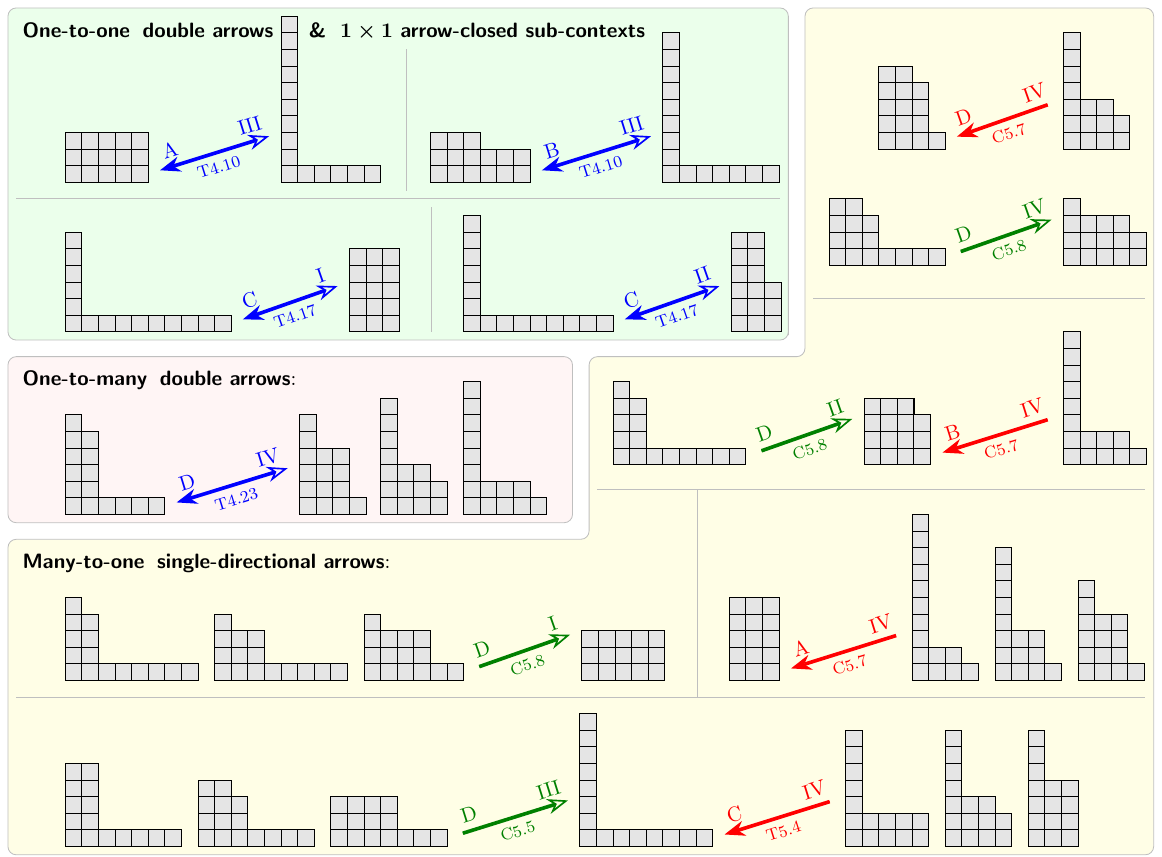}%
\end{graphicalabstract}

\begin{highlights}%
\item Type~\ref{typeD} partitions allow up-arrows, which fail to be
      down-arrows, to all types~\ref{typeI}--\ref{typeIV}.
\item Type~\ref{typeIV} partitions allow down-arrows, which fail to be
      up-arrows, to all types~\ref{typeA}--\ref{typeD}.
\item Type~\ref{typeD} partitions have double arrows only to those of type~\ref{typeIV} and vice
      versa.
\item Partitions of types~\ref{typeA}--\ref{typeC} are double arrow
      related only to types~\ref{typeI}--\ref{typeIII} and vice versa.
\item For $n\geq3$ there are exactly $2n-4$ one-generated arrow-closed
      $(1\times1)$-subcontexts of~$\K(\Lp)$.
\end{highlights}

\begin{keyword}
Integer partition\sep Lattice\sep Dominance order\sep Arrow relation
\sep Subdirect representation
\MSC[2020] 05A17
           \sep
           11P81
           \sep
           06B23
           \sep
           06A07
           \sep
           06B05
           \sep
           06B15
\end{keyword}
\end{frontmatter}

\section{Introduction}\label{sect:intro}

      Integer partitions have captivated mathematicians
      for centuries, starting as early as 1674 with Leibniz
      investigating the number~$p(n)$ of ways in which a natural
      number~$n$ can be partitioned, that is, expressed as a sum of a
      non-increasing sequence of positive integer summands,
      see~\cite[p.~37]{mahnke1912leibniz}.
      Recursive presentations of~$p(n)$, for example, following
      from Euler's pentagonal number theorem, are well known, and the
      search for more explicit formul\ae\ or approximations for~$p(n)$
      culminated with the celebrated asymptotic expressions given by
      Hardy and Ramanujan~\cite{HardyRamanujaniAsymptoticFormulaeCombinatoryAnalysis1918}
      and with Rademacher's representations by convergent
      series~\cite{Rademacher1937PartitionFunction,Rademacher1943SeriesExpansionOfThePartitionFunction}.
      \par
      The partitions of a given integer~$n\in\N$ can be ordered by
      dominance, that is, by pointwise comparing their sequences of
      partial sums; the resulting ordered set carries the structure of
      a finite (hence complete) lattice~$\Lp$, see~\cite{Brylawski}.
      The cardinalities of these lattices~$\Lp$, that is, the
      numbers~$p(n)=\crd{\Lp}$ of (unrestricted) partitions of~$n$,
      grow fast as~$n$ is increasing, the asymptotics shown
      in~\cite{HardyRamanujaniAsymptoticFormulaeCombinatoryAnalysis1918}
      to be
      $p(n)\sim\frac{\exp\bigl(\pi\sqrt{2n/3}\bigr)}{4n\sqrt{3}}$ for
      $n\to\infty$.
      Their size alone suggests an increasing complexity of the
      lattices~$\Lp$ for larger values of~$n$, wherefore splitting them
      up into smaller parts would be an important step towards
      enhancing our understanding of their structure. Fortunately,
      formal concept analysis and lattice theory offer techniques for
      this task in the form of subdirect representations of complete
      lattices, see, e.g., \cite[Chapter~4]{ganter}.
      These are embeddings of the given lattice~$\Lp$ into a direct product of
      smaller lattices, the subdirect factors, such that for each
      coordinate the corresponding projection is surjective. Of course, the subdirect
      factors may themselves be again subdirectly representable by even
      smaller factors, leading eventually to the concept of subdirect
      irreducibility: a lattice is subdirectly irreducible if in each
      subdirect representation at least one of the coordinate
      projections is not only surjective, but bijective, that is, an
      isomorphism onto the corresponding subdirect factor. The most
      efficient decompositions are hence given by representations using
      subdirectly irreducible factors. Such decompositions exist
      for any doubly founded (in particular any finite) complete
      lattice, such as~$\Lp$, see~\cite[Theorem~18]{ganter}.
      \par
      Formal concept analysis~\cite{ganter} offers a powerful framework to study
      complete lattices~$\mathbb{L}$ (up to isomorphism) as lattices of
      Galois closed sets of a suitable Galois connection between
      completely join-dense and completely meet-dense subsets of the
      lattice. This is part of the basic (or fundamental) theorem of
      formal concept analysis, see~\cite[Theorem~3]{ganter}. The Galois connection is, up to isomorphism,
      induced by the order relation of the lattice between the elements
      of the two dense subsets, and this inducing binary relation is
      usually represented in a tabular form, called formal context. A
      canonically derived complete lattice, the concept lattice, is
      isomorphic (anti-isomorphic) to the lattice of Galois closed sets
      and serves to represent the given complete lattice~$\mathbb{L}$. For
      finite lattices~$\mathbb{L}$ there is, up to isomorphism, a unique
      way of representation, namely through the so-called standard context,
      which is given by the order relation between all completely
      join-irreducible and complete meet-irreducible elements
      of~$\mathbb{L}$, see~\cite[Proposition~12]{ganter}. Formal concept
      analysis further defines binary relations $\Runterpfeil, \Hochpfeil$ and
      ${\Doppelpfeil}={\Runterpfeil}\cap{\Hochpfeil}$ as certain subsets
      of the complement of the relation represented by a formal
      context, cf.~\cite[Definition~25]{ganter}.
      These arrow relations appear in the `empty cells' of the context
      table, and from them one may determine so-called one-generated
      arrow-closed subcontexts. This is done by adding attributes
      (resp.\ objects) pointed to by arrows ensuing from objects
      (resp.\ attributes) already appearing
      in the subcontext until the configuration stabilises.
      According to~\cite[Proposition~62]{ganter}
      the one-generated arrow-closed subcontexts of the standard context
      of a finite lattice~$\mathbb{L}$ give subdirectly irreducible
      concept lattices, and taking sufficiently many of them, one may
      construct a subdirect decomposition of~$\mathbb{L}$,
      see~\cite[Proposition~61]{ganter}.
      A thorough understanding of the arrow relations in the standard
      context is hence a crucial step towards systematically
      obtaining subdirect decompositions of~$\Lp$.
      \par
      For the lattice~$\Lp$ of partitions of an integer~$n\in\N$, the
      sets of completely join-irreducible and completely
      meet-irreducible elements were described by
      Brylawski~\cite{Brylawski}; moreover, a very intuitive
      understanding of the covering relation in~$\Lp$ (and thus of the
      irreducibles) was later given in~\cite{LATAPY}.
      Recursive and non-recursive constructions of the standard
      contexts $\K\apply{\Lp}$ for increasing values of~$n$ were
      studied in~\cite{behrisch2021mas} and~\cite{ganter2022notes}.
      In~\cite{behrisch2021mas}, supported
      by~\cite{BehVarDatasetNonembeddabilityStandardContextsPartLat},
      also the non-embeddability of $\K\apply{\Lp[9]}$ into
      $\K\apply{\Lp[10]}$ was argued, the proof of which was
      later refined in~\cite{FrancoSolorioThesis} for the symmetric
      case, extending previous work
      in~\cite{ganter2022notes,BehDatasetSymmetricEmbeddingsStandardContextsPartLat}.
      \par
      We have computationally determined the standard contexts of~$\Lp$
      for parameters $n\leq 60$ and our results show some curious
      patterns regarding the appearing one-generated arrow-closed
      subcontexts and the corresponding subdirectly irreducible
      factors, cf.~\cite{ParamoPitolThesis} for a limited prospect.
      In order to be able, at a later stage, to substantiate these
      experimental results with rigid proofs, we aim in this paper at a
      complete characterisation of the arrow relations in
      $\K\apply{\Lp}$ for every $n\in\Nplus$, a question that was
      raised in~\cite[p.~40]{ganter2022notes}.
      Our work has partially evolved in parallel
      with~\cite{DiazMirandaThesis}, which also mentions some of the
      more basic results of this article and has, for example, inspired
      the graphical presentation of~$\Lp[11]$ including all arrows
      in \figurename~\ref{fig:lattice11}.
      We first describe all double arrows of $\K(\Lp)$ in
      Theorems~\ref{thm:double-typeAB},
      \ref{thm:double-typeC} and~\ref{thm:double-typeCD}.
      After that we provide characterisations of all down-arrows that fail to
      be up-arrows in Theorems~\ref{thm:TypeABDownIV},
      \ref{thm:TypeCdown1} and~\ref{thm:TypeCDdown}, with a summary in
      Corollary~\ref{cor:TypeABCDdownIVnew}; then we use partition
      conjugation to obtain the dual results, i.e., up-arrows without
      down-arrows, in Corollaries~\ref{corDualTypeCdown1}
      and~\ref{corDualTypeABCDdownIV}.
      Our knowledge regarding arrows is schematically
      summarised in Tables~\ref{tab:DoubleArrowCaracterizationSummary}
      and~\ref{tab:OneDArrowCaracterizationSummary}, and illustrated
      within the lattice~$\Lp[11]$ in \figurename~\ref{fig:lattice11}.
      As a proof of concept we finally show
      in Section~\ref{sect:1-by-1-gen-ACSC}
      how our characterisations can be used to determine and count, for
      each $n\in\Nplus$, all one-generated arrow-closed one-by-one
      subcontexts of $\K(\Lp)$, which correspond to two-element
      subdirectly irreducible lattice factors of~$\Lp$.

\section{Preliminaries}\label{sect:preliminaries}
\subsection{Lattices and ordered sets}
Throughout the text we write $\N\defeq\set{0,1,2,\ldots}$ for the set of
\emph{natural numbers}, and we set $\Nplus\defeq\N\setminus\set{0}$.
An \emph{ordered set} is a pair $\Pa=\apply{P,\leq}$ where~$P$ is a set
and ${\leq}\subseteq P\times P$ is a reflexive, antisymmetric and
transitive binary relation on it.
For $a,b\in P$ we have $a\geq b$ if and only if $b\leq a$, and we write
$a<b$ for $a\leq b$ and $a\neq b$ as usual.
Moreover, we say that \emph{$a$ is covered by~$b$ in~$\Pa$}
(or \emph{$b$ covers~$a$ in~$\Pa$}),
symbolically $a\prec b$, if $a<b$ and $a\leq x <b$ implies $x=a$ for
each $x\in P$.
The \emph{dual} of~$\Pa$ is the ordered set $\apply{P,\geq}$.
An \emph{order-isomorphism} $\varphi\colon\Pa\to\Qa$ is given by a map
$\varphi\colon P\to Q$ such that for all $a,b\in P$ the inequality
$a\leq b$ in~$\Pa$ is equivalent to $\varphi(a)\leq \varphi(b)$
in~$\Qa$. The ordered sets~$\Pa$ and~$\Qa$ are then said to be
\emph{(order-)isomorphic}. An \emph{order-antiisomorphism} or
\emph{dual isomorphism} $\varphi\colon\Pa\to\Qa$ is an order
isomorphism between~$\Pa$ and the dual of~$\Qa$, and~$\Pa$ and~$\Qa$
are then called \emph{antiisomorphic} or \emph{dually isomorphic};
$\Pa$ is \emph{self-dual} (or \emph{autodual}) if it is dually
isomorphic to itself.
Furthermore, we define
$\downset{x}\defeq\set{y\in \Pa \mid y\leq x}$ as the
\emph{principal down-set of $x\in P$}, and
$\upset{x}\defeq\set{y\in \Pa \mid x \leq y}$ as the \emph{principal
up-set of~$x$}.
\par
A \emph{complete lattice} is an ordered set $\La=(L,\leq)$ where any
subset $S\subseteq L$ has a greatest common lower bound $\bigwedge S\in L$
(called \emph{infimum} of~$S$) and a least common upper bound
$\bigvee S\in L$ (called \emph{supremum} of~$S$). It is customary to
write $a_1\wedge\dotsm\wedge a_k$ for $\bigwedge\set{a_1,\dotsc,a_k}$
and $a_1\vee\dotsm\vee a_k$ for $\bigvee\set{a_1,\dotsc,a_k}$ for
finitely many elements $a_1,\dotsc,a_k\in L$.
A subset $D\subseteq L$ is called \emph{(completely) join-dense in~$\La$}
if for every $x\in L$ there is a subset $S\subseteq D$ with
$x = \bigvee S$; $D$ is \emph{(completely) meet-dense in~$\La$} if for
each $x\in L$ there is some $S\subseteq D$ with $x = \bigwedge S$.
We say that $a\in L$ is \emph{completely join-irreducible}, denoted by $\bigvee$-irreducible,
if for all $S\subseteq L$ such that $a=\bigvee S$ we necessarily have
that $a\in S$. Dually, $a\in L$ is \emph{completely meet-irreducible},
or $\bigwedge$-irreducible, if for all $S\subseteq L$ such that
$a=\bigwedge S$ we must have $a\in S$. For a finite non-empty lattice, it is
sufficient to check this condition for the empty and all two-element
subsets~$S$ of~$L$. That is, for finite~$L\neq \emptyset$, an
element~$a\in L$ is completely join-irreducible, if~$a$ is not the
minimum element of~$L$ and for all $b,c\in L$ the condition $a=b\vee c$
implies that $a=b$ or $a=c$. This happens exactly if~$a$ covers exactly
one element below it, cf.~\cite[Proposition~2]{ganter}.
Dually, for a finite lattice, $a\in L$ is completely meet-irreducible
if it is not the top element of~$L$ and for all $b,c\in L$ with
$a=b\wedge c$ it follows that $b=a$ or $c=a$. This condition is met
precisely if~$a$ has exactly one upper cover.
For a complete lattice~$\mathbb{L}$,  we denote by $\Jirr$ and by
$\Mirr$ the sets of all \emph{completely join-irreducible
elements}, and of all \emph{completely meet-irreducible elements}
of~$\mathbb{L}$, respectively. Elements of $\Jirr\cap\Mirr$ are called
\emph{doubly completely irreducible}.
In the lattice diagrams shown in \figurename~\ref{fig:lattice7}
and~\ref{fig:lattice11}, the completely irreducible elements have been
highlighted using half or completely filled nodes.
\par
A (complete) lattice $\mathbb{L}=(L,{\leq})$ is
\emph{supremum-founded} if, for any two $x<y$ from~$L$, the set
$\lset{p\in L}{p\leq y \land p\nleq x}$ contains a $\leq$-minimal
element; the dual property that for any $x<y$ in~$L$ the set
$\lset{p\in L}{x\leq p \land y\nleq p}$ includes a $\leq$-maximal element
is called \emph{infimum-founded}. The lattice~$\mathbb{L}$ is
\emph{doubly founded} if it is both supremum-founded and
infimum-founded, see \cite[p.~33]{ganter}. Every chain-finite and hence
every finite lattice is doubly founded, cf.\
\cite[p.~33 et seqq., \figurename~1.11, p.~35]{ganter}.

\subsection{Notions of formal concept analysis}\label{subsect:fca}
Formal concept analysis (FCA) is a theoretical framework that harnesses
the powers of general abstract Galois theory and the structure theory
of complete lattices for data analysis and many other applications.
At its core lies the notion of a Galois connection between (the power
sets of) sets~$G$ and~$M$, induced by a binary relation
$I\subseteq G\times M$. This data is collected in a \emph{formal
context} $\K=(G,M,I)$, and the elements of~$G$ and~$M$ are given
the interpretative names \emph{objects} and \emph{attributes},
respectively. The set~$I$ is called the \emph{incidence relation}, and
$(g,m)\in I$ is usually written as $g\mathrel{I} m$ and read as
`object~$g$ \emph{has attribute}~$m$'. When~$G$ and~$M$ are finite, the
context~$\K$ is often given as a \emph{cross table}, where the objects
form the rows, the attributes label the columns, and the crosses
represent the characteristic function of~$I$ on $G\times M$. Formal
concept analysis extends the `prime notation' for the Galois
derivatives, which is common in classical Galois
theory~\cite[Chapter~V, Theorem~2.3 et seqq.]{Hungerford2011}, to general
formal contexts $K=(G,M,I)$. For every set~$A\subseteq G$ of objects,
$A'\defeq \set{m\in M\mid\forall g\in A\colon g\mathrel{I} m}$ assigns
to it the set of attributes commonly shared by all objects of~$A$.
Dually, for $B\subseteq M$, the set
$B'\defeq\set{g\in G\mid\forall m\in B\colon g\mathrel{I}m}$
contains exactly those objects possessing all the attributes in~$B$.
A \emph{formal concept} is a pair $(A,B)$ where the \emph{extent}
$A\subseteq G$ and the \emph{intent} $B\subseteq M$ are sets that are
mutually Galois closed: $A=B'$ and $B=A'$. Intents of the form
$B=\set{g}'$ with $g\in G$ are called \emph{object intents} and are
written as $g'$, for short; dually extents $A=\set{m}'\eqdef m'$ with
$m\in M$ are referred to as \emph{attribute extents}. The equivalent
conditions $A_1\subseteq A_2$ and $B_1\supseteq B_2$ define an order
$(A_1,B_1)\leq (A_2,B_2)$ on the set $\mathfrak{B}(\K)$ of all formal
concepts. $\underline{\mathfrak{B}}(\K) =(\mathfrak{B}(\K),\leq)$
becomes a complete lattice under this order, the \emph{concept lattice}
of~$\K$. The fundamental theorem of formal concept
analysis~\cite[Theorem~3]{ganter} states that, in fact, every complete
lattice is a concept lattice, up to isomorphism. Namely,
if~$\mathbb{L}$ is a complete lattice, then
$\mathbb{L}\cong\underline{\mathfrak{B}}(L,L,{\leq})$. For
finite lattices, which are always complete, this construction can be
improved: $\mathbb{L}\cong\underline{\mathfrak{B}}(\K)$,
where $\K=(\Jirr,\Mirr,{\leq})$ is
the \emph{standard context} of
$\mathbb{L}$~\cite[Proposition~12]{ganter}.
This applies to partition lattices~$\Lp$ in particular.
\par
A central notion for this paper will be the \emph{arrow relations} of a
formal context, which fill up some of the empty cells in the cross
table.
\begin{definition}[{\cite[Definition~25]{ganter}}]\label{def:arrow-rels}
If $(G, M, I)$ is a context, $g \in G$ an object, and $m \in M$ an
attribute, we write
\begin{align*}
g \DownArrow m &\diff g \notI m, \text{ and for all } h\in G
\text{ with } g' \subsetneqq h' \text{ we have } h \mathrel{I} m;\\
g \UpArrow m &\diff g \notI m, \text{ and for all } n\in H
\text{ with }m' \subsetneqq n' \text{ we have } g \mathrel{I} n;\\
g\DoubleArrow  m&\diff g \Hochpfeil m \text{ and } g \Runterpfeil m.
\end{align*}

Thus, $g \DownArrow m$ if and only if~$g'$ is maximal among all object intents which
do not contain~$m$; dually we have $g \UpArrow m$ if and only if $m'$
is maximal among all attribute extents which do not contain~$g$.
\end{definition}

We will now derive a useful characterisation of the arrow relations in
standard contexts of doubly founded lattices.
\begin{remark}\label{rem:char-arrows-std-cxt}
Consider the (standard) context
$\K(\mathbb{L})=\apply{\Jirr,\Mirr,\mathord{\leq}}$ of any complete
lattice $\mathbb{L}=(L,\mathord{\leq})$.
Note that for $g\in \Jirr$ and $m\in\Mirr$ we have
\begin{alignat*}{3}
g' &= \lset{m\in\Mirr}{g\leq m}
   &&= \upset{g}\cap \Mirr &&\eqdef \gupm[\Mirr]{g},\\
m' &= \lset{g\in\Jirr}{g\leq m}
   &&= \downset{m}\cap \Jirr &&\eqdef \mdownj[\Jirr]{m}.
\end{alignat*}
This allows us to reformulate the definition of the arrow relations
of~$\K(\mathbb{L})$ in terms of the up-sets and down-sets
of~$\mathbb{L}$. Consider again $g\in \Jirr$ and $m\in \Mirr$.
Then we have
\begin{alignat*}{4}
g \Runterpfeil m
&\stackrel{I\defeq{\leq}}{\iff}  g \nleq m \land
  \forall h\in \Jirr\setminus\set{g}\colon&
  g' &\subsetneqq h'& &\implies& h&\leq m,\\
&\iff  g \nleq m \land
  \forall h\in \Jirr\setminus\set{g}\colon&
  \gupm[\Mirr]{g}&\subsetneqq \gupm[\Mirr]{h}& &\implies& m& \in\gup{h},\\
g \Hochpfeil m
&\stackrel{I\defeq{\leq}}{\iff}  g \nleq m \land
  \forall a\in \Mirr\setminus\set{m}\colon&
  m' &\subsetneqq a'& &\implies& g&\leq a,\\
&\iff  g \nleq m \land \forall a\in \Mirr\setminus\set{m}\colon&
\mdownj[\Jirr]{m} &\subsetneqq \mdownj[\Jirr]{a}& &\implies& g& \in\mdown{a}.
\end{alignat*}
\end{remark}

The following sufficient condition will lead to our main tool to
establish arrow relations in standard contexts, in particular
in~$\K(\Lp)$.
\begin{lemma}\label{lem:arrows-from-meets-and-joins}
Let $\mathbb{L}=(L,{\leq})$ be any complete lattice. Consider any
$g\in\Jirr$ with unique lower cover~$\tilde{g}\in L$ and
$m\in\Mirr$ with unique upper cover~$\tilde{m}\in L$ such that $g\nleq m$.
Then in the context $\K(\mathbb{L})=(\Jirr,\Mirr,{\leq})$ the following
implications regarding arrow relations hold.
\begin{enumerate}[\upshape(a)]
\item\label{item:meet-g-lcover-below-m}
      If $\tilde{g}\leq m$ and there is a set $S\subseteq\Mirr$ such that
      $g=\bigwedge S$, then $g\Runterpfeil m$.
\item\label{item:join-m-ucover-above-g}
      If $\tilde{m}\geq g$ and there is a set $S\subseteq\Jirr$ such that
      $m=\bigvee S$, then $g\Hochpfeil m$.
\item\label{item:g-doubly-irr-lcover-below-m}
      If $\tilde{g}\leq m$ and $g\in\Mirr$ (thus doubly completely irreducible),
      then $g\Runterpfeil m$.
\item\label{item:m-doubly-irr-ucover-above-g}
      If $\tilde{m}\geq g$ and $m\in\Jirr$ (thus doubly completely irreducible),
      then $g\Hochpfeil m$.
\end{enumerate}
\end{lemma}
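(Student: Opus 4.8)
The plan is to prove parts~\ref{item:meet-g-lcover-below-m} and~\ref{item:join-m-ucover-above-g} directly from the reformulation in Remark~\ref{rem:char-arrows-std-cxt}, and then derive parts~\ref{item:g-doubly-irr-lcover-below-m} and~\ref{item:m-doubly-irr-ucover-above-g} as immediate special cases. By the duality between the two halves of the statement (passing to the dual lattice swaps $\Jirr$ with $\Mirr$, lower covers with upper covers, $\bigwedge$ with $\bigvee$, and $\Runterpfeil$ with $\Hochpfeil$), it suffices to treat~\ref{item:meet-g-lcover-below-m}; part~\ref{item:join-m-ucover-above-g} then follows by dualising, and I would state this explicitly rather than repeat the argument.

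For part~\ref{item:meet-g-lcover-below-m}, assume $g\nleq m$, that $\tilde g\leq m$ where $\tilde g$ is the unique lower cover of~$g$, and that $g=\bigwedge S$ for some $S\subseteq\Mirr$. I need to verify the condition from Remark~\ref{rem:char-arrows-std-cxt}: for every $h\in\Jirr\setminus\set g$ with $\gupm[\Mirr]{g}\subsetneqq\gupm[\Mirr]{h}$ we must show $m\in\gup h$, i.e.\ $h\leq m$. So fix such an~$h$. The inclusion $\upset g\cap\Mirr\subsetneq\upset h\cap\Mirr$ together with $h\neq g$ forces $h\nleq g$: indeed, if $h\leq g$ then $\upset g\subseteq\upset h$, hence $\upset g\cap\Mirr\subseteq\upset h\cap\Mirr$ is an equality would be too strong, but one checks that $h<g$ contradicts $g\in\Jirr$ only via the covers --- more carefully, since $h\le g$ gives $\gupm[\Mirr]{g}\subseteq\gupm[\Mirr]{h}$, and the hypothesis says this is proper, there is $m_0\in\Mirr$ with $h\le m_0$ but $g\nleq m_0$; but $g=\bigwedge S$ with $S\subseteq\Mirr$, and if every $s\in S$ satisfied $h\le s$ we would get $h\le\bigwedge S=g$, contradicting properness as $g\notin\gupm[\Mirr]{h}$ cannot happen... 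Let me restructure: from $\gupm[\Mirr]{g}\subsetneqq\gupm[\Mirr]{h}$ pick $n_0\in\Mirr$ with $h\le n_0$, $g\nleq n_0$. Since $g=\bigwedge S$, there is $s\in S\subseteq\Mirr$ with $s\ngeq g$ — impossible, $g$ is the meet — so actually $g\le s$ for all $s\in S$; the point is rather that $n_0$ cannot dominate all of $S$. Hence $g\nleq h$: for if $g\le h$ then combined with $h\le g$ (which we show next) we'd get $g=h$. The clean claim is $h\nleq g$: were $h\le g$, then since $g\covered$ has unique lower cover $\tilde g$ and $h\neq g$, we'd have $h\leq\tilde g\leq m$, so $h\leq m$ and we are done. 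So the only remaining case is $h\nleq g$, and then I claim $\gupm[\Mirr]{g}\subseteq\gupm[\Mirr]{h}$ already fails: pick $s\in S$ with $h\nleq s$ (such $s$ exists, else $h\le\bigwedge S=g$); then $s\in\gupm[\Mirr]{g}$ but $s\notin\gupm[\Mirr]{h}$, contradicting the hypothesis. Thus the case $h\nleq g$ does not occur, every relevant~$h$ satisfies $h\leq\tilde g\leq m$, and $g\Runterpfeil m$ follows.

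For parts~\ref{item:g-doubly-irr-lcover-below-m} and~\ref{item:m-doubly-irr-ucover-above-g}: if $g\in\Jirr\cap\Mirr$, then $g=\bigwedge\set g$ is a meet of a one-element subset of~$\Mirr$, so~\ref{item:meet-g-lcover-below-m} applies with $S=\set g$ and yields $g\Runterpfeil m$; dually for~\ref{item:m-doubly-irr-ucover-above-g} using $m=\bigvee\set m$.

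The main obstacle is the case analysis on the auxiliary object~$h$ (resp.\ attribute~$a$): one must cleanly rule out $h\nleq g$ using that $n_0$ separating $h$ from $g$ cannot lie above the meet-representation~$S$, and handle $h\leq g$ via the uniqueness of the lower cover~$\tilde g$. Once that dichotomy is set up correctly, everything else is formal, and the duality argument halves the work. I would take care to phrase the separating-attribute argument so that it visibly only uses $S\subseteq\Mirr$ and $g=\bigwedge S$, since that is exactly what makes the doubly-irreducible corollaries drop out with $S$ a singleton.
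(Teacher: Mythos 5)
Your proof is correct and follows essentially the same route as the paper's: both reduce to part (a) via duality and singleton $S$, and both hinge on the observation that $S\subseteq\gupm[\Mirr]{g}\subseteq\gupm[\Mirr]{h}$ forces $h\leq\bigwedge S=g$, whence $h\leq\tilde{g}\leq m$ by uniqueness of the lower cover. Your version merely phrases this contrapositively (ruling out $h\nleq g$ by producing a separating $s\in S$), and once the visible false starts are edited out it is the same argument.
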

\begin{proof}
We only prove statement~\eqref{item:meet-g-lcover-below-m},
for~\eqref{item:join-m-ucover-above-g} is completely dual;
\eqref{item:g-doubly-irr-lcover-below-m} follows by setting
$S\defeq\set{g}$ in~\eqref{item:meet-g-lcover-below-m},
and~\eqref{item:m-doubly-irr-ucover-above-g} by setting
$S\defeq\set{m}$ in~\eqref{item:join-m-ucover-above-g}.
By assumption we have $g\nleq m$.
To show $g\Runterpfeil m$, according to Remark~\ref{rem:char-arrows-std-cxt},
we take any $h\in\Jirr\setminus\set{g}$ and assume
$\gupm[\Mirr]{g}\subsetneqq\gupm[\Mirr]{h}$. From the hypothesis
of~\eqref{item:meet-g-lcover-below-m} we have $g=\bigwedge S$ with
$S\subseteq \Mirr$, thus we infer
$S\subseteq \gupm[\Mirr]{g}\subseteq \gupm[\Mirr]{h}$.
Therefore, $h$~is a common lower bound of the elements of~$S$, hence
$h\leq \bigwedge S = g$. Since $h\neq g$, we have $h<g$, and,
as $g\in\Jirr$, we infer $h\leq \tilde{g}\leq m$, as it is required for
$g\Runterpfeil m$.
\end{proof}

We shall need the following statement for infimum/supremum founded
complete lattices, which can be read
from~\cite[\figurename~1.11, p.~35]{ganter}. For completeness, we
provide a proof of this fundamental fact.
\begin{lemma}\label{lem:foundedness-implies-density}
Let $\La=(L,{\leq})$ be a complete lattice with completely
join-irreducibles~$\Jirr$ and meet-irreducibles~$\Mirr$.
\begin{enumerate}[\upshape(a)]
\item\label{item:inf-founded-implies-M-inf-dense}
      If\/~$\La$ is infimum-founded, then~$\Mirr$ is completely meet-dense.
\item\label{item:sup-founded-implies-J-sup-dense}
      If\/~$\La$ is supremum-founded, then~$\Jirr$ is completely join-dense.
\end{enumerate}
\end{lemma}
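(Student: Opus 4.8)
The plan is to establish part~\eqref{item:inf-founded-implies-M-inf-dense} directly and then obtain part~\eqref{item:sup-founded-implies-J-sup-dense} by order-duality. Fix $x\in L$. The natural candidate witnessing meet-density is the set $S\defeq\upset{x}\cap\Mirr$ of all completely meet-irreducibles lying above~$x$. Since~$x$ is a lower bound of~$S$, we get $x\leq\bigwedge S$ for free, so the whole task reduces to proving the reverse inequality $\bigwedge S\leq x$.

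Assume towards a contradiction that $x<y$, where $y\defeq\bigwedge S$. First I would feed the pair $x<y$ into infimum-foundedness to obtain a $\leq$-maximal element~$q$ of $\lset{p\in L}{x\leq p\land y\nleq p}$. The crucial step is then to argue that~$q$ is completely meet-irreducible: given any $T\subseteq L$ with $q=\bigwedge T$ and $q\notin T$, every $t\in T$ satisfies $t>q\geq x$; maximality of~$q$ forbids any element strictly above~$q$ from lying in the above set, so from $x\leq t$ we are forced to conclude $y\leq t$, and since this holds for all $t\in T$ we obtain $y\leq\bigwedge T=q$ -- contradicting $y\nleq q$. (The same contradiction with $T=\emptyset$ simultaneously rules out $q$ being the top element of~$\La$.) Hence $q\in\Mirr$, and as $x\leq q$ this means $q\in S$, so $y=\bigwedge S\leq q$, once more contradicting $y\nleq q$. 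Therefore $\bigwedge S=x$; since $x\in L$ was arbitrary, $\Mirr$ is completely meet-dense.

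For part~\eqref{item:sup-founded-implies-J-sup-dense} I would run the order-dual argument in $(L,\geq)$, or equivalently take $S\defeq\downset{x}\cap\Jirr$ and invoke supremum-foundedness; every inequality and every use of $\bigwedge$ and $\bigvee$ flips, and no new idea is needed.

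The one place that demands attention is that the preliminaries define (complete) meet-irreducibility of~$q$ via \emph{arbitrary} subsets $T\subseteq L$, not merely empty or two-element ones, so the irreducibility of~$q$ has to be checked for all such~$T$ at once rather than via the convenient finite reformulation available for finite lattices. Fortunately, the strength of the maximality condition supplied by infimum-foundedness is exactly what makes this uniform argument go through, so no extra finiteness hypothesis on~$\La$ is required.
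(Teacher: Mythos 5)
Your proposal is correct and follows essentially the same route as the paper: the same witnessing set $S=\upset{x}\cap\Mirr$, the same appeal to infimum-foundedness to extract a maximal element separating $x$ from $\bigwedge S$, and the same argument that this maximal element must be completely meet-irreducible (hence lie in $S$), yielding the contradiction. The only cosmetic difference is that you package the two contradictions of the paper's case split into a single claim that the maximal element belongs to $\Mirr$, which also cleanly absorbs the $T=\emptyset$ case.
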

\begin{proof}
Part~\eqref{item:sup-founded-implies-J-sup-dense} follows
from~\eqref{item:inf-founded-implies-M-inf-dense} by duality, thus we
only show the latter. Consider any $g\in\La$ and define the subset
$S\defeq \lset{y\in\Mirr}{g\leq y}\subseteq\Mirr$. By its construction,
$S$ satisfies $g\leq \bigwedge S$. Let us assume for a contradiction that
$g<\bigwedge S\eqdef h$. By infimum-foundedness, there is hence a
maximal element $x\in L$ with the properties $g\leq x$ but $h\nleq x$.
If $x\in\Mirr$, then $x\in S$ and thus $h=\bigwedge S\leq x$, being a
contradiction. Therefore, we consider now $x\notin\Mirr$. This means
there must exist a subset $W\subseteq L\setminus\set{x}$ with
$x = \bigwedge W$. It follows for each $w\in W$ that $x\leq w$,
and, in fact, $g\leq x<w$ since $w\neq x$. From the maximality of~$x$
we infer now that $h\nleq w$ fails, i.e., $h\leq w$. Since this holds
for all $w\in W$, we conclude $h\leq \bigwedge W = x$, which is again a
contradiction. Both contradictions show $g=h=\bigwedge S$.
\end{proof}

For our purposes the following characterisation of the arrow relations
is the most appropriate one since the lattice~$\Lp$ is finite, thus all
its chains have only a finite number of elements, and it hence is
doubly founded.

\begin{proposition}\label{prop:arrows-in-doubly-founded-lattices}
Let $\mathbb{L}=(L,{\leq})$ be a doubly founded complete lattice, e.g.,
a finite lattice.
In the formal context $\K(\mathbb{L})=(\Jirr,\Mirr,{\leq})$ the arrow
relations can be characterised as follows (cf.\
\figurename~\ref{fig:arrows-in-doubly-founded-lattices}).
Consider any $g\in\Jirr$ with unique lower
cover~$\tilde{g}\in L$ and $m\in\Mirr$ with unique upper
cover ${\tilde{m}\in L}$. Then we have 
\begin{enumerate}[\upshape(a)]
\item\label{item:g-lcover-below-m}
      $g\Runterpfeil m \iff g\nleq m \land \tilde{g}\leq m$;
\item\label{item:m-ucover-above-g}
      $g\Hochpfeil m \iff g\nleq m \land \tilde{m}\geq g$.
\end{enumerate}
\end{proposition}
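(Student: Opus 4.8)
The plan is to derive both equivalences from Remark~\ref{rem:char-arrows-std-cxt} together with Lemma~\ref{lem:arrows-from-meets-and-joins} and Lemma~\ref{lem:foundedness-implies-density}, exploiting that double foundedness makes~$\Jirr$ completely join-dense and~$\Mirr$ completely meet-dense. By the usual duality (passing to the dual lattice swaps $\Jirr$ with~$\Mirr$, $\tilde g$ with~$\tilde m$, and $\Runterpfeil$ with~$\Hochpfeil$), it suffices to prove part~\eqref{item:g-lcover-below-m}; part~\eqref{item:m-ucover-above-g} then follows verbatim in the dual. Note first that since~$g\in\Jirr$ in a doubly founded (hence chain-finite enough) lattice, its unique lower cover~$\tilde g$ exists, as recalled in the preliminaries; similarly~$\tilde m$ exists for~$m\in\Mirr$.

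For the forward direction of~\eqref{item:g-lcover-below-m}, assume $g\Runterpfeil m$. By Definition~\ref{def:arrow-rels} (or its reformulation in Remark~\ref{rem:char-arrows-std-cxt}) this already gives $g\nleq m$, so it remains to show $\tilde g\leq m$. The key observation is that the lower cover~$\tilde g$ satisfies $\tilde g<g$, hence $\gupm[\Mirr]{g}\subseteq\gupm[\Mirr]{\tilde g}$, and this inclusion is strict: by Lemma~\ref{lem:foundedness-implies-density}\eqref{item:inf-founded-implies-M-inf-dense}, $\Mirr$ is completely meet-dense, so the distinct elements $g$ and~$\tilde g$ have distinct sets of meet-irreducibles above them (if $\gupm[\Mirr]{g}=\gupm[\Mirr]{\tilde g}$, then $g=\bigwedge\gupm[\Mirr]{g}=\bigwedge\gupm[\Mirr]{\tilde g}=\tilde g$, a contradiction). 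However, $\tilde g$ need not lie in~$\Jirr$, so one cannot directly plug $h\defeq\tilde g$ into the arrow condition. Instead I would argue: take any $h\in\Jirr$ with $h\leq\tilde g$; for each such~$h$ we have $h<g$, so either $h=g$ (impossible) or the defining condition of $g\Runterpfeil m$ forces $h\leq m$ once we know $\gupm[\Mirr]{g}\subsetneqq\gupm[\Mirr]{h}$. The cleanest route is the following: since $\Jirr$ is completely join-dense, $\tilde g=\bigvee\bigl(\Jirr\cap\downset{\tilde g}\bigr)$; for each $h\in\Jirr\cap\downset{\tilde g}$ we have $h\leq\tilde g<g$, so $\gupm[\Mirr]{g}\subsetneqq\gupm[\Mirr]{h}$ (the inclusion is proper because $h<g$ and $\Mirr$ is meet-dense, as above), whence $g\Runterpfeil m$ yields $h\leq m$; taking the join over all such~$h$ gives $\tilde g\leq m$, as desired.

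For the converse, assume $g\nleq m$ and $\tilde g\leq m$. Since $\Mirr$ is completely meet-dense by Lemma~\ref{lem:foundedness-implies-density}\eqref{item:inf-founded-implies-M-inf-dense}, we may write $g=\bigwedge S$ for $S\defeq\gupm[\Mirr]{g}\subseteq\Mirr$. Now Lemma~\ref{lem:arrows-from-meets-and-joins}\eqref{item:meet-g-lcover-below-m} applies verbatim — its hypotheses are exactly $\tilde g\leq m$, $g\nleq m$, and the existence of such an~$S$ — and delivers $g\Runterpfeil m$. This closes the equivalence, and dualising gives~\eqref{item:m-ucover-above-g}.

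The step I expect to be the main obstacle is the forward direction: translating the arrow condition, which quantifies only over \emph{join-irreducible} objects~$h$, into the statement $\tilde g\leq m$ about the possibly reducible element~$\tilde g$. The resolution is to use join-density of~$\Jirr$ to approximate~$\tilde g$ from below by join-irreducibles and to use meet-density of~$\Mirr$ to guarantee that $h<g$ already forces the proper inclusion $\gupm[\Mirr]{g}\subsetneqq\gupm[\Mirr]{h}$ that unlocks the arrow hypothesis. Everything else is bookkeeping with up-sets and down-sets, plus the duality argument for part~\eqref{item:m-ucover-above-g}.
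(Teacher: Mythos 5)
Your proposal is correct, and its overall architecture matches the paper's: the backward direction is verbatim the paper's argument (meet-density of~$\Mirr$ from Lemma~\ref{lem:foundedness-implies-density}\eqref{item:inf-founded-implies-M-inf-dense} feeding into Lemma~\ref{lem:arrows-from-meets-and-joins}\eqref{item:meet-g-lcover-below-m}), the forward direction likewise approximates~$\tilde{g}$ from below by a join-dense family in~$\Jirr$, and part~\eqref{item:m-ucover-above-g} is dispatched by duality in both treatments. The one genuine divergence is how you certify the strict inclusion $\gupm[\Mirr]{g}\subsetneqq\gupm[\Mirr]{h}$ for $h<g$: the paper invokes infimum-foundedness a second time to produce a maximal $p\in L$ with $\tilde{g}\leq p$ and $g\nleq p$, proves $p\in\Mirr$ by a separate computation, and exhibits $p$ as an explicit element of $\gupm[\Mirr]{h}\setminus\gupm[\Mirr]{g}$; you instead observe that meet-density gives $x=\bigwedge\gupm[\Mirr]{x}$ for every $x$, so equality of the two up-sets would force $g=h$. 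Your route is shorter and reuses Lemma~\ref{lem:foundedness-implies-density} rather than redoing a foundedness argument from scratch, at the cost of losing the explicit separating meet-irreducible~$p$ (which the paper does not need elsewhere, so nothing is lost here). Both arguments are sound; minor bookkeeping in yours, such as $\tilde{g}=\bigvee\bigl(\Jirr\cap\downset{\tilde{g}}\bigr)$ and $h\in\Jirr\setminus\set{g}$ because $h\leq\tilde{g}<g$, checks out, including the degenerate case where $\Jirr\cap\downset{\tilde{g}}$ is empty and $\tilde{g}$ is the bottom element.
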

\begin{proof}
We shall only prove~\eqref{item:g-lcover-below-m}
since~\eqref{item:m-ucover-above-g} is completely dual.
To show `$\Longleftarrow$' we use infimum-foundedness, which
implies that the set $\Mirr$ is infimum-dense, see
Lemma~\ref{lem:foundedness-implies-density}\eqref{item:inf-founded-implies-M-inf-dense}.
This means that every
$g\in L$ can be written as $\bigwedge S$ for some set $S\subseteq\Mirr$,
namely, we may take $S=\lset{y\in\Mirr}{g\leq y}$. This is true, in
particular, for all $g\in\Jirr$, hence $g\nleq m$, $\tilde{g}\leq m$ and
Lemma~\ref{lem:arrows-from-meets-and-joins}\eqref{item:meet-g-lcover-below-m}
imply $g\Runterpfeil m$.
\par
For the converse implication, let us assume that $g\Runterpfeil m$
holds. This implies $g\nleq m$ by Remark~\ref{rem:char-arrows-std-cxt}.
By infimum-foundedness, there is $p\in L$ that is maximal with
respect to the property $\tilde{g}\leq p$ but $g\nleq p$.
Let $U\defeq\lset{z\in L}{z>p}$. For every $z\in U$ we have
$z>p\geq \tilde{g}$, thus, in order to not violate the maximality
of~$p$, the element~$z$ must fail the property $g\nleq z$, that is,
$g\leq z$ must hold. Therefore, $g$ is a common lower bound for the
elements of~$U$, and hence we have $g\leq \bigwedge U$.
As $g\nleq p$, we know that $\bigwedge U \neq p$, thus in fact,
$\bigwedge U>p$ since all $z\in U$ are above~$p$.
Consequently, if $p=\bigwedge T$ for some subset $T\subseteq L$, then
$p\in T$, for otherwise $T\subseteq U$ and thus
$p<\bigwedge U\leq \bigwedge T$.
This shows that $p\in\Mirr$.
Since~$\mathbb{L}$ is supremum-founded, the set $\Jirr$ is supremum-dense, see
Lemma~\ref{lem:foundedness-implies-density}\eqref{item:sup-founded-implies-J-sup-dense}.
Thus we can write $\tilde{g}=\bigvee S$ for some set
$S\subseteq \Jirr$. For every~$h\in S$ we have
$h\leq \bigvee S =\tilde{g}<g$, wherefore
$\gupm[\Mirr]{g}\subseteq \gupm[\Mirr]{h}$.
As $p\in\Mirr$ and $p\geq \tilde{g}$, we have $p\in\gupm[\Mirr]{h}$,
but certainly $p\notin \gupm[\Mirr]{g}$ since $p\ngeq g$.
Thus, $p\in\gupm[\Mirr]{h}\setminus\gupm[\Mirr]{g}$, i.e.,
$\gupm[\Mirr]{g}\subsetneqq \gupm[\Mirr]{h}$. Now $g\Runterpfeil m$ and
Remark~\ref{rem:char-arrows-std-cxt} imply $h\leq m$. As $h\in S$ was
arbitrary, we conclude that $\tilde{g}=\bigvee S \leq m$.
\end{proof}

\begin{figure}[hbt!]
		\centering
\setlength{\abovedisplayskip}{0pt plus 0pt minus 25pt}
\setlength{\belowdisplayskip}{0pt plus 0pt minus 25pt}
\includegraphics[scale=1]{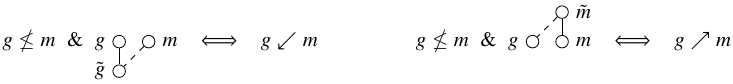}%
\caption{Graphical representation of
Proposition~\ref{prop:arrows-in-doubly-founded-lattices}; solid edges represent the
covering relation.}
\label{fig:arrows-in-doubly-founded-lattices}
\end{figure}

In Section~\ref{sect:intro} we explained that so-called arrow-closed
subcontexts (of the standard context) are a key ingredient in order to
obtain subdirect decompositions of finite lattices. We now provide
concrete definitions as far as they are needed in this paper. A
\emph{subcontext} of a context $\K=(G,M,I)$
is a context $\tilde{\K}=(H,N,J)$
where $H\subseteq G$, $N\subseteq M$ and $J= I\cap (H\times N)$.
For a clarified context $\K$, that is,
$g'=h'$ implies $g=h$, and $m'=n'$ implies $m=n$
for all $g,h\in G$ and $m,n\in M$, such a
subcontext~$\tilde{\K}$ is \emph{arrow-closed} if
for all $h\in H$, $m\in M$, $n\in N$ and $g\in G$
the condition $g\Hochpfeil m$ implies $m\in N$,
and $g\Runterpfeil n$ implies $g\in H$,
see~\cite[Definition~46]{ganter}. Note that the standard context of a
finite lattice~$\La$ is always clarified and
reduced~\cite[Proposition~12]{ganter}.
For a finite clarified context $\K=(G,M,I)$ and $G_1\subseteq G$ and $M_1\subseteq M$
there is always a smallest arrow-closed subcontext
$\tilde{\K}=(H,N,I\cap(H\times N))$ of $\K$ with $G_1\subseteq H$ and
$M_1\subseteq N$. It can be obtained by constructing the directed graph
$\apply{G\disjointunion M, {\Hochpfeil}\cup {(\Runterpfeil)^{-1}}}$ and
considering the (not necessarily strongly) connected directed
components~$[x]$ of each $x\in G_1\disjointunion M_1$. One then forms
$\bigcup_{x\in G_1\disjointunion M_1} [x]$, which can be written in a
unique way as $H\disjointunion N$ with $H\subseteq G$ and $N\subseteq M$.
In particular, starting from
$G_1=\set{g}$ and $M_1=\emptyset$ with $g\in G$ (or dually from
$G_1=\emptyset$ and $M_1=\set{m}$ with $m\in M$), we get the
\emph{one-generated arrow-closed subcontexts} of~$\K$,
cf.~\cite[Section~4.1]{ganter}. Note that if the context~$\K$ is
reduced, we may always concentrate on using either only objects or
attributes for constructing all its one-generated arrow-closed
subcontexts.

\subsection{Integer partitions}
Our aim is to study the arrow relations of the standard context of the
lattice~$\Lp$ of positive integer partitions, which is formed by the
sets~$\Ji$ and~$\Mi$.
First, we define formally, what a partition and the dominance
order is.
\begin{definition}
An (ordered) \emph{partition} of a number $n\in \N$ is an $n$-tuple $a\defeq\tup{a}{n}$ of natural numbers such that
\[a_1\geq a_2\geq \dotsm \geq a_n\geq 0 \qquad \text{  and  } \qquad
n=a_1+a_2+\dotsm+a_n.\]
If there is $k\in \set{1,\dotsc,n}$ such that $a_k\geq1$ and $a_i=0$ for
all $i>k$, we also allow for the partition~$a$ to be written in the form
$\apply{a_1, a_2, \dotsc, a_k}$, where we have deleted the zeros at the end.
\end{definition}

For example, $(5,4,1,1,0,0,0,0,0,0,0)$ is a partition of~$11$ because
${5\geq 4\geq 1\geq 0}$ and $5+4+1+1=11$. By removing the
trailing zeros we can represent it more compactly as $(5,4,1,1)$.
Graphically, we can illustrate a partition
using a diagram drawn with small squares or `bricks' arranged in a
downward ladder shape (cf.\ \figurename~\ref{fig:Ferrers-diagrams}), which
is known as \emph{Ferrers diagram} (usually Ferrers diagrams are drawn rotated
clockwise by a $90$~degrees angle, but the chosen presentation is more
useful to us, cf.\ Definition~\ref{def:transition-rules-lower-covers}).
Given a partition~$a$ of~$n$, one obtains
the \emph{conjugated} or \emph{dual partition} $a^*$ in the sense
of~\cite{Brylawski} and~\cite{ganter2022notes} as
$a^* = (a_1^*,\dotsc,a_n^*)$ where
$a_i^*\defeq\crd{\{1\leq j\leq n\mid a_j \geq i\}}$
for all $i\in \set{1,\dotsc,n}$. The Ferrers diagram of~$a^*$ can be
seen from the diagram of~$a$ by reading it by rows, from bottom to top.
For instance, the partition $(5,4,1,1)$ of $11=5+4+1+1$ has
the Ferrers diagram shown in \figurename~\ref{fig:Ferrers-diagrams},
and its conjugate consists of~$4$
bricks from the first row, $2$~bricks from the second to fourth, and $1$~brick
from the fifth row. We thereby obtain the partition
$(4,2,2,2,1)$; its Ferrers diagram is also depicted in
\figurename~\ref{fig:Ferrers-diagrams}.
\begin{figure}[ht]
\centering
\includegraphics[scale=1]{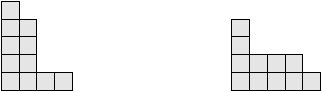}%
\caption{Ferrers diagrams. Left: of $g=(5,4,1,1)$; right: of the
conjugated (dual) partition $g^*=(4,2,2,2,1)$.}
\label{fig:Ferrers-diagrams}
\end{figure}

We denote the \emph{set of all partitions} of an integer $n\in \N$ by
$\Part(n)$.
From the construction of the conjugate via the Ferrers diagram, it is
easy to see that $\apply{a^{*}}^{*} = a$ holds for every $a\in\Part(n)$.
Therefore, partition conjugation is an involutive permutation
of~$\Part(n)$. One may order the set~$\Part(n)$ in different ways, for
example, lexicographically, or pointwise. In this paper we are
interested in the dominance order established by
Brylawski~\cite{Brylawski}.

\begin{definition}[\cite{Brylawski}]\label{def:dominance-order}
Let $a=\tup{a}{n}$ and $b=\tup{b}{n}$ be two partitions of $n\in\N$.
We define the \emph{dominance order} between~$a$ and~$b$ by setting
$a\geq b$ if and only if $\sum_{i=1}^j a_i\geq\sum_{i=1}^j b_i$
holds for all $j\in\set{1,\dotsc,n}$.
\end{definition}
Brylawski showed in~\cite[Proposition~2.2]{Brylawski} that the
set~$\Part(n)$ forms a lattice under the dominance order; further
arguments for this were given in~\cite[Chapter~3]{Alain}
and~\cite[Section~2]{ganter2022notes}.
We denote by $\Lp$ \emph{the lattice of all partitions} of $n\in\N$
with the dominance order.
It follows from~\cite[Proposition~2.8]{Brylawski} that for partitions
$a,b\in\Lp$ we have $a\leq b$ if and only if $a^*\geq b^*$, that is,
partition conjugation is an order-antiautomorphism of~$\Lp$,
making~$\Lp$ is self-dual.

Brylawski in~\cite[Proposition~2.3]{Brylawski} characterised precisely
two possibilities for downward movement along the covering
relation of~$\Lp$.
These were later given a more intuitive geometric interpretation as
\emph{transition rules} by
Latapy and Phan~\cite[\figurename~2, p.~1358]{LATAPY},
which we are going to follow in this article. The subsequent
definitions are required for this.

\begin{definition}[{cf.~\cite[p.~1358]{LATAPY}}]
Let $n,j\in\N$ and $1\leq j<n$.
The partition $a=\tup{a}{n}\in\Lp$ has
\begin{enumerate}[(1)]
 \item a \emph{cliff} at~$j$\/ if $a_j-a_{j+1}\geq 2$;
 \item a \emph{slippery step} at~$j$ if there is $\ell\in\N$
       such that $2\leq \ell\leq n-j$ and
       $a_{j}-1=a_{j+1}=\dots=a_{j+\ell-1}=a_{j+\ell}+1$.
\end{enumerate}
\end{definition}
For example, the left Ferrers diagram in
\figurename~\ref{fig:Ferrers-diagrams} has a cliff in the second
position ($j=2$) and its dual has a cliff at $j=1$ and a slippery step
(with $\ell=2$) at $j=4$.

\begin{definition}[{Transition rules, cf.~\cite{Brylawski,LATAPY}}]%
\label{def:transition-rules-lower-covers}
Let $n\in\N$ and $a\in\Lp$.
\begin{enumerate}[\upshape (1)]
\item\label{item:transition-step}
      If $a = (\dots, k+1,k,\dotsc,k,k-1,\dots)$ for some $k\in\N$ with
      $1\leq k<n$ has a slippery step, then the brick at the slippery
      step may slip across the step to give
      $\tilde{a}=(\dotsc,k,k,\dotsc,k,k,\dotsc)$.
      The subsequent illustration shows the application of such a
      transition to a slippery step at position~$j$ (with $i\geq j+2$).
\begin{center}
\includegraphics[scale=1]{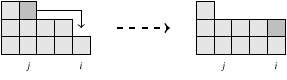}%
\end{center}

\item\label{item:transition-cliff}
      If $a= (\dotsc,k,k-h,\dotsc)$ for some $k,h\in\N$ with
      $2\leq h\leq k\leq n$
      has a cliff, then the brick may fall from the cliff to give
      $\tilde{a}=(\dotsc,k-1,k-h+1,\dotsc)$.
      Again, this is illustrated with a cliff of height~$3$ at position~$j$.
\begin{center}
\includegraphics[scale=1]{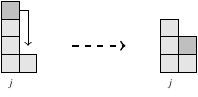}%
\end{center}
\end{enumerate}
\end{definition}

\begin{lemma}[{\cite[Proposition~2.3]{Brylawski} and~\cite{LATAPY}}]%
\label{lem:description-of-lower-covers}
The set of lower covers of a partition $g\in\Lp$ consists exactly of
all partitions~$\tilde{g}$ that can be obtained from~$g$ by applying
one of the two transition rules~\eqref{item:transition-step}
or~\eqref{item:transition-cliff} described in
Definition~\ref{def:transition-rules-lower-covers}.
\end{lemma}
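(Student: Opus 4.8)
The plan is to prove Lemma~\ref{lem:description-of-lower-covers}, that the lower covers of a partition $g\in\Lp$ are exactly the partitions obtainable by one application of transition rule~\eqref{item:transition-step} or~\eqref{item:transition-cliff}. Rather than reprove Brylawski's characterisation of covers in $\Lp$ from scratch, I would quote~\cite[Proposition~2.3]{Brylawski}, which already gives a combinatorial description of the covering relation (typically: $g$ covers $\tilde g$ iff $\tilde g$ arises from $g$ by moving a single brick down one row under a constraint ensuring the move lands one step below in the dominance order). The remaining work is then purely a translation: to show that Brylawski's brick-moving operation coincides, case by case, with the geometric transition rules of Latapy and Phan depicted in Definition~\ref{def:transition-rules-lower-covers}. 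So the structure is (i) state exactly what Brylawski's proposition says in the present notation, (ii) show every Brylawski move is a rule~\eqref{item:transition-step} or~\eqref{item:transition-cliff} move, and (iii) show conversely that each transition rule produces a Brylawski cover.

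For direction (ii), I would take a lower cover $\tilde g$ of $g$ in Brylawski's sense, so $\tilde g$ is obtained by decreasing $g$ at one coordinate and increasing it at a later coordinate by $1$ each, subject to Brylawski's minimality condition. Let $j$ be the row that loses a brick and $i$ the row that gains it. The minimality condition forces $i$ and $j$ to be "adjacent" in the shape in the appropriate sense; analysing the two ways this can happen yields precisely the two rules. If $a_j - a_{j+1}\ge 2$ at the departure row (the brick can fall straight down because there is already a gap), one is in the cliff case: the brick falls to row $i$ where $a_{i-1}-a_i\ge 1$, and matching indices against Definition~\ref{def:transition-rules-lower-covers}\eqref{item:transition-cliff} with the appropriate $k,h$ gives exactly rule~\eqref{item:transition-cliff}. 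Otherwise $a_j=a_{j+1}$, and for the move to land one step below, the brick must slide along a plateau of equal parts and drop at its end, which is exactly the slippery-step configuration $a_j-1 = a_{j+1}=\dots=a_{i-1}=a_i+1$ of rule~\eqref{item:transition-step}. Direction (iii) is the easy converse: apply each rule, check directly from Definition~\ref{def:dominance-order} that the partial sums drop by exactly $1$ at a single index $j$ (the one between the departure and arrival rows) and are unchanged elsewhere, so $\tilde g < g$; then verify no partition strictly between them exists, either by appeal to Brylawski's proposition again or by the partial-sum computation showing the "defect" $\sum_{i\le k} g_i - \sum_{i\le k}\tilde g_i$ is $0$ or $1$ for all $k$, which is well known to characterise a cover in the dominance order.

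The main obstacle is bookkeeping: one has to be careful that the transition rules as drawn allow the plateau of equal parts to have length zero (so rule~\eqref{item:transition-step} degenerates to moving a brick between consecutive rows) and that rule~\eqref{item:transition-cliff} with $h=2$ and with larger $h$ are handled uniformly, and conversely that Brylawski's "adjacency" condition is neither stronger nor weaker than the union of the two geometric cases — in particular that the two rules are mutually exclusive and jointly exhaustive. A clean way to avoid index-chasing is to phrase everything in terms of the conjugate/partial-sum vector: a cover corresponds to choosing the unique index $j$ where the partial sums differ, and the two rules correspond to whether $g_{j+1} > g_{j+1}$-in-$\tilde g$ happens at a "convex corner" (cliff) or a "staircase" (slippery step) of the Ferrers diagram. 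I would keep the proof short by deferring the hard combinatorics to Brylawski and Latapy--Phan, presenting this lemma essentially as a restatement, with a one-paragraph verification that the two formulations agree and a figure reference to Definition~\ref{def:transition-rules-lower-covers} for the geometric picture.
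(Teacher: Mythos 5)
The paper offers no proof of Lemma~\ref{lem:description-of-lower-covers} at all: it is stated as a quoted result, with the attribution to \cite[Proposition~2.3]{Brylawski} and~\cite{LATAPY} built into the lemma header, and the surrounding text only remarks that Latapy and Phan gave Brylawski's characterisation its geometric reading as transition rules. Your plan --- defer the combinatorics to Brylawski's proposition and then verify, case by case, that his brick-moving description of the covering relation coincides with the cliff and slippery-step rules of Definition~\ref{def:transition-rules-lower-covers} --- is precisely the justification the citation is standing in for, so your proposal is correct and matches the paper's (implicit) approach, merely spelling out the translation that the paper leaves to the references.
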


Since the completely join-irreducible elements of~$\Lp$ have exactly
one lower cover, according to
Lemma~\ref{lem:description-of-lower-covers}, the partitions in~$\Ji$
are precisely those to which exactly one of the transition
rules~\eqref{item:transition-step} or~\eqref{item:transition-cliff}
applies. Therefore, the $\bigvee$-irreducible partitions of~$n$ can
be characterised as those that have exactly one cliff (and no slippery
step) or exactly one slippery step (and no cliff). Based on this
Brylawski~\cite{Brylawski} split the set~$\Ji$ into four categories,
extending this by conjugation to~$\Mi$.
To be able to express our results more compactly, in the following
statement we have slightly modified the borders between the different
types of irreducibles compared to~\cite{Brylawski}, making them in
particular disjoint.

\begin{lemma}[{cf.~\cite[Corollary 2.5]{Brylawski}}]\label{lem:char-join-irr}
For $n \in\N$ the completely join-irreducible partitions
of~$\Ji$ can be categorised into four types where $b\in\N$, $d,\ell\in\Nplus$:
\begin{enumerate}[\upshape\bfseries Type A:]
\item\label{typeA} $\tupone{k}{\ell}$ for $k\geq 2$.
\item\label{typeB} $\tupdos{k}{b}{k-1}{b+\ell}$ for $k\geq 2$, $b\geq 1$.
\item\label{typeC} $(k,1,1,\dotsc,\overset{d+1}{1})$ for $k\geq 3$.
\item\label{typeD}
$(k+1,k+1,\dotsc,\overset{b}{k+1},k,\dotsc,\overset{b+\ell}{k},1,1,\dotsc,\overset{b+l+d}{1})$
for $k\geq 3$, $b+\ell\geq 2$.
\end{enumerate}
Also the $\bigwedge$-irreducible elements can be split into four groups
where $c\in\Nplus$:
\begin{enumerate}[\upshape\bfseries Type I:]
\item\label{typeI}   $\tupone{t}{c}$
                     for $t\geq 1$, $c\geq 2$,
                     i.e., $t$ appears at least twice.
\item\label{typeII}  $(t,t,\dotsc,\overset{c\vphantom{1}}{t},r)$
                     for $t>r\geq 1$.
\item\label{typeIII} $(a,1,1,\dotsc,\overset{c+1}{1})$
                     for $a\geq 2$, $c\geq 2$,
                     i.e., there are at least two $1$s.
\item\label{typeIV}  $(a,t,t,\dotsc,\overset{c+1}{t},r)$
                     for $a>t>r\geq 0$, $t,c\geq 2$,
                     i.e., $t\geq2$ appears at least twice.
\end{enumerate}
\end{lemma}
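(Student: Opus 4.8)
The statement is a classification of the $\bigvee$-irreducible (and dually the $\bigwedge$-irreducible) partitions of $n$, so the plan is to combine the covering-relation criterion with a direct case analysis on Ferrers diagrams. First I would recall, from Lemma~\ref{lem:description-of-lower-covers} and the discussion following it, that $g\in\Ji$ if and only if exactly one of the transition rules~\eqref{item:transition-step},~\eqref{item:transition-cliff} is applicable to~$g$; equivalently, $g$ has exactly one cliff and no slippery step, or exactly one slippery step and no cliff. (One must check these two alternatives are mutually exclusive and that having exactly one applicable \emph{instance} of a rule coincides with having exactly one cliff resp.\ slippery step — a short argument, since distinct cliffs and distinct slippery steps give distinct lower covers, and a cliff at $j$ together with a slippery step starting at some $j'$ also yield two different covers.)

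The core of the proof is then a partition of the set of such~$g$ into the four named families. I would argue as follows. Write $g=(g_1,\dots,g_n)$ with its trailing zeros and look at the multiset of values and the positions where consecutive entries drop. \emph{Case 1: $g$ has exactly one slippery step and no cliff.} No cliff means all successive differences $g_j-g_{j+1}$ lie in $\{0,1\}$; a single slippery step at some position forces the shape $(\dots,k+1,k,\dots,k,k-1,\dots)$ with all other differences zero, and chasing what can appear before the $k+1$ and after the $k-1$ (using "no cliff" to forbid any further drop of size $\ge 2$ and "only one slippery step" to forbid a second descent pattern) yields exactly Type~\ref{typeA} (when $k+1=2$, i.e.\ the $k+1$ is the first entry and $k-1=0$, giving a string of $2$s — here one must reconcile with the stated form $\tupone{k}{\ell}$ for $k\ge2$: the slippery-step partition $(2,\dots,2,1,\dots)$? no, without a cliff the entry after must be $1$ or $2$; careful bookkeeping shows Type~A is the all-equal case $(k,\dots,k)$ with $k\ge2$, which indeed has a unique slippery step at the last nonzero position when we view the trailing zero, and no cliff iff $k\le1$... ) — this is precisely the delicate border-bookkeeping the authors warn about, so I would be meticulous here, and Type~\ref{typeB} arises as the genuinely two-level case $\tupdos{k}{b}{k-1}{b+\ell}$. \emph{Case 2: $g$ has exactly one cliff and no slippery step.} "No slippery step" rules out any configuration $(\dots,k+1,k,\dots,k,k-1,\dots)$; "exactly one cliff" puts a unique drop of size $\ge2$ somewhere. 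Analysing the shape above and below the cliff, and using the absence of slippery steps to pin down what the flat stretches can be, I expect to land on Type~\ref{typeC} (a head $k$ followed only by $1$s, cliff between them) and Type~\ref{typeD} (a block of $(k+1)$s, then a block of $k$s, then $1$s, with the single cliff between the $k$-block and the $1$s).

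For the $\bigwedge$-irreducible part I would not redo the analysis: since partition conjugation is an order-antiautomorphism of~$\Lp$ (stated in the excerpt, from \cite[Proposition~2.8]{Brylawski}), it sends $\Mi$ bijectively onto $\Ji$. So Types~\ref{typeI}--\ref{typeIV} should be obtained by conjugating Types~\ref{typeA}--\ref{typeD} and simplifying the resulting Ferrers diagrams; I would present a small table matching each $\bigwedge$-type to the conjugate of a $\bigvee$-type (Type~\ref{typeI}~$\leftrightarrow$ Type~\ref{typeA}${}^*$, Type~\ref{typeII}~$\leftrightarrow$ Type~\ref{typeB}${}^*$, and so on — the exact pairing to be read off the diagrams), and verify that the parameter ranges quoted in the statement ($t\ge1,c\ge2$ for Type~\ref{typeI}, $a>t>r\ge0$, $t,c\ge2$ for Type~\ref{typeIV}, etc.) are exactly the images of the ranges in the $\bigvee$-part. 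Finally I would check disjointness and exhaustiveness: the four $\bigvee$-types are pairwise disjoint because their value-profiles differ (number of distinct nonzero values, whether the smallest nonzero value is $1$, etc.), and together with Cases~1--2 they cover all of~$\Ji$; the $\bigwedge$-side follows by the conjugation bijection.

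\textbf{Main obstacle.} The substantive mathematics (covers $\Leftrightarrow$ transition rules) is already supplied by Lemma~\ref{lem:description-of-lower-covers}; the real work — and the only place errors are likely — is the boundary accounting in Case~1 and Case~2: deciding exactly which degenerate Ferrers shapes ($k$ small, blocks of length $0$ or $1$, the role of trailing zeros in creating a "virtual" cliff or slippery step at the last nonzero coordinate) fall into which type, so that the four families come out genuinely disjoint and the parameter constraints ($k\ge2$ vs.\ $k\ge3$, $b\ge1$, $b+\ell\ge2$, etc.) are exactly right. I would handle this by, for each type, writing down the unique transition rule that applies and double-checking that no \emph{second} rule does, treating the extreme parameter values as explicit sub-cases.
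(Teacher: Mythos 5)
Your overall strategy---reduce to ``exactly one transition rule applies'' via Lemma~\ref{lem:description-of-lower-covers}, classify by a case analysis on the Ferrers diagram, and obtain Types~\ref{typeI}--\ref{typeIV} by conjugation---is exactly the route the paper takes: it gives no formal proof, citing Brylawski, and records the conjugation pairing $(\ref{typeA},\ref{typeI})$, $(\ref{typeB},\ref{typeII})$, $(\ref{typeC},\ref{typeIII})$, $(\ref{typeD},\ref{typeIV})$ immediately after the statement. However, your assignment of types to the two cases is wrong, and the case analysis would fail as written. You place Types~\ref{typeA} and~\ref{typeB} in the case ``one slippery step, no cliff'' and Types~\ref{typeC}, \ref{typeD} in the case ``one cliff, no slippery step''. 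In fact $(k,\dotsc,k)$ with $k\geq 2$ has a \emph{cliff} at its last nonzero position (the drop to the trailing zero is $k\geq2$) and no slippery step, so all of Type~\ref{typeA} lives in your Case~2; likewise Type~\ref{typeB} with $k\geq3$ ends with a drop of $k-1\geq2$ and hence also has its unique cliff there. The only join-irreducibles with a slippery step and no cliff are the Type~\ref{typeB} partitions with $k=2$, i.e.\ $(2,\dotsc,2,1,\dotsc,1)$. Your own parenthetical in Case~1 observes that $(k,\dotsc,k)$ has ``no cliff iff $k\leq1$'' but does not draw the consequence that Type~\ref{typeA} cannot belong to that case, and the claim that it has ``a unique slippery step at the last nonzero position'' is false: a slippery step requires a descent of exactly~$1$ into a block of some value followed by a descent of exactly~$1$ out of it, whereas here the only descent has size $k\geq 2$.

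A correct execution of your plan is still available. Encode a candidate~$g$ by its distinct nonzero values $v_1>\dotsb>v_s$ and the drops $d_i=v_i-v_{i+1}$ (with $v_{s+1}=0$); cliffs are the indices with $d_i\geq2$, and slippery steps correspond exactly to consecutive pairs $d_i=d_{i+1}=1$. ``Exactly one applicable transition'' then forces either all $d_i=1$ with $s=2$ (giving $(2,\dotsc,2,1,\dotsc,1)$), or exactly one $d_i\geq2$ and no two adjacent $1$s, which forces $s\leq3$; the short enumeration over $s\in\set{1,2,3}$ recovers precisely Types~\ref{typeA}--\ref{typeD} with the stated parameter bounds. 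The conjugation step of your plan is sound and agrees with the paper's remark following the lemma.
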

Observe that for each
$(L,J)\in\set{(\ref{typeA},\ref{typeI}),
              (\ref{typeB},\ref{typeII}),
              (\ref{typeC},\ref{typeIII}),
              (\ref{typeD},\ref{typeIV})}$
we have that if $g\in\Ji$ has type~$L$, then $g^*$ has type~$J$,
and if $m\in\Mi$ has type~$J$, then $m^*$ has type~$L$.
Therefore, the pairs $(L,J)$ of categories of completely irreducible
elements of~$\Lp$ are completely dual to each other.
\par

We mentioned that~$\Lp$ is a self-dual lattice under partition conjugation
as involutive antiautomorphism, see~\cite[Proposition~2.8]{Brylawski}.
Using this fact we obtain the following simple but useful results to
switch down and up-arrows.

\begin{lemma}\label{Lemma_3.4}
Let $g\in \Ji$ and $m\in \Mi$ with duals~$g^{*}$ and~$m^{*}$,
respectively. Then it holds that
\[g \Runterpfeil m \iff m^* \Hochpfeil g^*.\]
\end{lemma}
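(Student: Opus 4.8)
The plan is to unwind the definition of the arrow relations in the standard context and exploit the fact that partition conjugation is an order-antiautomorphism of $\Lp$ (Brylawski, \cite[Proposition~2.8]{Brylawski}), hence carries $\Ji$ bijectively onto $\Mi$ and reverses the order. The cleanest route is through the characterisation in Proposition~\ref{prop:arrows-in-doubly-founded-lattices}, since~$\Lp$ is finite and thus doubly founded: both $g\Runterpfeil m$ and $m^{*}\Hochpfeil g^{*}$ get reduced to purely order-theoretic conditions about $g$, its unique lower cover, $m$, and its unique upper cover, and then the equivalence becomes a matter of matching these conditions up under conjugation.

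First I would record the structural facts about conjugation that we need. Since $c\mapsto c^{*}$ is an involutive order-antiautomorphism of the finite lattice~$\Lp$, it maps $\bigvee$-irreducibles to $\bigwedge$-irreducibles and vice versa, so $g\in\Ji$ gives $g^{*}\in\Mi$ and $m\in\Mi$ gives $m^{*}\in\Ji$; moreover, because an antiautomorphism turns covers into covers, the unique lower cover~$\tilde g$ of~$g$ conjugates to the unique upper cover of~$g^{*}$, i.e.\ $\widetilde{g^{*}}=(\tilde g)^{*}$, and likewise $\widetilde{m^{*}}=(\tilde m)^{*}$, where $\tilde m$ denotes the unique upper cover of~$m$. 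Finally, order-reversal gives, for any $x,y\in\Lp$, that $x\leq y\iff y^{*}\leq x^{*}$, and correspondingly $x\nleq y\iff y^{*}\nleq x^{*}$.

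Now I would simply apply Proposition~\ref{prop:arrows-in-doubly-founded-lattices} twice. By part~\eqref{item:g-lcover-below-m}, $g\Runterpfeil m$ holds if and only if $g\nleq m$ and $\tilde g\leq m$. Conjugating both inequalities and using the facts from the previous paragraph, $g\nleq m\iff m^{*}\nleq g^{*}$ and $\tilde g\leq m\iff m^{*}\leq(\tilde g)^{*}=\widetilde{g^{*}}$. But $m^{*}\in\Ji$, $g^{*}\in\Mi$, and $\widetilde{g^{*}}$ is the unique upper cover of the $\bigwedge$-irreducible~$g^{*}$; so the pair of conditions ``$m^{*}\nleq g^{*}$ and $\widetilde{g^{*}}\geq m^{*}$'' is exactly what part~\eqref{item:m-ucover-above-g} of the proposition says is equivalent to $m^{*}\Hochpfeil g^{*}$ (with the roles of object and attribute played by $m^{*}$ and $g^{*}$ respectively, and the upper cover of the attribute~$g^{*}$ being~$\widetilde{g^{*}}$). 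Hence $g\Runterpfeil m\iff m^{*}\Hochpfeil g^{*}$, as claimed.

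The only genuinely delicate point, and the one I would be most careful about, is bookkeeping the direction of conjugation correctly: one must be sure that the unique \emph{lower} cover of a $\bigvee$-irreducible conjugates to the unique \emph{upper} cover of the corresponding $\bigwedge$-irreducible (not the other way around), which is where the antiautomorphism property is essential, and that in the application of Proposition~\ref{prop:arrows-in-doubly-founded-lattices}\eqref{item:m-ucover-above-g} the symbol~$\tilde m$ there is instantiated as the upper cover of the \emph{attribute}. None of this requires any computation with Ferrers diagrams; it is entirely formal once the cover-preserving and order-reversing behaviour of $*$ is in hand. (Alternatively, one could bypass Proposition~\ref{prop:arrows-in-doubly-founded-lattices} and argue directly from Remark~\ref{rem:char-arrows-std-cxt}, translating $\gupm[\Mirr]{h}$ into $\mdownj[\Jirr]{h^{*}}$ under conjugation; this works too but is more cumbersome, so I would prefer the route via the proposition.)
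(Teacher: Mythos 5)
Your proof is correct and is essentially the argument the paper has in mind: the paper dismisses this as ``a routine calculation exploiting the involutive antiautomorphism ${}^{*}$ and the duality of the involved concepts,'' and your reduction via Proposition~\ref{prop:arrows-in-doubly-founded-lattices} --- conjugation swaps $\Ji$ and $\Mi$, sends the unique lower cover of~$g$ to the unique upper cover of~$g^{*}$, and reverses $\nleq$ --- is a correct and complete instantiation of exactly that calculation. You also correctly flag the one delicate point (the direction in which covers are exchanged under~${}^{*}$), so nothing is missing.
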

\begin{proof}
The proof is a routine calculation exploiting the involutive
antiautomorphism ${}^{*}\colon\Lp\to\Lp$ and the duality of the
involved concepts.
\end{proof}

Similarly, one can prove the following lemma.
\begin{lemma}\label{Lemma_3.5}
Let $g\in \Ji$ and $m\in \Mi$ with duals~$g^{*}$ and~$m^{*}$,
respectively. Then it holds that
\[g \Hochpfeil m \iff m^* \Runterpfeil g^*.\]
\end{lemma}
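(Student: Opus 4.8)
The plan is to obtain this statement immediately from Lemma~\ref{Lemma_3.4} instead of redoing any calculation. The crucial observation is the one recorded right after Lemma~\ref{lem:char-join-irr}: partition conjugation interchanges the two sides of the standard context, i.e.\ $m\in\Mi$ implies $m^{*}\in\Ji$ and $g\in\Ji$ implies $g^{*}\in\Mi$. Consequently the pair $(m^{*},g^{*})$ is a legitimate instance of the hypotheses of Lemma~\ref{Lemma_3.4} (with the join-irreducible in the first slot and the meet-irreducible in the second), so we are allowed to specialise that lemma to it.

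Concretely, I would instantiate Lemma~\ref{Lemma_3.4} with $m^{*}$ playing the role of $g$ and $g^{*}$ playing the role of $m$. This yields the equivalence $m^{*}\Runterpfeil g^{*} \iff (g^{*})^{*}\Hochpfeil (m^{*})^{*}$. Then I would invoke the involutivity of conjugation, $(a^{*})^{*}=a$ for every $a\in\Part(n)$ (which follows from the Ferrers-diagram description), to simplify the right-hand side to $g\Hochpfeil m$. Reading the resulting equivalence $m^{*}\Runterpfeil g^{*}\iff g\Hochpfeil m$ from right to left is exactly the claimed statement $g\Hochpfeil m \iff m^{*}\Runterpfeil g^{*}$.

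As an alternative route one could copy the argument used for Lemma~\ref{Lemma_3.4}: unfold $g\Hochpfeil m$ through Remark~\ref{rem:char-arrows-std-cxt} (or Proposition~\ref{prop:arrows-in-doubly-founded-lattices}), push every up-set, down-set, cover relation and non-comparability through the antiautomorphism ${}^{*}$ — which reverses $\leq$, swaps unique lower and upper covers, and exchanges $\Ji$ with $\Mi$ — and recognise the outcome as the defining condition for $m^{*}\Runterpfeil g^{*}$. I do not expect a genuine obstacle in either approach; the only thing demanding a little attention is the bookkeeping of the direction reversals induced by conjugation, and since all of that is already absorbed into Lemma~\ref{Lemma_3.4}, the first route is the cleaner one and is the one I would present.
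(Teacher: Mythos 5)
Your proposal is correct. The instantiation is legitimate: since conjugation is an involutive antiautomorphism of~$\Lp$, it maps $\Mi$ into $\Ji$ and $\Ji$ into $\Mi$ (as the paper records after Lemma~\ref{lem:char-join-irr}), so the pair $(m^{*},g^{*})$ does satisfy the hypotheses of Lemma~\ref{Lemma_3.4}; applying that lemma and then $(a^{*})^{*}=a$ gives $m^{*}\Runterpfeil g^{*}\iff g\Hochpfeil m$, which is the claim. The paper itself does not argue this way: it merely says the lemma is proved ``similarly'' to Lemma~\ref{Lemma_3.4}, i.e.\ by repeating the routine dual computation that pushes the definition of the arrow relation through the antiautomorphism. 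Your primary route is genuinely different and a little sharper --- it shows Lemma~\ref{Lemma_3.5} is a \emph{formal consequence} of Lemma~\ref{Lemma_3.4} plus involutivity, with no new computation at all, whereas the paper's route is self-contained but redundant. Your second, alternative route coincides with what the paper intends. Either is acceptable; the deduction from Lemma~\ref{Lemma_3.4} is the cleaner thing to write down.
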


Combining Lemma~\ref{Lemma_3.4} and Lemma~\ref{Lemma_3.5}, we obtain
the following corollary.
\begin{corollary}\label{Corollary_3.6}
Let $g\in \Ji$ and $m\in \Mi$ with duals $g^{*}\in\Mi$ and
$m^{*}\in\Ji$. Then it holds that
\[g \Doppelpfeil m \iff m^* \Doppelpfeil g^*.\]
\end{corollary}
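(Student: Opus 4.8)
The plan is to chain together the two equivalences provided in Lemmas~\ref{Lemma_3.4} and~\ref{Lemma_3.5}, using only the definition ${\Doppelpfeil}={\Runterpfeil}\cap{\Hochpfeil}$ from Definition~\ref{def:arrow-rels} and the fact that conjugation is an involution. First I would unfold the double arrow on the left: by definition, $g\Doppelpfeil m$ means $g\Runterpfeil m$ \emph{and} $g\Hochpfeil m$. To each conjunct I apply the appropriate switching lemma: Lemma~\ref{Lemma_3.4} turns $g\Runterpfeil m$ into $m^*\Hochpfeil g^*$, and Lemma~\ref{Lemma_3.5} turns $g\Hochpfeil m$ into $m^*\Runterpfeil g^*$. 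Combining these two conclusions gives $m^*\Hochpfeil g^*$ and $m^*\Runterpfeil g^*$, which is exactly $m^*\Doppelpfeil g^*$ by the definition of the double arrow relation. Since both Lemma~\ref{Lemma_3.4} and Lemma~\ref{Lemma_3.5} are themselves biconditionals, every implication in this chain is reversible, so the resulting statement is an equivalence.

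One small bookkeeping point I would make explicit: the typing assertions $g^*\in\Mi$ and $m^*\in\Ji$ are precisely the content of the observation following Lemma~\ref{lem:char-join-irr} (the four pairs of categories are mutually dual), so that $m^*\Doppelpfeil g^*$ is a well-formed statement about the same standard context $\K(\Lp)$ — a down/up-arrow from an element of $\Ji$ to an element of $\Mi$. Here $m^*$ plays the role of the join-irreducible object and $g^*$ the role of the meet-irreducible attribute, which matches the direction in which the arrow relations of $\K(\mathbb{L})=(\Jirr,\Mirr,{\leq})$ are defined.

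There is essentially no obstacle here: the corollary is a formal consequence of the two preceding lemmas together with the set-theoretic identity ${\Doppelpfeil}={\Runterpfeil}\cap{\Hochpfeil}$. The only thing one must be careful about is not to mix up which of the two switching lemmas applies to which conjunct — $\Runterpfeil$ becomes $\Hochpfeil$ under conjugation and vice versa, so the single arrows genuinely swap roles, but their intersection is symmetric under this swap, which is exactly why the double arrow relation is preserved. I would therefore keep the proof to two or three lines: unfold, apply Lemma~\ref{Lemma_3.4} and Lemma~\ref{Lemma_3.5} to the two halves, and fold back up.

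\begin{proof}
By Definition~\ref{def:arrow-rels} we have $g\Doppelpfeil m$ if and only if both $g\Runterpfeil m$ and $g\Hochpfeil m$ hold. Applying Lemma~\ref{Lemma_3.4} to the first conjunct and Lemma~\ref{Lemma_3.5} to the second, this is equivalent to $m^*\Hochpfeil g^*$ and $m^*\Runterpfeil g^*$ holding simultaneously, which by Definition~\ref{def:arrow-rels} means precisely $m^*\Doppelpfeil g^*$. (That $m^*\in\Ji$ and $g^*\in\Mi$, so that the latter is a meaningful statement about $\K(\Lp)$, follows from the duality of the types noted after Lemma~\ref{lem:char-join-irr}.)
\end{proof}
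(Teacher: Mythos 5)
Your proposal is correct and follows exactly the route the paper intends: the corollary is stated as an immediate consequence of combining Lemmata~\ref{Lemma_3.4} and~\ref{Lemma_3.5}, and your unfold--apply--refold argument (with the typing remark on $g^*\in\Mi$ and $m^*\in\Ji$) is precisely that combination spelled out. Nothing is missing.
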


To obtain some familiarity with the completely irreducible elements
of~$\Lp$ and the characterisations concerning the arrow relations given
in Subsection~\ref{subsect:fca}, we consider the lattice~$\Lp[7]$ shown
in \figurename~\ref{fig:lattice7}.

\begin{figure}[thb]
\centering
	\includegraphics[scale=1]{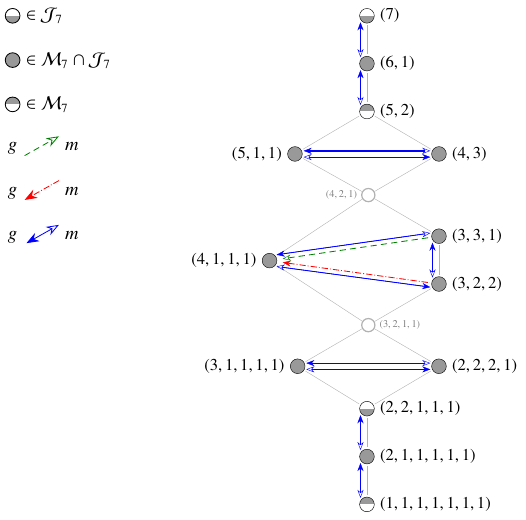}%
\caption{Lattice~$\Lp[7]$ with all arrows.
Up-arrows are shown with open arrow tips ending in
$\bigwedge$-irreducibles, and down-arrows with filled arrow tips ending
in $\bigvee$-irreducibles;
single up-arrows are coloured green, single down-arrows red.}
\label{fig:lattice7}
\end{figure}
\begin{example}\label{ex:lattice7}
  For $n=7$ we have from Lemma~\ref{lem:char-join-irr}, cf.\
  also~\cite[Proposition~2]{ganter2022notes}, that
	\[
		\begin{aligned}
    \Ji[7]&= \set{(2,1,\dotsc,1),(2,2,1,1,1),(2,2,2,1),(3,1,\dotsc,1),(3,2,2),(3,3,1),(4,1,1,1),(4,3),(5,1,1),(6,1), (7)},\\
    \Mi[7]&= \set{(1,\dotsc,1),(2,1,\dotsc,1),(2,2,2,1),(3,1,\dotsc,1),(3,2,2),(3,3,1),(4,1,1,1),(4,3),(5,1,1),(5,2),(6,1)}.
		\end{aligned}
	\]
The standard context $\K(\Lp[7])=\apply{\Ji[7],\Mi[7],\leq}$ is
presented in \figurename~\ref{fig:std-cxt-lattice7}. In both
\figurename~\ref{fig:std-cxt-lattice7} and
\figurename~\ref{fig:lattice7} we have indicated the arrow relations.
Note that $n=7$ is the first case where single arrows appear. In fact
there is exactly one single up-arrow and one single down-arrow
in~$\K(\Lp[7])$ (where the respective opposite arrow is not present).
We shall investigate this in the case of the down-arrow
$(4,1,1,1)\mathrel{\color{red}\Runterpfeil}(3,2,2)$, shown in red in
\figurename~\ref{fig:lattice7}. The green up-arrow can then be
explained by self-duality of~$\Lp[7]$ using Lemma~\ref{Lemma_3.5}.
\par
The partition $g\defeq(4,1,1,1)$ is completely join-irreducible of
type~\ref{typeC}, while $m=(3,2,2)$ is completely meet-irreducible of
type~\ref{typeIV}. To explain the down-arrow we shall use
Proposition~\ref{prop:arrows-in-doubly-founded-lattices}\eqref{item:g-lcover-below-m}.
Clearly, $g\nleq m$ since the condition on the partial sums already
fails with $4\nleq 3$ in the first position. The partition~$g$ has a
single cliff in the first position, from which a brick may fall to
obtain the unique lower cover $\tilde{g}=(3,2,1,1)$, cf.\ transition
rule~\eqref{item:transition-cliff} and \figurename~\ref{fig:lattice7}.
Indeed, the lattice diagram confirms that $\tilde{g}\leq m$, which can
also be checked with Definition~\ref{def:dominance-order}.
Thus, by
Proposition~\ref{prop:arrows-in-doubly-founded-lattices}\eqref{item:g-lcover-below-m}
we conclude $g\mathrel{\color{red}\Runterpfeil} m$. Moreover, that this
arrow is not a double arrow, can also be explained with the help of
Proposition~\ref{prop:arrows-in-doubly-founded-lattices}.
Namely, the unique upper cover of~$m$ is $\tilde{m}=(3,3,1)$ since we
may let the brick fall from the cliff in the second position
of~$\tilde{m}$ to get~$m$, cf.\ transition
rule~\eqref{item:transition-cliff}. Since $g\nleq\tilde{m}$, we have
$g\mathrel{\color{green!50!black}\nHochpfeil} m$ by
Proposition~\ref{prop:arrows-in-doubly-founded-lattices}\eqref{item:m-ucover-above-g}.
\par
The fact that there is a \emph{unique} up-arrow (and a unique
down-arrow) in~$\K(\Lp[7])$ that fails to be a double arrow, and why
non-double arrows appear for $n=7$ for the first time, cannot be
explained yet. This requires the characterisations in
Sections~\ref{sect:double-arrows} and~\ref{sect:single-arrows}.

\begin{figure}[hbt]
		\centering
\begin{tabular}{|l||c|c|c|c|c|c|c|c|c|c|c|}\hline
$\K\apply{\Lp[7]}$ & 1111111 & 211111 &31111 &2221 &4111 &322 &331 &511 &43 &52 &61\\\hline\hline
7& & & & & & & & & & &$\DoubleArrow$\\ \hline
61& & & & & & & & & &$\DoubleArrow$&$\times$\\ \hline
511& & & & & & & &$\times$&$\DoubleArrow$&$\times$&$\times$\\ \hline
43& & & & & & & &$\DoubleArrow$&$\times$&$\times$&$\times$\\ \hline
4111& & & & &$\times$&$\color{red}\DownArrow$&$\DoubleArrow$&$\times$&$\times$&$\times$&$\times$\\ \hline
331& & & & &$\color{green!50!black}\UpArrow$&$\DoubleArrow$&$\times$&$\times$&$\times$&$\times$&$\times$\\ \hline
322& & & & &$\DoubleArrow$&$\times$&$\times$&$\times$&$\times$&$\times$&$\times$\\ \hline
31111& & &$\times$&$\DoubleArrow$&$\times$&$\times$&$\times$&$\times$&$\times$&$\times$&$\times$\\ \hline
2221& & &$\DoubleArrow$&$\times$&$\times$&$\times$&$\times$&$\times$&$\times$&$\times$&$\times$\\ \hline
22111& &$\DoubleArrow$&$\times$&$\times$&$\times$&$\times$&$\times$&$\times$&$\times$&$\times$&$\times$\\ \hline
211111&$\DoubleArrow$&$\times$&$\times$&$\times$&$\times$&$\times$&$\times$&$\times$&$\times$&$\times$&$\times$\\ \hline
\end{tabular}
\caption{Standard context~$\K\apply{\Lp[7]}=(\Ji[7],\Mi[7],{\leq})$ with arrow relations shown.}
\label{fig:std-cxt-lattice7}
\end{figure}

\end{example}

\section{General facts concerning the dominance order of integer partitions}
\label{sect:general-facts-dominance}
We start with a lemma giving a simple sufficient condition for the
dominance order among partitions.
\begin{lemma}\label{lem:dominance-from-complement}
Let $n,j,k,m\in\N$ be such that $j,k,m\leq n$ and let
$a=(a_1,\dotsc,a_k,0,\dots), b=(b_1,\dotsc,b_m,0,\dots)\in\Lp$.
If $s\defeq\sum_{i=1}^j a_i = \sum_{i=1}^j b_i$ and
$a_i\geq b_i$ for each $i\in\N$ with $2\leq i\leq j$, while $a_i\leq b_i$
for each $i\in\N$ with $j<i<m$, then $a\leq b$.
\end{lemma}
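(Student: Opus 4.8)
The plan is to verify the defining condition of the dominance order (Definition~\ref{def:dominance-order}), namely $\sum_{i=1}^{t}a_i\le\sum_{i=1}^{t}b_i$ for every $t\in\set{1,\dotsc,n}$, by a case distinction according to where~$t$ sits relative to~$j$ and~$m$. The three ranges $1\le t\le j$, $\;j<t<m$ and $m\le t\le n$ together exhaust $\set{1,\dotsc,n}$ — they merely overlap harmlessly when $j\ge m$, in which case the middle range is empty — so it suffices to treat each of them separately.

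For $1\le t\le j$ I would argue backwards from the index~$j$, where the two partial sums coincide. Since $t\ge 1$, every index $i\in\set{t+1,\dotsc,j}$ satisfies $2\le i\le j$, so the hypothesis $a_i\ge b_i$ yields $\sum_{i=t+1}^{j}a_i\ge\sum_{i=t+1}^{j}b_i$; subtracting this from $s=\sum_{i=1}^{j}a_i=\sum_{i=1}^{j}b_i$ gives $\sum_{i=1}^{t}a_i\le\sum_{i=1}^{t}b_i$ (with equality at $t=j$). For $j<t<m$ I would argue forwards from~$j$ instead: every index $i\in\set{j+1,\dotsc,t}$ satisfies $j<i<m$, hence $a_i\le b_i$ gives $\sum_{i=j+1}^{t}a_i\le\sum_{i=j+1}^{t}b_i$, and adding $s$ to both sides again gives the claim. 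Finally, for $m\le t\le n$ the inequality is immediate: as $b_i=0$ for $i>m$ and $b$ is a partition of~$n$, we have $\sum_{i=1}^{t}b_i=\sum_{i=1}^{m}b_i=n\ge\sum_{i=1}^{t}a_i$.

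There is no genuine obstacle here; the statement reduces to elementary manipulation of finite sums. The only points requiring a little care are purely bookkeeping: one must invoke $a_i\ge b_i$ only on indices $\ge 2$ (which is exactly why the first range may safely start at $t\ge 1$), invoke $a_i\le b_i$ only on indices strictly between~$j$ and~$m$, and observe that the degenerate possibilities $j=0$ or $j\ge m$ render the respective hypothesis vacuous while still being covered by the case analysis. Assembling the three cases yields $\sum_{i=1}^{t}a_i\le\sum_{i=1}^{t}b_i$ for all $t\in\set{1,\dotsc,n}$, that is, $a\le b$.
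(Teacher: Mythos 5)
Your proposal is correct and follows essentially the same route as the paper's own proof: the identical three-way case split at the anchor index~$j$ (arguing backwards via $a_i\geq b_i$ for $t\leq j$, forwards via $a_i\leq b_i$ for $j<t<m$, and using that $b$ already sums to~$n$ for $t\geq m$). No differences worth noting.
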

\begin{proof}
Let $l\in\set{1,\dotsc,n}$. If $l\leq j$, then we have
$\sum_{i=1}^l a_i = s- \sum_{i=l+1}^j a_i \leq
s - \sum_{i=l+1}^j b_i =\sum_{i=1}^l b_i$ since
$b_i\leq a_i$ for $2\leq l+1\leq i\leq j$.
Let now $l>j$. If $l<m$, we have
$\sum_{i=1}^l a_i = s + \sum_{i=j+1}^l a_i
\leq s + \sum_{i=j+1}^l b_i = \sum_{i=1}^l b_i$
since $a_i\leq b_i$ for $j< i\leq l<m$.
Otherwise, $m\leq l\leq n$, and then we have
$\sum_{i=1}^l a_i \leq n = \sum_{i=1}^m b_i = \sum_{i=1}^l b_i$
as $a,b\in\Lp$.
\end{proof}

As an easy corollary we have the situation where~$b$ is not longer
than~$a$ and lies pointwise above~$a$.
\begin{corollary}\label{cor:below-implies-dominance}
Let $n,k,m\in\N$ and let
$a=(a_1,\dotsc,a_k),b=(b_1,\dotsc,b_m)\in\Lp$.
If $a_i\leq b_i$ for all $1\leq i< m$, then $a\leq b$.
\end{corollary}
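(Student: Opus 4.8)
The plan is to derive Corollary~\ref{cor:below-implies-dominance} directly from Lemma~\ref{lem:dominance-from-complement} by choosing the index~$j$ appropriately. Since the hypothesis gives $a_i\le b_i$ for all $1\le i<m$, the natural choice is to make the ``complement part'' of the lemma empty, i.e.\ to pick $j$ so that there are no indices~$i$ with $2\le i\le j$; that means taking $j=1$. With $j=1$, the condition ``$a_i\ge b_i$ for $2\le i\le j$'' is vacuous, and the condition ``$a_i\le b_i$ for $j<i<m$'' becomes ``$a_i\le b_i$ for $1<i<m$'', which is part of what we are given.

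The one thing that still needs checking is the equality hypothesis $s\defeq\sum_{i=1}^j a_i=\sum_{i=1}^j b_i$ of Lemma~\ref{lem:dominance-from-complement}, which for $j=1$ reads $a_1=b_1$. This need \emph{not} hold under our hypotheses, so a small extra argument is required: from $a_i\le b_i$ for all $1\le i<m$ together with $\sum_i a_i=n=\sum_i b_i$ (both are partitions of~$n$, where we pad with trailing zeros so that $a_i=0$ for $i\ge k+1$ and $b_i=0$ for $i\ge m$), we get $a_1\ge b_1$: indeed $a_1=n-\sum_{i\ge2}a_i\ge n-\sum_{i\ge2}b_i=b_1$, using $a_i\le b_i$ for $2\le i<m$ and $a_i\le 0=b_i$ for $i\ge m$. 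Combined with the hypothesis $a_1\le b_1$ (the case $i=1$), this yields $a_1=b_1$ exactly when $1<m$; and the degenerate case $m\le 1$ (so $b=(n)$ or $b$ empty, i.e.\ $n=0$) can be handled separately since then $b$ is the top element and $a\le b$ trivially. So first I would dispose of the trivial case $m\le1$, then establish $a_1=b_1$ as above, and finally invoke Lemma~\ref{lem:dominance-from-complement} with $j\defeq1$.

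I do not anticipate a genuine obstacle here; the only subtlety is remembering that the lemma's hypothesis fixes a \emph{prefix sum equality} that our corollary's hypotheses do not literally supply, so one must first upgrade ``$a_1\le b_1$'' to ``$a_1=b_1$'' using that both tuples sum to~$n$. Once that is noted, the corollary is an immediate specialisation with $j=1$.

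\begin{proof}
If $m\leq 1$, then $b$ is either the empty tuple (only possible when $n=0$) or $b=(n)$, the top element of~$\Lp$, and in either case $a\leq b$ holds trivially. Assume now $m\geq 2$. Extending both tuples by trailing zeros, we have $a_i=0$ for $i>k$ and $b_i=0$ for $i>m$, and $\sum_{i=1}^n a_i = n = \sum_{i=1}^n b_i$. From the hypothesis $a_i\leq b_i$ for $2\leq i<m$ and the fact that $a_i\leq 0 = b_i$ for $i\geq m$ we obtain $\sum_{i=2}^n a_i\leq\sum_{i=2}^n b_i$, whence $a_1 = n-\sum_{i=2}^n a_i\geq n-\sum_{i=2}^n b_i = b_1$. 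Together with the case $i=1$ of the hypothesis, which gives $a_1\leq b_1$, this yields $a_1=b_1$. Applying Lemma~\ref{lem:dominance-from-complement} with $j\defeq 1$, the condition $s\defeq\sum_{i=1}^1 a_i=\sum_{i=1}^1 b_i$ holds, the requirement $a_i\geq b_i$ for $2\leq i\leq 1$ is vacuous, and $a_i\leq b_i$ for $1<i<m$ is part of the hypothesis; hence $a\leq b$.
\end{proof}
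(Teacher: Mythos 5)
Your reduction to Lemma~\ref{lem:dominance-from-complement} is the right idea, but the way you set it up contains a genuine error. Choosing $j=1$ forces you to verify the prefix-sum equality $a_1=b_1$, and this simply does not follow from the hypotheses: take $n=4$, $a=(1,1,1,1)$ and $b=(3,1)$ (so $m=2$). The hypothesis only requires $a_1\leq b_1$, which holds as $1\leq 3$, yet $a_1\neq b_1$, so Lemma~\ref{lem:dominance-from-complement} with $j=1$ is not applicable (even though the conclusion $a\leq b$ is still true). The flaw in your derivation of $a_1\geq b_1$ is the assertion that $a_i\leq 0=b_i$ for $i\geq m$: nothing in the hypotheses bounds the entries of~$a$ beyond index $m-1$, and indeed~$a$ may be strictly longer than~$b$ (as in the example, where $a_2=a_3=a_4=1>0$); moreover $b_m$ itself need not vanish. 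So the inequality $\sum_{i\geq 2}a_i\leq\sum_{i\geq 2}b_i$ is unjustified and in general false.

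The fix is immediate and is exactly what the paper does: apply Lemma~\ref{lem:dominance-from-complement} with $j=0$ rather than $j=1$. Then $s=\sum_{i\in\emptyset}a_i=0=\sum_{i\in\emptyset}b_i$ holds trivially, the condition $a_i\geq b_i$ for $2\leq i\leq j$ is vacuous, and the condition $a_i\leq b_i$ for $j<i<m$ becomes precisely the hypothesis $a_i\leq b_i$ for $1\leq i<m$. No case distinction on~$m$ and no equality $a_1=b_1$ is needed.
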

\begin{proof}
This follows from Lemma~\ref{lem:dominance-from-complement} for
$j=0$ and $s=\sum_{i\in\emptyset} a_i =0=\sum_{i\in\emptyset} b_i$.
\end{proof}

We may also have the somewhat opposite situation.
\begin{corollary}\label{cor:above-implies-dominance}
Let $n,k,m\in\N$ and let
$a=(a_1,\dotsc,a_k),b=(b_1,\dotsc,b_m)\in\Lp$.
If $a_i\geq b_i$ for all $2\leq i\leq m$, then $a\leq b$.
\end{corollary}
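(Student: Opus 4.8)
The plan is to verify the defining partial-sum inequalities of the dominance order directly, exploiting that $a$ and $b$ partition the \emph{same} integer~$n$. Fix $l\in\set{1,\dotsc,n}$; I must show $\sum_{i=1}^{l} a_i \leq \sum_{i=1}^{l} b_i$. Since all entries of~$a$ beyond index~$k$ and all entries of~$b$ beyond index~$m$ vanish, both full sums equal~$n$, so
\[
\sum_{i=1}^{l} b_i - \sum_{i=1}^{l} a_i = \sum_{i=l+1}^{n}\apply{a_i-b_i}.
\]
It therefore suffices to show that each summand $a_i-b_i$ with $l+1\leq i\leq n$ is nonnegative.

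For such an index we have $i\geq l+1\geq 2$ because $l\geq 1$; if moreover $i\leq m$, the hypothesis $a_i\geq b_i$ applies directly, while if $i>m$, then $b_i=0\leq a_i$. In either case $a_i-b_i\geq 0$, so the tail sum on the right is nonnegative and $\sum_{i=1}^{l} a_i\leq\sum_{i=1}^{l} b_i$ follows. As $l\in\set{1,\dotsc,n}$ was arbitrary, $a\leq b$ by Definition~\ref{def:dominance-order}.

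I expect no genuine obstacle here: the statement is essentially a bookkeeping consequence of the conservation of the total mass~$n$. The only point demanding a little care is that the hypothesis deliberately omits the index $i=1$ (indeed $a_1\leq b_1$, possibly strictly) and the indices $i>m$; the argument sidesteps both because the tail sum starts at $l+1\geq 2$ and the range $i>m$ is treated separately via $b_i=0$. Note that, unlike Corollary~\ref{cor:below-implies-dominance}, this one does not fall out of Lemma~\ref{lem:dominance-from-complement} by taking $j=0$: the natural choice here would be $j=1$, which would require $a_1=b_1$, and that need not hold. Alternatively, one could deduce the corollary from Corollary~\ref{cor:below-implies-dominance} by conjugation, using $a\leq b\iff b^{*}\leq a^{*}$ together with the observation that $\set{j\mid b_j\geq i}\subseteq\set{j\mid a_j\geq i}$ for all $1\leq i<a_1$ (whence $(b^{*})_i\leq(a^{*})_i$), but the direct computation above is shorter and self-contained.
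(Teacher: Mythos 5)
Your proof is correct and is essentially the paper's argument: the paper simply invokes Lemma~\ref{lem:dominance-from-complement} with $j=n$ and $s=\sum_{i=1}^n a_i=n=\sum_{i=1}^n b_i$, and the proof of that lemma in the case $l\leq j$ is exactly your tail-sum computation $\sum_{i=1}^{l} b_i - \sum_{i=1}^{l} a_i = \sum_{i=l+1}^{n}\apply{a_i-b_i}\geq 0$. Your closing aside is mistaken, however: the corollary \emph{does} fall out of Lemma~\ref{lem:dominance-from-complement} --- not with $j=0$ or $j=1$, but with $j=n$, where the condition $a_i\geq b_i$ for $2\leq i\leq j$ is supplied by the hypothesis together with $b_i=0$ for $i>m$, and the remaining condition (on indices $j<i<m$) is vacuous.
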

\begin{proof}
For $2\leq i\leq m$ we have $a_i\geq b_i$; for $m<i\leq n$ we have
$a_i\geq 0=b_i$. Hence $a_i\geq b_i$ holds for all $2\leq i\leq n$.
Thus we can apply Lemma~\ref{lem:dominance-from-complement} with
$j=n$ and $s=\sum_{i=1}^n a_i =n=\sum_{i=1} b_i$ to get $a\leq b$.
\end{proof}

We now introduce two natural geometric parameters of partitions.
\begin{definition}\label{def:height-width=length}
Let $n\in\N$ and $p=(p_1,p_2,\dots)\in\Lp$.
\begin{enumerate}[\upshape(a)]
\item The \emph{height} of~$p$ is the value of its first entry, that
      is, $\hgt(p) \defeq p_1$.
\item For $h\in\N$ we define
      $\Bh{h}\defeq\lset{p\in\Lp}{\hgt(p)\leq h}$
      to be the set of partitions of height at most~$h$.
\item The \emph{length}, synonymously \emph{width}, of~$p$ is the value
      of the first index $j\in\N$ such that $p_{j+1}=0$, that is to say,
      $\len(p)\defeq\min\lset{j\in\N}{p_{j+1}=0}$.
\item For $w\in\N$ we define
      $\Bw{w}\defeq\lset{p\in\Lp}{\len(p)\leq w}$
      to be the set of partitions of length (width) at most~$w$.
\end{enumerate}
\end{definition}

\begin{lemma}\label{lem:width-in-terms-of-nonzero-entries}
For $n\in\Nplus$ the length of $p\in\Lp$ is the value
of the largest index $j\geq 1$ where $p_j\geq1$, that is,
$\len(p)=\max\lset{j\in\Nplus}{p_j>0}$.
\end{lemma}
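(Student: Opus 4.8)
The plan is to unfold the two definitions in play --- the defining one, $\len(p)=\min\lset{j\in\N}{p_{j+1}=0}$, and the asserted one, $\max\lset{j\in\Nplus}{p_j>0}$ --- and to prove their equality by establishing the two opposite inequalities. Beforehand I would record two elementary observations that make both quantities well defined. First, since $n\in\Nplus$ and $p_1\geq p_2\geq\dotsm\geq p_n\geq0$ with $p_1+\dotsm+p_n=n\geq1$, we must have $p_1\geq1$, for otherwise every entry would be~$0$ and the sum could not equal~$n$. Second, by the trailing-zero convention we have $p_j=0$ for all indices $j>n$. Hence $\lset{j\in\Nplus}{p_j>0}$ is non-empty (it contains~$1$) and bounded above (by~$n$), so its maximum, call it~$w$, exists; and $\lset{j\in\N}{p_{j+1}=0}$ is non-empty (it contains~$n$), so $\len(p)$ is well defined.

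Next I would show $\len(p)\leq w$: by maximality of~$w$ we have $p_{w+1}=0$, so $w$ itself lies in the set $\lset{j\in\N}{p_{j+1}=0}$, whence its minimum $\len(p)$ is at most~$w$. For the reverse inequality, put $l\defeq\len(p)$, so that $p_{l+1}=0$; since $p$ is non-increasing, $0\leq p_j\leq p_{l+1}=0$ for every $j\geq l+1$, that is, $p_j=0$ for all $j>l$. Consequently every index~$j$ with $p_j>0$ satisfies $j\leq l$, and in particular $w\leq l=\len(p)$. Combining the two inequalities yields $\len(p)=w=\max\lset{j\in\Nplus}{p_j>0}$.

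I do not expect a genuine obstacle here; the whole argument is a short piece of bookkeeping about a non-increasing tuple. The only point deserving an explicit word is the well-definedness of the maximum, and that is precisely where the hypothesis $n\in\Nplus$ (which forces $p_1\geq1$) and the convention that partitions are padded with zeros beyond the $n$-th coordinate are used.
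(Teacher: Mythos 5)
Your proof is correct and follows essentially the same route as the paper's: both rest on the observation that $p_1\geq 1$ (so the maximum~$w$ exists), that $p_{w+1}=0$ by maximality, and on the antitonicity of~$p$ to identify~$w$ with the first index at which the next entry vanishes. The paper phrases this as a single direct identification of~$w$ with $\len(p)$ rather than as two inequalities, but the content is identical.
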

\begin{proof}
For $n\in\Nplus$ we have $p_1=\hgt(p)\geq 1$, thus the maximum
$w\defeq\max\lset{j\in\Nplus}{p_j>0}$ exists.
We have $p_w>0$, but, by maximality, $p_{w+1}=0$. By antitonicity
of~$p$ we have $p_j\geq p_w>0$ for $1\leq j\leq w$, thus~$w$ is the
first index where $p_{w+1}=0$. Hence $\len(p)=w$.
\end{proof}

\begin{lemma}\label{lem:char-bounded-width}
For $n,w\in\N$ we have
$\Bw{w}=\lset{p\in\Lp}{p_{w+1}=0}$.
\end{lemma}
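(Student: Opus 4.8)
The plan is to unwind the definitions and reduce the claimed set equality to the pointwise equivalence
\[
\len(p)\leq w \iff p_{w+1}=0
\]
for an arbitrary partition $p=(p_1,p_2,\dots)\in\Lp$, where (as throughout) an index exceeding the explicitly listed entries refers to a zero entry. By Definition~\ref{def:height-width=length} we have $p\in\Bw{w}$ exactly when $\len(p)\leq w$, and $\len(p)=\min\lset{j\in\N}{p_{j+1}=0}$; hence the lemma follows at once from this equivalence. Before proving it I would note that the index set $\lset{j\in\N}{p_{j+1}=0}$ is nonempty — for $n\in\Nplus$ it contains, e.g., $n$ — so the minimum is well defined, and that for $n=0$ every entry of $p$ vanishes, so both sides of the asserted equality reduce to the single element of $\Lp[0]$.

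For the implication $p_{w+1}=0\Rightarrow\len(p)\leq w$: if $p_{w+1}=0$, then $w$ itself belongs to the set $\lset{j\in\N}{p_{j+1}=0}$ whose minimum is $\len(p)$, so $\len(p)\leq w$, i.e.\ $p\in\Bw{w}$. For the converse, set $j_0\defeq\len(p)$, so that $p_{j_0+1}=0$ by the very definition of $\len$ as the minimizing index. Since $p$ is non-increasing and $w+1\geq j_0+1$, we obtain $0\leq p_{w+1}\leq p_{j_0+1}=0$, whence $p_{w+1}=0$. This establishes both inclusions and hence the equality $\Bw{w}=\lset{p\in\Lp}{p_{w+1}=0}$.

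There is essentially no obstacle here: the statement is an immediate consequence of the definition of $\len$ as a least index together with the antitonicity of partitions. The only points deserving a line of care are the bookkeeping convention for entries past the listed ones — equivalently, checking that the index set defining $\len$ is nonempty so that $\min$ makes sense — and the degenerate case $n=0$, both of which are settled in the opening paragraph. (One could alternatively phrase the argument through Lemma~\ref{lem:width-in-terms-of-nonzero-entries}, but this is not needed and the direct route above is shorter.)
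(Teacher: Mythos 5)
Your proof is correct and follows essentially the same route as the paper's: one direction via the minimality in the definition of $\len$, the other via antitonicity of the partition entries. The extra remarks on well-definedness of the minimum and the case $n=0$ are harmless additions.
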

\begin{proof}
Let $p\in\Lp$ and $w\in\N$. If $p_{w+1}=0$, then the minimality in the
definition of length implies $\len(p)\leq w$, thus $p\in\Bw{w}$.
Conversely, if $j\defeq \len(p)\leq w$, then $0 = p_{j+1}\geq
p_{w+1}\geq 0$ by the antitonicity of~$p$.
\end{proof}

For parameter values at least~$n$ the concept of bounded height or
width trivialises.
\begin{lemma}\label{lem:bounded-height-width-trivial}
For $n,h\in\N$ such that $h\geq n$, we have $\Bh{h} = \Bw{h} = \Lp$.
\end{lemma}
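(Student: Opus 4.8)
The plan is to prove both equalities by verifying the nontrivial inclusions $\Lp \subseteq \Bh{h}$ and $\Lp \subseteq \Bw{h}$, the reverse inclusions $\Bh{h} \subseteq \Lp$ and $\Bw{h} \subseteq \Lp$ being immediate from the very definitions of $\Bh{h}$ and $\Bw{h}$. So I would fix an arbitrary partition $p = (p_1, p_2, \dots) \in \Lp$ and show $\hgt(p) \leq h$ and $\len(p) \leq h$.

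For the height inclusion, I would simply note that all entries of $p$ are non-negative and sum to $n$, so $\hgt(p) = p_1 \leq \sum_{i=1}^{n} p_i = n \leq h$; hence $p \in \Bh{h}$. For the width inclusion I would distinguish two cases: if $n = 0$, then $p$ is the all-zero tuple and $\len(p) = 0 \leq h$; if $n \geq 1$, then Lemma~\ref{lem:width-in-terms-of-nonzero-entries} lets me write $w \defeq \len(p) = \max\lset{j \in \Nplus}{p_j > 0}$, and since $p_1, \dots, p_w$ are then each at least~$1$ while they are summands of~$n$, we obtain $w \leq \sum_{j=1}^{w} p_j \leq \sum_{i=1}^{n} p_i = n \leq h$, so $p \in \Bw{h}$. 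Alternatively, one could observe that $h \geq n$ forces $p_{h+1} = 0$ and invoke Lemma~\ref{lem:char-bounded-width} directly.

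There is essentially no obstacle here; the statement is a bookkeeping fact about the two geometric parameters. The only point that needs a small amount of care is the degenerate case $n = 0$ (equivalently, making sure the minimum/maximum in the definition of $\len$ is taken over a non-empty index set), which is precisely why the argument for the width is split as above.
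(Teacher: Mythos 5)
Your proof is correct and follows essentially the same route as the paper: the height bound is identical, and your width argument (at least $\len(p)$ entries are $\geq 1$, so $\len(p)\leq n\leq h$) is just the direct form of the paper's contradiction argument showing $p_{h+1}=0$, which you even mention as an alternative. The extra care for $n=0$ is fine but not needed in the paper's version, since it never forms the possibly empty maximum.
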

\begin{proof}
For every $p\in\Lp$ we have $\hgt(p)=p_1\leq n\leq h$, thus $p\in\Bh{h}$.
If $p_{h+1}>0$, then for every $1\leq j\leq h+1$ we would have
$p_j\geq p_{h+1}\geq 1$, thus
$n\geq \sum_{j=1}^{h+1} p_j \geq\sum_{j=1}^{h+1} 1 = h+1\geq n+1$,
which is a contradiction. Therefore, $p_{h+1}=0$ and $p\in\Bw{h}$ by
Lemma~\ref{lem:char-bounded-width}.
\end{proof}

In a similar fashion, for $n\in\Nplus$, we have
$\Bh{0} = \Bw{0} = \emptyset$ since $p_1=0$ implies $p=(0,0\dots)$ and
thus $n=0$. Therefore, for $n\in\Nplus$ we may
safely restrict our attention to parameter values $1\leq h\leq n$ to
bound the height or width of a partition, which is, of course, clear
from the geometric intuition via the Ferrers diagrams.
\par
It is useful to observe that height and width are dual concepts
with respect to partition conjugation.
\begin{lemma}\label{lem:width-height-dual}
For $n\in\N$ and $p\in\Lp$ we have
$\hgt(p^{*}) = \len(p)$ and $\len(p^{*})= \hgt(p)$.
Moreover, for $w\in\N$ we have $p\in\Bw{w}$ exactly if $p^*\in\Bh{w}$.
\end{lemma}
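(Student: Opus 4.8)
The plan is to compute $\hgt(p^{*})$ directly from the definition of the conjugate partition and to obtain everything else from this together with the involutivity $(p^{*})^{*}=p$ recorded right after the definition of the conjugate.

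First I would unwind the definition of the conjugate: for $n\in\Nplus$ we have $\hgt(p^{*}) = p^{*}_1 = \crd{\set{1\leq j\leq n\mid p_j\geq 1}} = \crd{\lset{j\in\Nplus}{p_j>0}}$. By the antitonicity of~$p$, the set of indices carrying a positive entry is an initial segment $\set{1,\dotsc,w}$ of~$\Nplus$, so its cardinality equals $w=\max\lset{j\in\Nplus}{p_j>0}$, which is $\len(p)$ by Lemma~\ref{lem:width-in-terms-of-nonzero-entries}. Hence $\hgt(p^{*})=\len(p)$ whenever $n\geq1$; for $n=0$ the lattice $\Lp[0]$ consists only of the all-zero partition, for which $\hgt$ and $\len$ both vanish, so the identity holds in that degenerate case too.

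Next I would apply the equality just established to the partition $p^{*}$ in place of~$p$; since $(p^{*})^{*}=p$, this yields $\hgt(p) = \hgt\bigl((p^{*})^{*}\bigr) = \len(p^{*})$, which is the second claimed identity. Finally, the ``moreover'' part is then immediate from the definitions of~$\Bw{w}$ and~$\Bh{w}$: for $w\in\N$ we have $p\in\Bw{w}$ iff $\len(p)\leq w$ iff $\hgt(p^{*})\leq w$ (by the first identity) iff $p^{*}\in\Bh{w}$. I do not expect any genuine obstacle here; the whole statement is a routine consequence of reading off the conjugate's first entry, the only point deserving care being the reconciliation — carried out in Lemma~\ref{lem:width-in-terms-of-nonzero-entries} — of the definition of~$\len$ via the first vanishing index with the largest index of a nonzero entry.
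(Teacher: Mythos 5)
Your proposal is correct and follows essentially the same route as the paper: read off the first entry of $p^{*}$ from the definition of the conjugate to get $\hgt(p^{*})=\len(p)$, apply this to $p^{*}$ together with $(p^{*})^{*}=p$ for the second identity, and deduce the ``moreover'' part directly from the definitions of $\Bw{w}$ and $\Bh{w}$. Your extra care with the degenerate case $n=0$ (needed because Lemma~\ref{lem:width-in-terms-of-nonzero-entries} assumes $n\in\Nplus$) is a small refinement the paper glosses over, not a different approach.
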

\begin{proof}
Let $w\defeq \len(p)$ such that $p=(p_1,\dotsc,p_w)$ with $p_j\geq 1$
for $1\leq j\leq w$. From the definition of the conjugate we infer that
the first entry of~$p^*$ is~$w$, hence $\hgt(p^*)=w = \len(p)$. The
second equality follows from $\hgt(p^*)= \len(p)$ since ${p^{*}}^* = p$.
Finally, for $w\in\N$ we have $p\in \Bw{w}$ if and only if
$\hgt(p^*) = \len(p) \leq w$, i.e., if and only if $p^*\in\Bh{w}$.
\end{proof}

We may also observe a basic necessary condition following from the
dominance order: the dominated partition is always at least as long as
the dominating one.
\begin{lemma}\label{lem:shorter-implies-nleq}
Let $n\in\N$ and $a,b\in\Lp$.
If\/ $\len(a)<\len(b)$, then $a\nleq b$.
\end{lemma}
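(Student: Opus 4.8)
The plan is to argue by contraposition: I assume $a \leq b$ in the dominance order and deduce $\len(a) \geq \len(b)$, which is equivalent to the claim. First I would dispose of the degenerate case $n = 0$: then every partition of $n$ equals $(0,0,\dots)$, so $\len(a) = \len(b) = 0$ and the hypothesis $\len(a) < \len(b)$ is vacuously false; hence I may assume $n \in \Nplus$ and apply Lemma~\ref{lem:width-in-terms-of-nonzero-entries}, which identifies $\len(p)$ with the largest index $j$ for which $p_j > 0$.

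The key observation is about exhausting the total weight. Set $w \defeq \len(a)$. Since $a$ is a partition of $n$ with $a_i = 0$ for all $i > w$, the $w$-th partial sum already captures everything: $\sum_{i=1}^{w} a_i = n$. Now I invoke the dominance inequality $a \leq b$ at the index $j = w$, obtaining $n = \sum_{i=1}^{w} a_i \leq \sum_{i=1}^{w} b_i \leq n$, where the last inequality holds because $b$ is also a partition of $n$. Hence $\sum_{i=1}^{w} b_i = n$, i.e., $\sum_{i > w} b_i = 0$, and since the entries of $b$ are non-negative this forces $b_i = 0$ for every $i > w$. In particular $b_{w+1} = 0$, so by Lemma~\ref{lem:char-bounded-width} (or again Lemma~\ref{lem:width-in-terms-of-nonzero-entries}) we get $\len(b) \leq w = \len(a)$.

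This establishes the contrapositive and therefore the lemma. I do not expect any genuine obstacle here; the only points needing a little care are the bookkeeping for the edge case $n = 0$ and ensuring that Lemma~\ref{lem:width-in-terms-of-nonzero-entries} is only invoked for $n \in \Nplus$, which is harmless precisely because the statement is vacuous otherwise.
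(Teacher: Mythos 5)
Your proof is correct and rests on the same key observation as the paper's: comparing the partial sums of $a$ and $b$ at the index $w=\len(a)$, where $a$ has already exhausted the total weight~$n$. The paper argues directly (showing $\sum_{i=1}^{w}b_i<n=\sum_{i=1}^{w}a_i$ because $b_{w+1}\geq 1$), whereas you run the same computation contrapositively; this is only a difference in presentation, not in substance.
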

\begin{proof}
Assume $l\defeq \len(a)<w\defeq \len(b)$.
Then $a_{l+1}=0$ and $n= \sum_{i=1}^l a_i$.
Since $l<w$ we have $l+1\leq w$, whence
$b_{l+1} \geq b_w\geq 1$.
Thus $\sum_{i=1}^l b_i<\sum_{i=1}^{l+1}b_i\leq n=\sum_{i=1}^l a_i$,
demonstrating $a\nleq b$.
\end{proof}

The following lemma describes the largest partition of a given least
length~$\ell$.
\begin{lemma}\label{lem:top-typeIII-least-length}
Let $n,\ell\in\N$ with $n\geq \ell\geq 1$, let $p\in\Lp$ and set
$m\defeq (n-(\ell-1),1,\dotsc,\overset{\ell}{1})$.
If\/ $\len(p)\geq\ell$, then $p\leq m$;
if\/ $\len(p)<\ell$, then $p\nleq m$.
\end{lemma}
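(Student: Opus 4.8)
The plan is to split into the two cases according to whether $\len(p)\geq\ell$ or $\len(p)<\ell$, and in each case reduce to results already established.

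For the second case, $\len(p)<\ell$, I would simply invoke Lemma~\ref{lem:shorter-implies-nleq}: the partition $m=(n-(\ell-1),1,\dotsc,\overset{\ell}{1})$ has $\ell$ positive entries (note $n-(\ell-1)\geq 1$ by the hypothesis $n\geq\ell$), hence $\len(m)=\ell$ by Lemma~\ref{lem:width-in-terms-of-nonzero-entries}. Since $\len(p)<\ell=\len(m)$, Lemma~\ref{lem:shorter-implies-nleq} yields $p\nleq m$ directly.

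For the first case, $\len(p)\geq\ell$, I want to show $p\leq m$ using Corollary~\ref{cor:above-implies-dominance}, which asserts that $a_i\geq b_i$ for all $2\leq i\leq\len(b)$ implies $a\leq b$. Here I take $a=m$ and $b=p$, so I need $m_i\geq p_i$ for all $2\leq i\leq\len(p)$. For $2\leq i\leq\ell$ we have $m_i=1$, and since $\len(p)\geq\ell\geq i$ and $p$ is a partition of $n\geq\ell\geq 1$ with entries summing to $n$, antitonicity gives $p_i\geq p_{\len(p)}\geq 1$; but we need the reverse inequality $m_i=1\geq p_i$. This is where I must be slightly careful: the claim $p_i\leq 1$ for $2\leq i\leq\ell$ need not hold in general — so Corollary~\ref{cor:above-implies-dominance} in this naive form is not quite the right tool. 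Instead the cleaner route is to compare partial sums directly: for $1\leq j\leq\ell-1$ I need $\sum_{i=1}^j p_i\leq\sum_{i=1}^j m_i=(n-(\ell-1))+(j-1)=n-\ell+j$; equivalently $\sum_{i=j+1}^{\ell} p_i\leq\sum_{i=j+1}^\ell m_i=\ell-j$, which holds trivially if $\len(p)\geq\ell$ forces... no — again this can fail.

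The correct observation is the following: since $\len(p)\geq\ell$, the partition $p$ has at least $\ell$ positive parts, so $p_1+\dots+p_\ell\leq n$ with at least $\ell$ summands each $\geq 1$; subtracting, $p_1\leq n-(\ell-1)=m_1$. Then for $2\leq j\leq\ell-1$, $\sum_{i=1}^j p_i\leq p_1+(j-1)p_2$ is not obviously bounded either. The clean argument is: $m$ is the \emph{maximum} of $\Bw{\ell}'$s complement... Actually the right statement to use is Lemma~\ref{lem:top-typeIII-least-length}'s own content viewed through Lemma~\ref{lem:char-bounded-width}: the set of partitions with $\len\geq\ell$ has a maximum, and a direct partial-sum check works because for any $p$ with $\len(p)\geq\ell$ we have, for $1\leq j\leq\ell-1$, that $p_{j+1},\dots,p_\ell$ are $\ell-j$ parts each $\geq 1$ (as $\len(p)\geq\ell$), hence $\sum_{i=j+1}^\ell p_i\geq\ell-j$, so $\sum_{i=1}^j p_i=n-\sum_{i=j+1}^{\len(p)}p_i\leq n-\sum_{i=j+1}^\ell p_i\leq n-(\ell-j)=\sum_{i=1}^j m_i$; and for $j\geq\ell$, $\sum_{i=1}^j p_i\leq n=\sum_{i=1}^j m_i$. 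This gives $p\leq m$. The main obstacle is precisely spotting that one must bound the \emph{tail} sum $\sum_{i=j+1}^\ell p_i$ from below using $\len(p)\geq\ell$, rather than trying to bound individual entries $p_i$ from above; once that is seen, the partial-sum inequalities are immediate.
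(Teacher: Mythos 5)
Your proof is correct in the end, but it takes a detour caused by a misapplication of Corollary~\ref{cor:above-implies-dominance}. That corollary says: if $a_i\geq b_i$ for all $2\leq i\leq \len(b)$, then $a\leq b$ --- the \emph{dominated} partition sits pointwise \emph{above} the dominating one from the second entry onwards. To conclude $p\leq m$ you must therefore take $a=p$ and $b=m$, so the condition to check is $p_i\geq m_i$ for $2\leq i\leq\len(m)=\ell$, which is immediate: $m_i=1$ there, and $\len(p)\geq\ell$ gives $p_i\geq p_\ell\geq 1$. You instead set $a=m$, $b=p$ (which would prove $m\leq p$, the wrong inequality), found the resulting condition $p_i\leq 1$ false, and concluded the corollary was ``not quite the right tool''. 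It is exactly the right tool, and it is precisely what the paper uses; the whole first case is a one-line application. Your eventual direct partial-sum argument --- bounding the tail $\sum_{i=j+1}^{\ell}p_i\geq\ell-j$ from below --- is valid and is essentially a re-derivation of the corollary in this special case, so nothing is mathematically wrong, but in a cleaned-up write-up you should delete the false starts and either cite the corollary with the roles assigned correctly or present only the final partial-sum computation. The second case ($\len(p)<\ell$ implies $p\nleq m$ via Lemma~\ref{lem:shorter-implies-nleq}) matches the paper exactly.
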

\begin{proof}
Take $p\in\Lp$ with  $\len(p)\ge\ell$, that is,
$p=(p_1,\dotsc,p_\ell,\ldots)$ with
$p_i\geq p_{\ell}\geq 1$ for $1\leq i\leq \ell$.
Thus, for $2\leq i\leq\ell$ we have $m_i=1\leq p_i$,
hence $p\leq m$ by
Corollary~\ref{cor:above-implies-dominance}.
Furthermore, if~$p$ is shorter than~$m$, then
$p\nleq m$ by Lemma~\ref{lem:shorter-implies-nleq}.
\end{proof}

There is also a second largest partition of
a given least length $\ell$.
\begin{corollary}\label{cor:subtop-typeIII-least-length}
Let $n,\ell\in\Nplus$ with $2\leq \ell, n-\ell$.
Then
$m=(n-(\ell-1),1,\dotsc,\overset{\ell}{1})\in\Ji$ is of
type~\ref{typeC} and has
$\tilde{m} = (n-\ell,2,1,\dotsc,\overset{\ell}{1})$ as its
unique lower cover (the part $1,\dotsc,1$ appearing as of $\ell\geq3$);
moreover any $p\in\Lp\setminus\set{m}$ with
$\len(p)\geq\ell$ satisfies $p\leq \tilde{m}$.
\end{corollary}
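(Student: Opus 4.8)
The plan is to verify the three claims of the corollary separately: that $m\in\Ji$ is of type~\ref{typeC}, that its unique lower cover is the stated $\tilde m$, and that $\tilde m$ dominates every partition of length at least~$\ell$ other than~$m$ itself. The first two are read off directly from the transition rules, while the third follows from a short argument using join-irreducibility together with Lemma~\ref{lem:top-typeIII-least-length}.

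First I would note that, under the hypotheses $\ell\geq 2$ and $n-\ell\geq 2$, the partition $m=(n-\ell+1,1,\dotsc,\overset{\ell}{1})$ of~$n$ has first entry $n-\ell+1\geq 3$ and exactly $\ell-1\geq 1$ trailing~$1$s, so it matches the pattern of type~\ref{typeC} in Lemma~\ref{lem:char-join-irr} (with $k=n-\ell+1$ and $d=\ell-1$); in particular $m\in\Ji$. Concretely, $m$ has a single cliff, at position~$1$ where $m_1-m_2=n-\ell\geq 2$, and no slippery step, so by Lemma~\ref{lem:description-of-lower-covers} its unique lower cover is obtained by applying the cliff transition rule~\eqref{item:transition-cliff} at position~$1$: the top brick drops from height $n-\ell+1$ onto the adjacent part of height~$1$, turning $m$ into $\tilde m=(n-\ell,2,1,\dotsc,1)$, where the block of trailing $1$s now occupies positions $3,\dotsc,\ell$ and is therefore present exactly when $\ell\geq 3$. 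Along the way one checks that $\tilde m$ is a genuine partition, which is where $n-\ell\geq 2$ is used (it makes the first part $n-\ell$ at least as large as the second part,~$2$).

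For the final assertion, take $p\in\Lp\setminus\set{m}$ with $\len(p)\geq\ell$. Since $n>\ell\geq 1$, Lemma~\ref{lem:top-typeIII-least-length} yields $p\leq m$, hence $p<m$ because $p\neq m$. Suppose, towards a contradiction, that $p\nleq\tilde m$; then $p\vee\tilde m>\tilde m$ (were they equal, we would have $p\leq\tilde m$), while $p\vee\tilde m\leq m$ since both $p$ and $\tilde m$ lie below~$m$, so $\tilde m\prec m$ forces $p\vee\tilde m=m$. This writes~$m$ as a join $p\vee\tilde m$ of two elements each different from~$m$, contradicting $m\in\Ji$. Therefore $p\leq\tilde m$, as desired.

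The argument involves no serious obstacle; the only points requiring attention are the bookkeeping in the cliff transition — especially the degenerate shape of~$\tilde m$ when $\ell=2$ — and verifying that the numerical side conditions $\ell\geq 2$ and $n-\ell\geq 2$ are exactly what make $m$ fall inside type~\ref{typeC} and keep $\tilde m$ a legal partition of~$n$.
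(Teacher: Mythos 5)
Your proof is correct and follows essentially the same route as the paper's: identify $m$ as type~\ref{typeC} from the hypotheses, obtain $\tilde{m}$ by the cliff transition at position~$1$, and reduce the final claim to $p\leq m$ via Lemma~\ref{lem:top-typeIII-least-length} plus the fact that $\tilde{m}$ is the unique lower cover. The only difference is that where the paper simply asserts that $p<m$ forces $p\leq\tilde{m}$, you supply the short justification via $p\vee\tilde{m}=m$ contradicting join-irreducibility, which is a harmless (and welcome) extra level of detail.
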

\begin{proof}
Since $n-(\ell-1)\geq 3$ and $\ell\geq 2$, we see that~$m$ is of
type~\ref{typeC}, and we can obtain the unique lower cover~$\tilde{m}$
by applying transition rule~\eqref{item:transition-cliff} to the cliff
in the first position of~$m$.
If $p\in\Lp$ has length~$\len(p)\ge\ell$, then
$p\leq m$ by Lemma~\ref{lem:top-typeIII-least-length}.
If we additionally assume
$p\neq m$, then $p<m$ and thus $p\leq \tilde{m}$,
for~$\tilde{m}$ is the unique lower cover of~$m$.
\end{proof}

We can also identify a largest partition with a given height bound.
\begin{lemma}\label{lem:largest-bounded-height}
Let $n,h\in\Nplus$ with $h\leq n$ and factorise $n = wh+r$ where
$r,w\in\N$, $0\leq r<h$. Then $w\geq 1$ and every $p\in\Bh{h}$ satisfies
$p\leq m\defeq (h,\dotsc,\overset{w}{h},r)\in\Bh{h}$.
\end{lemma}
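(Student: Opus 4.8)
The claim has two parts: first, that $w \geq 1$, and second, that the partition $m = (h,\dotsc,\overset{w}{h},r)$ is the largest element of $\Bh{h}$. The first part is immediate: since $h \leq n$ and $n = wh + r$ with $0 \leq r < h$, we have $wh = n - r > n - h \geq 0$, so $wh > 0$ and hence $w \geq 1$ (as $h \geq 1$). I should also quickly check that $m$ is genuinely a partition of $n$ lying in $\Bh{h}$: its entries are $h$ repeated $w$ times followed by $r$, which is non-increasing because $r < h$, the entries sum to $wh + r = n$, and its first entry is $h$, so $\hgt(m) = h$ and $m \in \Bh{h}$.

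The substance is showing $p \leq m$ for every $p \in \Bh{h}$. The natural tool is Corollary~\ref{cor:above-implies-dominance}: if I can show $p_i \geq m_i$ for all $2 \leq i \leq \len(m)$, then $p \leq m$ follows. Here $\len(m) = w+1$ if $r \geq 1$ and $\len(m) = w$ if $r = 0$. So I need to verify $p_i \geq m_i$ for $2 \leq i \leq w$ (where $m_i = h$) and, when $r \geq 1$, also $p_{w+1} \geq m_{w+1} = r$. The first of these will be the crux.

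For the main inequality, fix $i$ with $2 \leq i \leq w$ and suppose toward a contradiction that $p_i < h$, i.e.\ $p_i \leq h - 1$. Since $p$ is non-increasing, $p_j \leq h-1$ for all $j \geq i$, while $p_j \leq \hgt(p) \leq h$ for $j < i$. Then $n = \sum_{j \geq 1} p_j \leq (i-1)h + \sum_{j \geq i} p_j$, and I want to bound the tail. The cleanest route: the total is $n = wh + r$, and the first $i-1$ entries contribute at most $(i-1)h$, so the remaining entries $p_i, p_{i+1}, \dotsc$ must sum to at least $n - (i-1)h = (w - i + 1)h + r \geq (w-i+1)h + r$. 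But there are infinitely many (well, finitely many nonzero) such entries, each at most $h-1$ by assumption; however, "infinitely many entries each $\leq h-1$" does not immediately give a contradiction — I need to also use that they're bounded by the width. The better approach is to count more carefully: since $p \in \Lp$ has all entries $\leq h$, and actually I should argue via width. From Lemma~\ref{lem:char-bounded-width} or a direct length argument: $p$ has at most some number of nonzero entries. Actually the slickest argument: compare partial sums directly. We have $\sum_{j=1}^{k} m_j = kh$ for $k \leq w$ and $\sum_{j=1}^{k} m_j = wh + r = n$ for $k \geq w$ (taking $m_j = 0$ for $j > \len(m)$, with the $r$ slot included once). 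Meanwhile $\sum_{j=1}^{k} p_j \leq kh$ since each $p_j \leq \hgt(p) \leq h$, and $\sum_{j=1}^{k} p_j \leq n$ always. Hence $\sum_{j=1}^k p_j \leq \min(kh, n) = \sum_{j=1}^k m_j$ for every $k$, which is exactly $p \leq m$ by the definition of dominance order (Definition~\ref{def:dominance-order}).

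This last observation makes the whole thing clean and avoids Corollary~\ref{cor:above-implies-dominance} entirely, so I would structure the proof that way: establish $w \geq 1$ and $m \in \Bh{h}$, then note $\sum_{j=1}^k m_j = \min(kh, n)$ for all $k$, then bound $\sum_{j=1}^k p_j \leq \min(kh, n)$ using $p_j \leq h$ and $\sum_j p_j = n$, and conclude. I do not anticipate a real obstacle here; the only mild subtlety is bookkeeping the value of $\sum_{j=1}^k m_j$ correctly across the boundary $k = w$ (and handling the $r = 0$ case where the $r$-slot vanishes), but in both cases the formula $\min(kh, n)$ holds, so the argument is uniform.
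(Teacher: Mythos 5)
Your final argument is correct and is essentially the paper's: the paper observes $p_i\leq p_1=\hgt(p)\leq h=m_i$ for $1\leq i\leq w$ and invokes Corollary~\ref{cor:below-implies-dominance}, which is precisely your partial-sum bound $\sum_{j=1}^{k}p_j\leq\min(kh,n)=\sum_{j=1}^{k}m_j$ carried out by hand (together with the same check that $w\geq 1$ and $m\in\Bh{h}$). Your initial attempt via Corollary~\ref{cor:above-implies-dominance} was rightly abandoned — the pointwise lower bound $p_i\geq m_i$ fails in general — and the direct route you settled on is the intended one.
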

\begin{proof}
Since $n=wh+r$, we have $m\in\Lp$. If $w=0$, then $n=r<h\leq n$; thus
$w\geq 1$ and therefore $\hgt(m)=h$, i.e., $m\in\Bh{h}$. By
antitonicity, for each $1\leq i\leq w$ we have
$p_i\leq p_1=\hgt(p)\leq h = m_i$,
hence $p\leq m$ by Corollary~\ref{cor:below-implies-dominance}.
\end{proof}

By duality we have a least partition of a given bounded width.
\begin{lemma}\label{lem:least-bounded-width}
Let $n,w\in\Nplus$, $w\leq n$ and factorise $n=\kappa w+b$ where
$\kappa,b\in\N$, $0\leq b<w$.
Then $\kappa\geq 1$ and every $p\in\Bw{w}$ satisfies
$p\geq
g=(\kappa+1,\dotsc,\overset{b}{\kappa+1},\kappa,\dotsc,\overset{w}{\kappa})$.
\end{lemma}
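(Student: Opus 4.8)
The plan is to obtain this as the conjugate-dual of Lemma~\ref{lem:largest-bounded-height}. First, $\kappa\geq 1$ follows exactly as there: were $\kappa=0$, then $n=b<w\leq n$, a contradiction. Next, I would apply Lemma~\ref{lem:largest-bounded-height} with the height bound $h\defeq w$ (legitimate since $1\leq w\leq n$) and the given factorisation $n=\kappa w+b$ with $0\leq b<w$; it tells us that $M\defeq(w,\dotsc,\overset{\kappa}{w},b)$ lies in $\Bh{w}$ and that $q\leq M$ for every $q\in\Bh{w}$.

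Now take any $p\in\Bw{w}$. By Lemma~\ref{lem:width-height-dual} we have $p^{*}\in\Bh{w}$, hence $p^{*}\leq M$ by the previous step. Since partition conjugation is an order-antiautomorphism of~$\Lp$ and $p^{**}=p$, this is equivalent to $p\geq M^{*}$. So it only remains to verify $M^{*}=g$ (and, if one wishes, that $g\in\Bw{w}$, making $g$ the least element of $\Bw{w}$). This is a routine application of the definition of the conjugate: for $1\leq i\leq b$ all $\kappa$ parts equal to $w$ satisfy $w>b\geq i$ and the trailing part equals $b\geq i$, giving $M^{*}_i=\kappa+1$; for $b<i\leq w$ only the $\kappa$ parts equal to $w\geq i$ contribute, so $M^{*}_i=\kappa$; and $M^{*}_i=0$ for $i>w$. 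Hence $M^{*}=(\kappa+1,\dotsc,\overset{b}{\kappa+1},\kappa,\dotsc,\overset{w}{\kappa})=g$, where the block of $(\kappa+1)$'s is empty when $b=0$. Finally, $\kappa\geq 1$ forces $\len(g)=w$, so $g\in\Bw{w}$.

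I do not anticipate any real obstacle here: the only mild care needed is the degenerate case $b=0$ (where $M=(w,\dotsc,w)$ and correspondingly $g=(\kappa,\dotsc,\kappa)$) and setting up the counting that yields $M^{*}$ correctly; everything else is immediate from the self-duality of~$\Lp$ together with Lemmas~\ref{lem:largest-bounded-height} and~\ref{lem:width-height-dual}. Alternatively, one could mimic the proof of Lemma~\ref{lem:largest-bounded-height} directly, bounding $p$ from below using Corollary~\ref{cor:above-implies-dominance} after observing that $p\in\Bw{w}$ has at most $w$ nonzero parts while $g$ has exactly $w$, but the conjugation argument is shorter.
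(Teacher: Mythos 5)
Your proposal is correct and follows essentially the same route as the paper: the paper's proof also reduces the claim to Lemma~\ref{lem:largest-bounded-height} via Lemma~\ref{lem:width-height-dual} and the fact that conjugation is an involutive order-antiautomorphism, so that $p^{*}\leq(w,\dotsc,\overset{\kappa}{w},b)$ conjugates back to $p\geq g$. Your explicit verification that the conjugate of $(w,\dotsc,\overset{\kappa}{w},b)$ equals $g$ is a detail the paper leaves implicit, but it is the same argument.
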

\begin{proof}
By Lemma~\ref{lem:width-height-dual}, for $p\in\Lp$ we have
$p\in\Bw{w}$ if and only if $p^*\in\Bh{w}$. Therefore, by
Lemma~\ref{lem:largest-bounded-height}, we obtain
$p^*\leq m = (w,\dotsc,\overset{\kappa}{w},b)$, and
since conjugation is an involutive antiautomorphism of~$\Lp$,
this inequality implies
$p={p^*}^*\geq m^* =
(\kappa+1,\dotsc,\overset{b}{\kappa+1},\kappa,\dotsc,\overset{w}{\kappa})=g$.
\end{proof}

\begin{corollary}\label{cor:least-bounded-width-typeAB}
Let $b\in\N$, $k,\ell\in\Nplus$ and set $n\defeq b(k+1)+\ell k$.
The least partition in $\Bw{b+\ell}$
has the form $g=(k+1,\dotsc,\overset{b}{k+1},k,\dotsc,\overset{b+\ell}{k})$.
If $b+\ell<n$, then $g\in\Ji$ is of type~\ref{typeA} and $k\geq 2$,
or~$g$ is of type~\ref{typeB}.
\end{corollary}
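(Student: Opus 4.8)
The statement to prove is Corollary~\ref{cor:least-bounded-width-typeAB}. Let me think about how to prove it.The plan is to obtain this corollary directly from Lemma~\ref{lem:least-bounded-width} by feeding it the appropriate Euclidean factorisation of~$n$. First I would rewrite $n = b(k+1)+\ell k = k(b+\ell)+b$ and note that, since $\ell\geq 1$, we have $0\leq b<b+\ell$; hence this is exactly the factorisation $n=\kappa w+b$ demanded by Lemma~\ref{lem:least-bounded-width} with $w\defeq b+\ell$ and $\kappa\defeq k$ (the remainder being~$b$ itself). The hypotheses of that lemma hold: $w=b+\ell\geq 1$ because $\ell\in\Nplus$, and $w\leq n$ because $k\geq 1$ gives $n=k(b+\ell)+b\geq b+\ell$ (in particular $n\in\Nplus$). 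Thus every $p\in\Bw{b+\ell}$ satisfies $p\geq g$, where $g$ has precisely the stated shape $(k+1,\dotsc,\overset{b}{k+1},k,\dotsc,\overset{b+\ell}{k})$.

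It then remains to observe that $g$ lies in $\Bw{b+\ell}$ itself: every entry of~$g$ equals $k$ or $k+1$, both positive since $k\in\Nplus$, so $\len(g)=b+\ell$ and $g\in\Bw{b+\ell}$ by Definition~\ref{def:height-width=length}. A lower bound of $\Bw{b+\ell}$ that belongs to $\Bw{b+\ell}$ is its least element, which proves the first assertion.

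For the second assertion I would assume $b+\ell<n$ and distinguish two cases. If $b=0$, then $g=\tupone{k}{\ell}$ and $n=\ell k$, so the assumption reads $\ell<\ell k$, which together with $\ell\geq 1$ forces $k\geq 2$; hence $g$ is of type~\ref{typeA} with $k\geq 2$ by Lemma~\ref{lem:char-join-irr}. If $b\geq 1$, then $g=\tupdos{k+1}{b}{k}{b+\ell}$ is exactly a type~\ref{typeB} partition $\tupdos{k'}{b'}{k'-1}{b'+\ell'}$ with $k'\defeq k+1\geq 2$ (using $k\geq 1$), $b'\defeq b\geq 1$ and $\ell'\defeq\ell\in\Nplus$; therefore $g\in\Ji$.

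None of the steps is genuinely hard; the only places that need a little care are verifying that the displayed decomposition of~$n$ really is the Euclidean one (the remainder~$b$ being strictly less than the divisor~$b+\ell$) and noticing that the hypothesis $b+\ell<n$ is exactly what excludes the degenerate case $k=1$, $b=0$, in which $g$ would be the minimum partition $(1,\dotsc,1)$ and hence fail to be join-irreducible.
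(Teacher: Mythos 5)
Your proposal is correct and follows essentially the same route as the paper: both rewrite $n=k(b+\ell)+b$ as the Euclidean factorisation modulo $w=b+\ell$, invoke Lemma~\ref{lem:least-bounded-width}, and then split into the cases $b=0$ (type~\ref{typeA}, with $b+\ell<n$ forcing $k\geq2$) and $b\geq1$ (type~\ref{typeB}). Your extra check that $g$ itself lies in $\Bw{b+\ell}$ is a small point the paper leaves implicit, but otherwise the arguments coincide.
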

\begin{proof}
We set $w\defeq b+\ell\geq 1$.
Since $k\geq 1$ we have
$n = b(k+1)+\ell k\geq b+\ell=w$, moreover
$n = b+(b+\ell) k = b+wk$.
Since $0\le b<b+\ell=w$, this provides the factorisation of~$n$
modulo~$w$,
whence Lemma~\ref{lem:least-bounded-width} shows that the least element
of $\Bw{w}$ is
$g = (k+1,\dotsc,\overset{b}{k+1},k,\dotsc,\overset{b+\ell}{k})$.
Furthermore, if $b\geq 1$, then $g\in\Ji$ is of type~\ref{typeB}.
If otherwise $b=0$, then $\ell=b+\ell<n=\ell k$ implies $k>1$, i.e.,
$k\geq 2$. Therefore, $g=(k,\dotsc,\overset{\ell}{k})$ is of
type~\ref{typeA}.
\end{proof}

\section[Description of the double arrow relation in
the standard context of partitions of n]%
{Description of the double arrow relation in
the standard context $\apply{\Ji, \Mi,\leq}$.}
\label{sect:double-arrows}

It is our aim to describe the arrow relations of the standard
context~$\K\apply{\Lp}$. To do this, we start by giving a complete
characterisation of all arrow relations of $\K\apply{\Lp[7]}$. To this
end we present the following lemmata, which are true for $\Lp$ in
general. The first lemma is quite elementary, but useful, as it
describes the top part of~$\Lp$.
\begin{lemma}\label{lem:topchain}
Let $n\in\N$ such that $n\geq 2$.
\begin{enumerate}[\upshape(a)]
\item\label{item:topchain-2}
  Then $(n)\in\Ji$ and $(n-1,1)\in \Mi$ is its unique
  lower cover.
\item\label{item:topchain-3}
  If $n\geq 3$, then $(n-1,1)\in \Mi\cap\Ji$.
\item\label{item:topchain-4}
  If $n\geq 4$, then $(n-2,2)\in\Mi$ is the unique lower cover of
  $(n-1,1)$.  Hence the top part of~$\Lp$ has the shape depicted in
  \figurename~\ref{fig:chain}.
\end{enumerate}
\end{lemma}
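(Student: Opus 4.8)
The plan is to verify each of the three claims directly from the description of covers via transition rules (Lemma~\ref{lem:description-of-lower-covers}) together with the characterisation of $\bigvee$- and $\bigwedge$-irreducibles by cliffs and slippery steps (following Lemma~\ref{lem:char-join-irr}). First, for~\eqref{item:topchain-2}: the partition $(n)=(n,0,\dots)$ has a single cliff at position~$j=1$ of height $n\geq 2$ and no slippery step, so by Lemma~\ref{lem:description-of-lower-covers} it has exactly one lower cover, obtained by transition rule~\eqref{item:transition-cliff}, namely $(n-1,1)$. Hence $(n)$ covers exactly one element and is therefore $\bigvee$-irreducible; and since $(n)$ is the top element of~$\Lp$ it has no upper cover, so it is not $\bigwedge$-irreducible. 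This also records that $(n-1,1)$ is the unique lower cover of $(n)$, i.e., $(n-1,1)\prec (n)$, and that the interval above $(n-1,1)$ is the single edge to~$(n)$.

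For~\eqref{item:topchain-3}, assume $n\geq 3$. The partition $(n-1,1)=(n-1,1,0,\dots)$ has height $n-1\geq 2$; its only upper cover in~$\Lp$ is $(n)$, as already noted, so it has exactly one upper cover and hence is $\bigwedge$-irreducible. For $\bigvee$-irreducibility we examine its transition structure: since $n\geq 3$ we have $n-1\geq 2$, so there is a cliff of height $n-2\geq 1$; if $n-2\geq 2$ this is a genuine cliff and there is no slippery step (the tail is just a single~$1$ followed by zeros), giving a unique lower cover; if $n-2=1$, i.e., $n=3$, then $(2,1)=(2,1,0,\dots)$ has $a_1-a_2=1$ and $a_2-a_3=1$, which is a slippery step at $j=1$ with $\ell=2$, and no cliff, again giving a unique lower cover. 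In either case exactly one transition rule applies, so $(n-1,1)$ has exactly one lower cover and is $\bigvee$-irreducible; thus $(n-1,1)\in\Mi\cap\Ji$. (Alternatively one may invoke that $(n-1,1)$ is of type~\ref{typeA} when $n=3$ and of type~\ref{typeB} — with $k=n-1$, $b=1$, $\ell=1$ — when $n\geq 4$, matching the respective meet-irreducible types~\ref{typeI}/\ref{typeII} under conjugation.)

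For~\eqref{item:topchain-4}, assume $n\geq 4$. We must show that $(n-2,2)$ is the unique lower cover of $(n-1,1)$ and lies in~$\Mi$. From the transition analysis of $(n-1,1)$ in the case $n\geq 4$: here $n-2\geq 2$, so $(n-1,1)=(n-1,1,0,\dots)$ has a single cliff of height $n-2\geq 2$ at position~$j=1$ and no slippery step; applying rule~\eqref{item:transition-cliff} yields the unique lower cover $(n-1-1,1+1)=(n-2,2)$. It remains to see $(n-2,2)\in\Mi$: its only upper cover is $(n-1,1)$, since the covering edges into $(n-2,2)$ from above are exactly those partitions of which $(n-2,2)$ is a lower cover, and the transition rules applied to $(n-2,2)$ in reverse (equivalently, inspecting which single transition produces $(n-2,2)$) give back $(n-1,1)$ as the only possibility — so $(n-2,2)$ has exactly one upper cover and is therefore $\bigwedge$-irreducible. (One can again cross-check: $(n-2,2)$ is of type~\ref{typeII} with $t=n-2$, $r=2$, $c=1$ when $n-2>2$, and of type~\ref{typeI} when $n=4$.) Combining the three parts, the top of~$\Lp$ consists of the chain $(n-2,2)\prec (n-1,1)\prec (n)$ with the indicated irreducibility markings, which is the shape in \figurename~\ref{fig:chain}.

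The main obstacle is nowhere deep: it is the bookkeeping of the boundary cases (especially $n=3$ and $n=4$, where the generic ``cliff of height $\geq 2$'' picture degenerates into a slippery step or where the second entry of the partition collides with the value~$2$), and making sure that ``exactly one transition rule applies'' is argued rather than asserted. I would handle this by splitting on whether $n-2\geq 2$ once and for all, and by appealing to the type classification of Lemma~\ref{lem:char-join-irr} for the identification of which irreducible classes the relevant partitions fall into.
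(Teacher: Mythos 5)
Your proof is correct in substance but follows a different route from the paper's. The paper establishes every irreducibility claim by simply naming the type of the partition in the classification of Lemma~\ref{lem:char-join-irr}: $(n)$ is of type~\ref{typeA}; $(n-1,1)$ is of type~\ref{typeI} ($n=2$) or~\ref{typeII} ($n\geq3$) as a meet-irreducible and of type~\ref{typeB} ($n=3$) or~\ref{typeC} ($n\geq4$) as a join-irreducible; $(n-2,2)$ is of type~\ref{typeI} ($n=4$) or~\ref{typeII} ($n\geq5$). You instead count cliffs and slippery steps by hand for the join-irreducibility claims and argue meet-irreducibility structurally: since $(n)$ is the top element with unique lower cover $(n-1,1)$, the only element strictly above $(n-1,1)$ is $(n)$, so it has a unique upper cover. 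That argument is clean and buys independence from the type table; the paper's version buys brevity. The lower-cover computations via transition rule~\eqref{item:transition-cliff} agree in both.

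Two blemishes, neither fatal. First, your parenthetical cross-check in part~\eqref{item:topchain-3} misassigns the types: $(2,1)$ is of type~\ref{typeB} (not~\ref{typeA}, which consists of constant partitions), and for $n\geq4$ the partition $(n-1,1)$ is of type~\ref{typeC} (not~\ref{typeB}; type~\ref{typeB} with $k=n-1$, $b=\ell=1$ would be $(n-1,n-2)$, which equals $(n-1,1)$ only when $n=3$). Since this is offered only as an alternative to your transition-rule argument, it does not break the proof, but as written it would mislead. Second, in part~\eqref{item:topchain-4} the meet-irreducibility of $(n-2,2)$ rests on the assertion that reversing the transition rules at $(n-2,2)$ yields only $(n-1,1)$; this is true but should be checked (a reverse slippery step would require three consecutive equal nonzero entries, which $(n-2,2,0,\dots)$ never has, and a reverse cliff at position $2$ would require the forbidden parameter $h>k$), or replaced by the one-line observation that $(n-2,2)$ is of type~\ref{typeI} or~\ref{typeII} and hence lies in~$\Mi$.
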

\begin{figure}[h]
\centering
\includegraphics[scale=1]{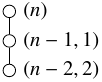}%
\caption{Chain at the top of~$\Lp$ for $n\geq 4$;
         edges denote the covering relation.}
\label{fig:chain}
\end{figure}
\begin{proof}
The top partition $(n)$ is completely
join-irreducible, since $n\geq 2$ ensures that it is of type~\ref{typeA}.
Letting a brick fall from the cliff, we obtain via transition
rule~\eqref{item:transition-cliff} its unique lower cover $(n-1,1)$.
The latter partition is completely meet-irreducible, for it is of
type~\ref{typeI} when $n=2$ and of type~\ref{typeII} if $n\geq 3$. This shows
part~\eqref{item:topchain-2}.
\par
Let us now assume that $n\geq 3$. If $n=3$, then $(n-1,1)$ is
completely join-irreducible of type~\ref{typeB}; moreover, it is of
type~\ref{typeC} when $n\geq 4$. Part~\eqref{item:topchain-3} has been
demonstrated.
\par
Finally let us impose $n\geq 4$. By applying transition
rule~\eqref{item:transition-cliff}, that is, by letting a brick fall
from the cliff in the first position of $(n-1,1)$, we obtain $(n-2,2)$
as its lower cover, which is unique by~\eqref{item:topchain-3}.
Moreover, $(n-2,2)$ is completely meet-irreducible of type~\ref{typeI} if $n=4$
and of type~\ref{typeII} if $n\geq 5$.
\end{proof}

\begin{lemma} \label{First_Lemma}
 For $n\geq 2$, we have $g=(n) \in \Ji$, $m=\apply{n-1,1}\in \Mi$, and
 $g \Doppelpfeil m$ in~$\K(\Lp)$.
\end{lemma}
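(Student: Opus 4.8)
The plan is to apply Proposition~\ref{prop:arrows-in-doubly-founded-lattices} directly, since $\Lp$ is finite and hence doubly founded. First I would recall from Lemma~\ref{lem:topchain}\eqref{item:topchain-2} that for $n\geq 2$ the partition $g=(n)$ is indeed completely join-irreducible, $m=(n-1,1)$ is completely meet-irreducible, and $m$ is the unique lower cover of $g$, i.e.\ $\tilde g = (n-1,1) = m$. Dually, for the up-arrow I need a unique \emph{upper} cover of $m$; but since $g=(n)$ is the top element of $\Lp$ and covers $m$, and $g$ is the only element strictly above $m$ (there is nothing between $m$ and the maximum, and $m$ has height $n-1<n$ so it is not itself the top), $m$ has the unique upper cover $\tilde m = (n) = g$.

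Next I would check the two defining conditions. For $g\Runterpfeil m$ via Proposition~\ref{prop:arrows-in-doubly-founded-lattices}\eqref{item:g-lcover-below-m}: we need $g\nleq m$ and $\tilde g\leq m$. The first holds because the partial-sum condition already fails in the first coordinate, $n = g_1 \nleq m_1 = n-1$; the second is trivial since $\tilde g = m \leq m$. For $g\Hochpfeil m$ via Proposition~\ref{prop:arrows-in-doubly-founded-lattices}\eqref{item:m-ucover-above-g}: we need $g\nleq m$, already shown, and $\tilde m \geq g$, which is trivial since $\tilde m = g \geq g$. Hence both arrows hold, and by Definition~\ref{def:arrow-rels} we conclude $g\Doppelpfeil m$.

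I do not anticipate a real obstacle here; the only point requiring a moment's care is justifying that $\tilde m = (n)$ is genuinely the \emph{unique} upper cover of $m=(n-1,1)$, which follows because $(n)$ is the maximum of $\Lp$, it covers $(n-1,1)$ by transition rule~\eqref{item:transition-cliff}, and any upper cover of $m$ must lie in the interval $[m,(n)]$, which by Lemma~\ref{lem:topchain}\eqref{item:topchain-2} is the two-element chain $m\prec (n)$. (Alternatively, one could invoke Lemma~\ref{lem:arrows-from-meets-and-joins}: $g=(n)=\bigvee S$ trivially for $S=\set{g}\subseteq\Ji$ together with $\tilde m = g \geq g$ gives the up-arrow, and symmetrically for the down-arrow, but using Proposition~\ref{prop:arrows-in-doubly-founded-lattices} is the cleanest route.)
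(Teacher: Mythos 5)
Your proposal is correct and follows essentially the same route as the paper: both invoke Lemma~\ref{lem:topchain}\eqref{item:topchain-2} to identify $g=(n)\in\Ji$, $m=(n-1,1)\in\Mi$ with $\tilde g=m$ and $\tilde m=g$, note $g\nleq m$, and conclude via Proposition~\ref{prop:arrows-in-doubly-founded-lattices}. The only difference is that you spell out why $(n)$ is the unique upper cover of $m$, which the paper dismisses as clear; your justification is fine.
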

\begin{proof}
From Lemma~\ref{lem:topchain}\eqref{item:topchain-2}, we have
$g=(n) \in \Ji$, $m=\apply{n-1,1}\in \Mi$, and clearly
$\tilde{g}=m$ is the unique lower cover of~$g$ and $\tilde{m}=g$ is the
unique upper cover of~$m$. Since $g\nleq m$,
Proposition~\ref{prop:arrows-in-doubly-founded-lattices} directly
implies $g\Doppelpfeil m$.
\end{proof}

\begin{lemma}\label{Second_Lemma}
 For $n\geq 4$ we have $g=\apply{n-1,1} \in \Ji$,
 $m=\apply{n-2,2}\in \Mi$, and $g \Doppelpfeil m$ in~$\K(\Lp)$.
\end{lemma}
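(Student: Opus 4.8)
The plan is to mimic the proof of Lemma~\ref{First_Lemma}, invoking Proposition~\ref{prop:arrows-in-doubly-founded-lattices} once the relevant covers have been identified. First I would quote Lemma~\ref{lem:topchain}: for $n\geq4$ we have $g=(n-1,1)\in\Ji$ (it is of type~\ref{typeC}), $m=(n-2,2)\in\Mi$, and $m$ is the unique lower cover of $(n)$, while $(n-2,2)$ is the unique lower cover of $(n-1,1)=g$ and $(n-1,1)=g$ is the unique upper cover of $m=(n-2,2)$. So in the notation of Proposition~\ref{prop:arrows-in-doubly-founded-lattices} we have $\tilde g=m$ and $\tilde m=g$.

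Next I would check $g\nleq m$: comparing partial sums, $g_1=n-1>n-2=m_1$, so the dominance condition fails already in the first coordinate, giving $g\nleq m$. Now both hypotheses of Proposition~\ref{prop:arrows-in-doubly-founded-lattices} are trivially met: $\tilde g=m\leq m$ and $\tilde m=g\geq g$. Hence part~\eqref{item:g-lcover-below-m} yields $g\Runterpfeil m$ and part~\eqref{item:m-ucover-above-g} yields $g\Hochpfeil m$, so $g\Doppelpfeil m$, as claimed.

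I expect essentially no obstacle here; the only point requiring a (one-line) argument beyond citing Lemma~\ref{lem:topchain} is $g\nleq m$, which is immediate from Definition~\ref{def:dominance-order}. Everything else is a direct application of the already-established machinery, exactly parallel to Lemma~\ref{First_Lemma} one level further down the top chain of~$\Lp$.
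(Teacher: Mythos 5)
Your proposal is correct and follows essentially the same route as the paper: cite Lemma~\ref{lem:topchain} to get $g\in\Ji$, $m\in\Mi$ with $\tilde g=m$ and $\tilde m=g$, observe $g\nleq m$, and apply both parts of Proposition~\ref{prop:arrows-in-doubly-founded-lattices}. One harmless slip: you write that $m=(n-2,2)$ is the unique lower cover of $(n)$, whereas that lower cover is $(n-1,1)$; this clause is not used in your argument, and the facts you actually invoke ($\tilde g=m$, $\tilde m=g$) are stated correctly.
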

\begin{proof}
As $n\geq 4$, we know from Lemma~\ref{lem:topchain} that
$g \in \Ji\cap\Mi$, having $\tilde{g}\defeq m\in \Mi$ as its unique
lower cover. At the same time $\tilde{m}=g$ is thus the (unique) upper
cover of~$m$. Since $g\nleq m$,
Proposition~\ref{prop:arrows-in-doubly-founded-lattices}
directly yields $g\Doppelpfeil m$.
\end{proof}

Applying Corollary~\ref{Corollary_3.6} to Lemmata~\ref{First_Lemma}
and~\ref{Second_Lemma}, respectively, we obtain two additional double
arrows in $\K(\Lp)$.
\begin{lemma}\label{Lemma3.7}
In the standard context~$\K(\Lp)$ of~$\Lp$ the following facts hold:
\begin{enumerate}[\upshape(a)]
\item\label{item:double-arrow-n-n-1-dual}
      For $n\geq 2$ we have
      $g=(2,1,\dotsc,\overset{n-1}{1})\in\Ji$,
      $m=(1,1,\dotsc,\overset{n}{1})\in\Mi$, and
      $g \Doppelpfeil m$.
\item\label{item:double-arrow-n-1-n-2-dual}
      For $n\geq 4$ we have
      $g=(2,2,1,\dotsc,\overset{n-2}{1})\in\Ji$,
      $m=(2,1,\dotsc,\overset{n-1}{1})\in\Mi$, and
      $g \Doppelpfeil m$.
\end{enumerate}
\end{lemma}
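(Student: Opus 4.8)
The plan is to deduce both double arrows from the ones already obtained in Lemmata~\ref{First_Lemma} and~\ref{Second_Lemma} by applying the conjugation duality recorded in Corollary~\ref{Corollary_3.6}, together with the fact (noted right after Lemma~\ref{lem:char-join-irr}, and stemming from \cite[Proposition~2.8]{Brylawski}) that $\ast$ is an order-antiautomorphism of~$\Lp$ and hence maps~$\Ji$ bijectively onto~$\Mi$ and vice versa. So essentially nothing needs to be re-proved about arrows; the only work is a conjugate computation plus index bookkeeping.

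For part~\eqref{item:double-arrow-n-n-1-dual}, I would start from Lemma~\ref{First_Lemma}, which for $n\geq 2$ gives $(n)\in\Ji$, $(n-1,1)\in\Mi$ and $(n)\Doppelpfeil(n-1,1)$. The conjugates are read off the Ferrers diagrams: a single row of~$n$ bricks becomes a single column, so $(n)^\ast=(1,1,\dotsc,\overset{n}{1})$; and $(n-1,1)$ has two bricks in its first column and single bricks afterwards, so $(n-1,1)^\ast=(2,1,\dotsc,\overset{n-1}{1})$. Applying Corollary~\ref{Corollary_3.6} with $g:=(n)$ and $m:=(n-1,1)$ (whose duals indeed lie in $\Mi$ and $\Ji$) turns $(n)\Doppelpfeil(n-1,1)$ into $(2,1,\dotsc,\overset{n-1}{1})\Doppelpfeil(1,1,\dotsc,\overset{n}{1})$, which is exactly the claim, with the membership assertions $(2,1,\dotsc,\overset{n-1}{1})\in\Ji$ and $(1,1,\dotsc,\overset{n}{1})\in\Mi$ following from the antiautomorphism property.

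Part~\eqref{item:double-arrow-n-1-n-2-dual} is completely analogous, now starting from Lemma~\ref{Second_Lemma}, valid for $n\geq 4$, which supplies $(n-1,1)\in\Ji$, $(n-2,2)\in\Mi$ and $(n-1,1)\Doppelpfeil(n-2,2)$. Again $(n-1,1)^\ast=(2,1,\dotsc,\overset{n-1}{1})$, and from the diagram of $(n-2,2)$ (two bricks in each of the first two columns, single bricks afterwards) one gets $(n-2,2)^\ast=(2,2,1,\dotsc,\overset{n-2}{1})$. Feeding $(n-1,1)\Doppelpfeil(n-2,2)$ into Corollary~\ref{Corollary_3.6} then yields $(2,2,1,\dotsc,\overset{n-2}{1})\Doppelpfeil(2,1,\dotsc,\overset{n-1}{1})$, as required.

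I do not anticipate any real obstacle: this is a direct transfer through conjugation, and the only point requiring care is checking that the conjugate tuples have the lengths claimed in the statement (e.g.\ that $(n-2,2)^\ast$ really terminates at position $n-2$, which needs $n\geq 4$ so that the diagram has a genuine ``tail'' of single bricks). Alternatively, one could bypass Corollary~\ref{Corollary_3.6} and establish the two double arrows directly via Proposition~\ref{prop:arrows-in-doubly-founded-lattices}, identifying the unique lower cover of each $g$ and the unique upper cover of each $m$ through the transition rules and verifying the (in)equalities of Definition~\ref{def:dominance-order}; but the conjugation route is shorter and reuses the work already done for the top of~$\Lp$.
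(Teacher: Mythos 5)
Your proposal is correct and follows essentially the same route as the paper: the paper likewise obtains both double arrows by conjugating the ones from Lemmata~\ref{First_Lemma} and~\ref{Second_Lemma} via Corollary~\ref{Corollary_3.6}, with the membership claims $g\in\Ji$, $m\in\Mi$ also transferred through the antiautomorphism. Your conjugate computations $(n)^*=(1,\dotsc,\overset{n}{1})$, $(n-1,1)^*=(2,1,\dotsc,\overset{n-1}{1})$ and $(n-2,2)^*=(2,2,1,\dotsc,\overset{n-2}{1})$ match those in the paper's proof.
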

\begin{proof}
To prove the first statement, let $g=(2,1,\dotsc,\overset{n-1}{1})$ and
$m=(1,1,\dotsc,\overset{n}{1})$. Then $m^*=\apply{n}$ and
$g^*=\apply{n-1,1}$. From Lemma~\ref{First_Lemma}, we infer that
$m^*\in\Ji$, $g^*\in\Mi$ and $m^*\Doppelpfeil g^*$. Applying
Corollary~\ref{Corollary_3.6} to the latter expression, we obtain
$g\Doppelpfeil m$, as required; moreover, using partition conjugation,
it follows that $g\in\Ji$ and $m\in\Mi$.
Similarly, combining Corollary~\ref{Corollary_3.6} with
Lemma~\ref{Second_Lemma}, one can prove the second statement.
\end{proof}

\begin{lemma}\label{lem:new3.14}
Let $b\in\Nplus$, $d\in\N$; set $n=2b+d$.
Then we have $g=(2,\dotsc,\overset{b}{2},1,\dotsc,\overset{b+d}{1})\in\Ji$
of type~\ref{typeA} if $d=0$ and of type~\ref{typeB} if $d\geq1$.
Its unique lower cover is
$\tilde{g}=(2,\dotsc,\overset{b-1}{2},1,\dotsc,\overset{b+d}{1},1)$.
\end{lemma}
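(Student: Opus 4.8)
The plan is to recognise $g$ directly as one of the two forms of $\bigvee$-irreducibles listed in Lemma~\ref{lem:char-join-irr}, according to whether $d$ vanishes, and then to read off the unique lower cover from Lemma~\ref{lem:description-of-lower-covers} by determining which of the two transition rules of Definition~\ref{def:transition-rules-lower-covers} is applicable to~$g$. To begin with, $g$ is a non-increasing sequence of positive integers whose entries sum to $2b+d=n$, so $g\in\Lp$; moreover $g\neq(1,\dotsc,1)$ because $b\geq1$ forces $g_1=2$.

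\emph{Case $d=0$.} Here $g=\tupone{2}{b}$ has type~\ref{typeA} with $k=2$ and $\ell=b$, hence $g\in\Ji$ by Lemma~\ref{lem:char-join-irr}. One observes that $g$ has exactly one cliff, of height $h=2$, at position $j=b$ (where $g_b-g_{b+1}=2-0=2$), and no slippery step (the identity $g_j-1=g_{j+1}$ can never hold, since every nonzero entry of~$g$ equals~$2$). So the sole applicable transition is rule~\eqref{item:transition-cliff}, which lets one brick fall from that cliff and thereby replaces $(\dotsc,\overset{b-1}{2},2,0,\dotsc)$ by $(\dotsc,\overset{b-1}{2},1,1,\dotsc)$; thus $\tilde{g}=(2,\dotsc,\overset{b-1}{2},1,1)$, which is the asserted formula specialised to $d=0$.

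\emph{Case $d\geq1$.} Now $g=\tupdos{2}{b}{1}{b+d}$ has type~\ref{typeB} with $k=2$, the same~$b$, and $\ell=d$, so again $g\in\Ji$. This time $g$ has no cliff: the only positions where the value decreases are $j=b$ (drop $2\to1$) and $j=b+d$ (drop $1\to0$), each of size~$1$. It does, however, have a slippery step at $j=b$ with parameter $\ell'\defeq d+1$, since $2\leq\ell'\leq b+d=n-b$ (using $d\geq1$ and $b\geq1$) and $g_b-1=1=g_{b+1}=\dotsm=g_{b+d}=g_{b+d+1}+1$; and no other index produces a slippery step (for $j<b$ one has $g_j-1=1\neq2=g_{j+1}$, and for $j>b$ the value $g_j-1=0$ would force a negative later entry). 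Hence the unique lower cover comes from transition rule~\eqref{item:transition-step}, which slips the brick across the step, turning $(\dotsc,\overset{b-1}{2},2,\overset{b+1}{1},\dotsc,\overset{b+d}{1},0)$ into $(\dotsc,\overset{b-1}{2},1,\overset{b+1}{1},\dotsc,\overset{b+d}{1},1)$; this is $\tilde{g}=(2,\dotsc,\overset{b-1}{2},1,\dotsc,\overset{b+d}{1},1)$, exactly as claimed.

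All the verifications involved are routine unpackings of the relevant definitions, so I do not anticipate a genuine obstacle. The only points that warrant explicit care are the boundary inequalities $2\leq\ell'=d+1$ and $\ell'=d+1\leq n-b$ in the definition of a slippery step (these are precisely where the hypotheses $d\geq1$ and $b\geq1$ enter), and the degenerate sub-case $b=1$, in which the leading block of $2$'s in~$\tilde{g}$ is empty and $\tilde{g}=(1,\dotsc,1)$; one checks that all the displayed expressions remain correct there.
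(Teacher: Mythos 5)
Your proof is correct and follows essentially the same route as the paper's: split on $d=0$ versus $d\geq 1$, identify $g$ as type~\ref{typeA} resp.~\ref{typeB} via Lemma~\ref{lem:char-join-irr}, and obtain the unique lower cover by applying transition rule~\eqref{item:transition-cliff} to the single cliff at position~$b$ resp.\ rule~\eqref{item:transition-step} to the single slippery step at position~$b$. Your additional verifications (uniqueness of the cliff/step, the bounds $2\leq d+1\leq n-b$, and the degenerate case $b=1$) are details the paper leaves implicit, and they are all correct.
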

\begin{proof}
If $d=0$, then~$g$ is of type~\ref{typeA} and has got a unique cliff at
position~$b$, from which a brick may fall and produce
$\tilde{g}=(2,\dotsc,\overset{b-1}{2},1,1)$,
see transition rule~\eqref{item:transition-cliff}.
If $d\geq 1$, then~$g$ has got no cliffs but a unique slippery step at
position~$b$. Therefore its unique lower cover is obtained by applying
transition rule~\eqref{item:transition-step} to~$g$, that is, by
letting a brick slip across the step at position~$b$ to obtain
$\tilde{g}=(2,\dotsc,2,\overset{b}{1},\dotsc,\overset{b+d}{1},1)$.
\end{proof}

\begin{lemma}\label{lem:new3.15}
Let $b,d\in\N$ with $b\geq 2$; set $n\defeq 2b+d$.
Then we have $m=(b,1,\dotsc,\overset{b+d}{1},1)\in\Mi$ of type~\ref{typeIII}, having
$\tilde{m}=(b,2,1,\dotsc,\overset{b+d}{1})$ as its unique upper cover,
where the part $1,\dotsc,1$ only appears for $b+d\geq3$, that is,
$d\geq 1$, or $d=0$ and $b\geq 3$.
\end{lemma}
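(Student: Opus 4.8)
The plan is to verify, in turn, that $m$ is a partition of $n=2b+d$, that it is completely meet-irreducible of type~\ref{typeIII}, and that $\tilde{m}$ is its (necessarily unique) upper cover; the remark about when the block $1,\dotsc,1$ occurs in $\tilde{m}$ will simply be read off from the explicit shape of $\tilde{m}$.

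First I would observe that $m=(b,1,1,\dotsc,1)$ with $b+d$ trailing ones is weakly decreasing, as $b\geq 2\geq 1$, and that its entries sum to $b+(b+d)=n$, so $m\in\Lp$; moreover it has at least two ones because $b+d\geq b\geq 2$. Hence $m$ fits the description of type~\ref{typeIII} in Lemma~\ref{lem:char-join-irr} (with $a=b\geq 2$ and $c=b+d\geq 2$), so in particular $m\in\Mi$, and as a meet-irreducible element it has exactly one upper cover.

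To pin that cover down I would pass to the conjugate. Reading the Ferrers diagram of $m$ column by column gives $m^{*}=(b+d+1,1,\dotsc,1)$ with $b-1$ trailing ones; since $b+d+1\geq 3$ and $b-1\geq 1$, this is of type~\ref{typeC}, so $m^{*}\in\Ji$. Its only cliff sits in the first position, of height $m^{*}_{1}-m^{*}_{2}=b+d\geq 2$, and it has no slippery step, so by Lemma~\ref{lem:description-of-lower-covers} its unique lower cover is produced by transition rule~\eqref{item:transition-cliff} applied there, namely $(b+d,2,1,\dotsc,1)$ with $b-2$ trailing ones (a legitimate partition since $b+d\geq 2$). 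Because partition conjugation is an order-antiautomorphism of~$\Lp$ \cite[Proposition~2.8]{Brylawski}, it turns covering relations around, so $m=(m^{*})^{*}$ is covered by the conjugate of $(b+d,2,1,\dotsc,1)$; computing that conjugate, again by columns, yields $(b,2,1,\dotsc,1)$ with $b+d-2$ trailing ones, which is precisely $\tilde{m}$. The trailing block of $\tilde{m}$ is nonempty exactly when $b+d-2\geq 1$, i.e.\ $b+d\geq 3$, which under the hypothesis $b\geq 2$ amounts to $d\geq 1$, or else $d=0$ and $b\geq 3$, as claimed.

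The computations here are entirely routine; the only points requiring a little care are the two conjugations and the index arithmetic in the cliff rule, together with the small degenerate cases ($b=2$, or $d=0$) where one or another ``$1,\dotsc,1$'' block is empty, and checking that $\widetilde{m^{*}}$ is still a genuine partition (which needs $2\leq b+d\leq b+d+1$). If one prefers to avoid conjugation, an alternative is to check directly that $m<\tilde{m}$ by comparing partial sums and that $m$ arises from $\tilde{m}$ by transition rule~\eqref{item:transition-step}, i.e.\ by letting a brick slip across the slippery step in the second position of $\tilde{m}$; then $\tilde{m}$ is an upper cover of $m$, hence, by meet-irreducibility, the unique one. I would favour the conjugation argument, since it reuses Lemma~\ref{lem:description-of-lower-covers} more economically.
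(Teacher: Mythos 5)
Your proposal is correct and follows essentially the same route as the paper's proof: identify $m$ as type~\ref{typeIII} (hence in $\Mi$ with a unique upper cover), conjugate to $m^{*}=(b+d+1,1,\dotsc,\overset{b}{1})$, apply transition rule~\eqref{item:transition-cliff} to its unique cliff to get the lower cover $(b+d,2,1,\dotsc,\overset{b}{1})$, and conjugate back to obtain $\tilde{m}$. The paper also records the direct verification via the slippery-step/cliff rule in the second position of $\tilde{m}$, which you mention as an alternative.
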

\begin{proof}
As $b\geq2$, we have $m\in\Mi$ of type~\ref{typeIII}. Since
partition conjugation is an order-antiautomorphism of~$\Lp$,
the unique upper cover of~$m$ can be obtained by conjugating the unique
lower cover of $m^* = (b+d+1,1,1,\dotsc,\overset{b}{1})\in\Ji$.
Since $b\geq 2$, the join-irreducible~$m^*$ has got a unique cliff in
its first position and no slippery step. Applying transition
rule~\eqref{item:transition-cliff}, we obtain its lower cover
$(b+d,2,1,\dotsc,\overset{b}{1})$, which we can conjugate to obtain
$\tilde{m}=(b,2,1,\dotsc,\overset{b+d}{1})$, the unique upper cover
of~$m$. Knowing that there \emph{is} a unique upper cover~$\tilde{m}$
of~$m$ from $m\in\Mi$, the fact that~$\tilde{m}$ looks as given before
can also be verified by checking that~$m$ is a lower cover
of~$\tilde{m}$. Indeed, by simply applying transition
rule~\eqref{item:transition-step} to the slippery step in the second
position of~$\tilde{m}$ when $b+d\geq 3$, or by applying transition
rule~\eqref{item:transition-cliff} to the cliff in the second position
of $\tilde{m}=(b,2)=(2,2)$ when $d=0$ and $b=2$, we see that we
get~$m$.
\end{proof}

\begin{lemma}\label{lem:edith-06-03-2023}
Let $b,d\in\N$, $b\geq 2$, and set $n\defeq 2b+d$.
Then, for
$g\defeq(2,\dotsc,\overset{b}{2},1,\dotsc,\overset{b+d}{1})$
and $m\defeq(b,1,\dotsc,\overset{b+d}{1},1)$ we have
$g\Doppelpfeil m$; moreover,
$m^*=(d+b+1,1,\dotsc,\overset{b}{1})\Doppelpfeil
 g^*=(d+b,b)$.
\end{lemma}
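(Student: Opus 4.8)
The plan is to deduce the double arrow $g\Doppelpfeil m$ from Proposition~\ref{prop:arrows-in-doubly-founded-lattices} and then to transfer it to the conjugates via Corollary~\ref{Corollary_3.6}. All the structural input is already prepared: Lemma~\ref{lem:new3.14} provides $g\in\Ji$ together with its unique lower cover $\tilde g=(2,\dotsc,\overset{b-1}{2},1,\dotsc,\overset{b+d}{1},1)$, and Lemma~\ref{lem:new3.15} provides $m\in\Mi$ together with its unique upper cover $\tilde m=(b,2,1,\dotsc,\overset{b+d}{1})$. By Proposition~\ref{prop:arrows-in-doubly-founded-lattices}\eqref{item:g-lcover-below-m} and~\eqref{item:m-ucover-above-g} it therefore suffices to verify the three relations $g\nleq m$, $\tilde g\leq m$ and $g\leq\tilde m$; the first together with the second will give $g\Runterpfeil m$, the first together with the third will give $g\Hochpfeil m$.

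For $g\nleq m$ I would simply compare lengths: $g$ has $b+d$ nonzero entries while $m$ has $b+d+1$, so $\len(g)<\len(m)$ and Lemma~\ref{lem:shorter-implies-nleq} applies. For $\tilde g\leq m$ note that $\tilde g$ and $m$ have the same length $b+d+1$ and that $\tilde g_i\in\set{1,2}$ whereas $m_i=1$ for every $i\geq 2$; hence $\tilde g_i\geq m_i$ for all $i$ with $2\leq i\leq\len(m)$, and Corollary~\ref{cor:above-implies-dominance} yields $\tilde g\leq m$. The same corollary settles $g\leq\tilde m$: for $2\leq i\leq\len(\tilde m)=b+d$ we have $\tilde m_2=2$ and $\tilde m_i=1$ for $i\geq 3$, while $g_i=2$ for $i\leq b$ and $g_i=1$ for $b<i\leq b+d$, so $g_i\geq\tilde m_i$ throughout. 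Combining the three facts with Proposition~\ref{prop:arrows-in-doubly-founded-lattices} gives $g\Runterpfeil m$ and $g\Hochpfeil m$, i.e.\ $g\Doppelpfeil m$.

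It remains to identify the conjugates, which I would read off the Ferrers diagrams row by row: from $m=(b,1,\dotsc,1)$ of length $b+d+1$ one obtains $m^*=(d+b+1,1,\dotsc,\overset{b}{1})$, and from $g=(2,\dotsc,2,1,\dotsc,1)$ one gets $g^*=(d+b,b)$. Since $m\in\Mi$ and $g\in\Ji$, the duality of the irreducible types recorded right after Lemma~\ref{lem:char-join-irr} shows $m^*\in\Ji$ and $g^*\in\Mi$, so Corollary~\ref{Corollary_3.6} converts $g\Doppelpfeil m$ into $m^*\Doppelpfeil g^*$, completing the argument. I do not expect a genuine obstacle here; the only care needed is the index bookkeeping in the tuple notation together with the degenerate instances $d=0$ (where $g$ is of type~\ref{typeA} and $\tilde m$ collapses to $(2,2)$) and $b=2$ (where $\tilde g$ coincides with $m$), but in all of these the inequalities above simply degenerate to equalities and require no separate treatment.
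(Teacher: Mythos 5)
Your proposal is correct and follows the same architecture as the paper's proof: identify $\tilde g$ and $\tilde m$ via Lemmata~\ref{lem:new3.14} and~\ref{lem:new3.15}, verify $g\nleq m$, $\tilde g\leq m$ and $g\leq\tilde m$, apply Proposition~\ref{prop:arrows-in-doubly-founded-lattices}, and transfer to the conjugates by Corollary~\ref{Corollary_3.6}. The only difference is cosmetic: where the paper checks the three order relations by explicit partial-sum computations, you invoke Lemma~\ref{lem:shorter-implies-nleq} and Corollary~\ref{cor:above-implies-dominance}, which is a clean and entirely valid shortcut.
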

\begin{proof}
Clearly, the statement about~$m^*$ and~$g^*$ will follow from the one
about~$g$ and~$m$ by Corollary~\ref{Corollary_3.6}. We therefore let
$n=2b+d$ with $b\geq 2$, $d\geq 0$ and consider~$g$ and~$m$.
We have that $g\nleq m$ because the partial sums up to index~$b$
exhibit the relation $2b>2b-1=b+b-1$. From Lemma~\ref{lem:new3.14} we
know that $g\in\Ji$ has the unique lower cover
$\tilde{g} = (2,\dotsc,\overset{b-1}{2},1,\dotsc,\overset{b+d}{1},1)$;
from Lemma~\ref{lem:new3.15}  we infer that $m\in\Mi$ has the unique
upper cover $\tilde{m}=(b,2,1,\dotsc,\overset{b+d}{1})$.
\par
First we prove that $g\Hochpfeil m$. We shall verify that
$g\leq \tilde{m}$ by comparing the sequences of partial sums. For the
sum up to the index $j=1$ we have $2\leq b$ by hypothesis.
The sequence of partial sums of~$\tilde{m}$ continues after~$b$ with
the terms $\apply{\min(b+j,n)}_{j\geq 2}$. If $2\leq j\leq b$, then the
partial sum of~$g$ up to index~$j$ yields $2j=j+j\leq j+b$ since
$j\leq b$. For indices $j\geq b$ both partial sums coincide with the
value $j+b$, until they reach the common maximum~$n$. Hence
$\tilde{m}\geq g$, and thus we may rely on
Proposition~\ref{prop:arrows-in-doubly-founded-lattices}\eqref{item:m-ucover-above-g}
to show that $g\Hochpfeil m$.
\par
Our next goal is to demonstrate that $g\Runterpfeil m$. For this we
first show that $\tilde{g}\leq m$ by comparing the sequences of partial
sums. For~$m$ we obtain the sequence $(\min(n,b-1+j))_{j\geq 1}$, while
for~$\tilde{g}$ it begins with $(2j)_{1\leq j<b}$ and then continues
with $(\min(n,b-1+j))_{j\geq b}$.
For $1\leq j\leq b-1$ we have $2j=j+j\leq b-1+j$, and for indices
$j\geq b$ both sequences coincide. Thus we have established
$\tilde{g}\leq m$, and hence
Proposition~\ref{prop:arrows-in-doubly-founded-lattices}\eqref{item:g-lcover-below-m}
yields $g\Runterpfeil m$.
\end{proof}

\begin{corollary}\label{cor:all2-doublearrow-n-half-ones}
Let $d\in\N$, $b\in\Nplus$ and set $n\defeq 2b+d$.
Under these assumptions we have the two double arrow relations
$g = (2,\dotsc,\overset{b}{2},1,\dotsc,\overset{b+d}{1})\Doppelpfeil
 m = (b,1,\dotsc,\overset{b+d+1}{1})$
and $m^*=(b+d+1,1,\dotsc,\overset{b}{1})\Doppelpfeil g^*=(b+d,b)$.
\end{corollary}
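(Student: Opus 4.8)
The plan is to recognise that Corollary~\ref{cor:all2-doublearrow-n-half-ones} is just Lemma~\ref{lem:edith-06-03-2023} with the hypothesis $b\ge 2$ relaxed to $b\in\Nplus$, so that the only genuinely new content is the boundary case $b=1$; everything else is to be quoted verbatim. First I would record, via Corollary~\ref{Corollary_3.6}, that once $g\in\Ji$ and $m\in\Mi$ are known, the two claimed relations are equivalent, $g\Doppelpfeil m\iff m^*\Doppelpfeil g^*$; hence it suffices to prove $g\in\Ji$, $m\in\Mi$ and $g\Doppelpfeil m$, together with the routine conjugation identities $g^*=(b+d,b)$ and $m^*=(b+d+1,1,\dotsc,\overset{b}{1})$, which are read off the Ferrers diagrams (and for $b\ge2$ are already contained in Lemma~\ref{lem:edith-06-03-2023}).

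For $b\ge2$ there is nothing to do: the partitions $g=(2,\dotsc,\overset{b}{2},1,\dotsc,\overset{b+d}{1})$ and $m=(b,1,\dotsc,\overset{b+d+1}{1})$ are precisely those of Lemma~\ref{lem:edith-06-03-2023} --- note that $(b,1,\dotsc,\overset{b+d}{1},1)$ and $(b,1,\dotsc,\overset{b+d+1}{1})$ denote the same tuple --- and that lemma already delivers $g\Doppelpfeil m$ as well as $m^*=(b+d+1,1,\dotsc,\overset{b}{1})\Doppelpfeil g^*=(b+d,b)$.

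It remains to treat $b=1$. Here $n=2+d\ge2$ and the compact notation degenerates: the block of $2$'s collapses to the single entry $2$, and the trailing isolated $1$ of $m$ merges with the run of $1$'s, so that $g=(2,1,\dotsc,\overset{n-1}{1})$ and $m=(1,\dotsc,\overset{n}{1})$. These are exactly the partitions appearing in Lemma~\ref{Lemma3.7}\eqref{item:double-arrow-n-n-1-dual}, which yields $g\in\Ji$, $m\in\Mi$ and $g\Doppelpfeil m$. For the conjugates, $g^*=(n-1,1)=(b+d,b)$ and $m^*=(n)$, which is precisely what $(b+d+1,1,\dotsc,\overset{b}{1})$ evaluates to when $b=1$; and $m^*\Doppelpfeil g^*$ then follows from Corollary~\ref{Corollary_3.6} (equivalently, it is Lemma~\ref{First_Lemma}). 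I expect no real obstacle here: the only thing requiring care is the notational bookkeeping, i.e.\ checking that the displayed forms of $g$, $m$, $g^*$, $m^*$ specialise correctly at $b=1$ so that Lemmata~\ref{Lemma3.7} and~\ref{First_Lemma} apply without any new computation.
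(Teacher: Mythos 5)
Your proposal is correct and follows exactly the paper's own route: the case $b\geq2$ is Lemma~\ref{lem:edith-06-03-2023}, the case $b=1$ reduces to Lemma~\ref{Lemma3.7}\eqref{item:double-arrow-n-n-1-dual}, and the dual statement follows from Corollary~\ref{Corollary_3.6}. The extra notational bookkeeping you carry out (checking that the displayed tuples specialise correctly at $b=1$) is sound but not needed beyond what the paper records.
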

\begin{proof}
If $b=1$, the first statement follows from
Lemma~\ref{Lemma3.7}\eqref{item:double-arrow-n-n-1-dual}, while
for $b\geq 2$ it follows from Lemma~\ref{lem:edith-06-03-2023}.
The second statement follows by dualisation via Corollary~\ref{Corollary_3.6}.
\end{proof}

\figurename~\ref{fig:std-cxt-lattice7} shows the arrow relations of
$\K\apply{\Lp[7]}$.
Using Lemmata~\ref{First_Lemma}, \ref{Second_Lemma} and~\ref{Lemma3.7},
one can verify some of those:
\begin{align*}
\apply{7}&\Doppelpfeil \apply{6,1}&
& \text{\footnotesize by Lemma~\ref{First_Lemma}}&&
&\apply{6,1} &\Doppelpfeil \apply{5,2}&
& \text{\footnotesize by Lemma~\ref{Second_Lemma}}\\
\apply{2,1,1,1,1,1}&\Doppelpfeil \apply{1,1,1,1,1,1,1}&
& \text{\footnotesize by Lemma~\ref{Lemma3.7}}&&
& \apply{2,2,1,1,1}&\Doppelpfeil \apply{2,1,1,1,1,1}&
& \text{\footnotesize by Lemma~\ref{Lemma3.7}}
\end{align*}
All remaining double arrows of $\K\apply{\Lp[7]}$ will be collected in the following remark,
whose proof will illustrate several similar cases.
In subsequent sections we introduce parameters to prove general
statements and avoid such repetitions.

\begin{remark}\label{rem:last-double-arrows-in-KL7}
 We consider \figurename~\ref{fig:lattice7} and the context given in
 \figurename~\ref{fig:std-cxt-lattice7}.
 The first double arrow $(2,2,2,1)\Doppelpfeil(3,1,1,1,1)$ and its dual
 $(5,1,1)\Doppelpfeil (4,3)$ are confirmed by
 Lemma~\ref{lem:edith-06-03-2023} for $b=3$ and $d=1$.
 It remains to verify
 \begin{align*}
   &\text{a)}\quad
  \begin{aligned}[t]
   (3,1,1,1,1) &\Doppelpfeil (2,2,2,1)
   \\
   (4,3) &\Doppelpfeil (5,1,1)
  \end{aligned}&&
  \text{b)}\quad \begin{aligned}[t]
   (4,1,1,1) &\Doppelpfeil (3,3,1)
   \\
   (3,2,2) &\Doppelpfeil (4,1,1,1)
  \end{aligned}&&
  \text{c)}\quad
   (3,3,1) \Doppelpfeil (3,2,2)
 \end{align*}
\end{remark}
\begin{proof}
First, we verify~a).
Using the  definition or inspecting
\figurename~\ref{fig:std-cxt-lattice7},
we get $g\defeq\apply{3,1,1,1,1} \nleq m\defeq\apply{2,2,2,1}$.
Clearly, we have $g\in\Ji[7]$ of type~\ref{typeC}
and $m\in\Mi[7]$ of type~\ref{typeII}.
Applying transition rule~\eqref{item:transition-cliff}, the
unique lower cover of~$g$ is $\tilde{g}=(2,2,1,1,1)$, which lies below~$m$.
Hence
Proposition~\ref{prop:arrows-in-doubly-founded-lattices}\eqref{item:g-lcover-below-m}
entails $g\Runterpfeil m$.
To prove the relation $g\Hochpfeil m$, we note that
the unique upper cover of~$m$ is
$\tilde{m}=(3,2,1,1)$, which is the conjugate of the unique lower cover
of $m^* = (4,3)$, as  explained in the proof of
Lemma~\ref{lem:new3.15}.
Using the  definition or inspecting \figurename~\ref{fig:lattice7},
we observe $g\le\tilde{m}$. Then, as $g\nleq m$,
Proposition~\ref{prop:arrows-in-doubly-founded-lattices}\eqref{item:m-ucover-above-g}
entails $g\Hochpfeil m$.
We conclude that $g\Doppelpfeil m$.
Furthermore, since $m^* = (4,3) $ and $g^* = (5,1,1)$,
Corollary~\ref{Corollary_3.6} gives the second double arrow in~a),
namely $(4,3)\Doppelpfeil(5,1,1)$.
\par

Next, we verify~b).
We repeat the arguments of~a)
and get $g\defeq\apply{4,1,1,1} \nleq m\defeq\apply{3,3,1}$
with  $g\in\Ji[7]$ of type~\ref{typeC} and $m\in\Mi[7]$ of type~\ref{typeII}.
Applying the same transition rules yields the unique lower cover
$\tilde{g}=(3,2,1,1)$ and the unique upper cover $\tilde{m}=(4,2,1)$.
Also, the definition or inspecting \figurename~\ref{fig:lattice7} gives
$g\le \tilde{m}$ and $\tilde{g}\le m$.
Therefore, since $g\nleq m$,
Proposition~\ref{prop:arrows-in-doubly-founded-lattices}
confirms $g\Doppelpfeil m$.
Furthermore, since $m^* = (3,2,2) $ and $g^* = (4,1,1,1)$,
Corollary~\ref{Corollary_3.6} gives the second double arrow in~b),
namely $(3,2,2)\Doppelpfeil(4,1,1,1)$.
\par

Finally,
to prove $g\defeq(3,3,1) \Doppelpfeil m\defeq(3,2,2)$,
observe that $g\in\Ji[7]$ of type~\ref{typeD}
and $m\in\Mi[7]$ of type~\ref{typeIV}.
Since $g_1+g_2 =6> m_1+m_2$, we get $g\nleq m$.
Furthermore, the unique lower cover $\tilde{g}=(3,2,2)$ of~$g$ is
obtained by applying transition rule~\eqref{item:transition-cliff}.
This gives both $\tilde{g} = m$ and $g=\tilde{m}$
where $\tilde{m}$ is the unique upper cover of $m$.
We conclude that $g\nleq m$, $\tilde{g}\le m$ and $g\le\tilde{m}$,
wherefore Proposition~\ref{prop:arrows-in-doubly-founded-lattices}
entails $g\Doppelpfeil m$.
\end{proof}

\subsection[Double arrows involving completely join-irreducibles of type A or B]{Double arrows involving completely join-irreducibles of type~\ref{typeA} or~\ref{typeB}}
The subsequent theorem exhibits the following one-to-one relation:
For every $g\in\Ji$ of type~\ref{typeA} or~\ref{typeB} there is exactly
one $m\in\Mi$ such that $g\Doppelpfeil m$.
This unique~$m$ has the form $m=(a,1,\dotsc,1)$
and for each~$m$ of this shape there is exactly one $p\in\Ji$ of
type~\ref{typeA} or~\ref{typeB}
such that $p\Doppelpfeil m$, namely $p=g$.

\begin{theorem}\label{thm:double-typeAB}
Let $b\in\N$, $k,\ell,n\in\Nplus$ be such that $b+\ell<n=b(k+1)+\ell k$ and
set $a=n-b-\ell$.
Then $g=(k+1,\dotsc,\overset{b}{k+1},k,\dotsc,\overset{b+\ell}{k})\in\Ji$
is of type~\ref{typeA} for $b=0$ and of type~\ref{typeB} for $b\ge1$,
$m=(a,1,\dotsc,\overset{b+\ell}{1},1)\in\Mi$,
and
$g\Doppelpfeil m$.
\par
Moreover, the previously described~$m$ are the
only $m\in\Mi$ satisfying $g\Doppelpfeil m$ for given $g\in\Ji$ of
type~\ref{typeA} or~\ref{typeB}.
\end{theorem}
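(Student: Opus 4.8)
The plan is to avoid comparing partial sums directly and instead to exploit two extremal facts already at our disposal. By Corollary~\ref{cor:least-bounded-width-typeAB} the partition
$g=(k+1,\dotsc,\overset{b}{k+1},k,\dotsc,\overset{b+\ell}{k})$
is exactly the least element of $\Bw{b+\ell}$, and it is of type~\ref{typeA} (with $k\geq2$) if $b=0$ and of type~\ref{typeB} if $b\geq1$; and by Lemma~\ref{lem:top-typeIII-least-length}, applied with its parameter equal to $b+\ell+1$, the partition $m=(a,1,\dotsc,\overset{b+\ell}{1},1)$, which has length $b+\ell+1$, is the \emph{largest} element of $\Lp$ of length at least $b+\ell+1$. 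A brief case distinction shows $m\in\Mi$: it is of type~\ref{typeI} when $a=1$, of type~\ref{typeIII} when $a\geq2$ and $b+\ell\geq2$, and of type~\ref{typeII} when $b+\ell=1$.

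The double arrow itself then follows from Proposition~\ref{prop:arrows-in-doubly-founded-lattices}. Since $\len(g)=b+\ell<b+\ell+1=\len(m)$, Lemma~\ref{lem:shorter-implies-nleq} gives $g\nleq m$. The unique lower cover $\tilde g$ of $g$ satisfies $\tilde g<g$, so $\tilde g\notin\Bw{b+\ell}$ (otherwise the minimality of $g$ would force $g\leq\tilde g$); hence $\len(\tilde g)\geq b+\ell+1$ and therefore $\tilde g\leq m$ by Lemma~\ref{lem:top-typeIII-least-length}, so Proposition~\ref{prop:arrows-in-doubly-founded-lattices}\eqref{item:g-lcover-below-m} yields $g\Runterpfeil m$. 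For the up-arrow, let $\tilde m$ be the unique upper cover of $m\in\Mi$. If $\len(\tilde m)\geq b+\ell+1$, then $\tilde m\leq m$ by Lemma~\ref{lem:top-typeIII-least-length}, contradicting $\tilde m>m$; hence $\len(\tilde m)\leq b+\ell$, i.e.\ $\tilde m\in\Bw{b+\ell}$, and minimality of $g$ gives $g\leq\tilde m$. Then Proposition~\ref{prop:arrows-in-doubly-founded-lattices}\eqref{item:m-ucover-above-g} yields $g\Hochpfeil m$, and altogether $g\Doppelpfeil m$.

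For the uniqueness claim, suppose $m'\in\Mi$ satisfies $g\Doppelpfeil m'$; I want to conclude $m'=m$. From $g\Runterpfeil m'$ and Proposition~\ref{prop:arrows-in-doubly-founded-lattices}\eqref{item:g-lcover-below-m} we get $g\nleq m'$ and $\tilde g\leq m'$; from $g\Hochpfeil m'$ and Proposition~\ref{prop:arrows-in-doubly-founded-lattices}\eqref{item:m-ucover-above-g} we get $g\leq\tilde{m'}$, where $\tilde{m'}$ is the unique upper cover of $m'$. Since $g\leq\tilde{m'}$, Lemma~\ref{lem:shorter-implies-nleq} forces $\len(\tilde{m'})\leq\len(g)=b+\ell$; as $m'$ is a lower cover of $\tilde{m'}$, it arises from $\tilde{m'}$ by a single transition rule (Lemma~\ref{lem:description-of-lower-covers}), which increases the length by at most one, so $\len(m')\leq b+\ell+1$. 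On the other hand $g\nleq m'$ rules out $m'\in\Bw{b+\ell}$ (which would give $g\leq m'$), so $\len(m')\geq b+\ell+1$; hence $\len(m')=b+\ell+1$ and therefore $m'\leq m$ by Lemma~\ref{lem:top-typeIII-least-length}. If $m'\neq m$, then $m'<m$, so $m$ lies strictly above $m'$; since $m'\in\Mi$ has the single upper cover $\tilde{m'}$, everything strictly above $m'$ already dominates $\tilde{m'}$, so $\tilde{m'}\leq m$, which is impossible because $\len(\tilde{m'})\leq b+\ell<b+\ell+1=\len(m)$ entails $\tilde{m'}\nleq m$ by Lemma~\ref{lem:shorter-implies-nleq}. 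Thus $m'=m$.

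The argument is essentially friction-free once the two extremal descriptions are in hand; the points requiring a little care are the verification that the candidate~$m$ lies in~$\Mi$ across its three degenerate shapes and the bookkeeping of lengths under passing to a lower or upper cover (each transition rule changes a partition's length by at most one). The genuinely decisive observation, and in that sense the hard part, is recognising that~$g$ and~$m$ are respectively the bottom of $\Bw{b+\ell}$ and the top of the partitions of length at least $b+\ell+1$: this turns every dominance comparison needed in the proof into a length comparison settled by Lemmata~\ref{lem:shorter-implies-nleq} and~\ref{lem:top-typeIII-least-length} together with the minimality statement of Corollary~\ref{cor:least-bounded-width-typeAB}.
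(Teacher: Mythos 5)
Your proof is correct, but it takes a genuinely different route from the paper's. The paper first peels off the degenerate cases $b+\ell=1$ and $k=1$ (delegating them to Lemma~\ref{First_Lemma} and Corollary~\ref{cor:all2-doublearrow-n-half-ones}), then in the main case writes down the covers $\tilde{g}$ and $\tilde{m}$ explicitly and verifies $\tilde{g}\leq m$ and $g\leq\tilde{m}$ via Lemma~\ref{lem:top-typeIII-least-length} and the ``second largest partition of given length'' statement of Corollary~\ref{cor:subtop-typeIII-least-length}; its uniqueness claim is then distributed over Lemmata~\ref{lem:typeAB-downarrow-implies-length-m=b+l+1-ending-in-1}--\ref{lem:typeAB-uparrow-typeIV-impossible}, which analyse the possible types of~$m$ and, as a by-product, classify \emph{all} down-arrows emanating from~$g$ (information the paper reuses in Theorem~\ref{thm:TypeABDownIV} and Section~\ref{sect:1-by-1-gen-ACSC}). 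You instead exploit only the two extremal facts --- $g$ is the minimum of $\Bw{b+\ell}$ and $m$ is the maximum among partitions of length at least $b+\ell+1$ --- so that every dominance comparison collapses to a length comparison; this removes the case split on $k$ and $b+\ell$ and avoids computing $\tilde{g}$ and $\tilde{m}$ altogether. Your uniqueness argument (any $m'$ with $g\Doppelpfeil m'$ has length exactly $b+\ell+1$, hence $m'\leq m$, and $m'<m$ would force $\tilde{m'}\leq m$ against the length bound on $\tilde{m'}$) is a clean shortcut resting on the standard fact that everything strictly above a meet-irreducible dominates its unique upper cover. The only blemishes are cosmetic: the type-\ref{typeII} clause in your membership check for $m\in\Mi$ should read ``$a\geq2$ and $b+\ell=1$'' to avoid overlapping the $a=1$ case, and the claim that a transition rule increases length by at most one deserves the one-line verification from Definition~\ref{def:transition-rules-lower-covers}. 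In exchange for its brevity, your proof does not yield the stronger down-arrow classification that the paper's lemmata provide and that are needed downstream.
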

\begin{proof}
First, we consider the cases $b+\ell=1$ and $k=1$ individually.
If $b+\ell =1$, then we get $\ell=1$, $b=0$, hence $1=b+\ell<n=k$ (since
$b=0$), and thus $g = (k) = (n)$ is of type~\ref{typeA} and $m=(n-1,1)$.
Hence the case $b+\ell=1$  was proved in Lemma~\ref{First_Lemma}.
Moreover, if $k=1$, then $0<n-b-\ell=b$, i.e., $b\geq 1$,
and~$g$ is of type~\ref{typeB}, wherefore this case was proved in
Corollary~\ref{cor:all2-doublearrow-n-half-ones}.
\par
Accordingly, it remains to show the statement for $k\ge2$ and
$b+\ell\ge2$. Then, as $k\geq 2$, we have $g\in\Ji$ because it is
completely join-irreducible of type~\ref{typeA} (for $b=0$) and of
type~\ref{typeB} (for $b\ge1$).
Moreover, $m\in\Mi$ is of type~\ref{typeIII} as
$a = n-b-\ell=b(k+1)+\ell k -b-\ell \ge 3b+2\ell-b-\ell = b+b+\ell\ge2$
due to $k\geq 2$ and $b+\ell\geq 2$.
Since~$g$ is `shorter' than~$m$, Lemma~\ref{lem:shorter-implies-nleq}
asserts $g\nleq m$.
\par
The unique lower cover of~$g$ is
$\tilde{g}=(k+1,\dotsc,\overset{b}{k+1},k,\dotsc,k,\overset{b+\ell}{k-1},1)$,
where the middle part $k,\dotsc,k$ only appears for $\ell\geq 2$.
The partition~$\tilde{g}$ is obtained by letting a brick fall from the
cliff in the last position of~$g$ (transition rule~\eqref{item:transition-cliff}).
The unique upper cover of~$m$ is
$\tilde{m}=(a,2,1,\dotsc,\overset{b+\ell}{1})$, where, to obtain~$m$,
we can let a brick fall from the cliff in the second position if
$b+\ell=2$, or let a brick slip across the slippery step if
$b+\ell\geq 3$ and the part $1,\dotsc,1$ is present.
We shall prove that $\tilde{g}\leq m$ and $g\leq\tilde{m}$;
then Proposition~\ref{prop:arrows-in-doubly-founded-lattices} will
entail $g\Doppelpfeil m$. By
Lemma~\ref{lem:top-typeIII-least-length}, $m$ is the largest
partition of length $b+\ell+1=\len(\tilde{g})$; hence $\tilde{g}\leq m$.
By Corollary~\ref{cor:subtop-typeIII-least-length}, $\tilde{m}$ is
the second largest partition of length $b+\ell=\len(g)$, while,
according to Lemma~\ref{lem:top-typeIII-least-length},
$p\defeq (n-(b+\ell-1),1,\dotsc,\overset{b+\ell}{1})$ is the largest.
Since $k\geq 2>1$ we have $g\neq p$; hence $g\leq \tilde{m}$.
\par
That there are no other $m\in\Mi$ in relation $g\Doppelpfeil m$ with~$g$
of type~\ref{typeA} or~\ref{typeB} will be proved in the subsequent
lemmata, the proof being concluded with
Corollary~\ref{cor:typeAB-doublearrow-implies-described}.
\end{proof}

\begin{lemma}\label{lem:typeAB-downarrow-implies-length-m=b+l+1-ending-in-1}
Let $b\in\N$, $k,\ell,n\in\Nplus$ be such that $b+\ell<n=b(k+1)+\ell k$
and set
$g\defeq(k+1,\dotsc,\overset{b}{k+1},k,\dotsc,\overset{b+\ell}{k})$.
If $m\in\Mi$ satisfies $g\Runterpfeil m$, then $\len(m) = b+\ell+1$
and the final entry of~$m$ is $m_{b+\ell+1}=1$.
\end{lemma}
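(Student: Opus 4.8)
The plan is to combine the characterisation of down-arrows in doubly founded lattices (Proposition~\ref{prop:arrows-in-doubly-founded-lattices}\eqref{item:g-lcover-below-m}) with the extremal role of~$g$ recorded in Corollary~\ref{cor:least-bounded-width-typeAB}. Recall from there that~$g$ is exactly the \emph{least} element of~$\Bw{b+\ell}$, and that $\len(g)=b+\ell$ since all of its $b+\ell$ leading entries are positive (as $k\geq1$). Its unique lower cover~$\tilde{g}$ is obtained by a single transition rule — the cliff in the last position when $k\geq2$, and, when $k=1$, the slippery step at position~$b$, cf.\ the proof of Theorem~\ref{thm:double-typeAB}, Lemma~\ref{lem:new3.14} and Corollary~\ref{cor:all2-doublearrow-n-half-ones}. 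Inspecting these cases one checks that in all of them $\len(\tilde{g})=b+\ell+1$ and the final entry is $\tilde{g}_{b+\ell+1}=1$, so that $\sum_{i=1}^{b+\ell}\tilde{g}_i = n-1$.

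First I would pin down $\len(m)$. Since $g\Runterpfeil m$, Proposition~\ref{prop:arrows-in-doubly-founded-lattices}\eqref{item:g-lcover-below-m} yields $g\nleq m$ together with $\tilde{g}\leq m$. From $g\nleq m$ and the fact that~$g$ is the least element of~$\Bw{b+\ell}$, the partition~$m$ cannot belong to~$\Bw{b+\ell}$; hence $\len(m)>b+\ell$, i.e.\ $\len(m)\geq b+\ell+1$. Conversely, $\tilde{g}\leq m$ combined with the contrapositive of Lemma~\ref{lem:shorter-implies-nleq} gives $\len(m)\leq\len(\tilde{g})=b+\ell+1$. The two bounds force $\len(m)=b+\ell+1$.

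It then remains to determine the last entry of~$m$. As $\len(m)=b+\ell+1$, Lemma~\ref{lem:width-in-terms-of-nonzero-entries} gives $m_{b+\ell+1}>0$, and since~$m$ has no further nonzero entries we get $\sum_{i=1}^{b+\ell}m_i = n-m_{b+\ell+1}$. Evaluating the dominance inequality $\tilde{g}\leq m$ at the index $j=b+\ell$ yields $n-1=\sum_{i=1}^{b+\ell}\tilde{g}_i\leq\sum_{i=1}^{b+\ell}m_i=n-m_{b+\ell+1}$, so $m_{b+\ell+1}\leq1$. Together with $m_{b+\ell+1}>0$ this forces $m_{b+\ell+1}=1$, which is the assertion.

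The only mildly delicate point is the case bookkeeping in the first paragraph, establishing $\len(\tilde{g})=b+\ell+1$ and $\tilde{g}_{b+\ell+1}=1$ across the subcases $b=0$ versus $b\geq1$, $\ell=1$ versus $\ell\geq2$, and $k=1$ versus $k\geq2$; each subcase is routine and already essentially recorded in the preceding lemmata, so this is not a genuine obstacle. Everything else is a short combination of the dominance-order bookkeeping lemmata of Section~\ref{sect:general-facts-dominance} with Proposition~\ref{prop:arrows-in-doubly-founded-lattices}.
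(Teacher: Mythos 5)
Your proposal is correct and follows essentially the same route as the paper's proof: both use Corollary~\ref{cor:least-bounded-width-typeAB} to rule out $\len(m)\leq b+\ell$, Lemma~\ref{lem:shorter-implies-nleq} together with $\tilde{g}\leq m$ to cap $\len(m)$ at $\len(\tilde{g})=b+\ell+1$, and the partial-sum comparison at index $b+\ell$ to force $m_{b+\ell+1}=1$. Your explicit case check that $\len(\tilde{g})=b+\ell+1$ and $\tilde{g}_{b+\ell+1}=1$ (covering $k=1$ via the slippery step) is slightly more detailed than the paper, which simply asserts these facts.
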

\begin{proof}
Under the given conditions, $g$ is completely join-irreducible of
type~\ref{typeA} or~\ref{typeB},
see Corollary~\ref{cor:least-bounded-width-typeAB}.
Let us assume that $g\Runterpfeil m$ with $m\in\Mi$.
If $\len(m)\leq b+\ell$, then $m\in\Bw{b+\ell}$, and thus
Corollary~\ref{cor:least-bounded-width-typeAB} shows that $g\leq m$,
in contradiction to $g\Runterpfeil m$.
Therefore, $\len(m)\geq b+\ell+1$.
The unique lower cover~$\tilde{g}$ of~$g$ satisfies
$\len(\tilde{g})=b+\ell+1$.
By Proposition~\ref{prop:arrows-in-doubly-founded-lattices}\eqref{item:g-lcover-below-m},
our assumption $g\Runterpfeil m$ entails $\tilde{g}\leq m$, wherefore
$\len(\tilde{g})<\len(m)$ is impossible by
Lemma~\ref{lem:shorter-implies-nleq}. Consequently, we conclude
$\len(m)\leq\len(\tilde{g})=b+\ell+1$ and hence
$\len(m)=\len(\tilde{g})=b+\ell+1$,
which also yields $m_{b+\ell+1} \geq 1$.
Moreover, we have
$n-1=\sum_{i=1}^{b+\ell} \tilde{g}_i\leq
\sum_{i=1}^{b+\ell}m_i=n-m_{b+\ell+1}$
due to $\tilde{g}\leq m$ and $\tilde{g}_{b+\ell+1}=1$;
thus $m_{b+\ell+1}\leq 1$, and therefore $m_{b+\ell+1}=1$.
\end{proof}

\begin{lemma}\label{lem:typeAB-downarrow-typeI-III-implies-described}
Let $b\in\N$, $k,\ell,n\in\Nplus$ be such that $b+\ell<n=b(k+1)+\ell k$
and set
$g\defeq(k+1,\dotsc,\overset{b}{k+1},k,\dotsc,\overset{b+\ell}{k})$.
If $m\in\Mi$ is of type~\ref{typeI}, \ref{typeII} or~\ref{typeIII} and
satisfies $g\Runterpfeil m$, then~$m$ is of the form as described in
Theorem~\ref{thm:double-typeAB}.
\end{lemma}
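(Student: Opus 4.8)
The plan is to peel off, type by type, the extra constraints forced by the hypotheses $m\in\Mi$ of type~\ref{typeI}, \ref{typeII} or~\ref{typeIII} together with $g\Runterpfeil m$. The crucial preliminary facts that I would record at the outset are: by Corollary~\ref{cor:least-bounded-width-typeAB}, $g\in\Ji$ is indeed of type~\ref{typeA} or~\ref{typeB}; by Proposition~\ref{prop:arrows-in-doubly-founded-lattices}\eqref{item:g-lcover-below-m}, the arrow $g\Runterpfeil m$ is equivalent to $g\nleq m$ together with $\tilde g\leq m$; and, most importantly, Lemma~\ref{lem:typeAB-downarrow-implies-length-m=b+l+1-ending-in-1} already gives $\len(m)=b+\ell+1$ and $m_{b+\ell+1}=1$.

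The cases of type~\ref{typeIII} and type~\ref{typeI} are then short. If $m=(a',1,\dotsc,1)$ is of type~\ref{typeIII}, the number of trailing ones is $\len(m)-1=b+\ell$, forcing $a'=n-(b+\ell)=a$, so $m$ is literally the partition exhibited in Theorem~\ref{thm:double-typeAB}. If $m=(t,\dotsc,t)$ is of type~\ref{typeI}, then its last entry equals $t$, so $m_{b+\ell+1}=1$ gives $t=1$ and $m=(1,\dotsc,1)$; since $m$ partitions~$n$, this means $\len(m)=n$, hence $n=b+\ell+1$ and $a=n-(b+\ell)=1$, so again $m=(a,1,\dotsc,1)$ with $b+\ell$ trailing ones.

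The real work is the type~\ref{typeII} case. Here $m=(t,\dotsc,t,r)$ with $t>r\geq 1$, and $\len(m)=b+\ell+1$ together with $m_{b+\ell+1}=r=1$ forces $m=(t,\dotsc,t,1)$ with exactly $b+\ell$ copies of $t\geq 2$. Summing entries yields $(b+\ell)t+1=n=b(k+1)+\ell k=(b+\ell)k+b$, i.e.\ the key identity $(b+\ell)(t-k)=b-1$. For $b=0$ this reads $\ell(t-k)=-1$, which can only hold with $\ell=1$, and then $t=k-1=n-1$, so $m=(n-1,1)=(a,1)$, once more the partition of Theorem~\ref{thm:double-typeAB}. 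For $b\geq 1$ the bounds $0\leq b-1<b+\ell$ (using $\ell\geq 1$) force the multiple $(b+\ell)(t-k)$ of $b+\ell$ to vanish, so $t=k$ and hence $b=1$. In this last situation $g=(k+1,k,\dotsc,k)$ with $k=t\geq 2$, so its unique lower cover, obtained by letting a brick fall from the cliff in the final position (transition rule~\eqref{item:transition-cliff}), is $\tilde g=(k+1,k,\dotsc,k,k-1,1)$ with first entry $\tilde g_1=k+1>k=m_1$; this contradicts $\tilde g\leq m$. Thus no type~\ref{typeII} partition other than those already listed can be down-arrow related to~$g$, completing the proof.

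I expect the type~\ref{typeII} step to be the main obstacle: one has to distill the Diophantine identity $(b+\ell)(t-k)=b-1$ from the size of~$m$, argue that its only non-degenerate solution is $b=1$, $t=k$, and then dispose of that solution by comparing first partial sums of $\tilde g$ and~$m$. The degenerate boundary cases $b=0$ and $b+\ell=1$, where type~\ref{typeII} (and type~\ref{typeI}) genuinely produce partitions lying inside the claimed family rather than a contradiction, need a little care in the bookkeeping, but are routine.
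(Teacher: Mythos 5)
Your proof is correct, and its overall skeleton matches the paper's: both arguments funnel everything through Lemma~\ref{lem:typeAB-downarrow-implies-length-m=b+l+1-ending-in-1} (so $\len(m)=b+\ell+1$ and $m_{b+\ell+1}=1$), dispatch types~\ref{typeI} and~\ref{typeIII} immediately since there $m=(a,1,\dotsc,1)$ is forced, and then concentrate on type~\ref{typeII}. Where you genuinely diverge is in how type~\ref{typeII} is eliminated. The paper never solves for $t$: it groups type~\ref{typeII} with $\len(m)=2$ together with types~\ref{typeI} and~\ref{typeIII}, and for $\len(m)\geq 3$ it observes that $\tilde g\leq m$ forces $k\leq\tilde g_1\leq m_1=\kappa$, whence the single inequality chain $\sum_{i=1}^{b+\ell-1}m_i=n-\kappa-1\leq n-k-1<n-k=\sum_{i=1}^{b+\ell-1}\tilde g_i$ contradicts $\tilde g\leq m$ in one stroke. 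You instead extract the Diophantine identity $(b+\ell)(t-k)=b-1$ from the total sum, classify its solutions ($b=0$, $\ell=1$, $t=k-1$ landing inside the claimed family; $b=1$, $t=k$ as the only other candidate), and kill the surviving candidate by comparing first entries of $\tilde g=(k+1,k,\dotsc,k,k-1,1)$ and $m$. Both routes are valid; the paper's estimate is more uniform (no case analysis on $b$, and the $k=1$ subcase is handled by a separate appeal to Corollary~\ref{cor:below-implies-dominance}, which your arithmetic absorbs automatically), while yours has the mild advantage of exhibiting exactly which type~\ref{typeII} partitions can occur and why they coincide with the $m=(n-1,1)$ already listed in Theorem~\ref{thm:double-typeAB}.
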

\begin{proof}
We assume that the given~$g$ and $m\in\Mi$ satisfy $g\Runterpfeil m$.
From Lemma~\ref{lem:typeAB-downarrow-implies-length-m=b+l+1-ending-in-1}
we know that $\len(m) = b+\ell+1$ and that $m_{b+\ell+1}=1$.
If~$m$ is of type~\ref{typeI}, \ref{typeIII}, or~\ref{typeII} with
$\len(m)=2$, then~$m$ is of the form
$m=(a,1,\dotsc,\overset{b+\ell+1}{1})$
with $a\geq 1$. Since its total sum amounts to $n=a+(b+\ell)$, we have
$a=n-b-\ell$ and~$m$ is exactly of
the form described in Theorem~\ref{thm:double-typeAB}.
\par
Finally, let us consider the case that~$m$ is of type~\ref{typeII} with $\len(m)\geq 3$
and~$m$ has the form $m=(\kappa,\dotsc,\overset{b+\ell}{\kappa},1)$ where $b+\ell\geq 2$ and
$\kappa\geq2$. Thus, if $k=1$, then $g\leq m$ by
Corollary~\ref{cor:below-implies-dominance}, which contradicts $g\Runterpfeil m$.
Therefore, we know that $k\geq 2$.
Hence the unique lower cover~$\tilde{g}$ of~$g$ is obtained by
letting a brick fall from the cliff in position $b+\ell\geq2$,
and thus $\tilde{g} = (k+1,\dotsc,\overset{b}{k+1},k,\dots,k,\overset{b+\ell}{k-1},1)$
with its first entry satisfying $\tilde{g}_1 \ge k$.
Now $g\Runterpfeil m$ and
Proposition~\ref{prop:arrows-in-doubly-founded-lattices}\eqref{item:g-lcover-below-m}
entail $\tilde{g}\leq m$, hence $k \le\tilde{g}_1\leq m_1=\kappa$.
Using this, we can finally estimate
$\sum_{i=1}^{b+\ell-1} m_i=n-\kappa-1\le n-k-1
 < n-k =\sum_{i=1}^{b+\ell-1}\tilde{g}_i$,
which implies the contradiction $\tilde{g}\nleq m$.
Therefore, the third case, where~$m$ is of type~\ref{typeII} with
$\len(m)\geq 3$, is impossible and the lemma has been shown.
\end{proof}

\begin{corollary}\label{cor:BarrowIIimpossible}
Partitions $g\in\Ji$ of type~\ref{typeB} are not arrow-related to any
$m\in\Mi$ of type~\ref{typeII}.
\end{corollary}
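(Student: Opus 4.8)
The plan is to observe first that ``$g$ is arrow-related to $m$'' means that at least one of $g\Runterpfeil m$ and $g\Hochpfeil m$ holds, since a double arrow $g\Doppelpfeil m$ entails both; hence it suffices to exclude the single down-arrow and the single up-arrow separately. The down-arrow will be disposed of directly by the preceding lemma, and the up-arrow will then follow by partition conjugation.

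For the down-arrow I would argue by contradiction. Assume $g\in\Ji$ is of type~\ref{typeB} and $m\in\Mi$ is of type~\ref{typeII} with $g\Runterpfeil m$. Every type~\ref{typeB} partition arises in the parametric form of Corollary~\ref{cor:least-bounded-width-typeAB} with $b\geq1$ and $\ell\geq1$, so in particular $b+\ell\geq2$. By Lemma~\ref{lem:typeAB-downarrow-typeI-III-implies-described}, the partition~$m$ must then be of the shape $m=(a,1,\dotsc,1)$ from Theorem~\ref{thm:double-typeAB}, that is, a first entry~$a$ followed by $b+\ell\geq2$ trailing ones. A partition of this form is of type~\ref{typeI} (if $a=1$) or of type~\ref{typeIII} (if $a\geq2$); since the four types of Lemma~\ref{lem:char-join-irr} are pairwise disjoint, $m$~cannot be of type~\ref{typeII}, contradicting the assumption. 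Hence no type~\ref{typeB} partition is in the relation $\Runterpfeil$ to a type~\ref{typeII} partition.

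For the up-arrow, suppose $g\in\Ji$ is of type~\ref{typeB} and $m\in\Mi$ is of type~\ref{typeII} with $g\Hochpfeil m$. By Lemma~\ref{Lemma_3.5} this is equivalent to $m^*\Runterpfeil g^*$, and by the duality of the type pairs recorded after Lemma~\ref{lem:char-join-irr} we have $m^*\in\Ji$ of type~\ref{typeB} and $g^*\in\Mi$ of type~\ref{typeII}. Thus $m^*\Runterpfeil g^*$ would be a down-arrow from a type~\ref{typeB} to a type~\ref{typeII} partition, which the previous paragraph has just excluded; therefore no such up-arrow exists, and \emph{a fortiori} no double arrow either. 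The argument is essentially a one-line deduction from Lemma~\ref{lem:typeAB-downarrow-typeI-III-implies-described} plus self-duality, so I do not anticipate any real obstacle; the only point deserving care is that for type~\ref{typeB} one has $b+\ell\geq2$, which rules out the length-two type~\ref{typeII} partition $(t,r)$ that could otherwise have been the double-arrow target of Theorem~\ref{thm:double-typeAB} when $b+\ell=1$.
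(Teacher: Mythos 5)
Your proof is correct and follows essentially the same route as the paper: exclude $g\Runterpfeil m$ via Lemma~\ref{lem:typeAB-downarrow-typeI-III-implies-described}, then dispose of $g\Hochpfeil m$ by conjugation and the type duality \ref{typeB}${}\leftrightarrow{}$\ref{typeII} using Lemma~\ref{Lemma_3.5}. The only (harmless) difference is that the paper appeals to the case analysis inside the \emph{proof} of that lemma (type~\ref{typeII} with $\len(m)\geq 3$ is impossible), whereas you apply the lemma's \emph{statement} and derive the contradiction from the disjointness of the types --- an equally valid, arguably cleaner, packaging of the same argument.
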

\begin{proof}
   First, we prove that~$g$ of type~\ref{typeB},
   $m$ of type~\ref{typeII} and $g\DownArrow m$ is impossible.
   Note that any arrow relation for
   $g=(k+1,\dotsc,\overset{b}{k+1},k,\dotsc,\overset{b+\ell}{k})$ of
   type~\ref{typeB} requires $\len(m) > \len(g)=b+\ell$, because
   otherwise Corollary~\ref{cor:least-bounded-width-typeAB} yields
   $m\in\Bw{b+\ell}$ and $g \le m$,
   which by Proposition~\ref{prop:arrows-in-doubly-founded-lattices}
   prevents any arrow.
   Therefore, we have $\len(g)\ge2$ and thus $\len(m)\ge 3$.
   The proof of Lemma~\ref{lem:typeAB-downarrow-typeI-III-implies-described}
   confirms that a relation $g \Runterpfeil m$ with~$m$ of
   type~\ref{typeII} with $\len(m)\ge3$ is impossible.
   \par

   Finally, if~$g$ of type~\ref{typeB} and~$m$ of type~\ref{typeII}
   were such that $g\Hochpfeil m$,
   then the conjugates~$m^*$ of type~\ref{typeB} and $g^*$ of
   type~\ref{typeII} would satisfy $m^* \Runterpfeil g^*$ by
   Lemma~\ref{Lemma_3.5}, but this is impossible as shown before.
\end{proof}

The following simple observation will be used more than once.
\begin{lemma}\label{lem:upper-cover-of-type-III-IV}
Let $a,t,r,c\in\N$ be such that $a>t>r\geq 0$ and $c\geq2$ and set
$n\defeq a+ct +r$. In~$\Lp$ we have the covering relation
$m\defeq(a,t,\dotsc,\overset{c+1}{t},r)\prec
\tilde{m}\defeq(a,t+1,t,\dotsc,t,\overset{c+1}{t-1},r)$.
Furthermore, $\tilde{m}$ is the unique upper cover of $m$.
\end{lemma}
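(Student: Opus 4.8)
The plan is to prove the covering relation $m\prec\tilde m$ by exhibiting a single transition rule of Definition~\ref{def:transition-rules-lower-covers} that turns $\tilde m$ into $m$, and to obtain the uniqueness of the upper cover essentially for free by first recognising $m$ as a completely meet-irreducible element of~$\Lp$. Concretely, I would first check that $\tilde m$ really is a partition of~$n$: it is non-increasing because $a\ge t+1$ (from $a>t$) and $t-1\ge r$ (from $t>r$), and its entries sum to $a+(t+1)+(c-2)t+(t-1)+r=a+ct+r=n$. Next I would note that $m$ occurs verbatim in the catalogue of Lemma~\ref{lem:char-join-irr}: if $t\ge2$, then $m=(a,t,\dotsc,\overset{c+1}{t},r)$ is of type~\ref{typeIV} (here $a>t>r\ge0$, $t\ge2$, $c\ge2$); if $t=1$, then $r<t$ forces $r=0$, so $m=(a,1,\dotsc,\overset{c+1}{1})$ with $a\ge2$ and $c\ge2$, which is type~\ref{typeIII}. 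In either case $m\in\Mi$, and since the completely meet-irreducible elements of a finite lattice are exactly those having a unique upper cover (as recalled in Section~\ref{sect:preliminaries}), the partition~$m$ has exactly one upper cover.

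It then suffices to verify that this unique upper cover is $\tilde m$, i.e.\ that $m$ is a lower cover of~$\tilde m$; by Lemma~\ref{lem:description-of-lower-covers} this reduces to applying one transition rule to $\tilde m$ at its second position. I would split into two cases. If $c\ge3$, the entries of $\tilde m=(a,\,t+1,\,t,\dotsc,t,\,\overset{c+1}{t-1},\,r)$ at positions $2,3,\dotsc,c+1$ read $t+1,t,\dotsc,t,t-1$, which is a slippery step at position~$2$ with parameter $\ell=c-1\ge2$ (using the trailing zero at position $c+1$ when $t=1$, $r=0$); letting the brick slip across it (transition rule~\eqref{item:transition-step}) replaces this block by $t,t,\dotsc,t,t$, and the outcome is exactly $m=(a,t,\dotsc,\overset{c+1}{t},r)$. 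If $c=2$, then $\tilde m=(a,t+1,t-1,r)$ has a cliff of height~$2$ at position~$2$ (since $\tilde m_2-\tilde m_3=2$), and letting the brick fall from it (transition rule~\eqref{item:transition-cliff} with $k=t+1$, $h=2$) replaces $t+1,t-1$ at positions $2,3$ by $t,t$, again producing~$m$. In both cases Lemma~\ref{lem:description-of-lower-covers} yields $m\prec\tilde m$. Combining this with the first paragraph, $\tilde m$ is an upper cover of~$m$ while $m$ has only one upper cover, so $\tilde m$ is the unique upper cover of~$m$, as claimed.

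The only real care needed is the transition-rule bookkeeping in the second paragraph: matching $\tilde m$ against the templates $(\dots,k+1,k,\dotsc,k,k-1,\dots)$ and $(\dots,k,k-h,\dots)$ of Definition~\ref{def:transition-rules-lower-covers}, checking the side conditions $1\le k<n$ resp.\ $2\le h\le k\le n$, reading off the transformed partition correctly, and remembering that when $t=1$ or $r=0$ the formal entries $t-1$ or $r$ disappear from the displayed partition (so that, for $t=1$ and $c\ge3$, one is actually working with $\tilde m=(a,2,1,\dotsc,1)$ and the slippery step involves the implicit trailing zero). None of this is hard, but it must be spelled out to keep the two-case analysis airtight.
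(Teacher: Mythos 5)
Your proposal is correct and follows essentially the same route as the paper: split into $c=2$ (cliff at position~2) and $c\geq 3$ (slippery step at position~2), apply the corresponding transition rule of Definition~\ref{def:transition-rules-lower-covers} to recover $m$ from $\tilde m$, invoke Lemma~\ref{lem:description-of-lower-covers} for the covering relation, and deduce uniqueness from $m$ being completely meet-irreducible of type~\ref{typeIII} or~\ref{typeIV}. The extra bookkeeping you supply (checking that $\tilde m$ is a partition of~$n$, and the degenerate entries when $t=1$ or $r=0$) is sound and merely makes explicit what the paper leaves implicit.
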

\begin{proof}
If $c=2$, we apply transition rule~\eqref{item:transition-cliff} to the
cliff in the second position of $\tilde{m}=(a,t+1,t-1,r)$, yielding
${m=(a,t,t,r)}$. If $c\geq 3$, we apply
rule~\eqref{item:transition-step} to the slippery step in the second
position of ${\tilde{m}=(a,t+1,t,\dotsc,t,t-1,r)}$.
In both cases the result is~$m$, and thus
Lemma~\ref{lem:description-of-lower-covers} yields $m\prec \tilde{m}$.
Moreover, $m$ is completely meet-irreducible of type~\ref{typeIII}
(if $t=1$) and~\ref{typeIV} (if $t\ge2$), wherefore $\tilde{m}$ is the
unique upper cover.
\end{proof}

\begin{lemma}\label{lem:typeAB-uparrow-typeIV-impossible}
Let $b\in\N$, $k,\ell,n\in\Nplus$ be such that $b+\ell<n=b(k+1)+\ell k$
and set
$g\defeq(k+1,\dotsc,\overset{b}{k+1},k,\dotsc,\overset{b+\ell}{k})$.
If $m\in\Mi$ is of type~\ref{typeIV} and satisfies $g\Runterpfeil m$,
then $g\Hochpfeil m$ fails.
\end{lemma}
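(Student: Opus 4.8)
The plan is to apply Proposition~\ref{prop:arrows-in-doubly-founded-lattices}. Since $g\Runterpfeil m$ is assumed, part~\eqref{item:g-lcover-below-m} already yields $g\nleq m$ and $\tilde g\leq m$; by part~\eqref{item:m-ucover-above-g} it therefore suffices to show that $g\leq\tilde m$ fails, where $\tilde m$ denotes the unique upper cover of~$m$.

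First I would pin down the exact shape of~$m$. By Corollary~\ref{cor:least-bounded-width-typeAB}, $g$ is of type~\ref{typeA} or~\ref{typeB} and has $\len(g)=b+\ell$, so Lemma~\ref{lem:typeAB-downarrow-implies-length-m=b+l+1-ending-in-1} applies: from $g\Runterpfeil m$ we obtain $\len(m)=b+\ell+1$ and $m_{b+\ell+1}=1$. Writing the type~\ref{typeIV} partition~$m$ as $m=(a,t,\dotsc,\overset{c+1}{t},r)$ with $a>t>r\geq0$, $t,c\geq2$, I would exclude the possibility $r=0$ (in that case $\len(m)=c+1$ and the last nonzero entry would be $t\geq2\neq1$); hence $r\geq1$, so $\len(m)=c+2$ and the last entry is~$r$, which forces $r=1$ and $c=b+\ell-1$.

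Then, by Lemma~\ref{lem:upper-cover-of-type-III-IV}, the unique upper cover of~$m$ is $\tilde m=(a,t+1,t,\dotsc,t,\overset{c+1}{t-1},1)$, whose trailing entry is still~$1$, sitting at position $c+2=b+\ell+1$; hence $\len(\tilde m)=b+\ell+1>b+\ell=\len(g)$. By Lemma~\ref{lem:shorter-implies-nleq} this gives $g\nleq\tilde m$, and consequently Proposition~\ref{prop:arrows-in-doubly-founded-lattices}\eqref{item:m-ucover-above-g} shows that $g\Hochpfeil m$ cannot hold.

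The main (and fairly mild) obstacle is the length bookkeeping: a priori, passing to the upper cover~$\tilde m$ could shorten the partition (by Lemma~\ref{lem:shorter-implies-nleq}, $m\leq\tilde m$ only forces $\len(\tilde m)\leq\len(m)$), so one genuinely needs the explicit form from Lemma~\ref{lem:upper-cover-of-type-III-IV} to see that the trailing entry $r=1$ survives. Alternatively, one can avoid the shape of~$\tilde m$ and argue with partial sums directly: since $g$ agrees with $\tilde g\leq m$ outside positions $b+\ell$ and $b+\ell+1$, and $\sum_{i=1}^{b+\ell}g_i=n$, the relation $g\nleq m$ is witnessed exactly at index $b+\ell$, where $\sum_{i=1}^{b+\ell}m_i=n-1$; and $\sum_{i=1}^{b+\ell}\tilde m_i\leq n-1$ as well because $\tilde m_{b+\ell+1}=r=1>0$, which again gives $g\nleq\tilde m$.
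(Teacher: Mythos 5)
Your proposal is correct and follows essentially the same route as the paper's proof: both use Lemma~\ref{lem:typeAB-downarrow-implies-length-m=b+l+1-ending-in-1} to pin down $\len(m)=b+\ell+1$ and the trailing entry $r=1$, then Lemma~\ref{lem:upper-cover-of-type-III-IV} for the shape of $\tilde{m}$, and finally Lemma~\ref{lem:shorter-implies-nleq} together with Proposition~\ref{prop:arrows-in-doubly-founded-lattices}\eqref{item:m-ucover-above-g} to conclude $g\nleq\tilde{m}$ and hence that $g\Hochpfeil m$ fails. Your extra check that the trailing $1$ of $\tilde{m}$ survives (and the alternative partial-sum argument) is a small, sound refinement of the same idea.
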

\begin{proof}
We assume that $m=(a,t,\dotsc,t,r)$ of type~\ref{typeIV} satisfies
$g\Runterpfeil m$.
Then Lemma~\ref{lem:typeAB-downarrow-implies-length-m=b+l+1-ending-in-1}
implies that $\len(m)=b+\ell+1$ and that the last non-zero entry
of~$m$ equals~$1$. Hence if $r=0$, then $t=1$, which is forbidden for~$m$ of
type~\ref{typeIV}; we therefore know $r\geq 1$ and actually $r=1$ by
Lemma~\ref{lem:typeAB-downarrow-implies-length-m=b+l+1-ending-in-1}.
We thus have $m=(a,t,\dotsc,\overset{b+\ell}{t},1)$
with $b+\ell\geq 3$ and $a>t\geq 2$.
By Lemma~\ref{lem:upper-cover-of-type-III-IV}, the unique upper cover
of~$m$ is $\tilde{m}=(a,t+1,t,\dotsc,t,\overset{b+\ell}{t-1},1)$,
where the part $t,\dotsc,t$ appears as of $b+\ell\geq 4$.
Since $\len(g)=b+\ell < b+\ell+1=\len(\tilde{m})$, we have
$g\nleq\tilde{m}$ by Lemma~\ref{lem:shorter-implies-nleq}.
According to
Proposition~\ref{prop:arrows-in-doubly-founded-lattices}\eqref{item:m-ucover-above-g},
this renders $g\Hochpfeil m$ impossible.
\end{proof}

The following result shows that
Theorem~\ref{thm:double-typeAB}
indeed describes all double arrows $g\Doppelpfeil m$
involving partitions~$g$ of type~\ref{typeA} or~\ref{typeB}.
\begin{corollary}\label{cor:typeAB-doublearrow-implies-described}
Let $b\in\N$, $k,\ell,n\in\Nplus$ be such that $b+\ell<n=b(k+1)+\ell k$
and set
$g\defeq(k+1,\dotsc,\overset{b}{k+1},k,\dotsc,\overset{b+\ell}{k})$.
Every $m\in\Mi$ in relation $g\Doppelpfeil m$
is of the form described in
Theorem~\ref{thm:double-typeAB}.
\end{corollary}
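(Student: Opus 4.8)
The plan is to combine the three preceding lemmata by means of a case distinction on the type of~$m$. Suppose $g\Doppelpfeil m$; then by definition both $g\Runterpfeil m$ and $g\Hochpfeil m$ hold. By Corollary~\ref{cor:least-bounded-width-typeAB} the partition~$g$ is of type~\ref{typeA} (with $k\geq2$) or of type~\ref{typeB}, so the hypotheses of Lemmata~\ref{lem:typeAB-downarrow-implies-length-m=b+l+1-ending-in-1}, \ref{lem:typeAB-downarrow-typeI-III-implies-described} and~\ref{lem:typeAB-uparrow-typeIV-impossible} are met. Since, by Lemma~\ref{lem:char-join-irr}, every $m\in\Mi$ belongs to exactly one of the types~\ref{typeI}--\ref{typeIV}, it suffices to handle these four cases.

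If $m$ is of type~\ref{typeI}, \ref{typeII} or~\ref{typeIII}, then the down-arrow $g\Runterpfeil m$ (which follows from $g\Doppelpfeil m$) together with Lemma~\ref{lem:typeAB-downarrow-typeI-III-implies-described} immediately forces~$m$ to be of the form described in Theorem~\ref{thm:double-typeAB}, and we are done. If instead $m$ is of type~\ref{typeIV}, I would invoke Lemma~\ref{lem:typeAB-uparrow-typeIV-impossible}: from $g\Runterpfeil m$ that lemma yields that $g\Hochpfeil m$ fails, contradicting $g\Doppelpfeil m$; hence a type~\ref{typeIV} partition can never be a double-arrow partner of~$g$. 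Combining the two cases establishes the corollary.

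Since all of the analytic content has already been isolated into the auxiliary lemmata, there is essentially no remaining obstacle here; the proof is a short bookkeeping argument. The only point that warrants a moment's attention is verifying that the four classes of $\bigwedge$-irreducibles listed in Lemma~\ref{lem:char-join-irr} are genuinely exhaustive, so that the case split really covers every possible $m\in\Mi$ — but this is exactly what was arranged when the type boundaries were chosen to be disjoint.
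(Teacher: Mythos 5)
Your proposal is correct and follows essentially the same route as the paper: the paper likewise rules out type~\ref{typeIV} via Lemma~\ref{lem:typeAB-uparrow-typeIV-impossible} (since $g\Doppelpfeil m$ supplies the down-arrow but forbids the failure of the up-arrow) and then applies Lemma~\ref{lem:typeAB-downarrow-typeI-III-implements-described} to the remaining types~\ref{typeI}--\ref{typeIII} — apart from the harmless typo in that reference, your case split is exactly the published argument, merely spelled out in more detail.
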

\begin{proof}
Under the assumption $g\Doppelpfeil m$,
Lemma~\ref{lem:typeAB-uparrow-typeIV-impossible}
implies that~$m$ is not of type~\ref{typeIV}. Therefore, it is of
type~\ref{typeI}--\ref{typeIII}, and
Lemma~\ref{lem:typeAB-downarrow-typeI-III-implies-described}
establishes the desired claim.
\end{proof}

\subsection[Double arrows involving completely join-irreducibles of type C]{Double arrows involving completely join-irreducibles of type~\ref{typeC}}
The following result exhibits further one-to-one double arrows
$g\Doppelpfeil m$, originating from any~$g\in\Ji$ of type~\ref{typeC},
and covering two exceptional cases.
These are~$(n)$ and $(2,1,\ldots,1)$,
which are also discussed in Theorem~\ref{thm:double-typeAB}.
\begin{theorem}\label{thm:double-typeC}
Let $n,k,t,r\in\N$ be such that $2\leq k\leq n$ and
$n=t(k-1)+r$ where $0\leq r\leq k-2$. We have
$t\ge1$,
$g=(k,1,\dotsc,\overset{n-k+1}{1})\in\Ji$,
$m=(k-1,\dotsc,\overset{t}{k-1},r)\in\Mi$
and $g\Doppelpfeil m$.
\par
Moreover, if $g\in\Ji$ is of type~\ref{typeC}, $m\in\Mi$ and
$g\Doppelpfeil m$ holds, then~$m$ must be of the shape as described before.
\end{theorem}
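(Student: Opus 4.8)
The plan is to apply Proposition~\ref{prop:arrows-in-doubly-founded-lattices}, exploiting that both~$g$ and~$m$ are extremal for a height bound. By Lemma~\ref{lem:largest-bounded-height} applied to the factorisation $n=t(k-1)+r$ with $0\leq r<k-1$, the partition $m=(k-1,\dotsc,\overset{t}{k-1},r)$ is the greatest element of~$\Bh{k-1}$; and $g=(k,1,\dotsc,\overset{n-k+1}{1})$ is the \emph{least} partition of height at least~$k$. The latter follows by conjugating Lemma~\ref{lem:top-typeIII-least-length}: with its parameter~$\ell$ set to~$k$ that lemma identifies $g^{*}=(n-k+1,1,\dotsc,\overset{k}{1})$ as the largest partition of length at least~$k$, so any $p\in\Lp$ with $\hgt(p)\geq k$ satisfies $\len(p^{*})=\hgt(p)\geq k$ by Lemma~\ref{lem:width-height-dual}, hence $p^{*}\leq g^{*}$, and therefore $p\geq g$ since conjugation is an order-antiautomorphism of~$\Lp$. (This already exhibits the theorem as the conjugate dual of Theorem~\ref{thm:double-typeAB} under Corollary~\ref{Corollary_3.6}, but the direct route is more transparent, especially for uniqueness.) First I would dispose of the two degenerate cases: for $k=2$ the constraints force $r=0$, $t=n$, so $g=(2,1,\dotsc,1)$, $m=(1,\dotsc,1)$, and the claim is Lemma~\ref{Lemma3.7}\eqref{item:double-arrow-n-n-1-dual}; for $k=n\geq 3$ they force $t=1$, $r=1$, so $g=(n)$, $m=(n-1,1)$, and the claim is Lemma~\ref{First_Lemma}. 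For $3\leq k\leq n-1$ one reads off from Lemma~\ref{lem:char-join-irr} that $g\in\Ji$ is of type~\ref{typeC}, and that $m\in\Mi$ is of type~\ref{typeI} when $r=0$ (then $t\geq 2$, since $t=1$ would give $n=k-1<k$) and of type~\ref{typeII} when $r\geq 1$; moreover $t\geq 1$ because $t(k-1)=n-r\geq n-(k-2)>0$.

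For the double arrow itself (assuming $3\leq k\leq n-1$), we have $g\nleq m$ since already the first partial sums give $k\nleq k-1$. The unique lower cover~$\tilde{g}$ of~$g$, obtained by letting a brick fall from the cliff in the first position (cf.\ Corollary~\ref{cor:subtop-typeIII-least-length}), satisfies $\hgt(\tilde{g})=k-1$, hence $\tilde{g}\in\Bh{k-1}$ and therefore $\tilde{g}\leq m$ by Lemma~\ref{lem:largest-bounded-height}. Finally, since $m\in\Mi$ it has a unique upper cover~$\tilde{m}$; as $\tilde{m}>m$ and~$m$ is the greatest element of~$\Bh{k-1}$, we get $\tilde{m}\notin\Bh{k-1}$, i.e.\ $\hgt(\tilde{m})\geq k$, whence $g\leq\tilde{m}$ because~$g$ is the least partition of height at least~$k$. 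Now Proposition~\ref{prop:arrows-in-doubly-founded-lattices} yields both $g\Runterpfeil m$ and $g\Hochpfeil m$, i.e.\ $g\Doppelpfeil m$.

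For the converse, let $g=(k,1,\dotsc,1)\in\Ji$ be of type~\ref{typeC} (so $3\leq k\leq n-1$) and suppose $g\Doppelpfeil m$ for some $m\in\Mi$. From $g\nleq m$ and the minimality of~$g$ among partitions of height at least~$k$ we obtain $\hgt(m)\leq k-1$, so $m\in\Bh{k-1}$ and hence $m\leq m_{0}\defeq(k-1,\dotsc,\overset{t}{k-1},r)=\max\Bh{k-1}$. On the other hand, $g\Hochpfeil m$ together with Proposition~\ref{prop:arrows-in-doubly-founded-lattices} gives $\tilde{m}\geq g$ for the unique upper cover~$\tilde{m}$ of~$m$, so $\hgt(\tilde{m})\geq\hgt(g)=k$. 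If~$m$ were strictly below~$m_{0}$, then the first step of a maximal chain in the finite lattice~$\Lp$ from~$m$ up to~$m_{0}$ would be an upper cover of~$m$, hence equal to~$\tilde{m}$ by uniqueness; but then $\tilde{m}\leq m_{0}$ would force $\hgt(\tilde{m})\leq\hgt(m_{0})=k-1$, contradicting $\hgt(\tilde{m})\geq k$. Therefore $m=m_{0}$, which is precisely the partition described in the theorem.

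I expect the routine parts to be the type bookkeeping via Lemma~\ref{lem:char-join-irr} and the two degenerate cases $k\in\set{2,n}$. The single substantive point, around which the whole argument is organised, is the identification of~$g$ as the minimum partition of height at least~$k$ (equivalently, the conjugate of the maximum partition of length at least~$k$); this is exactly what makes the inequality $g\leq\tilde{m}$ in the existence part and the height dichotomy in the uniqueness part available without ever having to compute~$\tilde{m}$ explicitly.
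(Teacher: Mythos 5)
Your proof is correct, but it follows a genuinely different route from the paper's. For the existence of the double arrow the paper simply conjugates: it verifies that $m^{*}$ and $g^{*}$ satisfy the hypotheses of Theorem~\ref{thm:double-typeAB} (with $\ell=k-1-r$, $b=r$) and invokes Corollary~\ref{Corollary_3.6}; you note this possibility but instead argue directly from the extremality of the two partitions, namely $m=\max\Bh{k-1}$ (Lemma~\ref{lem:largest-bounded-height}) and $g=\min\lset{p\in\Lp}{\hgt(p)\geq k}$ (the conjugate of Lemma~\ref{lem:top-typeIII-least-length}), which makes both inequalities $\tilde{g}\leq m$ and $g\leq\tilde{m}$ immediate from Proposition~\ref{prop:arrows-in-doubly-founded-lattices}. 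The larger divergence is in the uniqueness part: the paper spreads this over a type-by-type analysis (Lemma~\ref{lem:single-typeC-height-k-1} pins down $m_1=k-1$, Corollary~\ref{lem:single-typeC-I-II-described} settles types~\ref{typeI} and~\ref{typeII}, and Lemmata~\ref{lem:CarrowIIIimpossible} and~\ref{lem:double-typeC-not-III-IV} exclude types~\ref{typeIII} and~\ref{typeIV}), whereas you give a uniform two-line argument: $g\nleq m$ forces $\hgt(m)\leq k-1$, hence $m\leq m_0$, while $g\Hochpfeil m$ forces $\hgt(\tilde{m})\geq k$; if $m<m_0$ the unique upper cover would satisfy $\tilde{m}\leq m_0$ and thus $\hgt(\tilde{m})\leq k-1$, a contradiction. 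Your version is shorter and conceptually cleaner for this one theorem, and it avoids any case split on the type of~$m$; what the paper's longer route buys is reusable intermediate results --- Lemma~\ref{lem:single-typeC-height-k-1}, Lemma~\ref{lem:CarrowIIIimpossible} and Corollary~\ref{lem:single-typeC-I-II-described} are needed again in Section~\ref{sect:single-arrows} for the single-arrow classification (Lemma~\ref{lem:NecCondCone}, Theorem~\ref{thm:TypeCdown1}), which your self-contained argument does not supply. Your handling of the degenerate cases $k=2$ and $k=n$ via Lemmata~\ref{Lemma3.7} and~\ref{First_Lemma}, and the type bookkeeping for $m$ ($t\geq 2$ when $r=0$), are all sound.
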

\begin{proof}
The proof of $g\Doppelpfeil m$ relies on showing $m^*\Doppelpfeil g^*$
and then applying Corollary~\ref{Corollary_3.6}. The conjugate of~$m$
is $m^* = (t+1,\dotsc,\overset{r}{t+1},t,\dotsc,\overset{k-1}{t})$, the
one of~$g$ is
$g^* = (n-(k-1),1,\dotsc,\overset{k}{1})$.
We infer that $t\geq 1$, for otherwise
$n=t(k-1)+r=r\leq k-2\leq n-2$.
As $r\in\N$, $\ell=k-1-r\geq 1$ and $k-1\leq n-1<n$,
we can apply Theorem~\ref{thm:double-typeAB} to infer
$m^*\Doppelpfeil g^*$.
\par
To show, for $g\in\Ji$ of type~\ref{typeC} that there are no
other double arrows will require a series of little results, being
complete with Corollary~\ref{cor:double-typeC-described}.
\end{proof}

\begin{lemma}\label{lem:single-typeC-height-k-1}
Let $n,k\in\N$ be such that $2\leq k\leq n$ and set
$g\defeq(k,1,\dotsc,\overset{n-k+1}{1})$. If $m\in\Mi$ satisfies
$g\Runterpfeil m$ or $g\Hochpfeil m$, then $m_1= k-1$.
\end{lemma}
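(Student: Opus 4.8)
The plan is to treat the two hypotheses $g\Runterpfeil m$ and $g\Hochpfeil m$ on almost equal footing, since for both we must have $g\nleq m$ by Proposition~\ref{prop:arrows-in-doubly-founded-lattices}. The partition $g=(k,1,\dotsc,\overset{n-k+1}{1})$ is of type~\ref{typeC} (using $k\geq 3$; the boundary cases $k=2$, giving $g=(2,1,\dotsc,1)$, and $k=n$, giving $g=(n)$, must be checked separately against Lemmata~\ref{Lemma3.7} and~\ref{First_Lemma}, but there the asserted $m$'s are $(1,\dotsc,1)$ and $(n-1,1)$, both with first entry $k-1$, so the claim holds trivially). The key geometric fact about $g$ is its \emph{length}: $\len(g)=n-k+1$. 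Its unique lower cover, obtained by letting a brick fall from the single cliff in the first position (transition rule~\eqref{item:transition-cliff}), is $\tilde g=(k-1,2,1,\dotsc,\overset{n-k+1}{1})$, which has $\len(\tilde g)=n-k+1$ as well and first entry $k-1$. Its unique upper cover, obtained by the reverse move at the first cliff of $(k+1,1,\dotsc,1)$, is $\tilde{m}$-style but for $g$ itself: $g$ has a unique upper cover $\tilde{g}^{\uparrow}=(k+1,1,\dotsc,\overset{n-k}{1})$ (since $g$ is completely meet-irreducible precisely when $k\geq 3$ makes it also of type~\ref{typeIII}); this will matter only marginally.

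\textbf{The down-arrow case.}
First I would suppose $g\Runterpfeil m$. Then Proposition~\ref{prop:arrows-in-doubly-founded-lattices}\eqref{item:g-lcover-below-m} gives $\tilde g\leq m$, and hence by Lemma~\ref{lem:shorter-implies-nleq} we get $\len(m)\leq\len(\tilde g)=n-k+1$. On the other hand, $g\nleq m$ together with Corollary~\ref{cor:below-implies-dominance} shows $\len(m)\geq\len(g)=n-k+1$ must also be forced — more precisely, if $\len(m)<n-k+1=\len(g)$ we get $g\nleq m$ for free, so that direction is not yet a contradiction; instead I use that $\tilde g\leq m$ forces $\len(m)\geq$ (something). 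Actually the clean argument: from $\tilde g\leq m$ and $\len(\tilde g)=n-k+1$, Lemma~\ref{lem:shorter-implies-nleq} gives $\len(m)\geq$ is wrong direction; rather $\len(\tilde g)\not<\len(m)$, i.e.\ $\len(m)\leq n-k+1$. Combined with: $g\nleq m$ but $\tilde g\leq m$ means $m$ is not above $g$ yet is above $\tilde g$; since the only difference between $g$ and $\tilde g$ is in the first two coordinates, the failure $g\nleq m$ must come from the first partial sum, i.e.\ $g_1=k>m_1$, so $m_1\leq k-1$. For the reverse inequality $m_1\geq k-1$: from $\tilde g\leq m$ we get $\sum_{i=1}^{2}\tilde g_i=(k-1)+2=k+1\leq m_1+m_2$; if $m_1\leq k-2$ then $m_2\geq 3$, but then comparing first partial sums, $\tilde g_1=k-1\leq m_1\leq k-2$ is already false — so in fact $\tilde g\leq m$ forces $m_1\geq\tilde g_1=k-1$. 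Hence $m_1=k-1$.

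\textbf{The up-arrow case and the main obstacle.}
For $g\Hochpfeil m$ I would pass to conjugates: by Lemma~\ref{Lemma_3.5}, $g\Hochpfeil m$ is equivalent to $m^*\Runterpfeil g^*$, where $g^*=(n-k+1,1,\dotsc,\overset{k}{1})$ is again of type~\ref{typeC} (with ``$k$-parameter'' $n-k+1$), and $m^*\in\Ji$. Applying the down-arrow analysis just completed — or rather its analogue with the roles of length/height dualised — to the pair $(m^*,g^*)$ yields a constraint on $(m^*)_1=\len(m)$ is \emph{not} quite what we want; instead it yields that the \emph{first entry} of the meet-irreducible $g^*$ relates to $\hgt(m^*)$. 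Unwinding via Lemma~\ref{lem:width-height-dual} ($\hgt(m^*)=\len(m)$ is the wrong coordinate; we need $m_1=\hgt(m)$), the cleanest route is: the down-arrow argument applied to $m^*\Runterpfeil g^*$ shows the first coordinate of the \emph{join-irreducible} $m^*$ equals the first coordinate of $g^*$'s ``type-C shape'' minus one, i.e.\ $\hgt(m^*)=(n-k+1)-1=n-k$; but $\hgt(m^*)=\len(m)$, giving $\len(m)=n-k$ — and then $m_1=\hgt(m)$ still needs pinning down. I expect this coordinate bookkeeping to be the main obstacle: one must be careful that ``first entry of $m$'' under conjugation becomes ``length of $m^*$'', so the down-arrow lemma must be invoked in a form that controls a \emph{length}, not a height. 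The fix is to run the down-arrow argument symmetrically: just as $g\Runterpfeil m$ forced $m_1=k-1$ via $\tilde g\leq m$ and the first partial sum, the dual statement forces $\len(m)$ (= height of $m^*$) to a fixed value via $g\leq\tilde{m}$ and, after conjugating, a first-partial-sum comparison — and then one reads off $m_1$ directly from the shape forced on $m$ (a partition of length $n-k+1$ or so with prescribed small entries). I would organise the write-up as: (i) dispose of $k\in\{2,n\}$; (ii) prove the down-arrow half by the partial-sum argument above; (iii) prove the up-arrow half either by the same argument applied to $g^*, m^*$ with careful conjugation dictionary, or — likely cleaner — directly, using $g\leq\tilde m$ and that $\len(\tilde m)\leq\len(g)$ would follow were $m_1$ too small, contradicting $g\nleq m$. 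The partial-sum inequalities themselves are routine; the only real care is the conjugation bookkeeping in step (iii).
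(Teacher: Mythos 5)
There is a genuine gap, concentrated in the up-arrow half. Your down-arrow argument is essentially sound for $k\geq 3$: from $\tilde g=(k-1,2,1,\dotsc,1)\leq m$ you get $m_1\geq k-1$, and since the partial sums of $g$ and $\tilde g$ agree from index~$2$ on, $g\nleq m$ can only fail at the first partial sum, giving $m_1\leq k-1$. But your disposal of the boundary case $k=2$ does not work: Lemmata~\ref{First_Lemma} and~\ref{Lemma3.7} only assert that \emph{one particular} $m$ is (double-)arrow-related to $g$; the lemma you are proving requires that \emph{every} $m$ with $g\Runterpfeil m$ or $g\Hochpfeil m$ has $m_1=k-1$, so exhibiting one such $m$ with first entry $k-1$ proves nothing. (For $k=2$ your ``agree from index~$2$ on'' step also breaks, since $\tilde g=(1,\dotsc,1)$ differs from $g$ in the first and last coordinates.)

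The up-arrow half is simply not proved. The conjugation route founders exactly where you say it does: Lemma~\ref{Lemma_3.5} turns $g\Hochpfeil m$ into $m^*\Runterpfeil g^*$, but your down-arrow analysis controls the \emph{first entry} of the meet-irreducible target, whereas what you need is $m_1=\hgt(m)=\len(m^*)$, a length, not a height; you flag this as ``the main obstacle'' and never resolve it. Your alternative ``direct'' sketch (``$\len(\tilde m)\leq\len(g)$ would follow were $m_1$ too small, contradicting $g\nleq m$'') is not a valid inference as written. The missing idea is much simpler and is what the paper does for both halves at once: if $m_1\geq k$ then $g\leq m$ by Corollary~\ref{cor:below-implies-dominance}, excluding any arrow; if $m_1\leq k-2$ then $\tilde g_1=k-1>m_1$ gives $\tilde g\nleq m$ (no down-arrow), and since the unique upper cover raises the height by at most one, $\tilde m_1\leq m_1+1\leq k-1<k=g_1$ gives $g\nleq\tilde m$ (no up-arrow). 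This first-entry case split needs no conjugation, no length bookkeeping, and is uniform in $k$, covering $k=2$ and $k=n$ without separate treatment.
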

\begin{proof}
Let us assume that $m_1\geq k$. In this case
Corollary~\ref{cor:below-implies-dominance} directly implies $g\leq m$,
which, by Proposition~\ref{prop:arrows-in-doubly-founded-lattices},
makes $g\Runterpfeil m$ and $g\Hochpfeil m$ impossible.
\par
Let us now assume that $m_1\leq k-2$. We work with the first entries of
the unique lower cover~$\tilde{g}$ of~$g$ and the unique upper
cover~$\tilde{m}$ of~$m$.
We have $\hgt(m)=m_1\leq k-2<k-1=\tilde{g}_1$, hence $\tilde{g}\nleq m$.
Thus, by
Proposition~\ref{prop:arrows-in-doubly-founded-lattices}\eqref{item:g-lcover-below-m},
$g\Runterpfeil m$ is excluded. Likewise, we have
$\hgt(\tilde{m})\leq m_1+1\leq k-1<k=g_1$, hence $g\nleq\tilde{m}$.
Now
Proposition~\ref{prop:arrows-in-doubly-founded-lattices}\eqref{item:m-ucover-above-g}
excludes $g\Hochpfeil m$.
\end{proof}

\begin{corollary}\label{lem:single-typeC-I-II-described}
Let $n,k\in\N$ be such that $2\leq k\leq n$ and set
$g\defeq(k,1,\dotsc,\overset{n-k+1}{1})$. If $m\in\Mi$ is of
type~\ref{typeI} or~\ref{typeII} and satisfies
$g\Runterpfeil m$ or $g\Hochpfeil m$, then~$m$ is of the form as
described in Theorem~\ref{thm:double-typeC}.
\end{corollary}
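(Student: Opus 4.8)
The plan is to reduce the whole statement to the already-proved Lemma~\ref{lem:single-typeC-height-k-1}, which says that whenever $m\in\Mi$ satisfies $g\Runterpfeil m$ or $g\Hochpfeil m$ we must have $m_1=k-1$. Once the first entry of~$m$ is pinned down, the defining shape of a type~\ref{typeI} or type~\ref{typeII} meet-irreducible leaves almost no freedom, and what remains is to match the resulting partition against the division-with-remainder expression $n=t(k-1)+r$, $0\le r\le k-2$, occurring in Theorem~\ref{thm:double-typeC}.

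Concretely, I would first invoke Lemma~\ref{lem:single-typeC-height-k-1} to obtain $m_1=k-1$, and then split into two cases. If $m$ is of type~\ref{typeI}, then by definition all its parts coincide, so $m=(k-1,\dotsc,\overset{c}{k-1})$ for some $c\ge2$; comparing total sums gives $n=c(k-1)$, and setting $t\defeq c$, $r\defeq0$ yields exactly the partition of Theorem~\ref{thm:double-typeC}, with $0\le r\le k-2$ holding trivially (as $k\ge2$) and $t=c\ge2\ge1$. If $m$ is of type~\ref{typeII}, then $m$ has the form $(s,\dotsc,\overset{c}{s},p)$ with $s>p\ge1$, and $m_1=k-1$ forces $s=k-1$, so $m=(k-1,\dotsc,\overset{c}{k-1},p)$ with $1\le p\le k-2$ and $n=c(k-1)+p$; since $0\le p\le k-2<k-1$, the pair $(c,p)$ is the unique quotient and remainder for $n$ modulo $k-1$, hence coincides with the $(t,r)$ of the theorem, and $t=c\ge1$. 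In both cases $m$ is precisely $(k-1,\dotsc,\overset{t}{k-1},r)$, as claimed.

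I do not expect a genuine obstacle here; the only point deserving a line of care is noting that the representation $n=t(k-1)+r$ with $0\le r\le k-2$ is unique (which holds because $k-1\ge1$), so that the data $(c,p)$ read off from~$m$ really is the $(t,r)$ singled out in Theorem~\ref{thm:double-typeC} rather than merely some representation, together with the trivial verification of the side condition $t\ge1$. It is worth observing in passing that type~\ref{typeI} corresponds to the boundary case $r=0$ and type~\ref{typeII} to $r\ge1$, which makes transparent why these two cases together exhaust the type~\ref{typeI} and~\ref{typeII} meet-irreducibles single-arrow related to~$g$.
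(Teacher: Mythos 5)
Your proposal is correct and matches the paper's intent exactly: the paper states this as an immediate corollary of Lemma~\ref{lem:single-typeC-height-k-1} without writing out a proof, and your argument (pin down $m_1=k-1$, read off the type~\ref{typeI}/\ref{typeII} shape, and identify $(c,p)$ with $(t,r)$ via uniqueness of division with remainder by $k-1$) is precisely the intended reasoning.
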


\begin{lemma}\label{lem:CarrowIIIimpossible}
Partitions $g\in\Ji$ of type~\ref{typeC} are not arrow-related to any
$m\in\Mi$ of type~\ref{typeIII}.
\end{lemma}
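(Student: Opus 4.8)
The plan is to reduce the claim to two elementary comparisons by combining Lemma~\ref{lem:single-typeC-height-k-1} with Proposition~\ref{prop:arrows-in-doubly-founded-lattices}. Write $g=(k,1,\dotsc,\overset{n-k+1}{1})\in\Ji$ of type~\ref{typeC}, so that $k\geq 3$ and $n\geq k+1$ (the trailing block of $1$s is non-empty), and suppose $m\in\Mi$ is of type~\ref{typeIII} with $g\Runterpfeil m$ or $g\Hochpfeil m$. First I would apply Lemma~\ref{lem:single-typeC-height-k-1}, which forces $m_1=k-1$ in either case; hence $m=(k-1,1,\dotsc,1)$ consists of a first part $k-1$ followed by exactly $n-k+1\geq 2$ ones, so $\len(m)=n-k+2$.

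Next I would pin down the covers. The partition~$g$ has a unique cliff of height $k-1\geq 2$ in position~$1$ and no slippery step, so by transition rule~\eqref{item:transition-cliff} its unique lower cover is $\tilde g=(k-1,2,1,\dotsc,1)$, of length $\len(\tilde g)=n-k+1$ (the trailing $1$s disappearing when $n=k+1$). Since $\len(\tilde g)=n-k+1<n-k+2=\len(m)$, Lemma~\ref{lem:shorter-implies-nleq} gives $\tilde g\nleq m$, and then Proposition~\ref{prop:arrows-in-doubly-founded-lattices}\eqref{item:g-lcover-below-m} excludes $g\Runterpfeil m$.

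For the up-arrow I would describe the unique upper cover $\tilde m$ of~$m$ using Lemma~\ref{lem:upper-cover-of-type-III-IV} with $a=k-1$, $t=1$, $r=0$ and $c=n-k+1$; the point to extract is simply that its first entry is unchanged, $\hgt(\tilde m)=k-1$. Therefore $\hgt(\tilde m)=k-1<k=\hgt(g)$, so the sequences of partial sums of~$g$ and~$\tilde m$ already differ at index~$1$, i.e., $g\nleq\tilde m$, and Proposition~\ref{prop:arrows-in-doubly-founded-lattices}\eqref{item:m-ucover-above-g} excludes $g\Hochpfeil m$. This finishes the argument.

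I do not expect a genuine obstacle in this lemma; the only delicate points are bookkeeping. One should make sure that Lemma~\ref{lem:single-typeC-height-k-1} is quoted for both arrow directions at once, so that the case $m_1\neq k-1$ is disposed of immediately, and one should double-check the boundary subcase $n=k+1$, where $\tilde g=(k-1,2)$ and $m=(k-1,1,1)$, so that the length inequality $\len(\tilde g)<\len(m)$ still holds. Everything else is a single length or height comparison.
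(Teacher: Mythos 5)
Your proof is correct and follows essentially the same route as the paper's: both use Lemma~\ref{lem:single-typeC-height-k-1} to force $m_1=k-1$, exclude $g\Hochpfeil m$ via the height of the upper cover from Lemma~\ref{lem:upper-cover-of-type-III-IV}, and exclude $g\Runterpfeil m$ via the length comparison $\len(\tilde g)<\len(m)$ with Lemma~\ref{lem:shorter-implies-nleq}. The only (immaterial) difference is organisational: the paper rules out the up-arrow first and then derives a contradiction from the remaining down-arrow, whereas you dispose of both directions in parallel.
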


\begin{proof}
		Let $g\in\Ji$ be of type~\ref{typeC},
		that is $g= (k,1,\dotsc,\overset{d+1}1)$ with $k\ge3$.
		Suppose that $m$ is of type~\ref{typeIII}, 
		with $g\Runterpfeil m$ or $g\Hochpfeil m$.
		Then $m = (a,1,\ldots,\overset{c+1}1)$ with $a,c\ge2$,
		and Lemma~\ref{lem:single-typeC-height-k-1}  implies $a=k-1$ and $c+1=d+2$.
		Then the unique upper cover of $m$ is $\tilde{m} = (k-1,2,1,\dotsc,\overset{c}1)$ by Lemma~\ref{lem:upper-cover-of-type-III-IV}.
		Hence  $\tilde{m}\ngeq g$, and Proposition~\ref{prop:arrows-in-doubly-founded-lattices}\eqref{item:m-ucover-above-g} proves $g \nHochpfeil m$.
		Therefore $g\Runterpfeil m$.
		Now, the unique lower cover of $g$ is $\tilde{g} = (k-1,2,1,\dotsc,\overset{d+1}1)$ since $k\ge3$.
		Then, we get $\len(\tilde{g}) = d+1 < c+1 = \len(m)$, and thus
		Lemma~\ref{lem:shorter-implies-nleq}  yields $\tilde{g}\nleq m$,
		contradicting $g\Runterpfeil m$ by Proposition~\ref{prop:arrows-in-doubly-founded-lattices}\eqref{item:g-lcover-below-m}.
\end{proof}

\begin{lemma}\label{lem:double-typeC-not-III-IV}
Let $n,k\in\N$ be such that $2\leq k\leq n$ and set
$g\defeq(k,1,\dotsc,\overset{n-k+1}{1})$. If $m\in\Mi$ is of
type~\ref{typeIII} or~\ref{typeIV}, then
$g\Doppelpfeil m$ is impossible.
\end{lemma}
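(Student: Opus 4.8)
The plan is to rule out the double arrow by exploiting its \emph{up}-arrow half; the down-arrow half alone will not suffice, since type~\ref{typeC} partitions genuinely admit down-arrows that fail to be up-arrows towards type~\ref{typeIV} partitions (as already witnessed in $\K(\Lp[7])$ by $(4,1,1,1)\Runterpfeil(3,2,2)$). So the proof must extract a contradiction from the relation $g\Hochpfeil m$.

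I would argue by contradiction, assuming $g\Doppelpfeil m$ for some $m\in\Mi$ of type~\ref{typeIII} or~\ref{typeIV}. Writing~$m$ in the shape $(a,t,\dotsc,\overset{c+1}{t},r)$ with $a>t>r\geq0$ and $c\geq2$ used in Lemma~\ref{lem:upper-cover-of-type-III-IV} (the type~\ref{typeIII} case corresponding to $t=1$, $r=0$), we have $m_1=a\geq2$. A double arrow in particular gives $g\Hochpfeil m$, whence Lemma~\ref{lem:single-typeC-height-k-1} forces $m_1=k-1$, i.e.\ $a=k-1$.

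The decisive point is that the unique upper cover of~$m$, which by Lemma~\ref{lem:upper-cover-of-type-III-IV} equals $\tilde{m}=(a,t+1,t,\dotsc,t,\overset{c+1}{t-1},r)$, keeps the first entry of~$m$: $\tilde{m}_1=a=k-1$. But $g\Hochpfeil m$ together with Proposition~\ref{prop:arrows-in-doubly-founded-lattices}\eqref{item:m-ucover-above-g} yields $g\leq\tilde{m}$, and comparing the partial sums at index~$1$ gives $k=g_1\leq\tilde{m}_1=k-1$, which is absurd. Hence no such~$m$ exists; combined with Theorem~\ref{thm:double-typeC} and the intervening lemmata, this pins down all double arrows out of type~\ref{typeC}.

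I do not anticipate a real obstacle. The only things requiring a moment's attention are: (i) verifying that Lemma~\ref{lem:upper-cover-of-type-III-IV} applies uniformly to both target types, in particular that a type~\ref{typeIII} partition with at least two trailing ones fits its hypotheses with parameter $c\geq2$ via $t=1$ and $r=0$; and (ii) recalling that Lemma~\ref{lem:single-typeC-height-k-1} is stated for all $2\leq k\leq n$, so the conclusion also covers the small values of~$k$ for which~$g$ is actually of type~\ref{typeA} or~\ref{typeB} rather than~\ref{typeC}.
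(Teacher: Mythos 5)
Your proof is correct and follows essentially the same route as the paper: both arguments use Lemma~\ref{lem:single-typeC-height-k-1} to force $m_1=k-1$ and then Lemma~\ref{lem:upper-cover-of-type-III-IV} to see that the unique upper cover $\tilde{m}$ still has first entry $k-1<k=g_1$, so $g\nleq\tilde{m}$ and Proposition~\ref{prop:arrows-in-doubly-founded-lattices}\eqref{item:m-ucover-above-g} kills $g\Hochpfeil m$. The only cosmetic difference is that you treat types~\ref{typeIII} and~\ref{typeIV} uniformly (which is legitimate, since Lemma~\ref{lem:upper-cover-of-type-III-IV} admits $t=1$, $r=0$), whereas the paper dispatches type~\ref{typeIII} by citing the stronger Lemma~\ref{lem:CarrowIIIimpossible}.
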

\begin{proof}
	By Lemma~\ref{lem:CarrowIIIimpossible}, $g\Doppelpfeil m$
	with $m$ of type~\ref{typeIII} is impossible.
	Therefore, aiming for a contradiction, we assume that $g\Doppelpfeil m$
with $m$ of type~\ref{typeIV}. 
Then  $\len(m)\geq 3$, and, by
Lemma~\ref{lem:single-typeC-height-k-1}, we must have $m_1=k-1$,
i.e., $m=(k-1,t,\dotsc,\overset{\ell}{t},r)$ where
$0\leq r<t<k-1$, $t\ge2$ and $\ell\geq 3$.
Now, by Lemma~\ref{lem:upper-cover-of-type-III-IV}, the upper cover of~$m$
is $\tilde{m} = (k-1,t+1,t,\dotsc,t,t-1,r)$, implying that
$g_1=k>k-1=\tilde{m}_1$. Therefore, $g\nleq\tilde{m}$, and consequently
Proposition~\ref{prop:arrows-in-doubly-founded-lattices}\eqref{item:m-ucover-above-g}
excludes $g\Hochpfeil m$, in contradiction to~$g\Doppelpfeil m$.
\end{proof}

We can now finally observe that for~$g$ of type~\ref{typeC} all double
arrows $g\Doppelpfeil m$ were characterised in the first part of
Theorem~\ref{thm:double-typeC}.
\begin{corollary}\label{cor:double-typeC-described}
Let $n,k\in\N$ be such that $2\leq k\leq n$ and set
$g\defeq(k,1,\dotsc,\overset{n-k+1}{1})$. If $m\in\Mi$
satisfies $g\Doppelpfeil m$, then~$m$ is of the form as
described in Theorem~\ref{thm:double-typeC}.
\end{corollary}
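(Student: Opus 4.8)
The plan is to mimic the reasoning already carried out for
Corollary~\ref{cor:typeAB-doublearrow-implies-described}, assembling the
claim from the three auxiliary results proved just above. Assume
$g\Doppelpfeil m$ for some $m\in\Mi$; by Definition~\ref{def:arrow-rels}
this means that both $g\Runterpfeil m$ and $g\Hochpfeil m$ hold. By
Lemma~\ref{lem:char-join-irr} the meet-irreducible~$m$ belongs to
exactly one of the four types~\ref{typeI}--\ref{typeIV}, and the
strategy is to rule out two of them and then apply the
type~\ref{typeI}/\ref{typeII} classification.

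First I would discard types~\ref{typeIII} and~\ref{typeIV}:
Lemma~\ref{lem:double-typeC-not-III-IV} states precisely that
$g\Doppelpfeil m$ cannot occur when~$m$ is of one of these two types
(its proof in turn relying on Lemma~\ref{lem:CarrowIIIimpossible} for
type~\ref{typeIII}, and on the height/upper-cover bookkeeping of
Lemma~\ref{lem:single-typeC-height-k-1} together with
Lemma~\ref{lem:upper-cover-of-type-III-IV} for type~\ref{typeIV}).
Hence~$m$ must be of type~\ref{typeI} or~\ref{typeII}.

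It then remains to invoke Corollary~\ref{lem:single-typeC-I-II-described}:
since $g\Doppelpfeil m$ entails $g\Runterpfeil m$ (either single arrow
would already suffice), that corollary applies and yields that~$m$ has
exactly the shape $m=(k-1,\dotsc,\overset{t}{k-1},r)$ with $n=t(k-1)+r$
and $0\leq r\leq k-2$, i.e.,~$m$ is of the form described in
Theorem~\ref{thm:double-typeC}. This completes the argument.

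I do not anticipate a genuine obstacle here, since all the work has been
front-loaded into the preceding lemmata; the only points worth a second
glance are that the case split over types~\ref{typeI}--\ref{typeIV} is
exhaustive for every $m\in\Mi$ (guaranteed by
Lemma~\ref{lem:char-join-irr}), and that, although for $k=2$ the
partition $g=(2,1,\dotsc,\overset{n-1}{1})$ is not literally of
type~\ref{typeC} (its double arrows having already been recorded in
Lemma~\ref{Lemma3.7} and Corollary~\ref{cor:all2-doublearrow-n-half-ones}),
all the lemmata cited above are stated for the full range
$2\leq k\leq n$ and hence cover this boundary case as well, so no
separate treatment is needed.
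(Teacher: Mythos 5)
Your proposal is correct and follows essentially the same route as the paper's own proof: rule out types~\ref{typeIII} and~\ref{typeIV} via Lemma~\ref{lem:double-typeC-not-III-IV}, then conclude with Corollary~\ref{lem:single-typeC-I-II-described} for the remaining types~\ref{typeI} and~\ref{typeII}. The extra remarks about exhaustiveness of the type split and the boundary case $k=2$ are sound but not needed beyond what the cited lemmata already provide.
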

\begin{proof}
Assuming $g\Doppelpfeil m$, Lemma~\ref{lem:double-typeC-not-III-IV}
yields that~$m$ cannot be of type~\ref{typeIV} nor of
type~\ref{typeIII}.
Therefore, $m$ is of type~\ref{typeI} or~\ref{typeII}, whence
Corollary~\ref{lem:single-typeC-I-II-described} implies that~$m$ is of
the form as given in Theorem~\ref{thm:double-typeC}.
\end{proof}

\subsection[Double arrows involving completely join-irreducibles of type D]{Double arrows involving completely join-irreducibles of type~\ref{typeD}}
\begin{theorem}\label{thm:double-typeCD}
Let $b\in\N$, $k,d,\ell\in\Nplus$ with $k\geq 3, b+\ell\ge2$ and
set $n\defeq b(k+1)+\ell k+d$.
Now, choose $2\leq t\leq \min(k-1,d+1)$, set
$a\defeq b+(b+\ell)(k-t)+t-1$ and
decompose $n-a=ct+r$ with $0\leq r<t$ and $c\in\N$.
Then we have
$g\defeq(k+1,\dotsc,\overset{b}{k+1},k,\dotsc,\overset{b+\ell}{k},1,\dotsc,\overset{b+\ell+d}{1})\in\Ji$
and $m\defeq (a,t,\dotsc,\overset{c+1}{t},r)\in\Mi$ of
type~\ref{typeIV} and $g\Doppelpfeil m$.
\par
Moreover, if $g\in\Ji$ is of type~\ref{typeD}, $m\in\Mi$ and
$g\Doppelpfeil m$ holds, then~$m$ must be of the shape as described before.
\end{theorem}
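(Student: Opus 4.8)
The plan is to establish the two claims in turn: the explicit double arrow $g\Doppelpfeil m$, and then the fact that for a fixed $g$ of type~\ref{typeD} these $m$ exhaust the $\bigwedge$-irreducibles that are double-arrow related to it.

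For the forward direction the hypotheses $k\ge3$, $b+\ell\ge2$ are precisely Brylawski's, so Lemma~\ref{lem:char-join-irr} gives $g\in\Ji$ of type~\ref{typeD}; since $k\ge3$, the only transition site of~$g$ is the cliff of height $k-1$ at position $b+\ell$, and rule~\eqref{item:transition-cliff} produces the unique lower cover $\tilde{g}=(k+1,\dotsc,\overset{b}{k+1},k,\dotsc,k,\overset{b+\ell}{k-1},2,1,\dotsc,\overset{b+\ell+d}{1})$. Expanding $a$ and $n$ yields the identity $n-a=d+1+(b+\ell-1)t$; together with $2\le t\le d+1$ it forces $c=(b+\ell-1)+\lfloor(d+1)/t\rfloor\ge b+\ell\ge2$, and $t\le k-1$ gives $a=b(k+1-t)+\ell(k-t)+t-1\ge t+1$, so $m=(a,t,\dotsc,\overset{c+1}{t},r)\in\Mi$ is of type~\ref{typeIV}; by Lemma~\ref{lem:upper-cover-of-type-III-IV} its unique upper cover is $\tilde{m}=(a,t+1,t,\dotsc,t,\overset{c+1}{t-1},r)$.

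By Proposition~\ref{prop:arrows-in-doubly-founded-lattices} it then suffices to check $g\nleq m$, $\tilde{g}\le m$ and $g\le\tilde{m}$, and all of this reduces to comparing partial sums $\sigma_j(p)\defeq\sum_{i=1}^{j}p_i$. Replacing $g$ by $\tilde{g}$ changes only $\sigma_{b+\ell}$ (decreasing it by $1$), while replacing $m$ by $\tilde{m}$ changes only the $\sigma_j$ with $2\le j\le c$ (increasing each by $1$). Using the identity above and $c\ge b+\ell$ one computes $\sigma_{b+\ell}(g)=\sigma_{b+\ell}(m)+1$, which immediately gives $g\nleq m$ and also $\sigma_{b+\ell}(\tilde{g})=\sigma_{b+\ell}(m)=\sigma_{b+\ell}(\tilde{m})-1$. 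For $j\ne b+\ell$ the inequality $\sigma_j(g)\le\sigma_j(m)$ follows from the elementary bounds $k-t\ge1$, $\ell(k-t)\ge1$, $t\ge2$, $b+\ell\ge2$, by splitting into $j\le b$, $b\le j\le b+\ell$ and $j>b+\ell$ (using $c+1\le b+\ell+d$ in the last range); since $\sigma_j(\tilde{g})=\sigma_j(g)$ for those $j$, this is exactly $\tilde{g}\le m$, and then $g\le\tilde{m}$ drops out because $\sigma_j(g)\le\sigma_j(m)\le\sigma_j(\tilde{m})$ for $j\ne b+\ell$ while $\sigma_{b+\ell}(g)=\sigma_{b+\ell}(\tilde{m})$. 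Hence $g\Doppelpfeil m$.

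For the converse, the first step is to rule out that $m$ is of type~\ref{typeI}, \ref{typeII} or~\ref{typeIII}. If $g\Doppelpfeil m$ then $m^{*}\Doppelpfeil g^{*}$ by Corollary~\ref{Corollary_3.6}, and $g^{*}$ is of type~\ref{typeIV}; were $m$ of type~\ref{typeI}--\ref{typeIII}, then $m^{*}$ would be of type~\ref{typeA}--\ref{typeC}, but the completeness parts of Theorems~\ref{thm:double-typeAB} and~\ref{thm:double-typeC} say that partitions of type~\ref{typeA}--\ref{typeC} are double-arrow related only to partitions of the shapes $(a,1,\dotsc,1)$ or $(k-1,\dotsc,k-1,r)$, none of which is of type~\ref{typeIV} — a contradiction. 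So $m$ is of type~\ref{typeIV}, say $m=(\alpha,\tau,\dotsc,\tau,\rho)$. One then pins down its parameters: from $\len(\tilde{g})=\len(g)=b+\ell+d$, $\len(\tilde{m})=\len(m)$ and Lemma~\ref{lem:shorter-implies-nleq} one gets $\len(m)=b+\ell+d$, and examining the partial-sum inequalities of $\tilde{g}\le m$ and $g\le\tilde{m}$ around the index $b+\ell$ (which, because $g\nleq m$, must be the tight coordinate) forces $\alpha=b+(b+\ell)(k-\tau)+\tau-1$, the range $2\le\tau\le\min(k-1,d+1)$, and the decomposition $n-\alpha=\gamma\tau+\rho$ with $0\le\rho<\tau$. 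In the paper this would be split into several lemmas paralleling Lemmas~\ref{lem:typeAB-downarrow-implies-length-m=b+l+1-ending-in-1}--\ref{cor:typeAB-doublearrow-implies-described}.

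The forward direction is essentially bookkeeping around a single tight coordinate, so the main obstacle is this last step of the converse: extracting $\tau$, $\alpha$ and the precise admissible range of $\tau$ from the partial-sum comparisons requires a somewhat delicate case analysis, with the boundary cases $b=0$, $\ell=1$, $\tau=2$ and $\tau=k-1$ being the ones to watch.
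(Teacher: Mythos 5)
Your forward direction is sound and follows essentially the same route as the paper: identify the unique lower cover $\tilde{g}$ via the cliff at position $b+\ell$, the unique upper cover $\tilde{m}$ via Lemma~\ref{lem:upper-cover-of-type-III-IV}, verify $c\geq b+\ell\geq 2$ and $a>t$ so that $m$ is of type~\ref{typeIV}, and then compare partial sums around the single tight coordinate $b+\ell$ (the paper packages the off-coordinate comparisons into Lemma~\ref{lem:dominance-from-complement} rather than the case split $j\le b$, $b\le j\le b+\ell$, $j>b+\ell$, but the content is the same). Your first step of the converse — ruling out $m$ of types~\ref{typeI}–\ref{typeIII} by conjugating to $m^*\Doppelpfeil g^*$ and invoking the completeness parts of Theorems~\ref{thm:double-typeAB} and~\ref{thm:double-typeC} — is a valid and genuinely different route from the paper, which instead works directly on $g\Runterpfeil m$ (Lemma~\ref{lem:single-typeC-b>=2-I-II}) and excludes types~\ref{typeI}/\ref{typeII} via the largest-element-of-bounded-height argument.

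The genuine gap is in the parameter-pinning step of the converse. First, your intermediate claim $\len(m)=b+\ell+d$ is false: for $g=(4,4,1,1)$ ($n=10$, $b=0$, $\ell=2$, $k=4$, $d=2$) the theorem gives $g\Doppelpfeil(4,3,3)$ with $\len(m)=3\neq 4=b+\ell+d$; the inequalities $\tilde{g}\leq m$ and $g\leq\tilde{m}$ only yield $\len(m)\leq b+\ell+d$ via Lemma~\ref{lem:shorter-implies-nleq}, not equality. Second, and more seriously, the assertion that $b+\ell$ ``must be the tight coordinate'' is exactly the crux and does not follow from $g\nleq m$ alone, which only says \emph{some} partial sum is violated. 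What is needed is the upper bound $\sum_{i=1}^{b+\ell}m_i\leq n-d-1$, and the paper's proof of this (Lemma~\ref{lem:m-sum-b+l>=n-d-implies-g-leq-m}) is a real argument: if $\sum_{i=1}^{b+\ell}m_i\geq n-d$, then the truncation of $g$ to its first $b+\ell$ entries is the \emph{minimal} partition of that sub-sum with width at most $b+\ell$ (Lemma~\ref{lem:least-bounded-width}), whence $g\leq m$ at every index, contradicting $g\nleq m$. Only after this, combined with the lower bound $\sum_{i=1}^{b+\ell}m_i\geq\sum_{i=1}^{b+\ell}\tilde{g}_i=n-d-1$ from $\tilde{g}\leq m$, does one get the exact value $n-(d+1)$ from which $a=b+(b+\ell)(k-t)+t-1$ and the bounds $t\leq k-1$ (from position $b+\ell-1$) and $t\leq d+1$ (from the $\Hochpfeil$ half) follow. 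You acknowledge this step is delicate but supply neither the minimality idea nor a substitute for it, so the converse is not established as written.
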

Note that if $m\in\Mi$ is of type~\ref{typeIV},
then its dual partition $m^*\in\Ji$ is of type~\ref{typeD}.
Since Theorem~\ref{thm:double-typeCD} exhibits double arrow
relationships between $\bigvee$-irreducibles of type~\ref{typeD} and
$\bigwedge$-irreducibles of type~\ref{typeIV}, the dual result of
Theorem~\ref{thm:double-typeCD} under partition conjugation is
contained within the original statement.
\par
Moreover, Theorem~\ref{thm:double-typeCD} represents a one-to-many
double arrow relationship, as can be observed from the example
$g\Doppelpfeil (4,3,3)$ and $g\Doppelpfeil(5,2,2,1)$ where
$g=(4,4,1,1)$ in both cases.
\begin{proof}
First, we check that $m\in\Mi$ is of type~\ref{typeIV}
by verifying the inequalities $c\geq b+\ell\geq 2$ and $a\geq k+b\geq t+1\geq 3$.
We have
$a-k=b+(b+\ell-1)(k-t)-1
      =(b-1+\ell)(k-t)+b-1
      \ge b +(b+\ell-2) \ge b$
since $k-t\geq 1$ and $b+\ell\ge 2$, hence
$a\geq k+b\geq k\geq t+1\geq 3$.
For the other inequality we see that
\[a+(b+\ell-1)t=b+(b+\ell)k-1=b(k+1)+\ell k-1=n-d-1
  \leq n-t=a+ct+r-t<a+ct,\]
where we have used that $d+1\geq t$ and $r-t<0$.
Thus, due to $t>0$, we have $b+\ell-1<c$, i.e., $b+\ell<c+1$ or
$2\leq b+\ell\leq c$. Hence $m$ is completely meet-irreducible of
type~\ref{typeIV}.
We also observe that~$g$ is completely join-irreducible
of type~\ref{typeD} since $b+\ell\geq 2$, $k\geq 3$ and $d,\ell\geq 1$.
\par
We have that $g\nleq m$ because summing up the first $b+\ell\leq c$
entries of~$g$ gives $b(k+1)+\ell k=n-d$, which is larger than the
corresponding value of~$m$, namely $a+(b+\ell-1)t=n-d-1$.
\par
The unique lower cover of~$g$ is
$\tilde{g}=(k+1,\dotsc,\overset{b}{k+1},k,\dotsc,k,\overset{b+\ell}{k-1},2,1,\dotsc,\overset{b+\ell+d}{1})$,
in which the part $k,\dotsc,k$ only appears for $\ell\geq 2$ and $1,\dotsc,1$ only appears for $d\geq 2$.
We obtain~$\tilde{g}$ from~$g$ by letting the brick in position~$b+\ell$
fall from the cliff (transition rule~\eqref{item:transition-cliff}).
Since $k-1\geq t\geq 2$, Lemma~\ref{lem:dominance-from-complement} with
position $j\defeq b+\ell$ and common sum
$s=b(k+1)+\ell k-1 = n-d-1=a+(b+\ell-1)t$ directly implies
$\tilde{g}\leq m$. Therefore,
Proposition~\ref{prop:arrows-in-doubly-founded-lattices}\eqref{item:g-lcover-below-m}
yields $g\Runterpfeil m$.
\par
By Lemma~\ref{lem:upper-cover-of-type-III-IV}, the unique upper cover
of~$m$ is $\tilde{m}=(a,t+1,t,\dotsc,t,\overset{c+1}{t-1},r)$.
Since $b+\ell\leq c$, $k\geq t+1$ and $1\leq t-1$,
applying Lemma~\ref{lem:dominance-from-complement} with
position $j\defeq b+\ell$ and the common sum
$s=b(k+1)+\ell k = a+(b+\ell-1)t+1$ shows $g\leq\tilde{m}$. Thus
Proposition~\ref{prop:arrows-in-doubly-founded-lattices}\eqref{item:m-ucover-above-g}
yields $g\Hochpfeil m$.
\par
That, for given $g\in\Ji$ of type~\ref{typeD}, there are no
other $m\in\Mi$ than the previous with $g\Doppelpfeil m$ will follow
from the following results, the proof being complete with
Corollary~\ref{cor:double-typeCD-described}.
\end{proof}

The following lemma in particular applies to completely
join-irreducible elements~$g$ of any type.
\begin{lemma}\label{lem:m-sum-b+l>=n-d-implies-g-leq-m}
Let $b,d\in\N$, $k,\ell\in\Nplus$ and set
$n\defeq b(k+1)+\ell k+d$. For
$g\defeq (k+1,\dotsc,\overset{b}{k+1},k,\dotsc,\overset{b+\ell}{k},1,\dotsc,\overset{b+\ell+d}{1})$
and $m\in\Mi$ we have that
$\hat{m}_{b+\ell}\defeq\sum_{i=1}^{b+\ell}m_i\geq n-d$ implies $g\leq m$.
\end{lemma}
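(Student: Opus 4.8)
The plan is to verify $g\leq m$ directly from Definition~\ref{def:dominance-order}, i.e.\ to show $\sum_{i=1}^{j}g_i\leq\sum_{i=1}^{j}m_i$ for every index~$j$. Abbreviate $\hat{g}_j\defeq\sum_{i=1}^j g_i$, $\hat{m}_j\defeq\sum_{i=1}^j m_i$ and $w\defeq b+\ell$. The partial sums of~$g$ are piecewise linear and easy to write down: $\hat{g}_j=j(k+1)$ for $0\leq j\leq b$; $\hat{g}_j=jk+b$ for $b\leq j\leq w$; $\hat{g}_j=(n-d)+(j-w)$ for $w\leq j\leq w+d$; and $\hat{g}_j=n$ for $j\geq w+d$ (the formulas agree at the junction points $j=b$, $j=w$, $j=w+d$, and note $wk+b=n-d$). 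The hypothesis will only be used in the form $\sum_{i>w}m_i=n-\hat{m}_w\leq d$; in fact nothing beyond $m\in\Lp$ is needed, the completely meet-irreducible shape of~$m$ playing no role. I would then treat the two ranges $j\leq w$ and $j\geq w$ separately.

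For $j\geq w$ the comparison is quick. First, $\sum_{i>w}m_i\leq d$ forces $\len(m)\leq w+d$ (otherwise $m_{w+1},\dotsc,m_{w+d+1}$ are $d+1$ positive integers summing to more than~$d$); hence for $j\geq w+d$ we get $\hat{m}_j=n=\hat{g}_j$. For $w\leq j<w+d$, if $\len(m)\leq j$ then again $\hat{m}_j=n\geq\hat{g}_j$, while if $\len(m)>j$ then $m_{w+1},\dotsc,m_j$ are each at least~$1$, so $\hat{m}_j=\hat{m}_w+\sum_{i=w+1}^{j}m_i\geq(n-d)+(j-w)=\hat{g}_j$.

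The range $j\leq w$ is the subtle one, and I expect it to be the main obstacle: the obvious bound $\hat{m}_j\geq\frac{j}{w}\hat{m}_w\geq\frac{j}{w}(n-d)$, obtained from~$m$ being non-increasing, just fails to reach~$\hat{g}_j$ (the deficit being governed by~$\ell\geq1$), so a purely ``averaging'' argument does not suffice. The remedy is a short integrality argument by contradiction. Assume $\hat{m}_j<\hat{g}_j$ for some $j\leq w$. Since~$m$ is non-increasing, $j\,m_j\leq\hat{m}_j\leq\hat{g}_j-1$; plugging in the explicit value of~$\hat{g}_j$ (treating $j\leq b$ and $b<j\leq w$ separately) one gets $j\,m_j<j(k+1)$, hence $m_j\leq k$, and therefore $m_i\leq k$ for all $j\leq i\leq w$. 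Consequently $\hat{m}_w\leq\hat{m}_j+(w-j)k\leq(\hat{g}_j-1)+(w-j)k$, and a one-line computation simplifies the right-hand side to $wk+j-1\leq wk+b-1$ (for $j\leq b$) or exactly $wk+b-1$ (for $b<j\leq w$); either way it is strictly smaller than $wk+b=n-d$, contradicting $\hat{m}_w\geq n-d$. Hence $\hat{g}_j\leq\hat{m}_j$ also for $j\leq w$, and together with the previous range this establishes $g\leq m$.
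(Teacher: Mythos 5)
Your proof is correct, and for the decisive range $j\leq b+\ell$ it takes a genuinely different route from the paper. The paper truncates both partitions after position $j=b+\ell$, writes $\hat{m}_j=j\kappa+r$ with $0\leq r<j$, shows $\kappa\geq k$ (and $r\geq b$ when $\kappa=k$), and then sandwiches $\breve{g}\leq p\leq\breve{m}$, where $p=(\kappa+1,\dotsc,\kappa+1,\kappa,\dotsc,\kappa)$ is the least partition of $\hat{m}_j$ of width at most $j$ (invoking Lemma~\ref{lem:least-bounded-width}); the tail $j>b+\ell$ is then handled via $g_i\leq 1\leq m_i$, much as you do. Your replacement for the initial segment --- assuming $\hat{m}_j<\hat{g}_j$, extracting $m_j\leq k$ from $jm_j\leq\hat{m}_j\leq\hat{g}_j-1$ by integrality, and propagating $m_i\leq k$ forward to contradict $\hat{m}_{b+\ell}\geq n-d=(b+\ell)k+b$ --- is self-contained and avoids both the division with remainder and the appeal to the bounded-width machinery; I checked the two subcases $j\leq b$ and $b<j\leq b+\ell$ and the arithmetic ($wk+j-1\leq wk+b-1<wk+b$) is right. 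What the paper's version buys is reuse of structural lemmas already in place (and the explicit intermediate partition $p$, which is conceptually illuminating); what yours buys is elementariness and the explicit observation, also implicit in the paper, that only $m\in\Lp$ is used and meet-irreducibility of $m$ is irrelevant. Your handling of $j\geq b+\ell$ (deducing $\len(m)\leq b+\ell+d$ from $\sum_{i>b+\ell}m_i\leq d$, and counting at least $j-(b+\ell)$ positive tail entries otherwise) is sound and in fact slightly more careful than the paper's own tail argument.
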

\begin{proof}
As $k\geq 1$, we have $1\leq j\defeq b+\ell\leq
b(k+1)+\ell k=n-d\leq\hat{m}_j$.
There are $\kappa,r\in\N$ with $r<j$ such that~$\hat{m}_j$
decomposes modulo~$j$ as $\hat{m}_j = j\kappa+r$.
As $r<j$, we have
$jk=bk+\ell k\leq b(k+1)+\ell k=n-d\leq
\hat{m}_j=j\kappa+r<j(\kappa+1)$, hence $k < \kappa+1$, i.e.,
$k\leq \kappa$.
If $k=\kappa$, then $b+jk=b(k+1) + \ell k =
n-d\leq\hat{m}_j=r+j\kappa=r+jk$ and so
$b\leq r$; otherwise $\kappa\geq k+1$.
In both cases $\breve{g}\defeq
(k+1,\dotsc,\overset{b}{k+1},k,\dotsc,\overset{j}{k})$ lies pointwise below
$p\defeq(\kappa+1,\dotsc,\overset{r}{\kappa+1},\kappa,\dotsc,\overset{j}{\kappa})$.
Now we have $p,\breve{m}\defeq(m_1,\dotsc,m_j)\in \Bw[\hat{m}_j]{j}$, and since
$\hat{m}_j = \kappa j+r$ with $0\leq r<j$ and $1\leq j\leq\hat{m}_j$,
Lemma~\ref{lem:least-bounded-width} implies
$p\leq \breve{m}$.
Hence we obtain
$\sum_{i=1}^s g_i=\sum_{i=1}^s \breve{g}_i\leq \sum_{i=1}^s p_i
\leq \sum_{i=1}^s \breve{m}_i = \sum_{i=1}^s m_i$ for each~$s$ with $1\leq s\leq j$.
If $\len(m)>j$, then for each $j<i\leq\len(m)$ we have
$g_i\leq 1\leq m_i$, and thus
$\sum_{i=1}^sg_i =\sum_{i=1}^j g_i + \sum_{i=j+1}^s g_i
\leq\sum_{i=1}^j m_i + \sum_{i=j+1}^s m_i =\sum_{i=1}^s m_i$ for any
$s>j$. Therefore, we conclude $g\leq m$.
\end{proof}

Also the following lemma applies to any possible completely
join-irreducible partition~$g$.
\begin{lemma}\label{lem:g-downarrow-m-implies-m-sum-b+l=n-d-1}
Let $b,d\in\N$, $k,\ell\in\Nplus$ with $k\ge2$ and set $n\defeq b(k+1)+\ell k +d$.
If\/ $k=2$ and $d\geq1$, we additionally assume $b=0$.
Define $g\defeq (k+1,\dotsc,\overset{b}{k+1},k,\dotsc,\overset{b+\ell}{k},1,\dotsc,\overset{b+\ell+d}{1})$
and let $m\in\Mi$ be such that $g\Runterpfeil m$.
Then $\hat{m}_{b+\ell}\defeq\sum_{i=1}^{b+\ell}m_i= n-(d+1)<n$.
\end{lemma}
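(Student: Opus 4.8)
The plan is to derive the claimed identity $\hat{m}_{b+\ell}=n-(d+1)$ as a conjunction of the two inequalities $\hat{m}_{b+\ell}\leq n-d-1$ and $\hat{m}_{b+\ell}\geq n-d-1$, one coming from each half of the characterisation of $g\Runterpfeil m$ in Proposition~\ref{prop:arrows-in-doubly-founded-lattices}\eqref{item:g-lcover-below-m}: that proposition supplies both $g\nleq m$ and $\tilde{g}\leq m$, where $\tilde{g}$ denotes the unique lower cover of~$g$ (which exists because, under the standing hypotheses, $g$ is always completely join-irreducible).

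The bound $\hat{m}_{b+\ell}\leq n-d-1$ is immediate: from $g\nleq m$ and the contrapositive of Lemma~\ref{lem:m-sum-b+l>=n-d-implies-g-leq-m} one gets $\hat{m}_{b+\ell}<n-d$, whence $\hat{m}_{b+\ell}\leq n-d-1<n$, which also accounts for the strict inequality $\hat{m}_{b+\ell}<n$ in the statement. For the reverse bound I would compute the partial sum $\sum_{i=1}^{b+\ell}\tilde{g}_i$ and then use $\tilde{g}\leq m$, which by Definition~\ref{def:dominance-order} evaluated at the index $j=b+\ell$ gives $\sum_{i=1}^{b+\ell}\tilde{g}_i\leq\sum_{i=1}^{b+\ell}m_i=\hat{m}_{b+\ell}$. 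The crux — and the only place any work is needed — is the claim that in every admissible parameter range one has $\sum_{i=1}^{b+\ell}\tilde{g}_i=n-d-1$; equivalently, that the single transition rule turning $g$ into $\tilde{g}$ transports exactly one brick out of the first $b+\ell$ positions. I expect this short case distinction to be the main obstacle.

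The case distinction runs as follows; note that $\sum_{i=1}^{b+\ell}g_i=b(k+1)+\ell k=n-d$ in all cases, so it suffices to see that the brick moved when passing from $g$ to $\tilde{g}$ leaves the first $b+\ell$ positions. If $d=0$, then $g$ has a single cliff at position $b+\ell$ (its last non-zero entry, which equals $k\geq2$), and transition rule~\eqref{item:transition-cliff} lets that brick fall to position $b+\ell+1$. If $d\geq1$ and $k\geq3$, then $g$ has a single cliff of height $k-1\geq2$ at position $b+\ell$, and rule~\eqref{item:transition-cliff} again moves the brick to position $b+\ell+1$; this is precisely the lower cover~$\tilde{g}$ already exhibited in the proof of Theorem~\ref{thm:double-typeCD}. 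If $d\geq1$ and $k=2$, then $b=0$ by hypothesis, so $g=(2,\dotsc,\overset{\ell}{2},1,\dotsc,\overset{\ell+d}{1})$ has a single slippery step at position $\ell=b+\ell$ (with parameter $d+1$), and transition rule~\eqref{item:transition-step} moves that brick from position~$\ell$ to position $\ell+d+1$. In each case one checks through Lemma~\ref{lem:description-of-lower-covers} that $g$ admits no competing cliff or slippery step, so that $\tilde{g}$ really is \emph{the} unique lower cover; this is exactly where the hypothesis that $k=2$ together with $d\geq1$ forces $b=0$ enters, since otherwise $(3,\dotsc,3,2,\dotsc,2,1,\dotsc,1)$ would possess two slippery steps and hence fail to be completely join-irreducible at all. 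As the brick always lands in a position of index exceeding $b+\ell$, we obtain $\sum_{i=1}^{b+\ell}\tilde{g}_i=(n-d)-1$, and combining $\hat{m}_{b+\ell}\geq n-d-1$ with the earlier bound $\hat{m}_{b+\ell}\leq n-d-1$ finishes the proof.
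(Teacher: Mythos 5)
Your proposal is correct and follows essentially the same route as the paper's own proof: the upper bound $\hat{m}_{b+\ell}\leq n-d-1$ via the contrapositive of Lemma~\ref{lem:m-sum-b+l>=n-d-implies-g-leq-m}, the lower bound via $\tilde{g}\leq m$ from Proposition~\ref{prop:arrows-in-doubly-founded-lattices}\eqref{item:g-lcover-below-m}, and the same three-case computation ($d=0$; $d\geq1$, $k\geq3$; $d\geq1$, $k=2$, $b=0$) showing that the transition from $g$ to $\tilde{g}$ moves exactly one brick past position $b+\ell$, so that $\sum_{i=1}^{b+\ell}\tilde{g}_i=n-d-1$. Your added remark explaining why the hypothesis forces $b=0$ when $k=2$ and $d\geq1$ (to avoid two slippery steps) is a correct and welcome clarification that the paper leaves implicit.
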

\begin{proof}
Let $j\defeq b+\ell$ and assume $g\Runterpfeil m$. By
Lemma~\ref{lem:m-sum-b+l>=n-d-implies-g-leq-m}, we must
have $\hat{m}_j\leq n-d-1$ because $\hat{m}_j\geq n-d$ would entail
$g\leq m$, in contradiction to $g\Runterpfeil m$, see
Proposition~\ref{prop:arrows-in-doubly-founded-lattices}\eqref{item:g-lcover-below-m}.
On the other hand, we know from
Proposition~\ref{prop:arrows-in-doubly-founded-lattices}\eqref{item:g-lcover-below-m},
that $\tilde{g}\leq m$ holds for the unique lower cover~$\tilde{g}$
of~$g$, wherefore, we have $\hat{m}_j\geq \sum_{i=1}^j \tilde{g}_i\eqdef N$.
It only remains to observe that the latter sum indeed amounts to
$n-(d+1)$.
We do this by considering three different cases: if
$d=0$, then $g=(k+1,\dotsc,\overset{b}{k+1}, k,\dotsc,\overset{j}{k})$,
$\tilde{g}=(k+1,\dotsc,\overset{b}{k+1},k,\dotsc ,k,\overset{j}{k-1},1)$ and $N=b(k+1) +\ell k -1=n-1=n-d-1$.
Next, if $d\geq1$ and $k=2$, then  $b=0$, $j=\ell$, $n=\ell k +d = 2\ell+d$ and
$g= (2, \dotsc,\overset{\ell}{2}, 1,\dotsc,\overset{\ell+d}{1})$,
$\tilde{g} = (2, \dotsc,\overset{j-1}{2}, 1,\dotsc,\overset{\ell+d}{1},1)$
and
$N = 2\ell-1=b(k+1) +\ell k-1=n-d-1$.
The last case is $d\geq 1$ and $k\geq 3$.
Now~$g$ is of the general form given in the statement
and its unique lover cover is
$\tilde{g} = (k+1,\dotsc,\overset{b}{k+1},k,\dotsc,k,\overset{b+\ell}{k-1},2,1,\dotsc,\overset{b+\ell+d}{1})$
where the part $k,\dotsc,k$ only appears for $\ell\ge2$ and $1,\dotsc,1$
only for $d\ge2$. Again, we get $N=b(k+1) + \ell k -1 = n-d-1$ as
$n=b(k+1) +\ell k +d$.
Therefore, we conclude that in all cases mentioned in the lemma
the inequalities $n>n-d-1\geq \hat{m}_j\geq N = n-d-1$, i.e.,
$n>n-d-1=\hat{m}_j$, hold.
\end{proof}

The subsequent lemma shows in particular that if $g\in\Ji$ is of
type~\ref{typeD}, $m\in\Mi$ is of type~\ref{typeI}, \ref{typeII}
or~\ref{typeIII}, then $g\Runterpfeil m$ fails.
\begin{lemma}\label{lem:single-typeC-b>=2-I-II}
Let $b\in\N$, $k,\ell,d\in\Nplus$ be such that $k\geq 3$, $b+\ell\geq 2$ and set
$n\defeq b(k+1) +\ell k+d$. Consider
$g\defeq(k+1,\dotsc,\overset{b}{k+1},k,\dotsc,\overset{b+\ell}{k},1,\dotsc,\overset{b+\ell+d}{1})$
and any $m\in\Mi$. If $g\Runterpfeil m$, then
there are $a,t,c,r\in\N$ such that
$n= a+ct+r$,
$m= (a,t,\dotsc,\overset{c+1}{t},r)$,
$a>t>r\geq 0$, $c\geq 2$,
$2\leq t\leq k-1$ and
$a=b+(b+\ell)(k-t) +t-1$.
In particular, $m$ is of type~\ref{typeIV}. 
Moreover, if\/~$m$ is such that $t>d+1$, then
$r=d+1\geq 2$, $b+\ell=c+1$ and $g\Hochpfeil m$ fails.
\end{lemma}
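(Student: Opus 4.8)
The plan is to combine the two facts already available for type~\ref{typeD} join-irreducibles with careful bookkeeping of partial sums. Since here $k\geq3$, the extra hypothesis on the case $k=2$ in Lemma~\ref{lem:g-downarrow-m-implies-m-sum-b+l=n-d-1} is vacuous, so $g\Runterpfeil m$ already gives $\hat{m}_{b+\ell}\defeq\sum_{i=1}^{b+\ell}m_i = n-d-1$; together with Proposition~\ref{prop:arrows-in-doubly-founded-lattices}\eqref{item:g-lcover-below-m} it also yields $g\nleq m$ and $\tilde{g}\leq m$ for the unique lower cover $\tilde{g}=(k+1,\dots,\overset{b}{k+1},k,\dots,k,\overset{b+\ell}{k-1},2,1,\dots,\overset{b+\ell+d}{1})$, whose shape is recorded inside the proof of Lemma~\ref{lem:g-downarrow-m-implies-m-sum-b+l=n-d-1} and for which $k\geq3$ and $d\geq1$ guarantee $\tilde{g}_{b+\ell}=k-1$ and $\tilde{g}_{b+\ell+1}=2$ irrespective of the optional blocks. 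Evaluating $\tilde{g}\leq m$ at the indices $b+\ell-1$ and $b+\ell+1$ and using $\sum_{i=1}^{b+\ell}\tilde{g}_i = n-d-1 = \hat{m}_{b+\ell}$ then gives $m_{b+\ell}\leq k-1$ and $m_{b+\ell+1}\geq2$; in particular $2\leq m_{b+\ell}\leq k-1$, $m_1\geq\tilde{g}_1=g_1\geq k$ and $\len(m)\geq b+\ell+1$.

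Next I would rule out types~\ref{typeI}--\ref{typeIII} for $m$ via Lemma~\ref{lem:char-join-irr}. A constant partition (type~\ref{typeI}) would have $m_{b+\ell}=m_1\geq k$ (using $m_{b+\ell}\geq2$, so $b+\ell\leq\len(m)$), contradicting $m_{b+\ell}\leq k-1$. Type~\ref{typeIII} has $m_2=1$, so $m_{b+\ell}\leq1<2$ since $b+\ell\geq2$, a contradiction. For type~\ref{typeII}, $m_{b+\ell}\leq k-1<k\leq m_1=t$ forces $m_{b+\ell}$ into the trailing $r$-entry, so $b+\ell=\len(m)=c+1$; but then $m_{b+\ell+1}=0$, contradicting $m_{b+\ell+1}\geq2$. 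Hence $m$ is of type~\ref{typeIV}, say $m=(a,t,\dots,\overset{c+1}{t},r)$ with $a>t>r\geq0$ and $t,c\geq2$. The bound $m_{b+\ell+1}\geq2$ gives $b+\ell+1\leq c+2$, i.e.\ $c\geq b+\ell-1$, so positions $2,\dots,b+\ell$ of $m$ are all equal to $t$; thus $\hat{m}_{b+\ell}=a+(b+\ell-1)t=n-d-1=b(k+1)+\ell k-1$, which rearranges to $a=b+(b+\ell)(k-t)+t-1$, while $m_{b+\ell}=t$ with $2\leq m_{b+\ell}\leq k-1$ gives $2\leq t\leq k-1$; finally $n=a+ct+r$ is just the total sum of $m$.

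For the supplementary clause, assume $t>d+1$. The tail sum is $\sum_{i>b+\ell}m_i=n-\hat{m}_{b+\ell}=d+1$, and from the shape of $m$ it equals $(c+1-b-\ell)t+r$; if $c+1-b-\ell\geq1$ this is at least $t>d+1$, a contradiction, so $c+1-b-\ell=0$, i.e.\ $b+\ell=c+1$, and then the tail reduces to $r=d+1$, which is $\geq2$ because $d\geq1$. By Lemma~\ref{lem:upper-cover-of-type-III-IV} the unique upper cover of $m$ is $\tilde{m}=(a,t+1,t,\dots,t,\overset{c+1}{t-1},r)$, whose first $c+1=b+\ell$ entries sum to $a+ct=n-r=n-d-1$, whereas the first $b+\ell$ entries of $g$ sum to $b(k+1)+\ell k=n-d$. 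Hence $g\nleq\tilde{m}$, so $g\Hochpfeil m$ fails by Proposition~\ref{prop:arrows-in-doubly-founded-lattices}\eqref{item:m-ucover-above-g}.

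The main obstacle I anticipate is the type elimination for $m$: one must verify that the two inequalities $m_{b+\ell}\leq k-1$ and $m_{b+\ell+1}\geq2$ really exclude types~\ref{typeI}--\ref{typeIII} in every degenerate configuration ($b=0$, $\ell=1$, $d=1$, or $m$ very short), and that the entries of $\tilde{g}$ at positions $b+\ell$ and $b+\ell+1$ survive when the optional blocks $k,\dots,k$ or $1,\dots,1$ collapse---both of which reduce to the standing hypotheses $k\geq3$, $d\geq1$. The remainder is routine partial-sum bookkeeping, the fiddliest piece being the computation that the first $c+1$ entries of $\tilde{m}$ sum to $a+ct$.
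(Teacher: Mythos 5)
Your proposal is correct; all the partial-sum computations check out (in particular $\sum_{i=1}^{b+\ell-1}\tilde{g}_i=n-d-k$ and $\sum_{i=1}^{b+\ell+1}\tilde{g}_i=n-d+1$ do yield $m_{b+\ell}\leq k-1$ and $m_{b+\ell+1}\geq 2$, using $\tilde{g}_{b+\ell}=k-1$ and $\tilde{g}_{b+\ell+1}=2$, both guaranteed by $k\geq3$ and $d\geq1$), and the tail-sum argument for the supplementary clause is sound. The macro-strategy coincides with the paper's (invoke Lemma~\ref{lem:g-downarrow-m-implies-m-sum-b+l=n-d-1} for $\hat{m}_{b+\ell}=n-(d+1)$, exploit $\tilde{g}\leq m$, solve for~$a$, and kill $g\Hochpfeil m$ via the upper cover from Lemma~\ref{lem:upper-cover-of-type-III-IV}), but your middle portion is organised differently. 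The paper excludes types~\ref{typeI} and~\ref{typeII} by observing that such an~$m$ would be the top of $\Bh{m_1}$ (Lemma~\ref{lem:largest-bounded-height}) and hence above~$g$; it then bounds $t\leq k$ via Corollary~\ref{cor:below-implies-dominance}, sharpens to $t\leq k-1$ with the partial sum at $b+\ell-1$, and rules out $t=1$ (type~\ref{typeIII}) by a separate length argument through Lemmata~\ref{lem:shorter-implies-nleq} and~\ref{lem:top-typeIII-least-length}. You instead extract the two local inequalities $m_{b+\ell}\leq k-1$ and $m_{b+\ell+1}\geq 2$ once and let them do all of that work simultaneously --- which is arguably more economical, dispenses with Lemmata~\ref{lem:largest-bounded-height} and~\ref{lem:top-typeIII-least-length}, and delivers $b+\ell\leq c+1$ as a byproduct. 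For the `moreover' part the paper argues $d+1\equiv r\pmod{t}$ and cancels, whereas you compare the tail sum $d+1$ against $(c+1-b-\ell)t+r$; both give $r=d+1$ and $b+\ell=c+1$, and the final computation $\sum_{i=1}^{b+\ell}\tilde{m}_i=a+ct=n-(d+1)<n-d=\sum_{i=1}^{b+\ell}g_i$ is identical.
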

\begin{proof}
Suppose that $g\Runterpfeil m$; then
Proposition~\ref{prop:arrows-in-doubly-founded-lattices}\eqref{item:g-lcover-below-m}
entails $\tilde{g}\leq m$ for the unique lower cover~$\tilde{g}$
of~$g$. Since $b+\ell\geq 2$, we thus have $g_1=\tilde{g}_1\leq m_1\eqdef h$.
If~$m$ were of type~\ref{typeI} or~\ref{typeII}, then
$m=(h,\dotsc,h,r)$ for some $0\leq r<h$, and thus~$m$ would be the
largest member of $\Bh{h}$ (see
Lemma~\ref{lem:largest-bounded-height}).
Since $\hgt(g)=g_1\leq h$, we would have $g\in\Bh{h}$, hence $g\leq m$,
contradicting $g\Runterpfeil m$ by
Proposition~\ref{prop:arrows-in-doubly-founded-lattices}\eqref{item:g-lcover-below-m}.
\par
Therefore, $m$ is neither of type~\ref{typeI} nor~\ref{typeII}, thus is
must be of type~\ref{typeIII} or~\ref{typeIV}. Hence it has the form
$m=(a,t,\dotsc,\overset{c+1}{t},r)$ where $a>t>r\geq 0$ and
$c\geq 2$. The total sum of~$m$ being~$n$, we note that $n=a+ct+r$.
If $t\geq k+1$, then $a>t\geq k+1>k\geq 3>1$ and hence $g\leq m$ by
Corollary~\ref{cor:below-implies-dominance}. By
Proposition~\ref{prop:arrows-in-doubly-founded-lattices}\eqref{item:g-lcover-below-m},
this is in contradiction to $g\Runterpfeil m$, wherefore $t\leq k$.
As $k\geq 3$, Lemma~\ref{lem:g-downarrow-m-implies-m-sum-b+l=n-d-1}
shows that the partial sum of~$m$ up to position~$b+\ell$ is
$\hat{m}_{b+\ell}=n-(d+1)<n$. Since $\hat{m}_{b+\ell}<n$, we must have
$b+\ell\leq c+1$, hence
$\hat{m}_{b+\ell} = a+(b+\ell-1)t = n-(d+1) = a+ct +r -(d+1)$
and $\hat{m}_{b+\ell-1}=n-(d+1)-t$.
Since $g\Runterpfeil m$,
Proposition~\ref{prop:arrows-in-doubly-founded-lattices}\eqref{item:g-lcover-below-m}
yields $\tilde{g}\leq m$ for the unique lower cover~$\tilde{g}$ of~$g$.
Thus,
$n-(d+k) = n-\sum_{i=b+\ell}^{b+\ell+d}g_i =
n-\sum_{i=b+\ell}^{b+\ell+d}\tilde{g}_i =
\sum_{i=1}^{b+\ell-1}\tilde{g}_i \leq \hat{m}_{b+\ell-1}=n-(d+1)-t$,
i.e., $t\leq k-1$. Since $t>r\geq 0$, we have $t\geq 1$.
With the intent to derive a contradiction, we assume that $t=1$, hence
$m=(a,1,\dotsc,1)$.
We know that
$\tilde{g}=(k+1,\dotsc,\overset{b}{k+1},k,\dotsc,k,\overset{b+\ell}{k-1},2,1,\dotsc,\overset{b+\ell+d}{1})
          \leq m$, thus
$d\geq 1$ and Lemma~\ref{lem:shorter-implies-nleq} imply
$\len(g)= \len(\tilde{g})\geq \len(m)$; hence our assumption
$t=1$ and Lemma~\ref{lem:top-typeIII-least-length} yield $g\le m$,
contradicting $g\Runterpfeil m$ by
Proposition~\ref{prop:arrows-in-doubly-founded-lattices}\eqref{item:g-lcover-below-m}.
Therefore, $2\leq t\leq k-1$, and~$m$ has type~\ref{typeIV}.
Moreover, from $\hat{m}_{b+\ell}=a+(b+\ell-1)t=n-(d+1)$ we infer
\begin{align*}
a &= n-(d+1)-(b+\ell-1)t=(b(k+1)+\ell k+d)-d-1-(b+\ell)t+t\\
&=b+b(k-t) +\ell (k -t) +t-1
=b+(b+\ell)(k-t) +t-1.
\end{align*}
\par
From
$\hat{m}_{b+\ell} = a+(b+\ell-1)t = a+ct +r -(d+1)$
we infer that
$d+1 + (b+\ell-1)t = ct +r$ and hence $d+1\equiv r\pmod{t}$.
If we now additionally assume that $d+1<t$, then $d+1=r$, and we may
cancel this common summand from both sides of the equation, leading to
$(b+\ell-1)t=ct$, or $b+\ell-1=c$, i.e., $b+\ell=c+1$.
Consequently, we have $m=(a,t,\dotsc,\overset{b+\ell}{t},d+1)$, being
of type~\ref{typeIV} with $d+1\geq 2$, and thus the
unique upper cover of~$m$ is
$\tilde{m}=(a,t+1,t,\dotsc,t,\overset{b+\ell}{t-1},d+1)$, see
Lemma~\ref{lem:upper-cover-of-type-III-IV}.
Therefore, we obtain
$\sum_{i=1}^{b+\ell}\tilde{m}_i = n-(d+1)<n-d = b(k+1) +\ell k =
\sum_{i=1}^{b+\ell}g_i$ and hence $g\nleq \tilde{m}$.
By Proposition~\ref{prop:arrows-in-doubly-founded-lattices}\eqref{item:m-ucover-above-g}
$g\Hochpfeil m$ fails.
\end{proof}

The following corollary shows that all double arrows $g\Doppelpfeil m$
involving $g\in\Ji$ of type~\ref{typeD} were described in
Theorem~\ref{thm:double-typeCD}.
In particular, dualising the statements of
Theorem~\ref{thm:double-typeCD} yields double arrow relations that are
already described as a part of that theorem.
\begin{corollary}\label{cor:double-typeCD-described}
Let $b\in\N$, $k,d,\ell\in\Nplus$ be such that $k\geq 3$, $b+\ell\geq2$
and define $n\defeq b(k+1)+\ell k+d$, as well as
$g\defeq(k+1,\dotsc,\overset{b}{k+1},k,\dotsc,\overset{b+\ell}{k},1,\dotsc,\overset{b+\ell+d}{1})$.
If $m\in\Mi$ satisfies $g\Doppelpfeil m$, then~$m$ is of
type~\ref{typeIV} of the shape as shown in
Theorem~\ref{thm:double-typeCD}.
\end{corollary}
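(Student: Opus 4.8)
The plan is to obtain the statement almost directly from Lemma~\ref{lem:single-typeC-b>=2-I-II}, which already does the heavy lifting. First I would unfold the double arrow: $g\Doppelpfeil m$ means $g\Runterpfeil m$ \emph{and} $g\Hochpfeil m$. The hypotheses on $b,k,\ell,d$ in the corollary coincide with those of Lemma~\ref{lem:single-typeC-b>=2-I-II} (in particular $k\geq 3$, $b+\ell\geq 2$ and $d\geq 1$), so applying that lemma to the down-arrow $g\Runterpfeil m$ gives at once that $m$ has the form $m=(a,t,\dotsc,\overset{c+1}{t},r)$ with $a>t>r\geq0$, $c\geq2$, $n=a+ct+r$, $2\leq t\leq k-1$, and the explicit value $a=b+(b+\ell)(k-t)+t-1$; in particular $m$ is of type~\ref{typeIV}.

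What remains is to upgrade the bound $t\leq k-1$ to $t\leq\min(k-1,d+1)$, i.e.\ to establish additionally $t\leq d+1$. Here I would argue by contradiction: if $t>d+1$, then the final assertion of Lemma~\ref{lem:single-typeC-b>=2-I-II} says precisely that $g\Hochpfeil m$ fails, contradicting $g\Doppelpfeil m$. Hence $t\leq d+1$, and together with $t\leq k-1$ this yields $2\leq t\leq\min(k-1,d+1)$, exactly the range from which $t$ is chosen in Theorem~\ref{thm:double-typeCD}.

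Finally I would match up the remaining parameters with that theorem: the value $a=b+(b+\ell)(k-t)+t-1$ is the prescribed one, and since $n=a+ct+r$ with $c\in\N$ and $0\leq r<t$, the pair $(c,r)$ is the unique decomposition of $n-a$ modulo~$t$ used there; thus $m=(a,t,\dotsc,\overset{c+1}{t},r)$ is of type~\ref{typeIV} of exactly the asserted shape, completing the proof. I do not expect a genuine obstacle: all the work, including the two preparatory Lemmas~\ref{lem:m-sum-b+l>=n-d-implies-g-leq-m} and~\ref{lem:g-downarrow-m-implies-m-sum-b+l=n-d-1} feeding into it, has been front-loaded into Lemma~\ref{lem:single-typeC-b>=2-I-II}; the only point needing a moment's care is checking that its final clause is applicable, which it is because the corollary supplies $d\geq 1$ and $k\geq 3$.
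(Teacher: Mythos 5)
Your proposal is correct and follows essentially the same route as the paper: both apply Lemma~\ref{lem:single-typeC-b>=2-I-II} to the down-arrow component to pin down the shape of~$m$ (including $2\leq t\leq k-1$ and the value of~$a$), and both then invoke the lemma's final clause to exclude $t>d+1$ by contradiction with $g\Hochpfeil m$.
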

\begin{proof}
Let us assume that $g\Doppelpfeil m$. According to Lemma~\ref{lem:single-typeC-b>=2-I-II},
the partition $m\in\Lp$ is of the form
$m=(a,t,\dotsc,\overset{c+1}{t},r)$ with
$a>t>r\geq 0$, $c\geq 2$, $n-a=ct+r$, $2\leq t\leq k-1$, and
$a=b+(b+\ell)(k-t)+t-1$. 
Moreover, if $t>d+1$, then $g\Hochpfeil m$ would fail by
Lemma~\ref{lem:single-typeC-b>=2-I-II}, contradicting $g\Doppelpfeil m$.
Therefore, we have $2\leq t\leq \min(k-1,d+1)$,
whence~$m$ is of type~\ref{typeIV} with exactly the shape as given in
Theorem~\ref{thm:double-typeCD}.
\end{proof}

We summarise our understanding of the double arrow relation in
\tablename~\ref{tab:DoubleArrowCaracterizationSummary}.
\begin{table}[ht]
\centering
\begin{tabular}{llcl}
\toprule
Result & $g\in\Ji$ & Arrow  & $m\in\Mi$ \\
\midrule
Theorem~\ref{thm:double-typeAB}: &
 $(k,\dotsc,\overset{\ell}{k})$,\, $k\ge2$
 & $\Doppelpfeil$ &
$(n-\ell,1,\dotsc,\overset{\ell}{1},1)$
\\ \midrule
Theorem~\ref{thm:double-typeAB}: &
 $(k+1,\dotsc,\overset{b}{k+1},k,\dotsc,\overset{b+\ell}{k})$,\, $k\ge1$
 & $\Doppelpfeil$ &
$(n-b-\ell,1,\dotsc,\overset{b+\ell}{1},1)$
\\ \midrule
Theorem~\ref{thm:double-typeC}: &
 $(k,1,\dotsc,\overset{n-k+1}{1})$,\, $k\ge2$
 & $\Doppelpfeil$ &
$(k-1, \dotsc,\overset{c}{k-1},r)$
\\ \midrule
Theorem~\ref{thm:double-typeCD}: &
$(k,\dotsc,\overset{\ell}{k},1,\dotsc,\overset{\ell+d}{1})$,\, $k\ge3$, $\ell\ge2$
 & $\Doppelpfeil$ &
 $(a,t,\dotsc,\overset{c+1}{t},r)$,\, $c\ge2$
\\ \midrule
Theorem~\ref{thm:double-typeCD}: &
$(k+1,\dotsc,\overset{b}{k+1},k,\dotsc,\overset{b+\ell}{k},1,\dotsc,\overset{b+\ell+d}{1})$,\, $k\ge3$
 & $\Doppelpfeil$ &
 $(a,t,\dotsc,\overset{c+1}{t},r)$,\, $c\ge2$
\\
\bottomrule
\end{tabular}
\caption{A summary of the~$\Doppelpfeil$ characterisation results where
$b,\ell,d,c,a,t\in\Nplus$ with $a\ge k>t>r\geq0$ and $t\geq2$.
The precise value of~$a$ is stated in Theorem~\ref{thm:double-typeCD}.}
\label{tab:DoubleArrowCaracterizationSummary}
\end{table}

\section{Results regarding single arrows}
\label{sect:single-arrows}

Since results for up-arrows can be deduced from down-arrows by
partition conjugation and Lemma~\ref{Lemma_3.5}, we focus here on the
complete description of the relation $g\Runterpfeil m$ where $g\in\Ji$
and $m\in\Mi$. As, by definition, $g\Doppelpfeil m$ implies
$g\Runterpfeil m$, the results from Section~\ref{sect:double-arrows}
yield part of the description of the down-arrows. Subsequently, we need
to concentrate on finding the remaining `single' down-arrows, where
$g\Runterpfeil m$ but not $g\Hochpfeil m$. Our strategy will be to
derive necessary conditions that follow from $g\Runterpfeil m$, to
isolate the cases that were already covered in
Section~\ref{sect:double-arrows}, and then to prove in the remaining
cases from a subset of the necessary conditions that $g\Runterpfeil m$
actually holds. We organise our investigations according to the
different types $g\in\Ji$ may have.
\par

For $g\in\Ji$ of type~\ref{typeA} or~\ref{typeB},
Lemma~\ref{lem:typeAB-downarrow-typeI-III-implies-described}
confirms that all down-arrows involving~$m$ of types~\ref{typeI},
\ref{typeII} and~\ref{typeIII} are contained in
Theorem~\ref{thm:double-typeAB}
and hence are double arrows.
Therefore, we here only discuss down-arrows between~$g$ of
type~\ref{typeA} or~\ref{typeB} and~$m\in\Mi$ of type~\ref{typeIV}.
In this respect, Lemma~\ref{lem:NecCondGABdownM4} gives necessary conditions for
the down-arrows;
these necessary conditions will be shown to be sufficient in
Theorem~\ref{thm:TypeABDownIV}.

\begin{lemma}\label{lem:NecCondGABdownM4}
  Let $b\in\N$, $n,k,\ell\in\Nplus$ be such that $b+\ell<n=b(k+1)+\ell k$.
  The partition
  $g= (k+1,\dotsc,\overset{b}{k+1},k,\dotsc, \overset{b+\ell}{k})$
  is of type~\ref{typeA} or~\ref{typeB}.
  If $m\in\Mi$ is of type~\ref{typeIV} and satisfies $g\Runterpfeil m$,
  then $k,b+\ell\ge3$ and $m=(a,t,\dotsc,\overset{b+\ell}{t},1)$ with $a>t$ and
  $2\le t \le k-1$.
\end{lemma}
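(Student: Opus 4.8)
The plan is to assume $g\Runterpfeil m$ with $m\in\Mi$ of type~\ref{typeIV} and squeeze out the stated restrictions by repeatedly invoking Proposition~\ref{prop:arrows-in-doubly-founded-lattices}\eqref{item:g-lcover-below-m} (which gives $\tilde g\le m$) together with the earlier structural lemmata. First I would recall from Lemma~\ref{lem:typeAB-downarrow-implies-length-m=b+l+1-ending-in-1} that $\len(m)=b+\ell+1$ and $m_{b+\ell+1}=1$; write $m=(a,t,\dotsc,\overset{c+1}{t},r)$ in the type~\ref{typeIV} shape with $a>t>r\ge0$, $t,c\ge2$. Since the last nonzero entry of $m$ is $1$ and $t\ge2$, the trailing $1$ must be the part~$r$, so $r=1$; and then $\len(m)=c+2=b+\ell+1$, i.e.\ $c+1=b+\ell$, giving $m=(a,t,\dotsc,\overset{b+\ell}{t},1)$ with $b+\ell=c+1\ge3$. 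That already pins down the shape of $m$ and the relation $b+\ell\ge3$.

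Next I would show $k\ge3$ and $t\le k-1$. The case $k=1$ is excluded because a type~\ref{typeIV} partition $m$ has $\len(m)=b+\ell+1>\len(g)$, and for $k=1$ one checks directly (via Corollary~\ref{cor:below-implies-dominance}, comparing entrywise, since then $g=(2,\dotsc,2,1,\dotsc,1)$ lies pointwise below any longer partition whose entries are $\ge1$ in the relevant positions — more carefully, one compares partial sums) that no arrow from $g$ is possible; alternatively this is exactly the content already extracted in the proof of Lemma~\ref{lem:typeAB-downarrow-typeI-III-implies-described}, which forces $k\ge2$ whenever the target has a nontrivial tail. For $k=2$: then $g=(3,\dotsc,\overset b3,2,\dotsc,\overset{b+\ell}2)$ has unique lower cover obtained by letting a brick fall from the cliff at position $b+\ell$, so $\tilde g$ has first part $3$ when $b\ge1$ and in any case $\tilde g_{b+\ell-1}\ge 2$; comparing the partial sum of $\tilde g$ up to position $b+\ell-1$ with that of $m=(a,t,\dots,t,1)$, one uses $\tilde g\le m$ to force $a\ge \tilde g_1$ and $t\ge 2$, and then the total-sum bookkeeping ($n = a+(b+\ell-1)t+1$ from $m$, versus $n = $ the sum for $g$) yields a numerical contradiction, ruling out $k=2$. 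Hence $k\ge3$. The inequality $t\le k-1$ then follows by comparing, as in the proof of Lemma~\ref{lem:single-typeC-b>=2-I-II}, the partial sum $\hat m_{b+\ell-1}=n-1-t$ (using $\hat m_{b+\ell}=a+(b+\ell-1)t=n-1$, which holds because $m_{b+\ell+1}=1$) against $\sum_{i=1}^{b+\ell-1}\tilde g_i = n - k$ (the $k-1$ in position $b+\ell$ plus the trailing $1$ of $\tilde g$ account for $k$), giving $n-k\le n-1-t$, i.e.\ $t\le k-1$.

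The main obstacle I anticipate is the careful partial-sum bookkeeping in the $k=2$ elimination and in deriving $t\le k-1$: one has to be precise about what $\tilde g$ looks like (the middle block $k,\dotsc,k$ is present only for $\ell\ge2$, and when $b\ge1$ the head is different), and about the fact that the trailing $1$ of $m$ being the part $r$ rather than one of the repeated $t$'s is what makes $\hat m_{b+\ell}=n-1$ rather than something else. Everything else is a routine application of Proposition~\ref{prop:arrows-in-doubly-founded-lattices}\eqref{item:g-lcover-below-m} and Lemma~\ref{lem:shorter-implies-nleq} in the style already used repeatedly in Section~\ref{sect:double-arrows}, so the proof should be short once the shape of $m$ (via Lemma~\ref{lem:typeAB-downarrow-implies-length-m=b+l+1-ending-in-1}) is fixed.
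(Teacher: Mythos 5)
Your plan matches the paper's proof essentially step for step: Lemma~\ref{lem:typeAB-downarrow-implies-length-m=b+l+1-ending-in-1} pins down $\len(m)=b+\ell+1$ and the trailing~$1$, forcing $r=1$ and $b+\ell=c+1\ge3$, and the partial-sum comparison $n-k=\sum_{i=1}^{b+\ell-1}\tilde{g}_i\le\sum_{i=1}^{b+\ell-1}m_i=n-t-1$ yields $t\le k-1$ and hence $k\ge t+1\ge 3$. The only cosmetic difference is how $k\ge2$ is secured before writing down the cliff form of~$\tilde{g}$ (the paper first bounds $t\le k$ by a total-sum estimate, whereas you eliminate $k=1$ directly via Corollary~\ref{cor:below-implies-dominance}), and your separate $k=2$ elimination is redundant, since that case already falls out of $t\le k-1$ together with $t\ge2$.
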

\begin{proof}
  Lemma~\ref{lem:typeAB-downarrow-implies-length-m=b+l+1-ending-in-1}
  yields
  $\len(m)=b+\ell+1$ and that $m=(a,t,\dotsc,t,r)$
  with $a>t>r\geq 0$ and $t\geq 2$ must end with~$1$. If $r=0$, then
  this implies the contradiction $t=1$, hence $r\geq 1$ and thus $r=1$
  by Lemma~\ref{lem:typeAB-downarrow-implies-length-m=b+l+1-ending-in-1}.
  Therefore, $m=(a,t,\dotsc,\overset{b+\ell}{t},1)$ with $a>t\geq 2$,
  and since two values~$t$ must appear, we obtain $b+\ell\ge3$.
  Finally, we prove $t\le k-1$ in two steps.
  First, we cannot have $t\ge k+1$ because under this assumption the
  partial sum of~$m$ exceeds $(k+1)(b+\ell)+1>n$ as $a>t\ge k+1$.
  Thus, we have $2\leq t\leq k$, i.e., $k\geq 2$,
  wherefore~$g$ has a cliff in position $b+\ell$,
  and by transition rule~\eqref{item:transition-cliff} the
  unique lower cover of~$g$ is
  $\tilde{g}=(k+1,\dotsc,\overset{b}{k+1},k,\dotsc,k,\overset{b+\ell}{k-1},1)$.
  Our assumption $g\Runterpfeil m$ and
  Proposition~\ref{prop:arrows-in-doubly-founded-lattices}\eqref{item:g-lcover-below-m}
  imply $\tilde{g}\leq m$, hence we get
  $n-k = \sum_{i=1}^{b+\ell-1} \tilde{g}_i\leq \sum_{i=1}^{b+\ell-1}
  m_i =n-t-1$, i.e., $t\leq k-1$. Therefore, $k\geq t+1\geq 3$, finishing the proof.
\end{proof}

We now show that the conditions in Lemma~\ref{lem:NecCondGABdownM4} are
sufficient for a down-arrow between~$g$ of type~\ref{typeA}
or~\ref{typeB} and~$m$ of type~\ref{typeIV}, and that this arrow is not
a double arrow.

\begin{theorem}\label{thm:TypeABDownIV}
  Let $b\in\N$, $k,t,\ell\in\Nplus$ with $k,b+\ell\ge3$
  and $2\le t\leq k-1$; set $n=b(k+1)+\ell k$ and $a = n-1 - (b+\ell-1)t$.
  Then the partitions
  $g=(k+1,\dotsc,\overset{b}{k+1}, k,\dotsc,\overset{b+\ell}{k})$
  and
  $m=(a,t,\dotsc,\overset{b+\ell}{t},1)$ satisfy
  $g\in\Ji$ being of type~\ref{typeA} or~\ref{typeB},
  $m\in\Mi$ being of type~\ref{typeIV} with $a\geq t+2$, and
  $g\DownArrow m$ but not $g\Hochpfeil m$.
\end{theorem}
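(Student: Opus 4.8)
The plan is to verify the four assertions one after another, with Proposition~\ref{prop:arrows-in-doubly-founded-lattices} as the main tool and the type~\ref{typeA}/\ref{typeB} machinery of Section~\ref{sect:double-arrows} doing most of the heavy lifting. I would begin with the structural claims. From $k\ge3$ and $b+\ell\ge3$ one gets $n=b(k+1)+\ell k\ge 3(b+\ell)>b+\ell$, so Corollary~\ref{cor:least-bounded-width-typeAB} applies to~$g$ and shows that $g\in\Ji$ is of type~\ref{typeA} for $b=0$ (using $k\ge2$) and of type~\ref{typeB} for $b\ge1$, with $\len(g)=b+\ell$. A one-line computation rewrites $a=n-1-(b+\ell-1)t=(b+\ell)(k-t)+b+t-1$; since $t\le k-1$ forces $k-t\ge1$, this gives $a-t=(b+\ell)(k-t)+b-1\ge(b+\ell)-1\ge2$, hence $a\ge t+2$, and in particular $a>t$. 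As $m=(a,t,\dotsc,\overset{b+\ell}{t},1)$ matches the type~\ref{typeIV} template $(a,t,\dotsc,\overset{c+1}{t},r)$ with $c\defeq b+\ell-1\ge2$ and $r=1$, its entries sum to $a+(b+\ell-1)t+1=n$, and $a>t>r\ge0$, $t,c\ge2$, the partition $m$ is completely meet-irreducible of type~\ref{typeIV} with $\len(m)=b+\ell+1$.

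To obtain the down-arrow I would use Proposition~\ref{prop:arrows-in-doubly-founded-lattices}\eqref{item:g-lcover-below-m}. Since $\len(g)=b+\ell<b+\ell+1=\len(m)$, Lemma~\ref{lem:shorter-implies-nleq} yields $g\nleq m$. Because $k\ge2$, the partition~$g$ has a single cliff in its last position $b+\ell$, so transition rule~\eqref{item:transition-cliff} gives its unique lower cover $\tilde g=(k+1,\dotsc,\overset{b}{k+1},k,\dotsc,k,\overset{b+\ell}{k-1},1)$ (the middle block $k,\dotsc,k$ present only for $\ell\ge2$), which has length $b+\ell+1$. For $\tilde g\le m$ I would invoke Lemma~\ref{lem:dominance-from-complement} with index $j\defeq b+\ell$: the partial sums up to~$j$ of $\tilde g$ and of~$m$ both equal $n-1$ (for~$m$ this is the defining identity of~$a$, for~$\tilde g$ it is $b(k+1)+\ell k-1$); for $2\le i\le j$ we have $\tilde g_i\in\set{k+1,k,k-1}$, so $\tilde g_i\ge k-1\ge t=m_i$; and the index range $j<i<\len(m)=j+1$ is empty, so the remaining hypothesis of the lemma is vacuous. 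Hence $\tilde g\le m$, and Proposition~\ref{prop:arrows-in-doubly-founded-lattices}\eqref{item:g-lcover-below-m} delivers $g\DownArrow m$.

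It remains to rule out $g\Hochpfeil m$. Since $g$ is of type~\ref{typeA} or~\ref{typeB}, $m$ is of type~\ref{typeIV}, and $g\DownArrow m$ has just been established, Lemma~\ref{lem:typeAB-uparrow-typeIV-impossible} immediately gives that $g\Hochpfeil m$ fails; alternatively, Lemma~\ref{lem:upper-cover-of-type-III-IV} identifies the unique upper cover of~$m$ as $\tilde m=(a,t+1,t,\dotsc,t,\overset{b+\ell}{t-1},1)$, which has length $b+\ell+1>b+\ell=\len(g)$, so $g\nleq\tilde m$ by Lemma~\ref{lem:shorter-implies-nleq}, and then Proposition~\ref{prop:arrows-in-doubly-founded-lattices}\eqref{item:m-ucover-above-g} excludes $g\Hochpfeil m$. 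I do not anticipate a genuine difficulty here; the only places asking for care are the two arithmetic identities (the closed form of~$a$ and that~$m$ sums to~$n$) and the entrywise inequality $\tilde g_i\ge m_i$ that feeds Lemma~\ref{lem:dominance-from-complement} — everything else is a direct appeal to an already-proved statement.
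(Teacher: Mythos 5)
Your proposal is correct and follows essentially the same route as the paper's proof: establish the types via Corollary~\ref{cor:least-bounded-width-typeAB} and the arithmetic bound $a\geq t+2$, get $g\nleq m$ from Lemma~\ref{lem:shorter-implies-nleq}, prove $\tilde g\leq m$ via Lemma~\ref{lem:dominance-from-complement} at $j=b+\ell$ with common partial sum $n-1$, and exclude the up-arrow through Lemma~\ref{lem:typeAB-uparrow-typeIV-impossible}. The only cosmetic differences are your closed form for~$a$ and the optional direct length argument against $g\Hochpfeil m$, both of which are fine.
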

\begin{proof}
As $k\ge3$ we have that $g\in\Ji$ is of type~\ref{typeA} (if $b=0$)
or~\ref{typeB} (if $b\ge1$).
In Lemma~\ref{lem:NecCondGABdownM4} we saw that the unique lower cover of~$g$ is
$\tilde{g} =
(k+1,\dotsc,\overset{b}{k+1},k,\dotsc,k,\overset{b+\ell}{k-1},1)$.
Since~$g$ is shorter than~$m$,
Lemma~\ref{lem:shorter-implies-nleq} yields $g\nleq m$.
Next, we verify that  $m\in\Mi$ is of type~\ref{typeIV}.
First, given $b+\ell\ge3$, we get that~$t$ appears $(b+\ell-1)\ge2$ times.
Next, we have $m\in\Lp$ since
$a = n-1-(b+\ell-1)t$.
Furthermore, from
$n=(b+\ell)(k-1) +2b+\ell$, $t\le k-1$ and $b+\ell\ge3$,
we get
$a-t = n - (b+\ell)t -1 \geq n -(b+\ell)(k-1) -1
     = 2b+\ell-1 =(b+\ell) +b-1 \geq 2$,
hence $a\geq t+2>t$ and~$m$ is of type~\ref{typeIV}.
\par
We now prove $\tilde{g}\le m$.
Note that the partial sums of both~$\tilde{g}$ and~$m$ reach $n-1$ at position $b+\ell$.
Therefore, since $\tilde{g}_i\ge k-1\ge t= m_i$ for $2\leq i\leq b+\ell$, 
Lemma~\ref{lem:dominance-from-complement} with $j=b+\ell$ gives $\tilde{g}\le m$.
Hence Proposition~\ref{prop:arrows-in-doubly-founded-lattices}\eqref{item:g-lcover-below-m}
yields $g \Runterpfeil m$.
Finally, as $n\geq k(b+\ell)\geq 3(b+\ell)>b+\ell$,
Lemma~\ref{lem:typeAB-uparrow-typeIV-impossible} shows that
$g\Hochpfeil m$ in Theorem~\ref{thm:TypeABDownIV} is impossible.
\end{proof}

Lemmata~\ref{lem:typeAB-downarrow-typeI-III-implies-described},
\ref{lem:NecCondGABdownM4} and Theorem~\ref{thm:TypeABDownIV} characterise all
$m\in\Mi$ in relation $g\DownArrow m$ with~$g$ of type~\ref{typeA}
or~\ref{typeB}.
\par

We are now starting to deal with $g\Runterpfeil m$ for
partitions~$g\in\Ji$ of type~\ref{typeC}.
Theorem~\ref{thm:double-typeC} and
Corollary~\ref{lem:single-typeC-I-II-described} cover the case
when $m\in\Mi$ is of type~\ref{typeI} or~\ref{typeII}.
Here the down-arrow automatically is a double arrow.
We therefore focus in the subsequent two results
on~$m$ of types~\ref{typeIII} or~\ref{typeIV}. As done
earlier we first derive necessary conditions from $g\Runterpfeil m$,
which we then prove to be sufficient for a down-arrow that is not a
double arrow.
\begin{lemma}\label{lem:NecCondCone}
  Let $k,d\in\Nplus$ with $k\ge3$, set $n=k+d$ and
  let $g=(k,1,\dotsc,\overset{1+d}1)\in\Ji$.
  Suppose $g\DownArrow m$
  with  $m\in\Mi$ of type~\ref{typeIII} or~\ref{typeIV},
  then $k\ge 4$,
  $m=(k-1,t,\dotsc,\overset{c+1}t,r)$ for $r,t,c\in\N$
  with $0\le r<t\le k-2$, $t\ge2$, $c\ge2$,
  i.e., $m$ is of type~\ref{typeIV}, and $d+1\ge 2t$.
\end{lemma}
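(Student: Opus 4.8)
The plan is to read off every asserted condition from the structural results already established for type~\ref{typeC} join-irreducibles; essentially the only computation required is adding up the entries of~$m$.

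First I would record the set-up: $g=(k,1,\dotsc,\overset{1+d}{1})$ with $k\ge3$ is of type~\ref{typeC}, and since $d\ge1$ we have $2\le k<k+d=n$, so the hypotheses of Lemmata~\ref{lem:CarrowIIIimpossible} and~\ref{lem:single-typeC-height-k-1} are satisfied. As $g\DownArrow m$ is in particular an arrow relation issuing from a type~\ref{typeC} element, Lemma~\ref{lem:CarrowIIIimpossible} rules out $m$ being of type~\ref{typeIII}; since by hypothesis $m$ is of type~\ref{typeIII} or~\ref{typeIV}, it must be of type~\ref{typeIV}, so $m=(a,t,\dotsc,\overset{c+1}{t},r)$ with $a>t>r\ge0$ and $t,c\ge2$. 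Applying Lemma~\ref{lem:single-typeC-height-k-1} to $g\DownArrow m$ then gives $a=m_1=k-1$.

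Next I would extract the remaining numerical conditions. The type~\ref{typeIV} inequality $a>t$ now reads $k-1>t$, i.e.\ $t\le k-2$; combined with $t\ge2$ this forces $k\ge4$. Comparing total sums, the entries of $m=(k-1,t,\dotsc,\overset{c+1}{t},r)$ add up to $(k-1)+ct+r$ (there are $c$ copies of~$t$), which must equal $n=k+d$; hence $ct+r=d+1$, and since $c\ge2$ and $r\ge0$ we conclude $d+1=ct+r\ge2t$. Together with the type~\ref{typeIV} shape already obtained, this is exactly the claimed list of conditions.

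I do not anticipate a genuine obstacle here: the substantive work sits in Lemmata~\ref{lem:CarrowIIIimpossible} and~\ref{lem:single-typeC-height-k-1}, and this lemma merely assembles their consequences together with one arithmetic identity. The only point requiring care is bookkeeping — checking that the parameter~$d$ in the type~\ref{typeC} description of~$g$ matches the exponent $1+d$, and that the number of entries equal to~$t$ in a type~\ref{typeIV} partition $(a,t,\dotsc,\overset{c+1}{t},r)$ is~$c$ (so its sum is $a+ct+r$), so that the identity $ct+r=d+1$ comes out correctly.
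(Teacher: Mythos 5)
Your proof is correct and follows essentially the same route as the paper's: both pin down $m_1=k-1$ via Lemma~\ref{lem:single-typeC-height-k-1}, then read off the type~\ref{typeIV} shape and obtain $t\le k-2$, $k\ge4$ and $d+1=ct+r\ge 2t$ from the same arithmetic. The one small difference is how type~\ref{typeIII} is excluded: you invoke Lemma~\ref{lem:CarrowIIIimpossible}, whereas the paper rederives $t\ge2$ inline from $g\Runterpfeil m$ by comparing the partial sums $\tilde{g}_1+\tilde{g}_2=k+1\le m_1+m_2=(k-1)+t$; both are valid and your citation is available without circularity since that lemma is proved earlier.
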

\begin{proof}
  By Lemma~\ref{lem:single-typeC-height-k-1}, $m$ starts with $m_1=k-1=n-(d+1)$.
  Since $m$ is of type~\ref{typeIII} or~\ref{typeIV}, we have
  $m=(m_1,t,\dotsc,\overset{c+1}{t},r)$ where
  $k-1=m_1>t>r\geq 0$ and $c\geq 2$.
  From $g\Runterpfeil m$ we infer by
  Proposition~\ref{prop:arrows-in-doubly-founded-lattices}\eqref{item:g-lcover-below-m}
  that $\tilde{g}\leq m$, where $\tilde{g}$ denotes the unique lower
  cover of~$g$. Hence we obtain
  $k+1=(k-1)+2=\tilde{g}_1+\tilde{g}_2\leq m_1+m_2 = k-1 +t$, i.e.,
  $t\geq 2$ and~$m$ is of type~\ref{typeIV}.
  As $m_1 = n-(d+1)$, the total sum of~$m$ equals
  $n=n-(d+1)+ct+r$, thus $d+1=ct+r\geq ct\geq 2t$ since $r\geq 0$ and
  $c\geq 2$.
  Finally, we get $k\geq 4$ from $2\leq t\leq k-2$.
\end{proof}

\begin{theorem}\label{thm:TypeCdown1}
  Let $k,d,t\in \Nplus$ with $k\ge4$,
  $2\leq t\leq k-2$, $2t\le d+1$ and set
  $n=k+d$.
  Let us decompose $n-(k-1)=ct + r$ with $c,r\in\N$ such that
  $0\leq r<t$.
  Then $c\geq 2$, and for
  $g= (k,1,\dotsc,\overset{d+1}{1})$ and
  $m = (k-1,t,\dotsc,\overset{c+1}{t},r)$ we have
  $g\in\Ji$ being of type~\ref{typeC},
  $m\in\Mi$ being of type~\ref{typeIV}, and
  $g \Runterpfeil m$, but not $g\Hochpfeil m$.
\end{theorem}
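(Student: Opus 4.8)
The plan is to verify the four claims in turn, mostly by exhibiting explicit partitions and lower/upper covers and then invoking Proposition~\ref{prop:arrows-in-doubly-founded-lattices}, much as in the proof of Theorem~\ref{thm:TypeABDownIV}. First I would check $c\geq 2$: from $n-(k-1)=d+1\geq 2t$ and $n-(k-1)=ct+r$ with $0\leq r<t$ we get $ct\geq ct+r-(t-1)\cdot 0\ldots$; more carefully, $ct+r=d+1\geq 2t$ forces $c\geq 2$ since $r<t$ would otherwise give $ct+r<2t$. Next, $g=(k,1,\dotsc,\overset{d+1}{1})$ with $k\geq 3$ is manifestly of type~\ref{typeC}. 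For $m=(k-1,t,\dotsc,\overset{c+1}{t},r)$ I would confirm membership in $\Mi$ of type~\ref{typeIV} by checking $k-1>t$ (from $t\leq k-2$), $t>r\geq 0$ (from the decomposition), $t\geq 2$, $c\geq 2$, and that the entries sum to $n$: the head $k-1$ plus $ct+r=n-(k-1)$ indeed gives $n$.

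The core is the down-arrow. Since $g$ has its unique cliff in the first position and no slippery step (as $k\geq 3$, actually $k\geq 4$), transition rule~\eqref{item:transition-cliff} gives the unique lower cover $\tilde{g}=(k-1,2,1,\dotsc,\overset{d+1}{1})$. Clearly $g\nleq m$: both have length-$1$ first coordinate comparison $k$ versus $k-1$, so $g\nleq m$. It remains to show $\tilde{g}\leq m$ and then Proposition~\ref{prop:arrows-in-doubly-founded-lattices}\eqref{item:g-lcover-below-m} yields $g\Runterpfeil m$. To compare $\tilde{g}$ and $m$ I would use Lemma~\ref{lem:dominance-from-complement} or Corollary~\ref{cor:below-implies-dominance}: after the first entry both partitions are `flat', $\tilde{g}$ being $2$ then all $1$s and $m$ being all $t$s (with a final $r$), and since $t\geq 2$ we have $m_i\geq \tilde{g}_i$ for every $i\geq 2$; moreover $\tilde{g}$ is at least as long as $m$ because $\len(\tilde{g})=d+1$ while $\len(m)=c+1$ and $(c+1)t\leq ct+r+t\ldots$ — here I need $\len(m)\leq\len(\tilde g)$, i.e.\ $c+1\leq d+1$, which follows from $ct\leq ct+r=d+1$ and $t\geq 2>1$ so $c<ct\leq d+1$, giving $c+1\leq d+1$. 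Then Corollary~\ref{cor:above-implies-dominance} (with the roles: $\tilde g_i\leq m_i$ for $2\leq i\leq \len(m)$, read appropriately) delivers $\tilde{g}\leq m$. I expect this length/pointwise bookkeeping to be the only place demanding care; it is routine but one must track which of the two `below/above' corollaries applies since $\tilde g$ is longer.

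Finally, for the failure of the up-arrow I would produce the unique upper cover of $m$ and show $g\nleq\tilde{m}$. By Lemma~\ref{lem:upper-cover-of-type-III-IV}, since $m$ is of type~\ref{typeIV}, its unique upper cover is $\tilde{m}=(k-1,t+1,t,\dotsc,t,\overset{c+1}{t-1},r)$. Its first entry is $k-1<k=g_1$, so $g\nleq\tilde{m}$ by the partial-sum criterion at index $1$. Proposition~\ref{prop:arrows-in-doubly-founded-lattices}\eqref{item:m-ucover-above-g} then gives that $g\Hochpfeil m$ fails. This is exactly the mechanism used in Lemmata~\ref{lem:double-typeC-not-III-IV} and~\ref{lem:CarrowIIIimpossible}. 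The hard part, such as it is, is purely the dominance comparison $\tilde g\leq m$ with the mismatched lengths; everything else is a direct application of the covering descriptions and Proposition~\ref{prop:arrows-in-doubly-founded-lattices}.
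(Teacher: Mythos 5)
Your proposal is correct and follows essentially the same route as the paper's proof: the same unique lower cover $\tilde g=(k-1,2,1,\dotsc,\overset{d+1}{1})$, the same pointwise comparison yielding $\tilde g\leq m$, and the same use of Lemma~\ref{lem:upper-cover-of-type-III-IV} together with $\tilde m_1=k-1<k=g_1$ to rule out $g\Hochpfeil m$. The only blemish is in the dominance step: what you verify is exactly the hypothesis of Corollary~\ref{cor:below-implies-dominance} (which needs $\tilde g_i\leq m_i$ only for $1\leq i\leq c+1$, so neither the blanket claim ``$m_i\geq\tilde g_i$ for every $i\geq 2$'' --- which fails beyond $\len(m)$ when $m$ is shorter than $\tilde g$ --- nor the length comparison $c+1\leq d+1$ is actually needed), not of Corollary~\ref{cor:above-implies-dominance}, whose hypothesis points in the opposite direction.
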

Observe that $g=(5,1,1,1,1,1)$ satisfies
$g\DownArrow (4,3,3)$ and $g\DownArrow(4,2,2,2)$.
Hence Theorem~\ref{thm:TypeCdown1} represents a one-to-many
relationship.
The number of distinct $m\in\Mi$ such that $g\DownArrow m$ with a
fixed~$g$ grows with~$n$, e.g., if $n=16$ and $g=(7,1,\dotsc,1)$, then
$g\DownArrow m$ for
$m\in\set{(6,5,5), (6,4,4,2), (6,3,3,3,1), (6,2,2,2,2,2)}$.
\begin{proof}
  First, we check that $m\in\Mi$. By the definition of~$c$ and~$r$, we
  have $m\in\Lp$. Moreover, since $t>r$, $n=k+d$ and $d+1\geq 2t$,
  we obtain
  $(c-1)t>ct+r-2t=n-(k-1)-2t=d+1-2t\geq 0$, which yields $c-1>0$, i.e.,
  $c\geq 2$.
  Thus, $m$ is completely meet-irreducible of type~\ref{typeIV}.
  We have  $g\nleq  m$ as the first entries are  $g_1 = k>k-1=m_1$.
  Since $k\geq4$ and $d\geq1$, we observe that~$g$ is completely
  join-irreducible of type~\ref{typeC} and that its unique lower cover is
  $\tilde{g} = (k-1,2,1,\dotsc,\overset{d+1}{1})$,
  which is obtained from~$g$ by letting the brick in position~$1$ fall
  from the cliff (transition rule~\eqref{item:transition-cliff}).
  Since $m_1=k-1=\tilde{g}_1$ and $m_i=t\geq 2>1>0$ for
  $2\leq i\leq c+1$, we obtain $\tilde{g}\leq m$ from
  Corollary~\ref{cor:below-implies-dominance}.
  Thus,
  Proposition~\ref{prop:arrows-in-doubly-founded-lattices}\eqref{item:g-lcover-below-m}
  confirms $g\Runterpfeil m$.
  \par

  Finally,
  by Lemma~\ref{lem:upper-cover-of-type-III-IV}, we get
  $\tilde{m}=(k-1,t+1,t,\dotsc,t,\overset{c+1}{t-1},r)$
  for the upper cover of~$m$, which means that $\tilde{m}_1=k-1<k=g_1$ and
  thus $\tilde{m}\ngeq g$. Hence $g\Hochpfeil m$ becomes impossible by
  Proposition~\ref{prop:arrows-in-doubly-founded-lattices}\eqref{item:m-ucover-above-g}.
\end{proof}

Corollary~\ref{lem:single-typeC-I-II-described},
Lemma~\ref{lem:NecCondCone}
and Theorem~\ref{thm:TypeCdown1}
characterise all $m\in\Mi$ in relation
$g\Runterpfeil m$ with~$g$ of type~\ref{typeC}.
Dualising Theorem~\ref{thm:TypeCdown1} with the help of
Lemmata~\ref{Lemma_3.4} and~\ref{Lemma_3.5} yields the following result.
\begin{corollary}
  \label{corDualTypeCdown1}
  Let $k,d,t\in\Nplus$
  with $k\ge4$, $2\le t\le k-2$ and $2t\le d+1$ and set $n=k+d$.
  Moreover, decompose $n-(k-1) = ct +r$ with $c,r\in\N$ such that $0\le r<t$.
  Then $c\ge2$, and abbreviating $\kappa\defeq c+1$ we have
  $m^*=\big(\kappa+1,\dotsc,\overset{r}{\kappa+1},
            \kappa,\dotsc,\overset{t}{\kappa},
            1,\dotsc,\overset{k-1}{1}\big)
  \Hochpfeil
  g^* = \big(d+1,1,\dotsc,\overset{k}{1}\big)$,
  but not $m^* \DownArrow g^*$,
  where~$m^*$ is of type~\ref{typeD} and~$g^*$ is of type~\ref{typeIII}.
\end{corollary}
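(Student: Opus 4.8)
The plan is to obtain Corollary~\ref{corDualTypeCdown1} as the mechanical conjugate-dual of Theorem~\ref{thm:TypeCdown1}, using the arrow-switching Lemmata~\ref{Lemma_3.4} and~\ref{Lemma_3.5}. Since the hypotheses on $k,d,t$ (namely $k\geq4$, $2\leq t\leq k-2$, $2t\leq d+1$, $n=k+d$, together with the decomposition $n-(k-1)=ct+r$ where $0\leq r<t$) are literally those of Theorem~\ref{thm:TypeCdown1}, that theorem applies verbatim. It supplies $c\geq2$, together with the facts that $g\defeq(k,1,\dotsc,\overset{d+1}{1})\in\Ji$ is of type~\ref{typeC}, that $m\defeq(k-1,t,\dotsc,\overset{c+1}{t},r)\in\Mi$ is of type~\ref{typeIV}, and that $g\DownArrow m$ holds while $g\Hochpfeil m$ fails.

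Next I would translate these two arrow statements through partition conjugation. By Lemma~\ref{Lemma_3.4}, $g\DownArrow m$ is equivalent to $m^*\Hochpfeil g^*$, which yields the up-arrow claimed in the corollary. By Lemma~\ref{Lemma_3.5}, $g\Hochpfeil m$ is equivalent to $m^*\DownArrow g^*$; since the left-hand side fails by Theorem~\ref{thm:TypeCdown1}, the right-hand side fails too, giving the negative part of the statement. The type assertions are then read off from the duality of the four categories noted right after Lemma~\ref{lem:char-join-irr}: $g$ of type~\ref{typeC} forces $g^*$ of type~\ref{typeIII}, and $m$ of type~\ref{typeIV} forces $m^*$ of type~\ref{typeD} (and, as conjugation is an antiautomorphism of~$\Lp$, we also have $m^*\in\Ji$ and $g^*\in\Mi$, so that Lemmata~\ref{Lemma_3.4} and~\ref{Lemma_3.5} indeed apply).

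The only genuinely computational step, and the one I expect to be most error-prone, is writing $g^*$ and $m^*$ in the explicit tuple form demanded by the statement. For $g$: its Ferrers diagram has a first column of height $d+1$ and $k-1$ further columns of height~$1$, so $g^*=(d+1,1,\dotsc,\overset{k}{1})$, which is visibly of type~\ref{typeIII} since $d+1\geq2$ and $k-1\geq3$. For $m=(k-1,t,\dotsc,t,r)$ with $c$ copies of~$t$: counting, for each index~$i$, the parts of~$m$ that are $\geq i$ gives $m^*_i=c+2$ for $1\leq i\leq r$, $m^*_i=c+1$ for $r<i\leq t$, and $m^*_i=1$ for $t<i\leq k-1$ (here one uses $0\leq r<t\leq k-2<k-1$); hence, abbreviating $\kappa\defeq c+1$, we get $m^*=(\kappa+1,\dotsc,\overset{r}{\kappa+1},\kappa,\dotsc,\overset{t}{\kappa},1,\dotsc,\overset{k-1}{1})$, exactly as claimed. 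I would double-check the three index ranges and confirm $c\geq2$ (so $\kappa\geq3$), $t\geq2$ and $t\leq k-2$, which make $m^*$ a type~\ref{typeD} partition with inner block parameters $b=r$, $\ell=t-r\geq1$ and $d'=k-1-t\geq1$; with these verifications in place the corollary is proved.
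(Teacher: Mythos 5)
Your proposal is correct and follows essentially the same route as the paper, which obtains the corollary precisely by dualising Theorem~\ref{thm:TypeCdown1} through Lemmata~\ref{Lemma_3.4} and~\ref{Lemma_3.5}; your explicit verification of the conjugate tuples $g^*=(d+1,1,\dotsc,\overset{k}{1})$ and $m^*=(\kappa+1,\dotsc,\overset{r}{\kappa+1},\kappa,\dotsc,\overset{t}{\kappa},1,\dotsc,\overset{k-1}{1})$ is accurate and merely spells out what the paper leaves implicit.
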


It remains to discuss
$g=(k+1,\dotsc,\overset{b}{k+1},k,\dotsc,\overset{b+\ell}{k},1,\dots,\overset{b+\ell+d}{1})\in\Ji$
of type~\ref{typeD}
where $b\in\N$, $k,\ell,d\in\Nplus$, $k\geq 3$, $b+\ell\geq 2$ and
$n=b(k+1)+\ell k+d$.
Then we may deduce via
Lemma~\ref{lem:single-typeC-b>=2-I-II} from $g\Runterpfeil m$ for some
$m\in\Mi$ that
$m=(a,t,\dotsc,\overset{c+1}{t},r)\in\Lp$ is of type~\ref{typeIV}
with $n=a+ct+r$, $a>t>r\geq0$, $c\geq 2$, $a=b+(b+\ell)(k-t)+t-1$ and $2\leq t\leq k-1$.
In this situation there are two cases: the first is that $t \leq d+1$,
in which Theorem~\ref{thm:double-typeCD} entails that
$g\Runterpfeil m$ is not only present, but that it is actually a double arrow.
The second case is that $d+1<t$, in which
Lemma~\ref{lem:single-typeC-b>=2-I-II} implies
$t>r=d+1\geq 2$, $b+\ell=c+1\geq 3$ and that $g\Hochpfeil m$ is impossible.
Under these necessary conditions, the following result shows that
$g\Runterpfeil m$ is actually present (and clearly is not a double
arrow).
\begin{theorem}\label{thm:TypeCDdown}
  Let $b\in\N$, $k,\ell,d,t\in \Nplus$ with
  $b+\ell\geq 3$ and $d+1<t\leq k-1$; set
  $n=b(k+1)+\ell k+d$ and $a=b+(b+\ell)(k-t)+t-1$, and consider
  $g=(k+1,\dotsc,\overset{b}{k+1},k,\dotsc,\overset{b+\ell}{k},1,\dotsc,\overset{b+\ell+d}{1})$
  and
  $m = (a,t,\dotsc,\overset{b+\ell}{t},d+1)$.
  Then we have $g\in\Ji$ of type~\ref{typeD} with $k\geq 4$,
  $m\in\Mi$ of type~\ref{typeIV} with $a\geq t+2$, and
  $g\DownArrow m$ but not $g\Hochpfeil m$.
\end{theorem}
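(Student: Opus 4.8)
The strategy is the one already used for Theorems~\ref{thm:TypeABDownIV} and~\ref{thm:TypeCdown1}: pin down the covers of~$g$ and~$m$ and then apply Proposition~\ref{prop:arrows-in-doubly-founded-lattices} twice. The hypotheses $b+\ell\geq 3$ and $d+1<t\leq k-1$ are exactly the necessary conditions distilled in Lemma~\ref{lem:single-typeC-b>=2-I-II} for the case $d+1<t$, so the task is to prove their sufficiency. For the type memberships, note first that $d\in\Nplus$ forces $d+1\geq 2$, so $d+1<t\leq k-1$ gives $t\geq 3$ and $k\geq t+1\geq 4$; together with $b+\ell\geq 2$ and $d,\ell\geq 1$ this makes~$g$ a type~\ref{typeD} partition, hence $g\in\Ji$. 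Substituting $a=b+(b+\ell)(k-t)+t-1$ one checks that the entries of~$m$ sum to $b(k+1)+\ell k+d=n$, so $m\in\Lp$; moreover $a-t=b+(b+\ell)(k-t)-1\geq 3-1=2$ because $k-t\geq 1$ and $b+\ell\geq 3$, so $a\geq t+2$; then $a>t>d+1=r\geq 2$, $t\geq 3\geq 2$ and the $b+\ell-1\geq 2$ repeated parts equal to~$t$ together make~$m$ a type~\ref{typeIV} partition, hence $m\in\Mi$. Finally $g\nleq m$, since the partial sum at position $b+\ell$ equals $\sum_{i=1}^{b+\ell}g_i=b(k+1)+\ell k=n-d$ for~$g$ but only $a+(b+\ell-1)t=n-d-1$ for~$m$.

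For $g\DownArrow m$ I would use Proposition~\ref{prop:arrows-in-doubly-founded-lattices}\eqref{item:g-lcover-below-m}. Since~$g$ is of type~\ref{typeD} with $k\geq 4$, its unique lower cover arises by transition rule~\eqref{item:transition-cliff} applied to the cliff at position $b+\ell$ (legitimate as $k-1\geq 2$), giving $\tilde{g}=(k+1,\dotsc,\overset{b}{k+1},k,\dotsc,k,\overset{b+\ell}{k-1},2,1,\dotsc,\overset{b+\ell+d}{1})$ with $\sum_{i=1}^{b+\ell}\tilde{g}_i=n-d-1$, which agrees with $\sum_{i=1}^{b+\ell}m_i$. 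The crucial point is that the position $j\defeq b+\ell$ where the partial sums coincide also equals $\len(m)-1$, so the second hypothesis of Lemma~\ref{lem:dominance-from-complement} ($\tilde{g}_i\leq m_i$ for $j<i<\len(m)$) is vacuous; since $\tilde{g}_i\geq k-1\geq t=m_i$ for $2\leq i\leq j$, that lemma yields $\tilde{g}\leq m$, and hence $g\DownArrow m$.

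For the failure of $g\Hochpfeil m$ I would invoke Proposition~\ref{prop:arrows-in-doubly-founded-lattices}\eqref{item:m-ucover-above-g}: by Lemma~\ref{lem:upper-cover-of-type-III-IV} the unique upper cover of~$m$ is $\tilde{m}=(a,t+1,t,\dotsc,t,\overset{b+\ell}{t-1},d+1)$, and since it is obtained by redistributing one brick among the first $b+\ell$ coordinates, $\sum_{i=1}^{b+\ell}\tilde{m}_i=n-d-1<n-d=\sum_{i=1}^{b+\ell}g_i$, so $g\nleq\tilde{m}$; equivalently, this is immediate from Lemma~\ref{lem:single-typeC-b>=2-I-II} once $g\DownArrow m$ is in hand. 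I do not anticipate a genuine obstacle; the only care needed is correctly reading off~$\tilde{g}$ in the boundary cases $\ell=1$, $b=0$, $d=1$ (where parts of the displayed tuple collapse) and confirming that the partial-sum identity $\sum_{i=1}^{b+\ell}\tilde{g}_i=\sum_{i=1}^{b+\ell}m_i$ survives there — this identity, together with $\len(m)=b+\ell+1$, is exactly what makes Lemma~\ref{lem:dominance-from-complement} applicable with an empty second condition and does essentially all the work.
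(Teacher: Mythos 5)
Your proposal is correct and follows essentially the same route as the paper's proof: the same verification that $g$ is of type~\ref{typeD} and $m$ of type~\ref{typeIV} with $a\geq t+2$, the same comparison of partial sums at position $b+\ell$ to get $g\nleq m$, the same application of Lemma~\ref{lem:dominance-from-complement} with $j=b+\ell$ (where the second condition is indeed vacuous) to obtain $\tilde{g}\leq m$, and the same appeal to Lemma~\ref{lem:single-typeC-b>=2-I-II} (or equivalently the direct computation with $\tilde{m}$ from Lemma~\ref{lem:upper-cover-of-type-III-IV}) to rule out $g\Hochpfeil m$. The boundary cases you flag for reading off $\tilde{g}$ are handled in the paper exactly as you anticipate, so there is no gap.
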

\begin{proof}
  Since $d+2\leq t\leq k-1$ and $d\geq 1$, we have $t\geq 3$ and
  $k\geq d+3\geq 4$.
  Moreover, we observe
  \begin{multline*}
  a+(b+\ell-1)t+d+1 = b+(b+\ell)(k-t)+t-1 +(b+\ell)t-t+d+1
  =b+(b+\ell)k +d \\
  = b+bk +\ell k+d = b(k+1)+\ell k+d =n,
  \end{multline*}
  whence $m\in\Lp$.
  We check that
  $a=b+(b+\ell)(k-t)+t-1\geq  b+b+\ell+t-1\geq 0+3+t-1=t+2>t$
  as $k-t\geq 1$ and $b+\ell\geq3$.
  Hence, as $a>t>d+1\geq 2$ and $b+\ell\geq3$, we conclude that $m\in\Mi$ is
  of type~\ref{typeIV}.
  Since $k\geq 3$, $\ell,d\geq 1$ and $b+\ell\geq 2$, it is also clear
  that $g\in\Ji$ of type~\ref{typeD}.
  Furthermore, we have $g\nleq m$ because the partial sums up to
  position~$b+\ell$ satisfy
  $\hat{g}_{b+\ell}=n-d>n-(d+1)=\hat{m}_{b+\ell}$.
  Applying transition rule~\eqref{item:transition-cliff} to the cliff in
  position $b+\ell$ of~$g$, we obtain its unique lower cover
  $\tilde{g} =(k+1,\dotsc,\overset{b}{k+1},k,\dotsc,k,\overset{b+\ell}{k-1},2,1,\dotsc,\overset{b+\ell+d}{1})$
  as $d\geq 1$, with $1,\dotsc,1$ appearing for $d\geq2$.
  Note that the partial sums of both~$\tilde{g}$ and~$m$ reach
  $n-(d+1)$ at position~$b+\ell$.
  Therefore, as $\tilde{g}_i\ge k-1\ge t= m_i$ for $2\le i\leq b+\ell$,
  Lemma~\ref{lem:dominance-from-complement} with $j=b+\ell$ yields
  $\tilde{g}\leq m$, and hence
  Proposition~\ref{prop:arrows-in-doubly-founded-lattices}\eqref{item:g-lcover-below-m}
  implies $g\Runterpfeil m$.
  \par
  Finally, since $k\geq 3$, $b+\ell\geq 2$, $m\in\Mi$, $g\Runterpfeil m$ and $t>d+1$,
  Lemma~\ref{lem:single-typeC-b>=2-I-II} implies that $g\Hochpfeil m$
  is impossible.
\end{proof}

Lemma~\ref{lem:single-typeC-b>=2-I-II},
Corollary~\ref{cor:double-typeCD-described}
and Theorem~\ref{thm:TypeCDdown}
characterise all $m\in\Mi$ in relation $g\Runterpfeil m$ with~$g$ of
type~\ref{typeD}.
\par

The following summary combines Theorems~\ref{thm:TypeABDownIV}
and~\ref{thm:TypeCDdown}, covering~$g$ of types~\ref{typeA},
\ref{typeB} or~\ref{typeD}.
\begin{corollary}\label{cor:TypeABCDdownIVnew}
  Let $b,d\in\N$, $k,\ell\in \Nplus$ with
  $b+\ell\geq 3$ and $k\ge d+3$;
  set $n=b(k+1)+\ell k+d$.
  Furthermore, choose  $t\in\N$ with $d+1<t\leq k-1$
  and set $a\defeq b+(b+\ell)(k-t)+t-1$.
  Then we have
  $g=(k+1,\dotsc,\overset{b}{k+1},k,\dotsc,\overset{b+\ell}{k},1,\dotsc,\overset{\rlap{$\scriptstyle b+\ell+d$}}{1})\in\Ji$,
  ${m = (a,t,\dotsc,\overset{b+\ell}{t},d+1)\in\Mi}$
  of type~\ref{typeIV} with $a\geq t+2$,
  and~$g$ and~$m$ satisfy
  $g\DownArrow m$ but not $g\Hochpfeil m$.
\end{corollary}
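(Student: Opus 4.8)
The plan is to observe that this corollary is simply the amalgamation of the two down-arrow theorems proved above, the case distinction being whether the tail of~$1$s in~$g$ is empty, that is, whether $d=0$ or $d\geq1$.

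First I would dispose of the case $d=0$. Here $g=(k+1,\dotsc,\overset{b}{k+1},k,\dotsc,\overset{b+\ell}{k})$ has no trailing~$1$s, the bound $k\geq d+3$ reads $k\geq3$, the bound $d+1<t\leq k-1$ reads $2\leq t\leq k-1$, and $n=b(k+1)+\ell k$. The only point that needs verification is that the value $a=b+(b+\ell)(k-t)+t-1$ prescribed here agrees with the value $n-1-(b+\ell-1)t$ used in Theorem~\ref{thm:TypeABDownIV}; expanding $n=b(k+1)+\ell k$ gives $n-1-(b+\ell-1)t=(b+\ell)k-(b+\ell)t+b+t-1=b+(b+\ell)(k-t)+t-1$, as required. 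Together with the hypothesis $b+\ell\geq3$, all assumptions of Theorem~\ref{thm:TypeABDownIV} are then met, and that theorem yields $g\in\Ji$ (of type~\ref{typeA} or~\ref{typeB}), $m=(a,t,\dotsc,\overset{b+\ell}{t},1)\in\Mi$ of type~\ref{typeIV} with $a\geq t+2$, and $g\DownArrow m$ but not $g\Hochpfeil m$, which is exactly the assertion for $d=0$.

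For the remaining case $d\geq1$ we have $d\in\Nplus$ and $k\geq d+3\geq4$, and now~$g$, $m$, $n$ and the constant~$a$ are literally those occurring in Theorem~\ref{thm:TypeCDdown}: the relation $n=b(k+1)+\ell k+d$, the bound $d+1<t\leq k-1$, the value $a=b+(b+\ell)(k-t)+t-1$ and the hypothesis $b+\ell\geq3$ all match verbatim. Hence Theorem~\ref{thm:TypeCDdown} applies directly and delivers $g\in\Ji$ of type~\ref{typeD}, $m\in\Mi$ of type~\ref{typeIV} with $a\geq t+2$, and $g\DownArrow m$ without $g\Hochpfeil m$. Since the two cases exhaust all admissible values of~$d$, combining them completes the proof. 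I do not expect a genuine obstacle here; the only small points requiring care are the equivalence of the two formulae for~$a$ in the case $d=0$ and the harmless circumstance that the displayed tail $1,\dotsc,\overset{b+\ell+d}{1}$ of~$g$ degenerates to the empty sequence when $d=0$, so that~$g$ is then of type~\ref{typeA} or~\ref{typeB} rather than of type~\ref{typeD}.
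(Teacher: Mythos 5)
Your proposal is correct and follows essentially the same route as the paper's own proof: a case split on $d=0$ versus $d\geq1$, reducing to Theorem~\ref{thm:TypeABDownIV} and Theorem~\ref{thm:TypeCDdown} respectively. Your explicit verification that $a=b+(b+\ell)(k-t)+t-1$ coincides with the value $n-1-(b+\ell-1)t$ of Theorem~\ref{thm:TypeABDownIV} in the case $d=0$ is a welcome detail that the paper leaves implicit.
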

Observe that $g=(4,4,4)$ satisfies
$g\DownArrow (5,3,3,1)$ and $g\DownArrow(7,2,2,1)$.
Hence Corollary~\ref{cor:TypeABCDdownIVnew}
represents a one-to-many relationship.
Similarly to Theorem~\ref{thm:TypeCdown1}
the number of distinct $m\in\Mi$ such that $g\DownArrow m$ with a
fixed~$g$ grows with~$n$, e.g., if $n=15$ and $g=(5,5,5)$, then
$g\DownArrow m$ for $m\in\set{(10,2,2,1), (8,3,3,1), (6,4,4,1)}$.
\begin{proof}
Note that $m\in\Mi$ is always  of type~\ref{typeIV}
while  $g\in\Ji$ is
  of type~\ref{typeA} (if $b=d=0$),
  of type~\ref{typeB} (if $b\ge1$, $d=0$),
  of type~\ref{typeD} (if $d\ge1$).
\par
For $d=0$ the hypotheses and partitions~$g$ and~$m$
simplify to those of Theorem~\ref{thm:TypeABDownIV},
since $d+1<t$ is equivalent to $2\le t$.
Therefore,
Theorem~\ref{thm:TypeABDownIV} proves $g\Runterpfeil m$ but not $g\Hochpfeil m$.
\par
For $d\ge1$ the hypotheses restate those of Theorem~\ref{thm:TypeCDdown}.
Hence $g\Runterpfeil m$ but not $g\Hochpfeil m$.
\end{proof}

Dualising Corollary~\ref{cor:TypeABCDdownIVnew} via
Lemmata~\ref{Lemma_3.4} and~\ref{Lemma_3.5} produces the following
result.
\begin{corollary}\label{corDualTypeABCDdownIV}
  Let $b,d\in\N$, $k,\ell,t\in \Nplus$ with
  $b+\ell\geq 3$, $k\ge d+3$ and $d+1<t\leq k-1$; further set
  $n=b(k+1)+\ell k+d$ and $a=b+(b+\ell)(k-t)+t-1$.
  Then $a\geq t+2$, and abbreviating $\kappa\defeq b+\ell$ we have
\[
 m^*=\big(\kappa+1,\dotsc,\overset{d+1}{\kappa+1}, \kappa,\dotsc,\overset{t}{\kappa}, 1,\dotsc,\overset{a}1\big)
 \Hochpfeil
 g^* = \big(\kappa+d,\,\kappa,\dotsc,\overset{k}{\kappa},\,b\big),
 \quad\text{but not}\quad m^* \Runterpfeil g^*,
\]
where $m^*\in\Ji$ is of type~\ref{typeD} and
$g^*\in\Mi$ is of type \ref{typeI} (if $b=d=0$),
\ref{typeII} (if $d=0,b\ge1$)
or~\ref{typeIV} (if $d\ge1$).
\end{corollary}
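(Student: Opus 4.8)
The plan is to derive this corollary mechanically from Corollary~\ref{cor:TypeABCDdownIVnew} by applying partition conjugation, as announced in the text. Fix $b,d\in\N$ and $k,\ell,t\in\Nplus$ subject to the stated constraints; these are exactly the hypotheses of Corollary~\ref{cor:TypeABCDdownIVnew}, which therefore supplies partitions $g\in\Ji$ and $m\in\Mi$ of type~\ref{typeIV} with $a\ge t+2$ such that $g\Runterpfeil m$ but $g\Hochpfeil m$ fails; the inequality $a\ge t+2$ is inherited verbatim. Since ${}^{*}\colon\Lp\to\Lp$ is an involutive order-antiautomorphism it interchanges $\Ji$ and $\Mi$, so $m^{*}\in\Ji$ and $g^{*}\in\Mi$; Lemma~\ref{Lemma_3.4} turns $g\Runterpfeil m$ into $m^{*}\Hochpfeil g^{*}$, while Lemma~\ref{Lemma_3.5} turns the failure of $g\Hochpfeil m$ into the failure of $m^{*}\Runterpfeil g^{*}$. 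Everything then reduces to writing $g^{*}$ and $m^{*}$ in the claimed closed form and reading off their types.

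For the conjugates I would simply count, for each row index~$i$, how many parts of the partition are at least~$i$. For $g=(k+1,\dotsc,\overset{b}{k+1},k,\dotsc,\overset{b+\ell}{k},1,\dotsc,\overset{b+\ell+d}{1})$ there are $b+\ell+d$ parts $\ge1$, there are $b+\ell$ parts $\ge i$ for every $2\le i\le k$, and there are exactly $b$ parts $\ge k+1$; writing $\kappa\defeq b+\ell$ this gives $g^{*}=(\kappa+d,\kappa,\dotsc,\overset{k}{\kappa},b)$. For $m=(a,t,\dotsc,\overset{b+\ell}{t},d+1)$, whose parts are $a$ once, $t$ exactly $\kappa-1$ times and $d+1$ once, the inequalities $a\ge t+2$ and $t\ge d+2$ put the jumps of the conjugate at rows $d+1$, $t$ and $a$, giving $m^{*}=(\kappa+1,\dotsc,\overset{d+1}{\kappa+1},\kappa,\dotsc,\overset{t}{\kappa},1,\dotsc,\overset{a}{1})$. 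Both match the partitions displayed in the statement.

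It remains to confirm the types. Since $m$ is of type~\ref{typeIV}, the duality observation following Lemma~\ref{lem:char-join-irr} yields $m^{*}\in\Ji$ of type~\ref{typeD} at once; this is also visible directly from the shape of $m^{*}$ using $\kappa\ge3$, $t\ge d+2$ and $a-t\ge2$. For $g^{*}$ the same observation splits the three advertised cases from the type of~$g$ (type~\ref{typeA} when $b=d=0$, type~\ref{typeB} when $d=0,b\ge1$, type~\ref{typeD} when $d\ge1$), and each can be re-checked from $g^{*}=(\kappa+d,\kappa,\dotsc,\overset{k}{\kappa},b)$: for $d=0$ the leading entry collapses to the plateau value $\kappa$, so $g^{*}$ is a constant tuple of length $k\ge2$ when moreover $b=0$ (type~\ref{typeI}) and is $(\kappa,\dotsc,\overset{k}{\kappa},b)$ with $\kappa>b\ge1$ otherwise (type~\ref{typeII}); for $d\ge1$ the leading entry $\kappa+d$ strictly exceeds the plateau value $\kappa\ge2$, the plateau $\kappa,\dotsc,\kappa$ has length $k-1\ge2$, and $\kappa>b\ge0$, so $g^{*}$ is of type~\ref{typeIV}. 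I do not expect a real obstacle here; the only step demanding care is the trailing-zero bookkeeping when $b=0$ (and, for $g^{*}$, $d=0$), which is precisely what produces this case distinction.
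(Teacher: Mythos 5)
Your proposal is correct and follows exactly the route the paper takes: it states this corollary as the result of dualising Corollary~\ref{cor:TypeABCDdownIVnew} via Lemmata~\ref{Lemma_3.4} and~\ref{Lemma_3.5}, with the conjugates and types read off from the Ferrers diagrams and the duality pairing of the type classes. Your version merely spells out the row-counting for $g^{*}$ and $m^{*}$ and the type verification more explicitly than the paper does, and all of those details check out.
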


\begin{figure}[hp]
\centering
 \includegraphics[scale=1]{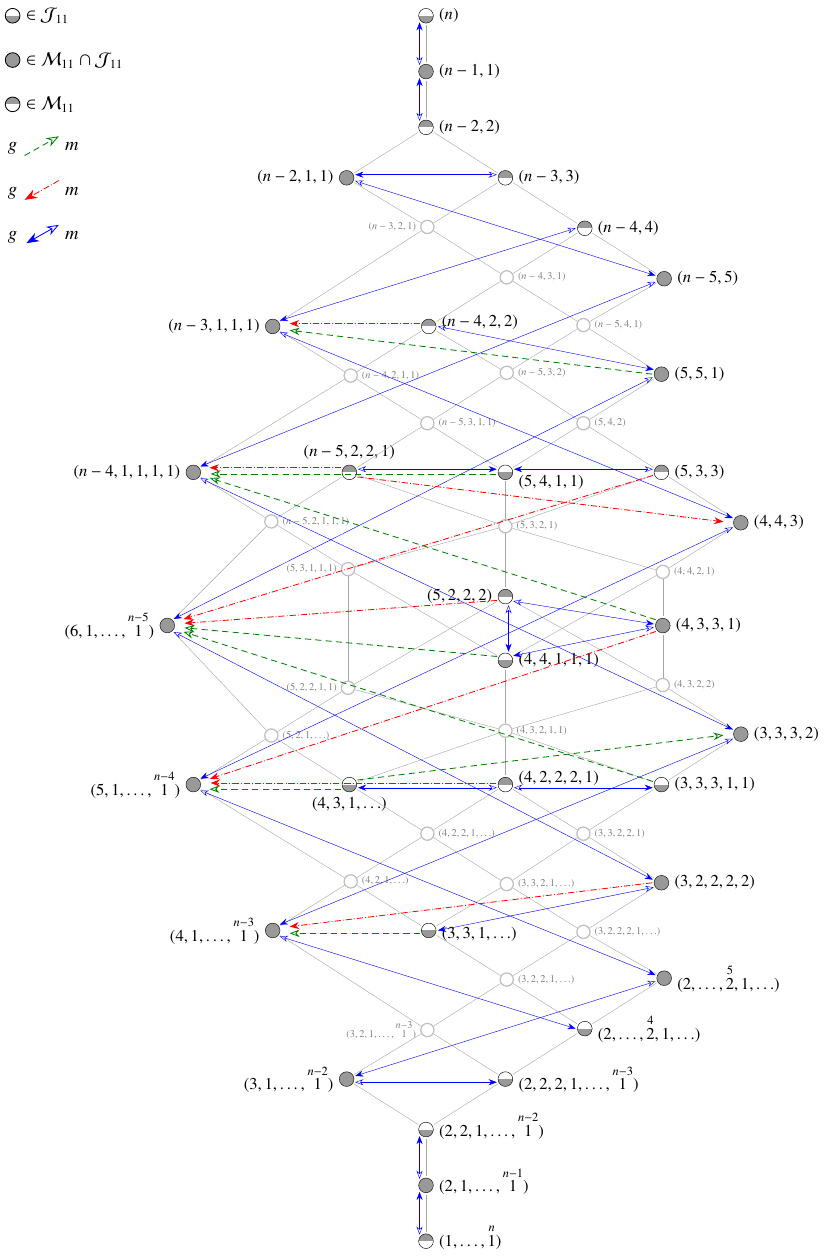}%
\caption{Lattice~$\Lp$ for $n=11$ with all arrows.}
\label{fig:lattice11}
\end{figure}

We finally collect our knowledge about single arrows in
\tablename~\ref{tab:OneDArrowCaracterizationSummary}.
Moreover, with \figurename~\ref{fig:lattice11} depicting~$\Lp[11]$ we aim to
illustrate for the concrete case $n=11$
the various up, down and double arrow relations appearing in
\tablename{s}~\ref{tab:DoubleArrowCaracterizationSummary}
and~\ref{tab:OneDArrowCaracterizationSummary}.
\begin{table}[ht!]
\centering
\begin{tabular}{llcl}
\toprule
Result & $g\in\Ji$ & Arrow & $m\in\Mi$ \\ \midrule
Theorem~\ref{thm:TypeCdown1}: &
 $(k,1,\dotsc,\overset{d+1}{1})$,\, $k\geq 4$, $d\geq 2t-1$
 & $\Runterpfeil, \nHochpfeil$ &
$(k-1, t,\dotsc,\overset{c+1}{t},r)$
\\ \midrule
Corollary~\ref{corDualTypeCdown1}:
 &
 $\big(\kappa+1,\dotsc,\overset{r}{\kappa+1},\kappa,\dotsc,\overset{t}{\kappa},1,\dotsc,\overset{k-1}{1}\big)$,\, $\kappa\defeq c+1$
& $\Hochpfeil,\nRunterpfeil$&
 $\big(d+1,1,\dotsc,\overset{k}{1}\big)$
\\ \midrule
Corollary~\ref{cor:TypeABCDdownIVnew}: &
$(k+1,\dotsc,\overset{b}{k+1},k,\dotsc,\overset{b+\ell}{k},1,\dotsc,\overset{b+\ell+d}{1})$,\,
$k\geq d+3$, $d\leq t-2$
 & $\Runterpfeil,\nHochpfeil$ &
$(a,t,\dotsc,\overset{b+\ell}{t},d+1)$
\\ \midrule
Corollary~\ref{corDualTypeABCDdownIV}: &
 $\big(\kappa+1,\dotsc,\overset{d+1}{\kappa+1},\kappa,\dotsc,\overset{t}{\kappa}, 1,\dotsc,\overset{a}{1}\big),$\, $\kappa\defeq b+\ell$
& $\Hochpfeil,\nRunterpfeil$ &
 $\big(\kappa+d,\,\kappa,\dotsc,\overset{k}{\kappa},\,b\big)$
\\
\bottomrule
\end{tabular}
\caption{A summary of the arrow characterisation results for~$\Hochpfeil$ and~$\Runterpfeil$
where $b,d\in\N$,  $c,\ell,t\in\Nplus$, $b+\ell\geq 3$ and $c,t\geq 2$.
There are different additional restrictions on~$t$ that have to be
taken from the respective results. Rows 1 and 2, and 3 and 4 are duals
of each other and are subject to the same conditions.}
\label{tab:OneDArrowCaracterizationSummary}
\end{table}

We are now ready to discuss once more the single arrows in
Example~\ref{ex:lattice7}.
The first $n\in\N$ where a single arrow appears in $\K(\Lp)$ is $n=7$.
This is justified by our Theorem~\ref{thm:TypeCdown1}
since its assumptions require $k\ge4$ and $4 \le 2t \le d+1$, which
gives $n = k+d \ge 4+3 = 7$.
Moreover, Corollary~\ref{cor:TypeABCDdownIVnew}
can only be applied for $n\ge9$ as it requires
$b+\ell\ge3$ and $k\ge d+3$, wherefore
$n=b + (b+\ell) k + d \ge  3k \ge 3(d+3) \ge 9$ as $b,d\ge0$.
The dual results Corollary~\ref{corDualTypeCdown1} and
Corollary~\ref{corDualTypeABCDdownIV}
are also valid for $n\ge7$ and $n\ge9$, respectively.
\par

Finally, for $n=7$, if $d=3$, then the restriction $4\le2t\le d+1=4$ of
Theorem~\ref{thm:TypeCdown1} shows that $t=2$, wherefore the partition
$g = (4,1,1,1)$ satisfies $g\DownArrow m$ with a unique $m\in\Mi[7]$.
Hence, considering the dual arrow (justified by
Corollary~\ref{corDualTypeCdown1})  there exist exactly two
single-directional arrows in $\K(\Lp[7])$. These two arrows are the
ones exhibited in Example~\ref{ex:lattice7} and illustrated in
\figurename~\ref{fig:lattice7}.

\section{One-generated arrow-closed one-by-one subcontexts}\label{sect:1-by-1-gen-ACSC}
We have implemented a brute-force algorithm (based on
Definition~\ref{def:arrow-rels}) and an algorithm that uses the
information given in
\tablename{s}~\ref{tab:DoubleArrowCaracterizationSummary}
and~\ref{tab:OneDArrowCaracterizationSummary}
to discover one-generated arrow-closed subcontexts of~$\K(\Lp)$.
Both implementations suggest that as of $n\ge3$ there exist exactly
$2n-4$ arrow-closed $(1\times1)$-subcontexts.
To demonstrate the applicability of our general characterisations of
the arrow relations in~$\K(\Lp)$ to the original problem of describing
one-generated arrow-closed subcontexts (with the ultimate intent to
obtain subdirect decompositions), we prove in this section that $2n-4$
is the correct number of one-generated arrow-closed subcontexts of
format~$(1\times 1)$.

Our first lemma deals precisely with all arrow-closed subcontexts
of~$\K(\Lp)$ generated by $g\in\Ji$ of types~\ref{typeA} or~\ref{typeB}.
\begin{lemma}\label{lem1x1AB}
 Let $b\in\N, \ell,k\in\Nplus$ be such that
 $b+\ell < n=b(k +1)+ \ell k$.
Then $g=(k+1,\dotsc,\overset{b}{k+1},k,\dotsc,\overset{b+\ell}{k})\in\Ji$
generates the following arrow-closed $(1\times1)$-subcontext
of\/~$\K(\Lp)$ with $m=(n-b-\ell,1,\dotsc,\overset{b+\ell}{1},1)\in\Mi$.
\[
\begin{array}{|c||c|}
  \hline
  & m=(n-b-\ell,1,\ldots,\overset{b+\ell}{1},1)\\
  \hline\hline
  g=(k+1,\ldots,\overset{b}{k+1},k,\ldots,\overset{b+\ell}{k}) & \Doppelpfeil\\
  \hline
\end{array}\]
\end{lemma}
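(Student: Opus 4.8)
The plan is to show that the one-generated arrow-closed subcontext obtained from $g$ consists of exactly the single object $g$ and the single attribute $m$, i.e.\ that no arrows escape this $(1\times 1)$-cell. By the construction of one-generated arrow-closed subcontexts described after Definition~\ref{def:arrow-rels} in Subsection~\ref{subsect:fca}, starting from $G_1=\set{g}$, $M_1=\emptyset$ we must trace the directed graph on $\Ji\disjointunion\Mi$ with edges given by $\Hochpfeil$ (from objects to attributes) and $(\Runterpfeil)^{-1}$ (from attributes to objects). Concretely, I first need to determine all $m'\in\Mi$ with $g\Hochpfeil m'$, then for each such $m'$ all $g'\in\Ji$ with $g'\Runterpfeil m'$, and iterate; the claim is that this process immediately stabilises with $H=\set{g}$ and $N=\set{m}$.

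First I would invoke Theorem~\ref{thm:double-typeAB}: under the present hypotheses on $b,k,\ell,n$, that theorem yields $g\in\Ji$ (of type~\ref{typeA} if $b=0$ and of type~\ref{typeB} if $b\ge1$), $m=(n-b-\ell,1,\dotsc,\overset{b+\ell}{1},1)\in\Mi$, and $g\Doppelpfeil m$; hence in particular $g\Runterpfeil m$ and $g\Hochpfeil m$. Next, the crucial step is to verify that $m$ is the \emph{only} attribute reachable from $g$: for the up-arrows this is exactly the content of Lemmata~\ref{lem:typeAB-downarrow-typeI-III-implies-described}, \ref{lem:NecCondGABdownM4} and Theorem~\ref{thm:TypeABDownIV} taken together — they characterise all $m'\in\Mi$ with $g\Runterpfeil m'$, but for the up-arrow direction I instead need all $m'$ with $g\Hochpfeil m'$. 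I would argue as follows: if $g\Hochpfeil m'$ then, by Theorem~\ref{thm:TypeABDownIV} and Lemma~\ref{lem:typeAB-uparrow-typeIV-impossible}, $m'$ cannot be of type~\ref{typeIV}; and for $m'$ of types~\ref{typeI}--\ref{typeIII}, dualising via Lemma~\ref{Lemma_3.5} (so that $m'^*\Runterpfeil g^*$ with $g^*$ of type~\ref{typeI} or~\ref{typeII}) forces, through the type-\ref{typeC}/\ref{typeD} down-arrow analysis, that $m'$ coincides with the unique double-arrow partner, namely $m$. Thus the only arrow leaving $g$ points to $m$.

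It then remains to check that no arrow leaves $m$ except back to $g$, i.e.\ that $g$ is the unique object with $g'\Runterpfeil m$. Here I would use that $m=(n-b-\ell,1,\dotsc,\overset{b+\ell}{1},1)$ is of type~\ref{typeIII} with $\len(m)=b+\ell+1$; by Lemma~\ref{lem:typeAB-downarrow-implies-length-m=b+l+1-ending-in-1} applied to arbitrary type~\ref{typeA}/\ref{typeB} objects, and by the analogous length/height constraints from Lemma~\ref{lem:single-typeC-height-k-1} and Lemma~\ref{lem:single-typeC-b>=2-I-II} for objects of types~\ref{typeC} and~\ref{typeD}, any $g'\Runterpfeil m$ must have its unique lower cover $\tilde{g'}$ satisfying $\tilde{g'}\leq m$ and hence (comparing lengths and the final entry) $\len(\tilde{g'})=b+\ell+1$ and $\sum_{i\le b+\ell}\tilde{g'}_i=n-1$; pinning down $g'$ from these equalities, together with the explicit forms of lower covers from Definition~\ref{def:transition-rules-lower-covers} and Lemma~\ref{lem:description-of-lower-covers}, yields $g'=g$. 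Combining the two uniqueness statements, the reachable component of $g$ is $\set{g}\disjointunion\set{m}$, which written as $H\disjointunion N$ gives $H=\set{g}$, $N=\set{m}$; restricting the incidence $\leq$ gives the displayed $(1\times1)$-subcontext, and the single cell carries $\Doppelpfeil$ by the first part.

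The main obstacle I anticipate is the \emph{up-arrow uniqueness} step: Section~\ref{sect:double-arrows} characterises down-arrows from $g$, but the arrow-closure construction for an object generator needs the up-arrows $g\Hochpfeil m'$, which are not directly tabulated for type~\ref{typeA}/\ref{typeB} objects. Bridging this gap cleanly requires either a direct argument via Proposition~\ref{prop:arrows-in-doubly-founded-lattices}\eqref{item:m-ucover-above-g} (showing $g\le\tilde{m'}$ forces the shape of $m'$) or a careful dualisation through Lemma~\ref{Lemma_3.5} landing in the already-settled down-arrow classification; I would favour the latter, using that $g^*$ is of type~\ref{typeI} or~\ref{typeII} and that down-arrows \emph{into} such $g^*$ are controlled by the type~\ref{typeC}/\ref{typeD} results of Sections~\ref{sect:double-arrows}--\ref{sect:single-arrows}.
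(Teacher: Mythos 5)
Your proposal is correct and follows essentially the same strategy as the paper's proof: establish $g\Doppelpfeil m$ via Theorem~\ref{thm:double-typeAB}, then use the completeness of the arrow classification to show that $m$ is the only attribute with $g\Hochpfeil m$ and that $g$ is the only object with $g\Runterpfeil m$. The paper carries out both uniqueness steps by inspecting Tables~\ref{tab:DoubleArrowCaracterizationSummary} and~\ref{tab:OneDArrowCaracterizationSummary} (no single up-arrows originate from type~\ref{typeA}/\ref{typeB}, and all down-arrows into a target of shape $(a,1,\dotsc,1)$ come from Theorem~\ref{thm:double-typeAB}), and then pins down the unique type-\ref{typeA}/\ref{typeB} source of length $b+\ell$ via the uniqueness of the decomposition $n=(b+\ell)k+b$ modulo $b+\ell$ --- your ``pinning down $g'$ from the length and partial-sum equalities'' is the same step, just less explicit. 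One caveat: your first justification for excluding type-\ref{typeIV} targets of up-arrows, citing Theorem~\ref{thm:TypeABDownIV} and Lemma~\ref{lem:typeAB-uparrow-typeIV-impossible}, is logically insufficient on its own, since those results only show that a down-arrow to a type-\ref{typeIV} target is never simultaneously an up-arrow; they do not exclude a standalone relation $g\Hochpfeil m'$ with $m'$ of type~\ref{typeIV}. Your fallback of dualising via Lemma~\ref{Lemma_3.5} does repair this: $(m')^{*}\Runterpfeil g^{*}$ with $(m')^{*}$ of type~\ref{typeD} and $g^{*}$ of type~\ref{typeI} or~\ref{typeII} is impossible by Lemma~\ref{lem:single-typeC-b>=2-I-II}, which confines down-arrows from type-\ref{typeD} sources to type-\ref{typeIV} targets. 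So use the dualisation route uniformly for the whole up-arrow uniqueness step, and your argument matches the paper's.
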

\begin{proof}
Under the given conditions, $g$ is of type~\ref{typeA} or~\ref{typeB},
see Corollary~\ref{cor:least-bounded-width-typeAB}.
We apply the procedure to arrow-close the context.
The summarising
\tablename{s}~\ref{tab:DoubleArrowCaracterizationSummary}
and~\ref{tab:OneDArrowCaracterizationSummary} show that
$g\UpArrow m=(n-b-\ell,1,\dotsc,\overset{b+\ell}{1},1)$ by
Theorem~\ref{thm:double-typeAB} (in fact $g\Doppelpfeil m$)
and that this~$m$ is the only one in relation $g\Hochpfeil m$.
It remains to justify that no other partition $p\neq g$ satisfies
$p\DownArrow m$ for the given~$m$.
Considering the summarising
\tablename{s}~\ref{tab:DoubleArrowCaracterizationSummary}
and~\ref{tab:OneDArrowCaracterizationSummary}
and the shape of $m=(a,1,\dotsc,1)$, we realise that
Theorem~\ref{thm:double-typeAB} describes all $p\in\Ji$ such that
$p\DownArrow m$ (the cases $m=(1,\dotsc,1)$ and $m=(n-1,1)$ can be
handled by Theorem~\ref{thm:double-typeC}, as well, and yield the
same $p=g$ as Theorem~\ref{thm:double-typeAB}).
Therefore, the partitions satisfying $p\DownArrow m$
are  exclusively of type~\ref{typeA} or \ref{typeB} and have length $b+\ell= \len(m)-1$.
Then the uniqueness of the decomposition
$n = (b+\ell) k + b$ modulo $b+\ell$ where $0\le b<b+\ell$ entails
that~$p$ coincides with the given~$g$.
\end{proof}

We now dualise Lemma~\ref{lem1x1AB} and thereby cover in particular all
arrow-closed subcontexts of~$\K(\Lp)$ generated by any $g\in\Ji$ of
type~\ref{typeC}.
\begin{lemma}\label{lem1x1C1}
Let $d\in\N$, $k\in\Nplus$ with $k\geq 2$, set $n=k+d$ and
decompose $n= c(k-1) + r$ with $c,r\in\N$ such that $0\le r\le k-2$.
Then the partition $g=(k,1,\dotsc,\overset{d+1}{1})\in\Ji$
generates the following arrow-closed $(1\times1)$-subcontext
of\/~$\K(\Lp)$ with $m=(k-1,\dotsc,\overset{c}{k-1},r)\in\Mi$.
\[
\begin{array}{|c||c|}
  \hline
  &m=(k-1,\ldots,\overset{c}{k-1},r)\\
  \hline\hline
  g=(k,1,\ldots,\overset{d+1}{1}) & \Doppelpfeil\\
  \hline
\end{array}
\]
\end{lemma}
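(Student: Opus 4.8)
The plan is to obtain this lemma from Lemma~\ref{lem1x1AB} by dualising everything via partition conjugation, which is an involutive antiautomorphism of~$\Lp$, using the switching results Lemmata~\ref{Lemma_3.4} and~\ref{Lemma_3.5} and Corollary~\ref{Corollary_3.6}.

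First I would observe that the bijection $\phi$ that sends each $x\in\Ji$ to $x^{*}\in\Mi$ and each $y\in\Mi$ to $y^{*}\in\Ji$ is an automorphism of the directed graph $\apply{\Ji\disjointunion\Mi, {\Hochpfeil}\cup {(\Runterpfeil)^{-1}}}$ out of which one-generated arrow-closed subcontexts are built: by Lemma~\ref{Lemma_3.5} an edge stemming from $x\Hochpfeil y$ is carried to the edge stemming from $y^{*}\Runterpfeil x^{*}$, and by Lemma~\ref{Lemma_3.4} an edge stemming from $x\Runterpfeil y$ goes to the one stemming from $y^{*}\Hochpfeil x^{*}$. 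Hence $\phi$ permutes the connected directed components, so whenever an object $g_{0}\in\Ji$ generates the $(1\times1)$-subcontext $\set{g_{0}}\times\set{m_{0}}$, the elements $g_{0}^{*}\in\Mi$ and $m_{0}^{*}\in\Ji$ lie in a single component, whence $m_{0}^{*}$ generates the $(1\times1)$-subcontext $\set{m_{0}^{*}}\times\set{g_{0}^{*}}$ (with $m_{0}^{*}\Doppelpfeil g_{0}^{*}$ by Corollary~\ref{Corollary_3.6}, or directly by Theorem~\ref{thm:double-typeC}).

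Next I would choose the parameters for Lemma~\ref{lem1x1AB}: from $n=c(k-1)+r$ with $0\le r\le k-2$ I put there $b'\defeq r$, $\ell'\defeq(k-1)-r$ and $k'\defeq c$. The bound $r\le k-2$ gives $\ell'\ge1$, while $c\ge1$ because $r<k-1\le n$, and $b'+\ell'=k-1<n=b'(k'+1)+\ell'k'$ holds by construction. Lemma~\ref{lem1x1AB} then yields that $g'\defeq(k'+1,\dotsc,\overset{b'}{k'+1},k',\dotsc,\overset{b'+\ell'}{k'})$ generates $\set{g'}\times\set{m'}$ with $m'=(n-(k-1),1,\dotsc,\overset{k-1}{1},1)=(d+1,1,\dotsc,\overset{k}{1})$, and by the previous paragraph $(m')^{*}$ generates $\set{(m')^{*}}\times\set{(g')^{*}}$.

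The only remaining step, which I expect to be the main (purely notational) obstacle, is to identify these conjugates with the partitions in the statement: reading Ferrers diagrams gives $(m')^{*}=(k,1,\dotsc,\overset{d+1}{1})=g$ and $(g')^{*}=(k-1,\dotsc,\overset{c}{k-1},r)=m$, the trailing~$r$ being present exactly when $r\ge1$, where one uses $b'+\ell'=k-1$ and $k'=c$. This proves the lemma; since the argument never refers to the type of~$g$, the degenerate cases $k=2$ or $d=0$, in which $g$ is of type~\ref{typeA} or~\ref{typeB} rather than of type~\ref{typeC}, require no separate treatment.
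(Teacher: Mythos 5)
Your proof is correct and follows essentially the same route as the paper, which likewise obtains Lemma~\ref{lem1x1C1} as the exact dual of Lemma~\ref{lem1x1AB} under partition conjugation; the paper's proof merely asserts this dualisation (and sketches a direct alternative via the tables), whereas you make the transport precise by noting that conjugation is an automorphism of the arrow digraph thanks to Lemmata~\ref{Lemma_3.4} and~\ref{Lemma_3.5}. Your parameter translation $b'=r$, $\ell'=k-1-r$, $k'=c$ and the identification of the conjugates $(m')^{*}=g$ and $(g')^{*}=m$ check out, including the degenerate cases $k=2$ and $d=0$.
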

\begin{proof}
This result is the exact dual of Lemma~\ref{lem1x1AB}. An explicit
proof can be obtained by dualising the argument of Lemma~\ref{lem1x1AB}
where applications of Theorem~\ref{thm:double-typeAB} need to be replaced
by Theorem~\ref{thm:double-typeC} and types~\ref{typeA}
and~\ref{typeB} become types~\ref{typeI} and~\ref{typeII}.
The lemma can also be proved directly using the arrow-closing procedure
and the tables.
In fact, if $m\in\Mi$ is of type~\ref{typeI} or~\ref{typeII},
then \tablename{s}~\ref{tab:DoubleArrowCaracterizationSummary}
and~\ref{tab:OneDArrowCaracterizationSummary} confirm that
Theorem~\ref{thm:double-typeC} describes all $p\in\Ji$ such that
$p\Runterpfeil m$ (there are (only) two cases, $p=(n)$ and $p=(2,1,\dotsc,1)$,
which are handled by Theorem~\ref{thm:double-typeAB}
and~\ref{thm:double-typeC} simultaneously).
\end{proof}

\begin{proposition}\label{prop:NumberOf1x1acsc}
For $n\in\N$, $n\geq 3$ there are exactly $2n-4$ one-generated
arrow-closed $(1\times1)$-subcontexts in~$\K(\Lp)$.
\end{proposition}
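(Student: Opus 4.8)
The plan is to enumerate all one-generated arrow-closed $(1\times 1)$-subcontexts by recalling that such a subcontext consists of a single pair $(g,m)$ with $g\in\Ji$, $m\in\Mi$, whose arrow-closure stays of format $(1\times 1)$; concretely, we need $g\Hochpfeil m$ to fail for every $m\neq m_0$ once $g$ is chosen (so that no column is added when closing from the object side), and $p\Runterpfeil m_0$ to fail for every $p\neq g$ (so that no row is added when closing from the attribute side), together with at least one arrow linking $g$ and $m_0$; since $\K(\Lp)$ is reduced, it suffices to run the closure procedure starting from a single object. Lemmata~\ref{lem1x1AB} and~\ref{lem1x1C1} already exhibit, for each admissible parameter choice, exactly such a $(1\times1)$-subcontext: the one generated by a $\bigvee$-irreducible $g$ of type~\ref{typeA} or~\ref{typeB} (with partner $m=(n-b-\ell,1,\dotsc,1)$ of type~\ref{typeI} or~\ref{typeIII}), and dually the one generated by $g$ of type~\ref{typeC} (with partner of type~\ref{typeI} or~\ref{typeII}). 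So the heart of the argument is twofold: (i) verify that the $g$'s producing these subcontexts are counted without repetition and total $2n-4$; (ii) verify that \emph{no} $g$ of type~\ref{typeD}, and no other $g$ of types~\ref{typeA}--\ref{typeC}, generates a $(1\times1)$-subcontext.

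For part~(i): by Corollary~\ref{cor:least-bounded-width-typeAB} every $g\in\Ji$ of type~\ref{typeA} or~\ref{typeB} is the least element of $\Bw{w}$ for a unique $w=\len(g)$ with $1\le w\le n-1$ (the value $w=n$ being excluded since then $g=(1,\dots,1)\notin\Ji$), and conversely each such $w$ gives exactly one such $g$; this yields $n-1$ subcontexts from Lemma~\ref{lem1x1AB}. Dually, via $\len(g^*)=\hgt(g)$, the type-\ref{typeC} partitions $g=(k,1,\dotsc,1)$ are parametrised by $\hgt(g)=k$ with $2\le k\le n-1$, giving $n-2$ subcontexts from Lemma~\ref{lem1x1C1}. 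One then checks these two families overlap in exactly one subcontext: namely $g=(n)$ (of type~\ref{typeA}, and for $n\ge3$ with partner $(n-1,1)$) coincides with the $k=n$ boundary case handled simultaneously by Theorems~\ref{thm:double-typeAB} and~\ref{thm:double-typeC}, as noted in the proof of Lemma~\ref{lem1x1C1}; equivalently $g=(2,1,\dots,1)$ arises in both families by self-duality. After removing this single double count, the total is $(n-1)+(n-2)-1=2n-4$.

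For part~(ii): I would argue that a $(1\times1)$-subcontext generated by an object $g$ forces $g$ to have a \emph{unique} up-arrow. A $\bigvee$-irreducible $g$ of type~\ref{typeD} always admits the double arrow of Theorem~\ref{thm:double-typeCD}, which is a \emph{one-to-many} relation (illustrated by $g=(4,4,1,1)$ with two partners); more precisely, Theorem~\ref{thm:double-typeCD} produces at least two distinct $m\in\Mi$ with $g\Doppelpfeil m$ (hence $g\Hochpfeil m$) whenever $g$ is of type~\ref{typeD}, because the admissible values of $t$ in $2\le t\le\min(k-1,d+1)$ range over an interval and $k\geq3$, $d\ge1$ leave room for at least two choices — this needs a short verification that $\min(k-1,d+1)\ge 3$, or else that distinct small $t$ already yield distinct $m$. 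Consequently the closure from $g$ immediately adds a second attribute, so the subcontext is not $(1\times1)$. For $g$ of type~\ref{typeA}, \ref{typeB}, or~\ref{typeC}, Theorems~\ref{thm:double-typeAB} and~\ref{thm:double-typeC} show the up-arrow partner is unique; the remaining check is that the \emph{attribute} side also closes trivially, i.e. that the unique partner $m$ has a unique down-arrow partner $p=g$ — this is exactly what the uniqueness-of-decomposition arguments in Lemmata~\ref{lem1x1AB} and~\ref{lem1x1C1} establish. The main obstacle is the type-\ref{typeD} exclusion: one must confirm rigorously that every type-\ref{typeD} $g$ has $\ge 2$ up-arrows. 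I expect this to follow cleanly from the parameter count in Theorem~\ref{thm:double-typeCD}, but it requires carefully treating the smallest cases (e.g. $k=3$, where $\min(k-1,d+1)=\min(2,d+1)$ can equal $2$, forcing $t=2$ uniquely — then one must instead exhibit two distinct $c,r$ decompositions or fall back on the single-arrow Theorem~\ref{thm:TypeCDdown} to produce an additional, non-double, up-arrow's dual, or argue via the down-arrows of Corollary~\ref{corDualTypeABCDdownIV}). Assembling these pieces yields the stated count of exactly $2n-4$.
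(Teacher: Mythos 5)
Your overall architecture matches the paper's: count the subcontexts exhibited by Lemmata~\ref{lem1x1AB} and~\ref{lem1x1C1}, correct for the overlap, and rule out generators of type~\ref{typeD}. Part~(i) of your plan is essentially sound and the arithmetic $(n-1)+(n-2)-1=2n-4$ is consistent with the paper's $2(n-1)-2$, although your identification of the overlap is muddled: having restricted the second family to $2\le k\le n-1$, the partition $(n)$ is \emph{not} in it, so the unique common generator is $(2,1,\dotsc,1)$ alone (the paper instead keeps $k\in\set{2,\dotsc,n}$ and subtracts the two common generators $(n)$ and $(2,1,\dotsc,1)$). You should also justify, as the paper does, why any $p$ with $p\Runterpfeil m$ for $m=(a,1,\dotsc,1)$ must be the least element of $\Bw{\len(m)-1}$, which is where the uniqueness of the decomposition of $n$ modulo $b+\ell$ enters.

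The genuine gap is in part~(ii), the exclusion of type~\ref{typeD} generators. Your primary argument --- extracting two distinct values of $t$ from the range $2\le t\le\min(k-1,d+1)$ in Theorem~\ref{thm:double-typeCD} --- fails exactly when $\min(k-1,d+1)=2$, i.e.\ when $k=3$ or $d=1$; this is not a fringe case but includes the very first type~\ref{typeD} partition $(3,3,1)$ at $n=7$. You acknowledge the problem, but your proposed repairs do not close it: varying $c,r$ is a non-starter since they are determined by $t$ and $n$, and the dual of Corollary~\ref{cor:TypeABCDdownIVnew} (Corollary~\ref{corDualTypeABCDdownIV}) only produces $m^*$ of type~\ref{typeD} whose first two block lengths are both positive, so it misses type~\ref{typeD} partitions with $b=0$ such as $(3,3,1)$. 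The paper's resolution is different and unconditional: pair the type~\ref{typeIV} up-arrow from Theorem~\ref{thm:double-typeCD} with the type~\ref{typeIII} up-arrow from Corollary~\ref{corDualTypeCdown1} --- one checks that \emph{every} type~\ref{typeD} partition occurs as the $m^*$ of that corollary --- and conclude that $g$ has up-arrows to two attributes of distinct types, hence distinct, so the closure of $\set{g}$ acquires a second column. Without this (or an equivalent) argument covering the boundary cases, your proof of the upper bound $2n-4$ is incomplete.
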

\begin{proof}
Considering Lemmata~\ref{lem1x1AB} and~\ref{lem1x1C1}
we may count the arrow-closed $(1\times1)$-subcontexts.
First, Lemma~\ref{lem1x1C1} allows to choose the parameter
$k\in\set{2,\ldots,n}$. Hence, $n-1$ choices are available, and each leads
to a distinct~$g$ and thus a distinct $(1\times1)$-subcontext.
Moreover, Lemma~\ref{lem1x1AB} is exactly the dual of
Lemma~\ref{lem1x1C1}, wherefore we have once more~$n-1$ choices, and
thus $n-1$ distinct subcontexts, available.
The only $g\in\Ji$ that are captured by Lemma~\ref{lem1x1AB}
and~\ref{lem1x1C1} at the same time are either $g=(n)$, or have length
at least two and thus, according to Lemma~\ref{lem1x1C1}, end with~$1$,
have height at least~$2$ and satisfy $g_1>g_2$. Now, according to the
shapes of~$g$ considered in Lemma~\ref{lem1x1AB}, the height must be
equal to~$2$ since~$g$ ends in~$1$, and so $g=(2,1,1,\dotsc,1)$.
We therefore have to reduce the sum $2(n-1)$ by~$2$ and thus
have at least $2n-4$ distinct arrow-closed $(1\times1)$-subcontexts
generated by $g\in\Ji$ of types~\ref{typeA}, \ref{typeB} or~\ref{typeC}.
\par
Finally, if $g$ is of type~\ref{typeD} with $k\ge3$ and $b+\ell\ge2$, then
Theorem~\ref{thm:double-typeCD} exhibits some $m^{}_{\text{\ref{typeIV}}}\in\Mi$ of
type~\ref{typeIV} with $g\Hochpfeil m^{}_{\text{\ref{typeIV}}}$, and
Corollary~\ref{corDualTypeCdown1} yields some $m^{}_{\text{\ref{typeIII}}}\in\Mi$ of
type~\ref{typeIII} with $g\Hochpfeil m^{}_{\text{\ref{typeIII}}}$.
Since the type classes are disjoint, we therefore have two distinct
$m\in\Mi$ such that $g\UpArrow m$, and hence arrow-closed subcontexts
generated by~$g$ of type~\ref{typeD} cannot be of
format $1\times1$.
Consequently, the subcontexts shown in Lemmata~\ref{lem1x1AB}
and~\ref{lem1x1C1} are the only ones of this shape.
\end{proof}

\section{Conclusion}\label{sect:conclusion}
\begin{table}[b!]
  \centering
  \setlength{\tabcolsep}{12pt}
  \begin{tabular}{c@{\;}c@{\;}llr@{\;}c@{\;}ll}
  \toprule
  \multicolumn{3}{l}{Arrow vs.\ type}
  & \multicolumn{1}{l}{For}
  & \multicolumn{3}{c}{First example}
  & Result  \\
  \midrule
  \ref{typeA} & $\Doppelpfeil$ & \ref{typeI}   & $n=2$     &  $(2)$    & $\Doppelpfeil$ & $(1,1)$     & Theorem~\ref{thm:double-typeAB}       \\
  \ref{typeB} & $\Doppelpfeil$ & \ref{typeI}   & $n\ge3$     & $(2,1)$   & $\Doppelpfeil$ & $(1,1,1)$   & Theorem~\ref{thm:double-typeAB}       \\
  \ref{typeA} & $\Doppelpfeil$ & \ref{typeII}  & $n\ge3$     & $(3)$     & $\Doppelpfeil$ & $(2,1)$     & Theorem~\ref{thm:double-typeAB}       \\
  \ref{typeA} & $\Doppelpfeil$ & \ref{typeIII} & $n\ge4$     & $(2,2)$   & $\Doppelpfeil$ & $(2,1,1)$   & Theorem~\ref{thm:double-typeAB}       \\
  \ref{typeB} & $\Doppelpfeil$ & \ref{typeIII} & $n\ge5$     & $(2,2,1)$ & $\Doppelpfeil$ & $(2,1,1,1)$ & Theorem~\ref{thm:double-typeAB}       \\
  \midrule
  \ref{typeC} & $\Doppelpfeil$ &  \ref{typeI}   & $n\ge4$    & $(3,1)$   & $\Doppelpfeil$ & $(2,2)$     & Theorem~\ref{thm:double-typeC}    \\
  \ref{typeC} & $\Doppelpfeil$ &  \ref{typeII}  & $n\ge5$    & $(3,1,1)$ & $\Doppelpfeil$ & $(2,2,1)$   & Theorem~\ref{thm:double-typeC}    \\
  \midrule
  \ref{typeD} & $\Doppelpfeil$ &  \ref{typeIV}  & $n\ge7$    & $(3,3,1)  $ & $\Doppelpfeil$ & $(3,2,2)$    & Theorem~\ref{thm:double-typeCD}       \\
  \midrule
  \ref{typeC} & $\DownArrow$   &  \ref{typeIV}   & $n\ge7$   & $(4,1,1,1)$ & $\DownArrow$ & $(3,2,2)$    & Theorem~\ref{thm:TypeCdown1}          \\
  \ref{typeA} & $\DownArrow$   &  \ref{typeIV}   & $n\ge9$   & $(3,3,3)  $ & $\DownArrow$ & $(4,2,2,1)$     & Corollary~\ref{cor:TypeABCDdownIVnew}   \\
  \ref{typeB} & $\DownArrow$   &  \ref{typeIV}   & $n\ge10$  & $(4,3,3)  $ & $\DownArrow$ & $(5,2,2,1)$     & Corollary~\ref{cor:TypeABCDdownIVnew}   \\
  \ref{typeD} & $\DownArrow$   &  \ref{typeIV}   & $n\ge13$  & $(4,4,4,1)$ & $\DownArrow$ & $(5,3,3,2)$    & Corollary~\ref{cor:TypeABCDdownIVnew}   \\
  \midrule
  \ref{typeD} & $\UpArrow$     &  \ref{typeIII}  & $n\ge7$   & $(3,3,1)    $ & $\UpArrow$ & $ (4,1,1,1)$       & Corollary~\ref{corDualTypeCdown1}       \\
  \ref{typeD} & $\UpArrow$     &  \ref{typeI}    & $n\ge9$   & $(4,3,1,1)  $ & $\UpArrow$ & $ (3,3,3)$      & Corollary~\ref{corDualTypeABCDdownIV}   \\
  \ref{typeD} & $\UpArrow$     &  \ref{typeII}   & $n\ge10$  & $(4,3,1,1,1)$ & $\UpArrow$ & $ (3,3,3,1)$  & Corollary~\ref{corDualTypeABCDdownIV}   \\
  \ref{typeD} & $\UpArrow$     &  \ref{typeIV}   & $n\ge13$  & $(4,4,3,1,1)$ & $\UpArrow$ & $ (4,3,3,3)$  & Corollary~\ref{corDualTypeABCDdownIV}   \\
  \bottomrule
  \end{tabular}
  \caption{Arrow relation patterns between different types of
           partitions with least $n\in\N$ such that these occur
           in~$\K(\Lp)$.}
  \label{tab:arrow-type-patterns}
\end{table}

In this paper, for every $n\in\N$, we characterised all arrow relations
appearing in the standard context of the lattice~$\Lp$ of partitions
of~$n$ under the dominance order.
When looking at Tables~\ref{tab:DoubleArrowCaracterizationSummary}
and~\ref{tab:OneDArrowCaracterizationSummary}, collecting our results,
some curious patterns with respect to the three arrow shapes and the
partition types connected by arrows of a specific form arise. These
will be summarised subsequently; more
detailed information for which values of~$n$ these patterns appear and
which result justifies them, can seen from Table~\ref{tab:arrow-type-patterns}.
Inspecting Table~\ref{tab:arrow-type-patterns}, we note that all of
these connections can be observed in~$\K(\Lp)$ as soon as $n\geq 13$.
The following patterns caught our eye:
\begin{enumerate}[(a)]
\item If $g\in\Ji$ is of type~\ref{typeA}, \ref{typeB} or~\ref{typeC} and $m\in\Mi$ satisfies $g\Doppelpfeil m$, then~$m$ is
not of type~\ref{typeIV} and~$m$ is unique. Conversely, if $m\in\Mi$ is
of types~\ref{typeI}, \ref{typeII} or~\ref{typeIII} and
$g\Doppelpfeil m$ with $g\in\Ji$, then~$g$ is not of type~\ref{typeD}
and it is also unique. Therefore, the double arrows establish a
one-to-one relationship between elements of the union of the type
classes~\ref{typeA}--\ref{typeC} and the union
of the classes~\ref{typeI}--\ref{typeIII}.\par
More specifically, if $g\in\Ji$ is of type~\ref{typeA} or~\ref{typeB}
such that $g \neq (n)$, $g\neq (2,1,\dotsc,\overset{n-1}1)$
and $g\Doppelpfeil m$, then Theorem~\ref{thm:double-typeAB}
implies that~$m$ is of type~\ref{typeIII}, and the whole class of
type~\ref{typeIII} partitions is exhausted by those~$g$.
The two exceptional double arrow relations are
$(n)\Doppelpfeil (n-1,1)$
with~$g$ of type~\ref{typeA} and~$m$ of type~\ref{typeII}
(or~\ref{typeI} for $n=2$), and
$(2,1,\dotsc,1)\Doppelpfeil (1,\dotsc,1)$
with~$g$ of type~\ref{typeB} and~$m$ of type~\ref{typeI}.
In particular, $g\in\Ji$ of type~\ref{typeA} is never
$\Doppelpfeil$-connected to $m\in\Mi$ of type~\ref{typeI}
(or~\ref{typeIV}) when $n\geq 3$, and~$g$ of
type~\ref{typeB} is never $\Doppelpfeil$-connected to~$m$ of
type~\ref{typeII} (or~\ref{typeIV}).
In fact, $g$ of type~\ref{typeB} is never connected via any kind of arrow to~$m$ of type~\ref{typeII}, as shown in
Corollary~\ref{cor:BarrowIIimpossible}.
Having observed the bijective correspondence given by the double arrow
relation, this leaves that the $g\in\Ji$ of type~\ref{typeC} correspond in
a one-to-one way to the $m\in\Mi\setminus\set{(n-1,1),(1,\dotsc,1)}$ of
types~\ref{typeI} or~\ref{typeII}. This can also be verified directly
using Theorem~\ref{thm:double-typeC} for parameters $2\leq a\leq n-2$.
Note also that $g$ of type~\ref{typeC} is not even $\Hochpfeil$ or $\Runterpfeil$-related to~$m$ of type~\ref{typeIII}, as shown in Lemma~\ref{lem:CarrowIIIimpossible}.
\item If $g\in\Ji$, $m\in\Mi$ and $g\Doppelpfeil m$, then~$g$ is of
type~\ref{typeD} if and only if~$m$ is of type~\ref{typeIV}, see
Table~\ref{tab:DoubleArrowCaracterizationSummary}. However, this
correspondence is not unique because for sufficiently large values
of~$n$, $k$ and~$d$, Theorem~\ref{thm:double-typeCD} allows for
several choices of the parameter~$t$ determining distinct shapes
of $m\in\Mi$ of type~\ref{typeIV}. Dually, a given~$m$ of
type~\ref{typeIV} may only be double arrow related to $g\in\Ji$ of
type~\ref{typeD}, and these~$g$ may not be unique.
\item
Every up-arrow originating in~$g\in\Ji$ of type~\ref{typeA}, \ref{typeB}
or~\ref{typeC} is actually a double arrow described in
Table~\ref{tab:DoubleArrowCaracterizationSummary}. This is so because
Table~\ref{tab:OneDArrowCaracterizationSummary} does not contain any
up-arrows of such type.
Conversely, every down-arrow connecting $g\in\Ji$ with some $m\in\Mi$ of
type~\ref{typeI}, \ref{typeII} or~\ref{typeIII} is necessarily a double
arrow.
This implies that $g\in\Ji$ of types~\ref{typeA}, \ref{typeB}
or~\ref{typeC} receive down-arrows from $m\in\Mi$ that are not
double-arrows only if~$m$ is of type~\ref{typeIV}.
Dually, $g\in\Ji$ are in relation $g\Hochpfeil m$ and $g\nRunterpfeil m$
with $m\in\Mi$ of types~\ref{typeI}, \ref{typeII} or~\ref{typeIII} only
if~$g$ is of type~\ref{typeD}.
\par
On the other hand, $g\in\Ji$ of type~\ref{typeD} exhibit 
up-arrows to $m\in\Mi$ of all four types that fail to be double
arrows; likewise $m\in\Mi$ of type~\ref{typeIV} satisfy
$g\Runterpfeil m$ but $g\nHochpfeil$ with $g\in\Ji$ of all four
types. All of these single directional arrow relationships are in
general not one-to-one as Theorem~\ref{thm:TypeCdown1} and
Corollary~\ref{cor:TypeABCDdownIVnew}
exhibit a flexible parameter range for~$t$ with fixed and sufficiently
large~$k,d$ and~$n$. This happens as of~$n\geq 10$
for Theorem~\ref{thm:TypeCdown1} and as of~$n\geq 12$ for
Corollary~\ref{cor:TypeABCDdownIVnew}.
\end{enumerate}

As a side-product of Proposition~\ref{prop:NumberOf1x1acsc},
we discovered that the number
of ways a natural number~$n$ can be decomposed yielding partitions of
type~\ref{typeA} or~\ref{typeB} is exactly $n-1$, for this is the same
as the number of partitions of~$n$ of the shape
$(a,1,\dotsc,1)$ with $2\le a\le n$.
This is an example of a bijective proof as they are typical for
counting the number of restricted partitions in the combinatorial
theory of integer partitions, see, e.g.,
\cite[Section~2.2]{AndrewsErikssonIntegerPartitions}.
\par

Finally, in order to demonstrate the applicability of our general
descriptions, we started by characterising all one-generated
arrow-closed subcontexts of~$\K(\Lp)$ of format~$1\times 1$.
We determined that their number is exactly $2n-4$, see
Proposition~\ref{prop:NumberOf1x1acsc}. Working on this we observed
that also the characterisation of other arrow-closed subcontexts of
small shape seems to be within reach of our results.
We leave the complete description of the one-generated arrow-closed
subcontexts of~$\K(\Lp)$ as a task for future investigation.


\begin{thebibliography}{10}
\expandafter\ifx\csname url\endcsname\relax
  \def\url#1{\texttt{#1}}\fi
\expandafter\ifx\csname urlprefix\endcsname\relax\def\urlprefix{URL }\fi
\expandafter\ifx\csname href\endcsname\relax
  \def\href#1#2{#2} \def\path#1{#1}\fi

\bibitem{AndrewsErikssonIntegerPartitions}
G.~E. Andrews, K.~Eriksson, Integer partitions, Cambridge University Press,
  Cambridge, 2004.
\newblock \href {https://doi.org/10.1017/CBO9781139167239}
  {\path{doi:10.1017/CBO9781139167239}}.

\bibitem{BehDatasetSymmetricEmbeddingsStandardContextsPartLat}
M.~Behrisch, Symmetric embeddings between standard contexts of lattices of
  integer partitions [dataset], Zenodo (Dec. 2021).
\newblock \href {https://doi.org/10.5281/zenodo.5810903}
  {\path{doi:10.5281/zenodo.5810903}}.

\bibitem{behrisch2021mas}
M.~Behrisch, A.~Chavarri~Villarello, E.~Vargas-Garc{\'{\i}}a, Representing
  partition lattices through {FCA}, in: A.~Braud, A.~Buzmakov, T.~Hanika,
  F.~Le~Ber (Eds.), {F}ormal {C}oncept {A}nalysis -- 16th {I}nternational
  {C}onference, {ICFCA} 2021, Strasbourg, France, June 29 -- July 2, 2021,
  Proceedings, Vol. 12733 of Lecture Notes in Artificial Intelligence,
  Springer, Cham, 2021, pp. 3--19.
\newblock \href {https://doi.org/10.1007/978-3-030-77867-5_1}
  {\path{doi:10.1007/978-3-030-77867-5_1}}.

\bibitem{BehVarDatasetNonembeddabilityStandardContextsPartLat}
M.~Behrisch, E.~Vargas-Garc{\'{\i}}a, Non-embeddability of standard contexts of
  lattices of integer partitions [dataset], Zenodo (Dec. 2021).
\newblock \href {https://doi.org/10.5281/zenodo.5805981}
  {\path{doi:10.5281/zenodo.5805981}}.

\bibitem{Brylawski}
T.~Brylawski, The lattice of integer partitions, Discrete Math. 6~(3) (1973)
  201--219.
\newblock \href {https://doi.org/10.1016/0012-365X(73)90094-0}
  {\path{doi:10.1016/0012-365X(73)90094-0}}.

\bibitem{Alain}
A.~Chavarri~Villarello, El re\-t{\'\i}\-cu\-lo de par\-ti\-cio\-nes de
  en\-te\-ros po\-si\-ti\-vos y su con\-tex\-to es\-t{\'a}n\-dar, Undergraduate
  honors thesis, {ITAM}, R{\'\i}o Hondo~1, Ciudad de M{\'e}xico (Jul. 2020).

\bibitem{DiazMirandaThesis}
S.~A. D{\'\i}az~Miranda, Ret{\'\i}culos de n{\'u}meros naturales y sus
  relaciones de flechas, Undergraduate honors thesis, {ITAM}, R{\'\i}o Hondo~1,
  Ciudad de M{\'e}xico (Mar. 2024).

\bibitem{FrancoSolorioThesis}
L.~A. Franco~Solorio, Encajes entre contextos est{\'a}ndar de los
  ret{\'\i}culos de particiones de n{\'u}meros enteros, Undergraduate honors
  thesis, {ITAM}, R{\'\i}o Hondo~1, Ciudad de M{\'e}xico (Mar. 2022).

\bibitem{ganter2022notes}
B.~Ganter, Notes on integer partitions, International Journal of Approximate
  Reasoning 142 (2022) 31--40.
\newblock \href {https://doi.org/10.1016/j.ijar.2021.11.004}
  {\path{doi:10.1016/j.ijar.2021.11.004}}.

\bibitem{ganter}
B.~Ganter, R.~Wille, Formal concept analysis. {M}athematical foundations,
  Springer, Berlin, 1999, translated from the 1996 German original by Cornelia
  Franzke.
\newblock \href {https://doi.org/10.1007/978-3-642-59830-2}
  {\path{doi:10.1007/978-3-642-59830-2}}.

\bibitem{HardyRamanujaniAsymptoticFormulaeCombinatoryAnalysis1918}
G.~H. Hardy, S.~Ramanujan, Asymptotic formul{\ae} in combinatory analysis,
  Proc. London Math. Soc. (2) 17~(1) (1918) 75--115.
\newblock \href {https://doi.org/10.1112/plms/s2-17.1.75}
  {\path{doi:10.1112/plms/s2-17.1.75}}.

\bibitem{Hungerford2011}
T.~W. Hungerford, Algebra, Vol.~73 of Graduate Texts in Mathematics, Springer,
  New York, NY, 2011, reprint of the 1974 original.
\newblock \href {https://doi.org/10.1007/978-1-4612-6101-8}
  {\path{doi:10.1007/978-1-4612-6101-8}}.

\bibitem{LATAPY}
M.~Latapy, T.~H.~D. Phan, The lattice of integer partitions and its infinite
  extension, Discrete Math. 309~(6) (2009) 1357--1367.
\newblock \href {https://doi.org/10.1016/j.disc.2008.02.002}
  {\path{doi:10.1016/j.disc.2008.02.002}}.

\bibitem{mahnke1912leibniz}
D.~Mahnke, \href{https://www.ophen.org/pub-102519}{{L}eib\-niz auf der
  {S}u\-che nach einer all\-ge\-mei\-nen {P}rim\-zahl\-glei\-chung}, Bibl.
  Math. (3) XIII (1912--1913) 29--61.
\newline\urlprefix\url{https://www.ophen.org/pub-102519}

\bibitem{ParamoPitolThesis}
A.~P{\'a}ramo~Pitol, Factores subdirectamente irreducibles de peque{\~n}os
  ret{\'\i}culos de particiones, Undergraduate honors thesis, {ITAM}, R{\'\i}o
  Hondo~1, Ciudad de M{\'e}xico (Jul. 2022).

\bibitem{Rademacher1937PartitionFunction}
H.~Rademacher, On the partition function $p(n)$, Proc. London Math. Soc. (2)
  43~(4) (1937) 241--254.
\newblock \href {https://doi.org/10.1112/plms/s2-43.4.241}
  {\path{doi:10.1112/plms/s2-43.4.241}}.

\bibitem{Rademacher1943SeriesExpansionOfThePartitionFunction}
H.~Rademacher, On the expansion of the partition function in a series, Ann. of
  Math. (2) 44~(3) (1943) 416--422.
\newblock \href {https://doi.org/10.2307/1968973} {\path{doi:10.2307/1968973}}.

\end{thebibliography}

\end{document}